\title {A Counting Lemma 
for Somewhat Restricted $3$-APs}
\author{Amey Bhangale\thanks{Department of Computer Science and Engineering, University of California, Riverside. Supported by the Hellman Fellowship award and NSF CAREER award 2440882.}
	\and
	Subhash Khot\thanks{Department of Computer Science, Courant Institute of Mathematical Sciences, New York University. Supported by
		the NSF Award CCF-1422159, NSF CCF award 2130816, and the Simons Investigator Award.}
	\and
    Yang P. Liu\thanks{School of Mathematics, Institute for Advanced Study, Princeton, NJ. This material is based upon work supported by the National Science Foundation 
under Grant No. DMS-1926686}
    \and 
	Dor Minzer\thanks{Department of Mathematics, Massachusetts Institute of Technology. Supported by NSF CCF award 2227876 and NSF CAREER award 2239160.}}
\date{\vspace{-5ex}}

\documentclass[11pt]{article}
\usepackage{times}
\usepackage[T1]{fontenc}
\usepackage{amssymb}
\usepackage{amsmath}
\usepackage{amsthm}
\usepackage{bm}
\usepackage{xcolor}
\usepackage{fullpage}

\usepackage{hyperref}
\hypersetup{hypertexnames=false}

\usepackage{thmtools}
\usepackage{thm-restate}

   \newtheorem{thm}{Theorem}[section]
   \newtheorem{conj}[thm]{Conjecture}
   \newtheorem{lemma}[thm]{Lemma}
   
   \newtheorem{claim}[thm]{Claim}
   \newtheorem{fact}[thm]{Fact}
   \newtheorem{remark}[thm]{Remark}
   \newtheorem{definition}[thm]{Definition}
   
   \newtheorem{proposition}[thm]{Proposition}

\newcommand\E{\mathbb{E}}
\newcommand\card[1]{\left| {#1} \right|}

\newcommand\sett[2]{\left\{ \left. #1 \;\right\vert #2 \right\}}

\newcommand\set[1]{{\left\{ #1 \right\}}}
\newcommand\Prob[2]{{\Pr_{#1}\left[ {#2} \right]}}

\newcommand\cExpect[3]{{\mathop{\mathbb{E}}_{#1}\left[ \left. #3 \;\right\vert #2 \right]}}

\newcommand\norm[1]{\| #1 \|}

\newcommand\Expect[2]{{\mathop{\mathbb{E}}_{#1}\left[ {#2} \right]}}

\renewcommand{\lll}{\lesssim}
\newcommand\spn{{\sf span}_{\mathbb{N}}}
\newcommand\inner[2]{\langle{#1},{#2}\rangle}
\newcommand\eps{\varepsilon}

\renewcommand\geq{\geqslant}
\renewcommand\leq{\leqslant}

\newcommand{\rom}[1]{\uppercase\expandafter{\romannumeral #1\relax}}
\newcommand{\V}[1]{{#1}}

\begin{document}

\maketitle
\begin{abstract}
    For a prime $p\geq 3$, a somewhat restricted $3$-AP in $\mathbb{F}_p^n$ is a triplet $(x,x+a,x+2a)$, where $x\in\mathbb{F}_p^n$ and $a\in \{0,1,2\}^n$. We prove a counting lemma for somewhat restricted $3$-APs in dense sets in $\mathbb{F}_p^n$. More precisely, we prove that for all $\alpha>0$, there exists $\beta>0$, such that for sufficiently large $n$, if a set $A\subseteq \mathbb{F}_p^n$ has density at least $\alpha$, then it contains at least $\beta$ fraction of all somewhat restricted $3$-APs. 
    %This result is a counting analog of the existence result of [Bhangale, Khot, Minzer, 2024], who showed that any such $A$ must contain at least one non-trivial restricted $3$-AP. 
    %but in contrast to the case of standard $3$-APs, it is not known whether an existence result implies a counting lemma.

    Our proof builds on recently developed machinery from [Bhangale, Khot, Minzer, 2026]. Our main new ingredient is an arithmetic regularity lemma for patterns such as somewhat restricted 3-APs. This result is in the spirit of arithmetic regularity lemmas from the theory of Gowers uniformity norms [Green, Tao, 2010] and may be of independent interest.
\end{abstract}
\section{Introduction}

The emergence of arithmetic patterns in dense sets, first studied by Roth~\cite{Roth}, has proven to be a fundamental problem relating distinct areas in mathematics. Roth's theorem asserts that for all $\eps>0$, if $A\subseteq \{1,\ldots,N\}$ has density at least $\eps$ and $N$ is sufficiently large, then $A$ must contain a triplet of the form $x,x+a,x+2a$ with $a\neq 0$. Variants and extensions of Roth's theorem have received much attention over the years, and their study led to the development of important tools in combinatorics, analysis, ergodic theory, probability and more. Below are a few of these results:
\begin{enumerate}
    \item {\bf Quantitative bounds:} the precise density needed for the conclusion to hold has been the subject of intense study~\cite{Bourgain1999,Bourgain2008,Sanders2011,BloomSisask2019,BloomSisask2020,KelleyMeka2023}. By now, it is known that the answer is quasi-polynomial~\cite{KelleyMeka2023,BloomSisask2023}, matching the shape of the best known construction~\cite{Behrend1946,ElsholtzHunterProskeSauermann2024}.
    \item {\bf Longer progressions:} Szemer\'edi's~\cite{Szemeredi1975} extended Roth's theorem to progressions of arbitrarily large constant length. Further study of this problem led to the
    development of higher-order Fourier analysis~\cite{Gowers1998,Gowers2001} and of graph and hypergraph regularity~\cite{Szemeredi1978,RuzsaSzemeredi1978,Gowers2007}.
    \item {\bf More general patterns:} the study of arithmetic progressions has motivated the broader question of the occurrence of general patterns in sufficiently dense sets. A concrete class of examples are polynomial progressions~\cite{BergelsonLeibman1996}, which are patterns of the form $x+P_1(a),\ldots,x+P_k(a)$ where $P_1,\ldots,P_k$ are polynomials. Other examples include multi-dimensional patterns~\cite{FurstenbergKatznelson1978,Gowers2007} (such as $x+ia_1+ja_2$ for $i,j=0,1,2$), and combinatorial lines~\cite{FurstenbergKatznelson1991,Polymath2012,BKLMDHJ}.
\end{enumerate}

% \begin{enumerate}
%     \item {\bf Quantitative bounds:} the precise density needed for the conclusion to hold has been the subject of intense study~\cite{}. By now, it is known that the answer is quasi-polynomial~\cite{}, matching the shape of the best known example~\cite{}.
%     \item {\bf Longer progressions:} Szemer\'edi's Theorem~\cite{} is an extension of Roth's theorem for progressions of arbitrarily large constant length $k\in\mathbb{N}$. This is the starting point for the rich theory of higher-order Fourier analysis~\cite{}, and for graph and hypergraph regularity lemma~\cite{}.
%     \item {\bf More general patterns:} the study of arithmetic progressions has motivated the broader question of appearances of general patterns in sufficiently dense sets. Concrete examples include polynomial progressions, which are patterns of the form $x+P_1(a),\ldots,x+P_k(a)$ where $P_1,\ldots,P_k$ are polynomials, multi-dimensional patterns (such as $x+ia_1+ja_2$ for $i,j=0,1,2$), and combinatorial lines.
% \end{enumerate}
The current work fits in the third item above, and we study restricted forms of $3$-APs in the finite field setting, in which the common difference must come from a set much smaller than the domain.

\subsection{The Finite Field Setting}
In the finite field setting, one considers the problem of finding arithmetic patterns in a dense set $A\subseteq \mathbb{F}_p^n$. Here and throughout, $p$ is thought of as a prime of constant size and $n$ is thought of as tending to infinity.  The theory in this setting is very close  to the theory in the integer setting, and it avoids some of the technical difficulties (such as dealing with Bohr sets). In fact, the tools in the two settings are close analogs of one another, with the exception of the polynomial method~\cite{CrootLevPach2017,EllenbergGijswijt2017}, which only seems applicable for some problems in the finite field setting. 

Meshulam~\cite{Meshulam}  proved an analog of Roth's theorem in the finite field setting, following a similar density increment, Fourier-analytic strategy.
Our work is mostly concerned with studying more restricted patterns than the ones addressed in Meshulam's work, defined as follows:
\begin{definition}
    Fix a prime $p\geq 3$. 
    \begin{enumerate}
        \item A somewhat restricted $3$-AP in $\mathbb{F}_p^n$ is a triplet of the form $(x,x+a,x+2a)$ such that $x\in\mathbb{F}_p^n$ and 
    $a\in\{0,1,2\}^n\setminus\{\vec{0}\}$.
    \item A restricted $3$-AP in $\mathbb{F}_p^n$ is a triplet of the form $(x,x+a,x+2a)$ such that $x\in\mathbb{F}_p^n$ and 
    $a\in\{0,1\}^n\setminus\{\vec{0}\}$.
    \end{enumerate}
    
\end{definition}
One may consider two distinct problems with respect to restricted $3$-APs and somewhat restricted $3$-APs: 
\begin{enumerate}
    \item {\bf Existence:} what density guarantees that $A$ must contain at least one pattern?
    \item {\bf Counting:} if $A$ has density $\alpha>0$, does it have to contain $\Omega_{\alpha}(1)$ fraction the patterns (provided $n$ is sufficiently large)?
\end{enumerate}
The Fourier analytic machinery employed by Meshulam~\cite{Meshulam} and subsequent works does not seem to apply well to this setting, and these problems were highlighted in~\cite{Green,Mos}. For the first problem, the density Hales Jewett theorem implies that any constant density must contain at least one pattern. The quantitative bounds this argument gives were quite weak, and only recently effective bounds were given. For both problems, it is now known~\cite{BKM3AP,BKLMDHJ} that if  $A\subseteq\mathbb{F}_p^n$ contains no restricted $3$-AP, then it has density at most $1/(\log\log\log n)^{c_p}$ where $c_p>0$ is a constant. As far as we know it may be the case that $A$ must have exponentially small measure, and a recent result~\cite{conlon2026note} shows this is indeed the case for the richer class of progressions of the form $(x,x+a,x+2a)$ where $a\in \{0,1\ldots, (p+1)/2\}^n$.

The main objective of the current paper is to answer the counting problem above for somewhat restricted $3$-APs. Note that for standard $3$-APs, an existence result immediately yields a counting result, and we recall this argument next. Suppose that we know that for all $\alpha>0$ there is $n_0(p,\alpha)$ such that if $A\subseteq \mathbb{F}_p^n$ has density at least $\alpha$, then it has a $3$-AP. Now fix $A\subseteq \mathbb{F}_p^n$ of density at least $\alpha$, denote
\[
{\sf 3AP}(A,W) =
\card{\{x,a\in W~|~x,x+a,x+2a\in A\}},
\]
and take $n_0 = n_0(\alpha/2,p)$. Taking an affine subspace $W\subset\mathbb{F}_p^n$ of dimension $n_0$ randomly, by an averaging argument we have that $\card{A\cap W}\geq \frac{1}{2}\alpha \card{W}$ with probability at least $\alpha/2$, and then by assumption $A\cap W$ must contain a $3$-AP. Thus, 
${\sf 3AP}(A,W)\geq 1$ with probability at least $\alpha/2$. On the other hand, we have that $\Expect{W}{{\sf 3AP}(A,W)}= {\sf 3AP}(A,\mathbb{F}_p^n)\cdot p^{2(n_0-n)}$, hence we conclude that ${\sf 3AP}(A,\mathbb{F}_p^n)\geq p^{2n}\frac{\alpha}{2p^{2n_0}}=\Omega_{\alpha,p}(p^{2n})$.
Attempting to run such an argument in the setting of somewhat restricted $3$-APs or restricted $3$-APs fails because these patterns are basis dependent. This raises the question of whether a counting lemma for such patterns still holds. Our main result, stated in the next section, asserts that the answer is positive in the case of somewhat restricted $3$-APs.

\subsection{Main Results}
The main result of this paper is the following statement.
\begin{thm}\label{thm:main}
    For all prime $p\geq 3$ and $\alpha>0$, there exists $\beta>0$, such that for large enough $n$, if $A\subseteq \mathbb{F}_p^n$ has density at least $\alpha$, then
    \[
    \Prob{\substack{x\in \mathbb{F}_p^n\\ a\in \{0,1,2\}^n}}{x,x+a,x+2a\in A}\geq \beta.
    \]
\end{thm}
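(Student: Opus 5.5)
We follow the energy-increment strategy suggested by the abstract: an arithmetic regularity lemma for the somewhat restricted $3$-AP form, followed by a counting step on the structured part. Write $f=1_A$ and
\[
\Lambda(f_0,f_1,f_2)=\Expect{x\in\mathbb{F}_p^n,\,a\in\{0,1,2\}^n}{f_0(x)f_1(x+a)f_2(x+2a)}.
\]
We want $\Lambda(f,f,f)\geq\beta$.

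The first step is an inverse theorem. We take as a black box the machinery of [Bhangale, Khot, Minzer, 2026], which (for the analogous restricted patterns) shows the following. If $|\Lambda(f_0,f_1,f_2)|\geq\eps$ with all $|f_i|\leq 1$, then some $f_i$ correlates, say $\delta(\eps)$-wise, with a structured function. We expect the structured class to be a product of a low-degree function (a character, or a function of few linear forms) with a \emph{product function} $\prod_{j} g_j(x_j)$ of small "effective" complexity. This complexity should be measured in noise-stability terms, because the distribution of $(x,x+a,x+2a)$ on each coordinate is a connected but not pairwise-independent distribution on $\mathbb{F}_p^3$.

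Using this inverse theorem, we run a Green--Tao style energy increment. We iteratively build a $\sigma$-algebra $\mathcal{B}$ generated by boundedly many structured functions, with complexity bounded in terms of $\eps$, and decompose
\[
f=f_{\rm str}+f_{\rm sml}+f_{\rm unf},\qquad f_{\rm str}=\E[f\mid\mathcal{B}].
\]
Here $\|f_{\rm sml}\|_2$ is small, and $f_{\rm unf}$ is uniform, meaning it has negligible correlation with every structured function of complexity $F(\mathcal B)$, where $F$ grows fast. Each refinement step increases the energy $\|\E[f\mid\mathcal{B}]\|_2^2$ by $\mathrm{poly}(\delta)$, so the process terminates after boundedly many steps.

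The key technical point is this. The inverse theorem gives correlation with a structured function, but to iterate we need the atoms of $\mathcal{B}$ to behave well. We also need $f_{\rm str}$ to remain in $[0,1]$, which holds automatically for conditional expectations. Third, we need the uniform part to contribute negligibly to $\Lambda$ even when paired with $f_{\rm str}$ and $f_{\rm sml}$. The last requirement is why the counting lemma must hold for $1$-bounded functions with a uniform slot. We also need to handle coordinates on which the product functions have too much influence. Our plan is to restrict those boundedly many "junta" coordinates to fixed values, averaging over them. We then apply an invariance principle on the remaining coordinates, so that low-influence parts behave like Gaussian or linear objects.

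With the decomposition in hand, we expand $\Lambda(f,f,f)$ into $27$ terms. Every term containing $f_{\rm unf}$ is small by the inverse theorem applied contrapositively. Every term containing $f_{\rm sml}$ is small by a Cauchy--Schwarz bound of the form $|\Lambda(g_0,g_1,g_2)|\leq\|g_i\|_2$-type. For such a bound we use that, for fixed $a$, each marginal $x+ia$ is uniform.

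The main term is $\Lambda(f_{\rm str},f_{\rm str},f_{\rm str})$, where $f_{\rm str}\geq0$ has mean at least $\alpha$ and is measurable with respect to a bounded-complexity factor. To lower bound it, we condition on $a$ having coordinates fixed on the junta set. We then use a pigeonhole argument on the factor: the atoms generated by linear forms form a subspace coset structure, which is invariant under $a\mapsto$ anything in the corresponding subspace. We choose $a$ with the relevant linear forms vanishing, which happens with probability at least $p^{-O(1)}$ by equidistribution of $\{0,1,2\}^n$ under bounded-rank linear maps. On that event $x,x+a,x+2a$ lie in the same atom, up to the product-function components, which are noise-stable. Taking the atom of largest density then gives $\Lambda\geq c(\alpha)^3 p^{-O_\alpha(1)}$.

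I expect the main obstacle to be the second step. The regularity lemma must produce a factor fine enough that the uniform part is genuinely negligible, yet structured enough that the third step's "same atom" argument works. In particular, the product-function components are not closed under the shifts $x\mapsto x+a$. To control them, I would first try replacing them by noise-smoothed versions $T_\rho g$, and argue via hypercontractivity that $T_\rho g(x+a)\approx T_\rho g(x)$ for a positive fraction of $a$.
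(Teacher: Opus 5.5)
\textbf{Main gap: the counting step for the structured part.} This is where essentially all of the paper's work lies, and your proposal does not supply it.

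In the inverse theorem this machinery actually provides (Theorem~\ref{thm:csp4}), the obstruction is $L\cdot \chi\circ\sigma^{\otimes n}$. Here $\chi\circ\sigma^{\otimes n}$ is a product function, which is where the characters of $\mathbb{F}_p^n$ live. The factor $L$ has low \emph{Efron--Stein} degree; it is not a function of few linear forms. So even after you fix the values of the product functions, the structured part is a genuinely high-dimensional low-degree function, not a constant on atoms.

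Your plan for this part is that a noise-smoothed $\mathrm{T}_\rho g$ satisfies $\mathrm{T}_\rho g(x+a)\approx \mathrm{T}_\rho g(x)$ for a positive fraction of $a$. That is false, because a uniform $a\in\{0,1,2\}^n$ has about $2n/3$ nonzero coordinates. For instance, take $g(x)=n^{-1/2}\sum_i\phi(x_i)$ with $\phi$ nonconstant and mean zero. Then $\E_x|g(x+a)-g(x)|^2=\Omega(1)$ for every $a$ outside an exponentially small set, and hypercontractivity cannot change this.

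What you actually need is a lower bound on $\E_{\mu^{\otimes n}}[h(x)h(y)h(z)]$, depending only on $\E[h]$, for $[0,1]$-valued $h$ close to a function of low-degree, low-influence functions. The paper obtains this in four steps:
\begin{enumerate}
\item a random restriction makes the $L_P$ low-influence (Claim~\ref{claim:influence_removal});
\item the product functions are decoupled from the low-degree parts and their values $\vec b$ are conditioned on (Lemma~\ref{lem:decoupled_move} and Claim~\ref{claim:closeness}), which uses both the rank and the softrank of $\mathcal{P}$ and the fact that $(x,x,x)\in{\sf supp}(\mu)$;
\item the invariance principle moves the problem to Gaussian space;
\item the multidimensional Gaussian reverse hypercontractivity of Chen--Dafnis--Paouris gives the lower bound.
\end{enumerate}
That last theorem requires a nonsingular covariance, which is exactly what the absence of $(\mathbb{Z},+)$-embeddings provides (Proposition~\ref{prop:positive_definite}). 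Your counting step never uses this hypothesis. Section~\ref{sec:discussion} explains that the Gaussian step breaks down when the covariance is singular, as it is for restricted $3$-APs.

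\textbf{Secondary problem: parameter ordering when you discard $f_{\rm sml}$.} You bound the $f_{\rm sml}$ terms globally by $\|f_{\rm sml}\|_2$, and you bound the main term below by $c(\alpha)^3p^{-O(1)}$. But that $O(1)$ is the complexity of $\mathcal{B}$, which grows as $\|f_{\rm sml}\|_2$ is made smaller. This is precisely the failed tradeoff described in Section~\ref{sec:techniques}.

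The remedy is to pass first to the structured event (in the paper, conditioning on $W_{\vec b}$) and use nonnegativity of the functions, losing only the factor $\Pr[W_{\vec b}]$. One then controls $\mathsf{f}'-\mathsf{f}$ \emph{inside} the conditional count. There the main term $\delta^{\Omega_{m,\alpha}(1)}$ coming from the Gaussian step does not depend on the complexity.

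Finally, your claim that a bounded family of linear forms vanishes on $a\in\{0,1,2\}^n$ with probability $p^{-O(1)}$ needs a rank-regularization argument (compare Lemma~\ref{lem:bothrank_gain}). Low-weight combinations of the forms are not equidistributed, so this is not an automatic consequence of equidistribution.
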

We do not give explicit quantitative bounds for $\beta$ in Theorem~\ref{thm:main}. 
At best, the argument given here yields a bound for $\beta$ that is inverse-tower-type in $1/\alpha$. 
This is because, as we discuss in Section~\ref{sec:techniques}, our argument uses an arithmetic regularity lemma in the spirit of~\cite{Green2005SzemerediRegularityAbelianGroups,GreenTao2010ArithmeticRegularity}, which often leads to weak quantitative bounds.
We believe that the statement of Theorem~\ref{thm:main} should be true for restricted $3$-APs, namely where $\{0,1,2\}^n$ is replaced with $\{0,1\}^n$, but our methods cannot handle this yet. We discuss one of the key obstructions for this in Section~\ref{sec:discussion}.

Theorem~\ref{thm:main} follows from our general result, which we now state. We need the following definition of embeddability of a distribution into an abelian group.
\begin{definition}
    Let $\Sigma,\Gamma,\Phi$ be finite alphabets, let $\mu$ be a distribution over $\Sigma\times\Gamma\times\Phi$, and let $(G,+)$ be an Abelian group. We say that $\mu$ has a $(G,+)$-embedding if there are maps $\sigma\colon \Sigma\to G$, $\gamma\colon \Gamma\to G$, $\phi\colon \Phi\to G$ not all constant such that for all $(x,y,z)\in{\sf supp}(\mu)$ we have
    \[
    \sigma(x)+\gamma(y)+\phi(z) = 0_G.
    \]
\end{definition}
With this notion in mind, we state the general theorem below.  Theorem~\ref{thm:main} follows easily from this theorem (it is a routine exercise to check that somewhat restricted $3$-APs do not admit $(\mathbb{Z},+)$-embeddings, see~\cite{BKM3AP}, for instance).

\begin{thm}\label{thm:main_2}
    Let $\Sigma$ be an alphabet of size at most $m$, and let $\mu$ be a distribution over $\Sigma^3$ and $\nu$ be a distribution over $\Sigma$ such that 
    \begin{enumerate}
        \item The marginal of $\mu$ on each coordinate is equal to $\nu$, and $(x,x,x)\in {\sf supp}(\mu)$.
        \item The probability of each atom in $\mu$ is at least $s>0$.
        \item The distribution $\mu$ has no $(\mathbb{Z},+)$-embedding.
    \end{enumerate}
    Then for all $\alpha>0$, there exists $\beta = \beta(\alpha,m,s)>0$ such that for sufficiently large $n$, if $A\subseteq \Sigma^n$ has 
    $\nu^{\otimes n}(A)\geq \alpha$, then
    \[
    \Prob{(x,y,z)\sim\mu^{\otimes n}}{x,y,z\in A}\geq \beta.
    \]
\end{thm}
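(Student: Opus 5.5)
Our plan is to follow the regularity-lemma strategy described in the abstract, combined with the analytic machinery of [Bhangale, Khot, Minzer, 2026] for distributions with no $(\mathbb{Z},+)$-embedding. Write $f=1_A$ and
\[
\Lambda(f,g,h)=\Expect{(x,y,z)\sim\mu^{\otimes n}}{f(x)g(y)h(z)}.
\]
The first ingredient is an inverse theorem from that machinery. Because $\mu$ has no $(\mathbb{Z},+)$-embedding, and its atoms all have probability at least $s$, the only obstructions to $\Lambda$ being small are low-degree functions and ``product/Abelian-embedding'' type functions over finite groups. Concretely: if $|\Lambda(f,g,h)|\geq\eps$ with $1$-bounded functions, then $f$ correlates (with correlation $\delta(\eps,m,s)$) with a structured function $L\cdot P$. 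Here $L$ has bounded degree and $P$ is a product function $\prod_i \chi_i(x_i)$ coming from a finite-group embedding of $\mu$ into some $(G,+)$ with $G$ of bounded size. Without a $\mathbb{Z}$-embedding, the group embeddings can be taken into bounded finite Abelian groups. We will take this inverse theorem as the input from the cited work.

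The main new step is an arithmetic regularity lemma in the Green--Tao style. We decompose $f=f_{\rm str}+f_{\rm sml}+f_{\rm unf}$. Here $f_{\rm str}=\E[f\mid\mathcal{B}]$ for a $\sigma$-algebra $\mathcal{B}$ generated by boundedly many structured functions: low-degree polynomials/juntas together with finitely many embedding characters $x\mapsto\sum_i\sigma(x_i)\in G$. The term $f_{\rm sml}$ is small in $L_2$, and $f_{\rm unf}$ has tiny correlation with all structured functions, at a level that may depend on the complexity of $\mathcal{B}$. We run the usual energy-increment iteration: whenever $f-\E[f\mid\mathcal{B}]$ correlates with a structured function, we refine $\mathcal{B}$, which raises $\|\E[f\mid\mathcal{B}]\|_2^2$ by a fixed amount. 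A growth function then yields the three-part decomposition with tower-type bounds. By the inverse theorem and multilinearity, every term in the expansion of $\Lambda(f,f,f)$ containing $f_{\rm unf}$ is negligible. The terms containing $f_{\rm sml}$ are bounded by Cauchy--Schwarz, using that each marginal of $\mu$ is $\nu$. So $\Lambda(f,f,f)\approx\Lambda(f_{\rm str},f_{\rm str},f_{\rm str})$, and here we must ensure $f_{\rm str}\geq0$ with $\E f_{\rm str}\geq\alpha$.

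It remains to lower bound $\Lambda(f_{\rm str},f_{\rm str},f_{\rm str})$, which is a counting problem on the bounded-complexity factor. We would first regularize $\mathcal{B}$ so that its atoms are equidistributed jointly under $\mu$. This means the low-degree part is made high-rank or noise-stable, and the embedding maps are made surjective with the joint distribution of $(\sigma(x),\gamma(y),\phi(z))$ close to uniform on the subgroup allowed by the embedding relation. Then $\Lambda(f_{\rm str},\ldots)$ reduces to an average over atoms $(c_1,c_2,c_3)$ that are jointly consistent. The key observation is that $(x,x,x)\in{\sf supp}(\mu)$ forces the diagonal atoms $(c,c,c)$ to carry probability at least roughly $s^{k}\cdot(\text{atom mass})$ for bounded $k$: with constant probability, a random $\mu$-triple agrees on a fixed bounded set of coordinates in the diagonal way. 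Restricting to atoms with $\E[f\mid c]\geq\alpha/2$, which carry mass at least $\alpha/2$, gives a lower bound of about $(\alpha/2)^4$ times the diagonal weight, and hence $\beta>0$.

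The main obstacle is this last counting step for the low-degree part. The low-degree structure is not equidistributed in the Gowers sense, and a single atom of $\mathcal{B}$ might not be $\mu$-correlated with itself. To handle this, we would replace the degree structure by noise-stabilized juntas on a bounded set $J$ of coordinates. On $J$ we use $(x,x,x)$ with probability at least $s^{|J|}$. Outside $J$, we use the mixing of $\mu$ restricted to non-embedding directions, which the regularity lemma guarantees. Making this rigorous requires showing that conditioning on $x_J=y_J=z_J$ does not destroy the quasirandomness of $f_{\rm unf}$ on the remaining coordinates.
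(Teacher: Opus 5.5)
Your overall architecture (the inverse theorem, Theorem~\ref{thm:csp4}, then a two-scale regularity decomposition, then a count on the structured part using the diagonal $(x,x,x)$ and equidistribution of product-function atoms) is the right one. However, the counting step for the low-degree part has a genuine gap.

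A bounded-degree function cannot be replaced by a junta, noise-stabilized or not. Restricting a bounded set $J$ of coordinates only lets you make all influences small; this is what the paper's Claim~\ref{claim:influence_removal} does. You are then left with functions such as a truncation of $\tfrac12+n^{-1/2}\sum_i\phi(x_i)$. Such a function is bounded, has tiny influences, and is far from every junta. Its count $\E_{\mu^{\otimes n}}[F(x)F(y)F(z)]$ is a genuinely Gaussian quantity, because $\phi(x_i),\phi(y_i),\phi(z_i)$ are correlated under $\mu$. It is not controlled by any ``mixing'' that the regularity lemma guarantees.

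The missing idea is to move to Gaussian space. The paper does this in several steps:
\begin{enumerate}
\item It writes the coarse structured function as $\sum_{P\in\spn(\mathcal{P})}P\cdot L_P$. The noise operators $\mathrm{T}_{\mathcal{P},1-\eps}$, unlike conditional expectations on discretized level sets, produce exactly this form (Lemma~\ref{lem:approx_formula}).
\item It decouples the input of the product functions from that of the $L_P$'s.
\item It conditions on all product functions taking a common value $\vec b$ on $x,y,z$. High rank and softrank make this event non-negligible and make the low-degree inputs nearly $\mu^{\otimes n'}$-distributed after conditioning.
\item It applies the invariance principle and then the multidimensional Gaussian reverse hypercontractivity of Chen--Dafnis--Paouris, which gives a bound of the form $(\E F)^{3/\eta}$.
\end{enumerate}
That last theorem needs the Gaussian covariance matrix to satisfy $M\succeq\eta I$. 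This is where the absence of a $(\mathbb{Z},+)$-embedding is used a second time (Proposition~\ref{prop:positive_definite}); your proposal uses hypothesis~3 only through the inverse theorem. This is not a technicality: singularity of exactly this covariance matrix is what blocks the restricted $3$-AP case (Section~\ref{sec:discussion}).

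Second, your treatment of $f_{\rm sml}$ does not close. Bounding the terms containing $f_{\rm sml}$ globally by Cauchy--Schwarz costs $O(\norm{f_{\rm sml}}_2)=O(\eps)$, and $\eps$ is fixed before the complexity of $\mathcal{B}$ is determined. Your lower bound on $\Lambda(f_{\rm str},f_{\rm str},f_{\rm str})$, roughly $(\alpha/2)^4$ times $s^{k}$ times an atom mass, decays with that complexity. So the error swamps the main term; this is precisely the failed attempt described in the paper's introduction.

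The paper avoids this as follows. It replaces $f$ globally only by the fine approximant $\mathrm{T}_{\mathcal{P}',1-\rho_5}f$, at cost $3\rho_3$, which is tiny compared to everything depending on $\mathcal{P}$. It then uses the $L_2$-closeness of this approximant to the coarse one only conditionally on a good atom (the events $E_3,E_4,E'_1,E'_2$). On that atom the conditional count is at least $\alpha^{O_{m,s}(1)}$ in the theorem's notation, independently of the complexity. It therefore beats the $O(\xi^{1/8})$ loss, and the complexity enters only through multiplicative factors such as $s^{|J|}$ and $\Pr[W_{\vec b}]$.
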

We remark that the first two conditions in Theorem~\ref{thm:main_2} are necessary for the result to hold, but we believe that the third condition can be relaxed.

\subsection{Techniques}\label{sec:techniques}
In this section we give an overview of the proof of Theorem~\ref{thm:main}. We first outline a direct argument for the counting lemma for standard $3$-APs, and then explain how to generalize it to our setting.

\subsubsection{A Counting Lemma for Standard \texorpdfstring{$3$}{3}-APs}
Let $A\subseteq \mathbb{F}_p^n$ have measure $\alpha>0$ which we think of as a small constant, and denote for ease of notation $f(x) = 1_A(x)$. 
A standard Fourier analytic computation shows that if all non-trivial Fourier coefficients of $f$ are at most $\eps$ in absolute value, then 
$\Expect{x,a\in\mathbb{F}_p^{n}}{f(x)f(x+a)f(x+2a)}\geq \alpha^3 - \eps\alpha$. 
Therefore, if all Fourier coefficients of $f$ are at most $\alpha^2/2$ in absolute value, the conclusion immediately follows. Otherwise, one could pick a heavy Fourier character $0\neq \gamma\in\mathbb{F}_p^n$ and show that the density of $f$ increases on one of the hyperplanes defined by $\gamma$, and iterate the argument there. This works in the case of standard $3$-APs (because a co-dimension $1$ subspace contains a constant fraction of $3$-APs), but fails in the case of somewhat restricted $3$-APs because the density increment argument we know~\cite{BKM3AP} passes down to a subspace of super-constant co-dimension. Thus, we wish to come up with an argument that avoids this type of iterative approach.

To do so, upon finding $\gamma$, one can define the averaging operator 
$\mathrm{T}_{\gamma}\colon L_2(\mathbb{F}_p^n)\to L_2(\mathbb{F}_p^n)$ by
\[
\mathrm{T}_{\gamma}f(x)
=\cExpect{y\in\mathbb{F}_p^n}
{\chi_{\gamma}(y) = \chi_{\gamma}(x)}{f(y)}.
\]
In words, $\mathrm{T}_{\gamma}$ averages over the cosets of the linear hyperplane defined by $\gamma$. For a single $\gamma$, one can observe that $\mathrm{T}_{\gamma}f(x) = \sum\limits_{t\in\mathbb{F}_p}\widehat{f}(t\gamma)\chi_{t\gamma}(x)$, so $\widehat{f}(\gamma)$ being large in absolute value means that the $2$-norm of $(I-\mathrm{T}_{\gamma})f$ is noticeably smaller than that of $f$. Iterating, one finds a collection of characters $\mathcal{C} = \{\gamma_1,\ldots,\gamma_r\}$, such that defining
\[
\mathrm{T}_{\mathcal{C}}f(x)
=\cExpect{y\in\mathbb{F}_p^n}
{\chi_{\gamma}(y) = \chi_{\gamma}(x)~\forall\gamma\in\mathcal{C}}{f(y)},
\]
all Fourier coefficients of $(\mathrm{I}-\mathrm{T}_{\mathcal{C}})f$ are at most $O(1/\sqrt{r})$ in absolute value. Thus, one can write
\begin{equation}\label{eq:intro}
\Expect{x,a\in\mathbb{F}_p^n}{f(x)f(x+a)f(x+2a)}
\geq
\Expect{x,a\in\mathbb{F}_p^n}{\mathrm{T}_{\mathcal{C}}f(x)\mathrm{T}_{\mathcal{C}}f(x+a)\mathrm{T}_{\mathcal{C}}f(x+2a)}
-O\left(\frac{1}{\sqrt{r}}\right),
\end{equation}
so the task now is to analyze the right-hand side and show it is noticeable. Because the function 
$\mathrm{T}_{\mathcal{C}}f$ is very structured and has only $O_{r}(1)$ many non-zero Fourier coefficients, this expectation seems more feasible to analyze. Indeed, taking $W$ to be the linear subspace of codimension at most $r$ defined by the vectors in $\mathcal{C}$, it is easy to observe that:
\begin{itemize}
    \item Taking a random shift $v+W$ of $W$, with probability at least $\alpha/2$, the average of $f$ inside $v+W$ is at least $\alpha/2$. Noting that $\mathrm{T}_{\mathcal{C}} f$ is constant on each such affine shift, we get that with probability at least $\alpha/2$ over the choice of $v$, the value of $\mathrm{T}_{\mathcal{C}} f$ on all points of $v+W$ is at least $\alpha/2$.
    \item Sampling $x,a\in\mathbb{F}_p^n$, the probability that $x,x+a,x+2a$ are all in $v+W$ is $p^{-2{\sf codim}(W)}$, which is at least $p^{-2r}$.
\end{itemize}
With these two items in mind, one can get that
\[
\Expect{x,a\in\mathbb{F}_p^n}{\mathrm{T}_{\mathcal{C}}f(x)\mathrm{T}_{\mathcal{C}}f(x+a)\mathrm{T}_{\mathcal{C}}f(x+2a)}\geq p^{-2r}\left(\frac{\alpha}{2}\right)^{4},
\]
which initially seems good, but upon further inspection is too weak.\footnote{A more careful argument gives $p^{-r}\alpha^{3}$, but this does not make a difference.} Indeed, this bound is quantitatively weaker than the error term on the right hand side of~\eqref{eq:intro}, so it does not give any lower bound on $\Expect{x,a\in\mathbb{F}_p^n}{f(x)f(x+a)f(x+2a)}$.

At a high-level, the attempt above can be thought of as decomposing $f$ into a structure function $\mathrm{T}_{\mathcal{C}} f$, and a Fourier-quasi random function 
$(\mathrm{I}-\mathrm{T}_{\mathcal{C}}) f$, and ultimately the issue is that there is an unfavorable tradeoff between the quasi-randomness parameter of $(\mathrm{I}-\mathrm{T}_{\mathcal{C}}) f$ (which governs the error term) and the size of $\mathcal{C}$ (which governs the codimension of $W$). It turns out that one can break this dependency using stronger decompositions known as arithmetic regularity lemmas, appearing implicitly in~\cite{bourgain1986szemeredi} for $U^2$-norm and explicitly in~\cite{Green2005SzemerediRegularityAbelianGroups,GreenTao2010ArithmeticRegularity}. The idea is that by iterating the above process several times, fixing an arbitrary decay function $w\colon (0,1)^2\to (0,1)$, one can find two collections $\mathcal{C}$ and $\mathcal{C}'$ such that:
\begin{itemize}
    \item All Fourier coefficients of 
    $(\mathrm{I}-\mathrm{T}_{\mathcal{C}}) f$ are at most $\eps$ in absolute value.
    \item All Fourier coefficients of $(\mathrm{I}-\mathrm{T}_{\mathcal{C}'}) f$ are at most $\eps'=w(\eps,\card{\mathcal{C}}^{-1})$ in absolute value.
    \item The functions 
    $(\mathrm{I}-\mathrm{T}_{\mathcal{C}}) f$ and 
    $(\mathrm{I}-\mathrm{T}_{\mathcal{C}'}) f$ are $\eps$-close in $L_2$-norm.
\end{itemize}
With this in mind, one can replace~\eqref{eq:intro} with 
\begin{equation}\label{eq:intro_2}
\Expect{x,a\in\mathbb{F}_p^n}{f(x)f(x+a)f(x+2a)}
\geq
\Expect{x,a\in\mathbb{F}_p^n}{\mathrm{T}_{\mathcal{C}'}f(x)\mathrm{T}_{\mathcal{C}'}f(x+a)\mathrm{T}_{\mathcal{C}'}f(x+2a)}
-O(\eps').
\end{equation}
The point now is that one can still take the linear subspace $W$ as above with respect to $\mathcal{C}$ (and not to $\mathcal{C}'$). While it is no longer the case that $\mathrm{T}_{\mathcal{C}'}f$ is constant on cosets of $W$, we know it is close to being constant on a typical coset, thanks to the $L_2$-closeness to  $\mathrm{T}_{\mathcal{C}}f$. This is sufficient to repair the argument above.

\subsubsection{A Counting Lemma for Somewhat Restricted \texorpdfstring{$3$}{3}-APs}
We now explain the way our argument for somewhat restricted $3$-APs fits the above template. 

The first complication is that the direct Fourier-analytic computation showing that small Fourier coefficients imply counting no longer works.
This issue was already studied in~\cite{BKM3AP,BKM4}, and morally speaking, it is known that if $f$ has correlation at most $\eta$ with all functions of the form $\chi_{\gamma}(x)\cdot L(x)$ 
where $\gamma\in\mathbb{F}_p^n$ and $L\colon \mathbb{F}_p^n\to\mathbb{C}$ has $\norm{L}_2=1$ and Fourier analytic degree at most $d$, then
\[
\Expect{\substack{x\in\mathbb{F}_p^n\\a\in\{0,1,2\}^n}}{f(x)f(x+a)f(x+2a)}
\geq \alpha^3 - \delta(\eta,d),
\]
where $\delta(\eta,d)$ goes to $0$ as $\eta\rightarrow 0$ and $d\rightarrow \infty$; see Theorem~\ref{thm:csp4} for a more precise statement.

The second complication that arises is that one has to come up with an analog of the averaging operator $\mathrm{T}_{\mathcal{C}}$ above. This was done in~\cite{BKM5}, and they introduce noise operators $\mathrm{T}_{\mathcal{C},1-\eps}$. Morally speaking, for $x\in\mathbb{F}_p^n$, the value of $\mathrm{T}_{\mathcal{C},1-\eps}f(x)$ can be thought of as the average of $f(y)$ over points $y$ such that $\chi_{\gamma}(y) = \chi_{\gamma}(x)$ for all $\gamma\in\mathcal{C}$, and additionally $x$ and $y$ agree on roughly $(1-\eps)$-fraction of the coordinates. A key feature of this  operator is that it acts like the identity operator on functions of the form $\chi_{\gamma}'(x) L'(x)$ with $\gamma\in \spn(\mathcal{C})$ and ${\sf deg}(L')\ll 1/\eps$, which suggests it is the correct analog of $\mathrm{T}_{\mathcal{C}}$ above. Indeed,~\cite{BKM5} prove a regularity lemma with respect to this operator, showing that for all $\eta>0$, given $f$, one can find 
$\mathcal{C}$ of size $O_{\eta}(1)$ and $\eps = \Omega_{\eta}(1)$, such that 
$(\mathrm{I}-\mathrm{T}_{\mathcal{C},1-\eps})f$ has correlation at most $\eta$ with all functions of the form $\chi_{\gamma}(x) L(x)$. We remark that technically, the operator $\mathrm{T}_{\mathcal{C},1-\eps}$ is significantly harder to work with compared to $\mathrm{T}_{\mathcal{C}}$, and this is partly because it does not behave as nicely as $\mathrm{T}_{\mathcal{C}}$ with respect to the Fourier decomposition.

With this in mind, attempting to reproduce the initial failed attempt, one could start with the inequality
\[
\Expect{\substack{x\in\mathbb{F}_p^n\\a\in\{0,1,2\}^n}}{f(x)f(x+a)f(x+2a)}
\geq 
\Expect{\substack{x\in\mathbb{F}_p^n\\a\in\{0,1,2\}^n}}{\mathrm{T}_{\mathcal{C},1-\eps}f(x)\mathrm{T}_{\mathcal{C},1-\eps}f(x+a)\mathrm{T}_{\mathcal{C},1-\eps}f(x+2a)}
-\delta(\eta,1/\eps),
\]
where $\lim_{\eps,\eta\rightarrow 0}\delta(\eta,1/\eps)=0$, and then analyze the expectation on the right hand side. Towards this end, one may use the fact from~\cite{BKM5} that the function $\mathrm{T}_{\mathcal{C},1-\eps}f$ can be $\xi$-approximated in $L_2$-distance by a function of the form
\[
\sum\limits_{\gamma\in{\sf span}(\mathcal{C})}\chi_{\gamma}(x) L_{\gamma}(x),
\]
where $L_{\gamma}$ are all of degree and $2$-norm at most $O_{\xi,\eta}(1)$. Intuitively, this says that after we restrict $\mathrm{T}_{\mathcal{C},1-\eps}f$ down to a coset of $W$, we will get a low-degree function (instead of a constant function). This is of course more complicated to analyze, but luckily (after appropriate preprocessing), one could use the invariance principle of~\cite{MOO} to relate the resulting expectation to a similar looking expectation over Gaussian space, which can be lower bounded via results of~\cite{ChenDafnisPaouris2015HolderGaussian}. We remark that to actually make such an argument go through, one needs to ensure that the collection $\mathcal{C}$ has a large rank (roughly asserting that no non-trivial combination of vectors from $\mathcal{C}$ has small Hamming weight). This leads to significant complications which ignore in this overview. We also remark that the ability to solve the Gaussian problem we get in the end is the primary reason we are only able to handle somewhat restricted $3$-APs at the moment (as opposed to restricted $3$-APs), and we discuss this further in Section~\ref{sec:discussion}.

With the initial failed attempt recovered, the main contribution of this paper is to show an appropriate analog of the arithmetic regularity lemma for operators of the form $\mathrm{T}_{\mathcal{C},1-\eps}$, and this is achieved in Lemma~\ref{lem:arithmetic_reg}. We then boost it to Lemma~\ref{lem:arithmetic_reg_high_rank}, which makes the same assertion with additional quasirandomness properties of the collection $\mathcal{C}$ in the spirit of rank discussed above (see Definitions~\ref{def:rank} and~\ref{def:softrank}). Finally, in Section~\ref{sec:proof_of_main} we show how to use our arithmetic regularity lemma in conjunction with the invariance principle to prove Theorem~\ref{thm:main_2}.

\paragraph{Statement of AI use:} we used ChatGPT 5.5 during the polishing of this write-up, and to write the proofs from Section~\ref{sec:GRH_complex}.
All other mathematical content in this paper is due to the authors.
\section{Preliminaries}
\paragraph{Notations.} 
    For a vector $x\in \Sigma^n$ and a subset $I\subseteq [n]$ of coordinates, we denote by
 $x_I$ the vector in $\Sigma^{I}$ which results by dropping from $x$ all coordinates outside $I$. 
 %We denote by $x_{\bar{I}}$ the vector in $\Sigma^{n-\card{I}}$ resulting from dropping from $x$ all coordinates from $I$.
% For $i\in [n]$ we denote by $x_{-i}$ the vector in $\Sigma^{n-1}$ resulting from dropping the $i$th coordinate of $x$. For $I\subseteq[n]$, $a\in \Sigma^{I}$
% and $b\in \Sigma^{n-\card{I}}$ we denote by $(x_I = a, x_{\bar{I}} = b)$ the point in $\Sigma^{n}$ whose $I$-coordinates
% are filled according to $a$, and whose $\overline{I}$-coordinates are filled according to $b$. For two strings $x,y\in \Sigma^n$
% we denote by $\Delta(x,y)$ the Hamming distance between $x$ and $y$, that is, the number of coordinates $i\in [n]$ such that $x_i\neq y_i$. 
We denote by ${\bf i}$ the complex root of $-1$, by $\overline{a}$ the complex conjugate of the number $a\in\mathbb{C}$, and by $\mathbb{D}\subseteq\mathbb{C}$ the set of complex numbers of absolute value $1$.
We denote by $I\subseteq_{\rho}[n]$ a 
random subset of $[n]$ in which we include each $i\in[n]$ independently with probability $\rho$. We will use standard big-$O$ notations, and write $A = O(B)$ if $A\leq C\cdot B$ for an absolute constant $C$, and $A = \Omega(B)$ if $A\geq C\cdot B$ for an absolute constant $C>0$. If
there is a dependency of the hidden constant on some auxiliary parameter, say $m$, we denote $A = O_m(B)$
and $A = \Omega_m(B)$.

Our arguments will use sufficiently rapid decay functions $w\colon (0,1)\to(0,1)$, which by default will always be monotonically increasing and satisfying $w(\eps)\leq \eps$. 
We say that a tuple of scales 
$\rho_0,\ldots,\rho_{\ell}$ $w$-separated, and denote
\[
\rho_\ell\ll_w\rho_{\ell-1}\ll_w\ldots\ll_w\rho_0,
\]
if $\rho_{j+1}\leq w(\rho_j)$ for all $0\leq j<\ell$.

\subsection{Product Functions}
\begin{definition}
    Let $\Sigma$ be a finite alphabet and $n\in\mathbb{N}$. A function $P\colon \Sigma^n\to\mathbb{D}$ is called a product function if there are functions $P_1,\ldots,P_n\colon \Sigma\to\mathbb{D}$ such that $P(x) = \prod\limits_{i=1}^{n}P_i(x_i)$.
\end{definition}
Given a product function $P\colon \Sigma^{n}\to\mathbb{D}$, the decomposition as $P_1(x_1)\cdots P_n(x_n)$ is only unique up to multiplying the $P_i$'s by constants of absolute value $1$, and this will be inconvenient for us at times. To remedy that we fix a distinguished element $\sigma^{\star}\in \Sigma$, and by default normalize so that $P(\sigma^{\star},\ldots,\sigma^{\star}) = 1$ and $P_1(\sigma^{\star})=\ldots=P_{n}(\sigma^{\star})=1$. Unless specified otherwise, throughout this paper we will consider normalized product functions $P$ and use their normalized decompositions.

We next define the projections of a product function.
\begin{definition}
    Given a product function $P(x) = \prod\limits_{i=1}^{n}P_i(x_i)$ and $I\subseteq [n]$, we define the function $P|_{I}\colon \Sigma^n\to\mathbb{D}$ as 
    $P|_{I}(x) = \prod\limits_{i\in I}P_i(x_i)$. For a collection of product functions $\mathcal{P}\subseteq \{P\colon \Sigma^n\to\mathbb{D}\}$ and $I\subseteq [n]$, we denote by 
    $\mathcal{P}|_{I}$ the collection $\{P|_{I}~|~P\in\mathcal{P}\}$.
\end{definition}

Next, we define the order of a product function and the notion of cyclic and discrete product functions.
\begin{definition}
    Let $P\colon \Sigma^{n}\to\mathbb{D}$. We define the order of $P$, denoted by ${\sf ord}(P)$, as the smallest integer $k$ such that $P(x)^{k}= 1$ for all $x\in\Sigma^n$. If no such $k$ exists we denote ${\sf ord}(P) = \infty$.
\end{definition}
% \begin{definition}
%     Let $\Sigma$ be a finite alphabet, let $G$ be an Abelian group, and let $\sigma\colon \Sigma\to G$. We denote by $\mathcal{P}(\Sigma,G,\sigma)$ the collection of product functions $P\colon \Sigma^{n}\to\mathbb{C}$ that take the form $P(x) = \chi(\sigma(x_1),\ldots,\sigma(x_n))$ where $\chi\in\hat{G}^{\otimes n}$.
% \end{definition}
% It is clear from definitions that if $P\in\mathcal{P}(\Sigma,G,\sigma)$, then ${\sf ord}(P)$ divides $G$. It will be most convenient for us to work with product functions that have orders that are prime powers.
\begin{definition}
    Let $P\colon \Sigma^n\to\mathbb{D}$ be a product function.
    \begin{enumerate}
        \item We say that $P$ is a cyclic product function if there is a prime $p$ and an integer $r$ such that ${\sf ord}(P) = p^r$. 
        \item We say that $P$ is $M$-discrete if ${\sf ord}(P)\leq M$.
    \end{enumerate}
\end{definition}
We have the following basic fact.
\begin{fact}\label{fact:triv_disc_cyclic}
    Let $P\colon \Sigma^n\to\mathbb{D}$ be a product function.
    \begin{enumerate}
        \item If ${\sf ord}(P)=M<\infty$, we may find $\sigma_1,\ldots,\sigma_n\colon \Sigma\to \mathbb{Z}_{M}$ such that $P(x) = e^{\frac{1}{M}2\pi{\bf i}\sum\limits_{i=1}^{n}\sigma_i(x_i)}$.
        \item If $P$ is $M$-discrete, then for all $x,x'\in\Sigma^n$, we either have $P(x)=P(x')$, or $\card{P(x)-P(x')}\geq \frac{1}{M}$.
    \end{enumerate}
\end{fact}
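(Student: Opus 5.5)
We need to prove Fact~\ref{fact:triv_disc_cyclic}, and both items follow directly from the definitions of order and of normalized product functions.

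\textbf{Item 1.} Suppose ${\sf ord}(P)=M<\infty$ and use the normalized decomposition $P=\prod_i P_i$ with $P_i(\sigma^\star)=1$. For each coordinate $i$ and each $a\in\Sigma$, evaluate $P$ at the point $x$ with $x_i=a$ and $x_j=\sigma^\star$ for $j\neq i$. By the normalization, $P(x)=P_i(a)$. Since $P(x)^M=1$ for every $x$, we get $P_i(a)^M=1$. So $P_i(a)$ is an $M$-th root of unity, and we can write $P_i(a)=e^{2\pi{\bf i}\sigma_i(a)/M}$ for a unique $\sigma_i(a)\in\mathbb{Z}_M$. Multiplying over $i$ gives
\[
P(x)=e^{\frac{1}{M}2\pi{\bf i}\sum_{i=1}^{n}\sigma_i(x_i)}.
\]
Without normalization one would first have to rescale each $P_i$ by a constant so that its values become roots of unity. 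The normalization convention removes this step, and that is the only subtlety in this item.

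\textbf{Item 2.} Let $k={\sf ord}(P)\leq M$. Every value of $P$ satisfies $P(x)^k=1$, so it is a $k$-th root of unity. Take two distinct $k$-th roots of unity $\omega\neq\omega'$. Then $\omega'/\omega=e^{2\pi{\bf i}j/k}$ for some $j\in\{1,\ldots,k-1\}$, and
\[
|\omega-\omega'|=|1-e^{2\pi{\bf i}j/k}|=2\sin(\pi j/k)\geq 2\sin(\pi/k).
\]
If $k=1$, $P$ is constant and there is nothing to prove. For $k\geq 2$ we have $\pi/k\leq\pi/2$, and concavity of $\sin$ on $[0,\pi/2]$ gives $\sin t\geq 2t/\pi$. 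Hence
\[
2\sin(\pi/k)\geq \frac{4}{k}\geq\frac{1}{M}.
\]
So either $P(x)=P(x')$ or $|P(x)-P(x')|\geq 1/M$.
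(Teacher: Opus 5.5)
Your proof is correct and follows essentially the same route as the paper: for item 1 you use the normalization $P_j(\sigma^\star)=1$ to get $P_i^M\equiv 1$ (the paper phrases this as $P_i^M$ being constant and hence equal to $1$), and for item 2 you bound the distance between distinct ${\sf ord}(P)$-th roots of unity using $\sin t\geq 2t/\pi$. The only difference is cosmetic: you use the exact chord length $2\sin(\pi j/k)$, which avoids the paper's case split on whether $r\geq {\sf ord}(P)/4$.
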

\begin{proof}
    For the first item, writing $P(x) = \prod\limits_{i=1}^{n}P_i(x_i)$ we get that $P_i^{M}$ is constant for all $i$, and by normalization it must be the constant $1$ function. Thus, it follows that there is $\sigma_i\colon \Sigma\to\mathbb{Z}_{M}$ such that 
    $P_i(x_i) = e^{\frac{1}{M}2\pi{\bf i}\sigma_i(x_i)}$ for all $x_i\in \Sigma$, completing the proof of the first item.

    For the second item, note that if $P(x)\neq P(x')$, then as they are both roots of unity of order ${\sf ord}(P)$ we get that
    \[
    \card{P(x)-P(x')}
    = 
    \card{e^{\frac{1}{{\sf ord}(P)}2\pi{\bf i}r}-1}
    \]
    for some $0<r<{\sf ord}(P)$.
    We assume $r\leq {\sf ord}(P)/2$, otherwise we can replace $r$ with ${\sf ord}(P)-r$ and the above equality is still valid. 
    If $r\geq {\sf ord}(P)/4$, then the real part of $e^{\frac{1}{{\sf ord}(P)}2\pi{\bf i}r}$ is 
    ${\sf cos}\left(\frac{r}{{\sf ord}(P)}2\pi)\right)\leq 0$, we get $\card{P(x)-P(x')}\geq 1$. Otherwise, we get
    \[
    \card{e^{\frac{1}{{\sf ord}(P)}2\pi{\bf i}r}-1}
    \geq {\sf sin}\left(\frac{r}{{\sf ord}(P)}2\pi\right)
    \geq \frac{2}{\pi} \frac{r}{{\sf ord}(P)}2\pi
    \geq \frac{1}{{\sf ord}(P)},
    \]
    where we used the inequality ${\sf sin}(\theta)\geq \frac{2}{\pi}\theta$ for $0\leq \theta\leq \frac{\pi}{2}$. This proves the second item.
\end{proof}
The next claim shows that discrete product functions far from constants have small expectation and are very high degree. To state it, we define the standard noise operator over product spaces.
\begin{definition}\label{def:noise_op}
    Let $\Sigma$ be a finite alphabet and let $\nu$ be a distribution over $\Sigma$. For $\rho\in [0,1]$ and $x\in \Sigma$, we define the distribution $y\sim\mathrm{T}_{\rho,\nu}x$ as outputting $x$ with probability $\rho$, and else outputting $y\sim \nu$. For $n\geq 2$ and $x\in\Sigma^n$, we define the distribution $\mathrm{T}_{\rho,\nu}^{\otimes n}x$ by outputting $y$ sampled by taking $y_i\sim \mathrm{T}_{\rho,\nu} x_i$ independently for each $i$.

    We also think of $\mathrm{T}_{\rho,\nu}^{\otimes n}$ as a linear operator on $L_2(\Sigma^n, \nu^{\otimes n})$, defined as
    \[
    \mathrm{T}_{\rho,\nu}^{\otimes n} f(x)
    =\Expect{y\sim \mathrm{T}_{\rho,\nu}^{\otimes n}x}{f(y)}
    \]
    for each $f\colon \Sigma^n\to\mathbb{C}$ and $x\in\Sigma^n$. When $n$ and $\nu$ are clear from context, we often omit them from notation.
\end{definition}

The next fact allows us to decompose a discrete product function as a product of cyclic product functions.
\begin{fact}\label{fact:break_discrete_to_cyclic}
    For all $M\in\mathbb{N}$ there exists $T\in\mathbb{N}$ such that the following holds.
    Suppose $P\colon \Sigma^{n}\to\mathbb{D}$ is an $M$-discrete product function. Then we may find cyclic product functions $Q_1,\ldots,Q_T$ with ${\sf ord}(Q_i)\leq M$ such that $P(x) = Q_1(x)\cdots Q_T(x)$.
\end{fact}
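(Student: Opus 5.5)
The idea is to use the Chinese remainder theorem on the cyclic group $\mathbb{Z}_M$, applied coordinatewise. First I would apply the first item of Fact~\ref{fact:triv_disc_cyclic}. Let $M' = {\sf ord}(P)\leq M$. Then there are maps $\sigma_1,\ldots,\sigma_n\colon\Sigma\to\mathbb{Z}_{M'}$ with
\[
P(x) = e^{\frac{2\pi{\bf i}}{M'}\sum_{i=1}^n\sigma_i(x_i)} .
\]
Factor $M' = q_1^{e_1}\cdots q_k^{e_k}$ into distinct prime powers, and note that $k\leq \log_2 M$. By the Chinese remainder theorem there are integers $c_1,\ldots,c_k$ with
\[
\frac{1}{M'} = \sum_{j=1}^k \frac{c_j}{q_j^{e_j}} \pmod 1 .
\]
To get these, take $c_j \equiv (M'/q_j^{e_j})^{-1} \pmod{q_j^{e_j}}$. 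The identity holds because $\sum_j c_j M'/q_j^{e_j}\equiv 1 \pmod{M'}$, which one checks modulo each $q_j^{e_j}$.

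Next, for each $j$ I would define
\[
Q_j(x) = \prod_{i=1}^n e^{\frac{2\pi{\bf i}}{q_j^{e_j}}c_j\sigma_i(x_i)} .
\]
Each $Q_j$ is a product function, and the normalization at $\sigma^\star$ is inherited because $\sigma_i(\sigma^\star)=0$. Since $\sigma_i(x_i)$ is only defined mod $M'$, we need $Q_j$ to be well defined; this holds because $q_j^{e_j}$ divides $M'$. Since $Q_j^{q_j^{e_j}}\equiv 1$, its order divides $q_j^{e_j}$. So ${\sf ord}(Q_j)$ is a power of $q_j$ (possibly $q_j^0=1$) and is at most $M$, which makes $Q_j$ cyclic. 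Multiplying, we get $\prod_j Q_j(x) = e^{2\pi{\bf i}\sum_i\sigma_i(x_i)\sum_j c_j/q_j^{e_j}} = P(x)$.

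Finally, I would pad with copies of the constant function $1$, whose order is $1=p^0$, to reach exactly $T=\lfloor\log_2 M\rfloor+1$ factors. The only delicate point is whether order-$1$ functions count as cyclic. If the convention requires $r\geq 1$, I would instead pad by pairs $Q,\overline{Q}$ with $Q=e^{\pi{\bf i}\sigma}$ of order $2$ for a nonconstant $\sigma$. If $\Sigma$ is a singleton everything is constant and the claim is trivial. No step is a real obstacle; the CRT identity is the heart of the argument.
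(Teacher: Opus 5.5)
Your proof is correct and is essentially the paper's argument: the paper also writes $P=e^{\frac{1}{M'}2\pi{\bf i}k(x)}$ and applies Bezout to the cofactors $M'/p_j^{r_j}$ to split $1/M'$ into prime-power fractions, setting $Q_j=e^{\frac{a_j}{p_j^{r_j}}2\pi{\bf i}k(x)}$. The differences are cosmetic. You work globally with $M'={\sf ord}(P)$ via Fact~\ref{fact:triv_disc_cyclic} instead of reducing to $n=1$, which spares the paper's implicit regrouping of per-coordinate factors by prime; and you pad with order-$1$ factors to get exactly $T$ terms, which is legitimate because the definition of cyclic allows $r=0$, so your fallback for the $r\geq 1$ convention is not needed.
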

\begin{proof}
    Working coordinate by coordinate, it suffices to prove the statement for $n=1$. By assumption we know that $P(x) = e^{\frac{1}{M'}2\pi{\bf i} k(x)}$ for some $M'\leq M$ and $k\colon \Sigma\to \mathbb{Z}_{M'}$. Factorize $M' = p_1^{r_1}\cdots p_T^{r_T}$, and define $M_{-i}' = \prod\limits_{j\neq i} p_j^{r_j}$. As ${\sf gcd}(M_{-1}',\ldots,M_{-T}') = 1$, we get by Bezout's identity that there are integers $a_1,\ldots,a_T$ such that
    \[
     1 = \sum\limits_{i=1}^{T} a_i M_{-i}'.
    \]
    Plugging this in, we get that
    \[
    P(x) = e^{\frac{1}{M'}2\pi{\bf i} k(x)}
    =
    \prod\limits_{i=1}^{T}
    e^{\frac{a_i M_{-i}'}{M'}2\pi{\bf i} k(x)}
    =\prod\limits_{i=1}^{T}
    e^{\frac{a_i}{p_i^{r_i}}2\pi{\bf i} k(x)}.
    \]
    Defining $Q_i(x) = e^{\frac{a_i}{p_i^{r_i}}2\pi{\bf i} k(x)}$ gives the result.
\end{proof}

We will often consider combinations of functions from a collection $\mathcal{P}$, and we make the following definition.
\begin{definition}
    Let $\mathcal{P}$ be a collection of product functions over $\Sigma^n$. We define
    \[
    \spn(\mathcal{P})
    =\left\{\prod\limits_{P\in\mathcal{P}}P(x)^{\alpha_{P}}~\big|~\alpha_P\in\mathbb{N}~\forall P\in\mathcal{P}\right\}.
    \]
\end{definition}
We note that if each function in $\mathcal{P}$ has order at most $M<\infty$, then $\spn(\mathcal{P})$ is finite and has size at most $M^{\card{\mathcal{P}}}$.

\subsection{Two Rank Notions}
In this section we define the two notions of rank used in our argument. Both use the symbolic distance between product functions, defined as follows.
\begin{definition}
    Let $P,P'\colon \Sigma^n\to\mathbb{D}$ be product functions. We define $\Delta_{{\sf symbolic}}(P,P')$ to be the smallest integer $k$ such that there are $u_1,u_1'\ldots,u_n,u_n'\colon \Sigma\to\mathbb{D}$ such that 
    \[
    P(x) = \prod\limits_{i=1}^{n}u_i(x_i),
    \qquad
    P'(x) = \prod\limits_{i=1}^{n}u_i'(x_i)
    \]
    and $u_i=u_i'$ for all but $k$ of $i\in [n]$.
\end{definition}
\begin{definition}
    For $P\colon\Sigma^{n}\to\mathbb{D}$ and a collection of product functions $\mathcal{P}$ over $\Sigma^n$, we define 
    \[
    \Delta_{{\sf symbolic}}(P,\spn(\mathcal{P}))
    =\min_{Q\in\spn(\mathcal{P})}\Delta_{{\sf symbolic}}(P,Q).
    \]
\end{definition}
The following simple fact will be used throughout the paper.
\begin{fact}\label{fact:symb_dist_prop}
    Let $P,Q,R\colon \Sigma^{n}\to\mathbb{D}$ be product functions. Then:
    \begin{enumerate}
        \item $\Delta_{{\sf symbolic}}(P,R)\leq \Delta_{{\sf symbolic}}(P,Q)+\Delta_{{\sf symbolic}}(Q,R)$.
        \item For all integers $k$, 
        $\Delta_{{\sf symbolic}}(P^k,Q^k)\leq \Delta_{{\sf symbolic}}(P,Q)$.
    \end{enumerate}
\end{fact}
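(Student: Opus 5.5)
Both items follow directly from the definition of $\Delta_{{\sf symbolic}}$ by manipulating witnessing decompositions. The only point needing care is that different pairs of decompositions may normalize the coordinate functions differently.

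\emph{First item.} Let $k_1=\Delta_{{\sf symbolic}}(P,Q)$ and $k_2=\Delta_{{\sf symbolic}}(Q,R)$. Fix decompositions $P=\prod_i u_i$, $Q=\prod_i u_i'$ agreeing outside a set $S_1$ with $|S_1|=k_1$. Fix also $Q=\prod_i v_i$, $R=\prod_i v_i'$ agreeing outside $S_2$ with $|S_2|=k_2$. The two decompositions of $Q$ need not coincide. However, since $\prod_i u_i'(x_i)=\prod_i v_i(x_i)$ for all $x$, varying one coordinate at a time shows that $u_i'=c_iv_i$ for constants $c_i\in\mathbb{D}$ with $\prod_i c_i=1$.

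Now rescale the decomposition of $R$ as $R=\prod_i (c_iv_i')$, which is legitimate because $\prod_i c_i=1$. Compare it with $P=\prod_i u_i$. For $i\notin S_1\cup S_2$ we have $u_i=u_i'=c_iv_i=c_iv_i'$. Hence $P$ and $R$ have decompositions agreeing outside $S_1\cup S_2$, which gives $\Delta_{{\sf symbolic}}(P,R)\leq k_1+k_2$.

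\emph{Second item.} Take decompositions $P=\prod_i u_i$ and $Q=\prod_i u_i'$ with $u_i=u_i'$ outside a set of size $\Delta_{{\sf symbolic}}(P,Q)$. For $k\geq 0$, the functions $u_i^k$ and $(u_i')^k$ take values in $\mathbb{D}$ and give decompositions $P^k=\prod_i u_i^k$ and $Q^k=\prod_i (u_i')^k$. These agree wherever $u_i=u_i'$. For negative $k$, use $\overline{u_i}^{\,|k|}$, since $P^{-1}=\overline{P}$ on $\mathbb{D}$.

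The only step that is not purely mechanical is the constant-matching in the first item: justifying $u_i'=c_iv_i$ with $\prod_i c_i=1$. Fix any point $x$ and vary only coordinate $i$. The ratio $u_i'(\sigma)/v_i(\sigma)$ is then independent of $\sigma$, because the remaining factors are fixed and nonzero. This defines $c_i$, and evaluating the identity $\prod_i u_i'=\prod_i v_i$ at any point gives $\prod_i c_i=1$.
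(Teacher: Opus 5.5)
Your proof is correct and takes the same approach as the paper, which simply declares both items immediate from the definition of $\Delta_{{\sf symbolic}}$. You also spell out a detail the paper leaves implicit in the triangle inequality: two decompositions of $Q$ can differ coordinatewise by unimodular constants $c_i$ with $\prod_i c_i=1$. Your handling of this point is sound.
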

\begin{proof}
    The proof of both items is immediate by definition.
\end{proof}
\begin{fact} \label{fact:highrankdecay}
    Let $P\colon \Sigma^n\to\mathbb{D}$ be an $M$-discrete product function, and let $\nu$ be a distribution over $\Sigma$ in which the probability of each atom is at least $\alpha$. Suppose that $\Delta_{{\sf symbolic}}(P,1)\geq D$. Then:
    \begin{enumerate}
        \item 
        $\card{\Expect{x\sim\nu^{\otimes n}}{P(x)}}\leq 2^{-\Omega_{\alpha, M}(D)}$.
        \item For all $\tau>0$, $\norm{\mathrm{T}_{1-\tau}^{\otimes n}
        P}_2\leq 2^{-\Omega_{\alpha,\tau, M}(D)}$.
    \end{enumerate}
\end{fact}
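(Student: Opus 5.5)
Both items reduce to a coordinate-wise product computation once we know that many coordinates of $P$ are \emph{genuinely non-constant}. Write $P(x)=\prod_i P_i(x_i)$ in its normalized decomposition and let $S=\{i : P_i \text{ is not constant}\}$.

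\textbf{Step 1: $|S|\geq D$.} If $i\notin S$, then $P_i\equiv P_i(\sigma^\star)=1$, so $P$ and the constant function $1$ admit decompositions that agree outside $S$. Hence $\Delta_{\sf symbolic}(P,1)\leq |S|$, and therefore $|S|\geq D$.

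\textbf{Step 2: a per-coordinate bound.} Fix $i\in S$. Since $P$ is $M$-discrete, each $P_i$ takes values that are ${\sf ord}(P)$-th roots of unity (by Fact~\ref{fact:triv_disc_cyclic}, item 1). As $P_i$ is non-constant, there are $a,b\in\Sigma$ with $P_i(a)\neq P_i(b)$, and by item 2 of that fact, $|P_i(a)-P_i(b)|\geq 1/M$.

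For unit complex numbers $z_\sigma$ and a distribution $\nu$, we have the identity
\[
1-\Bigl|\sum_\sigma \nu(\sigma) z_\sigma\Bigr|^2=\tfrac12\sum_{\sigma,\sigma'}\nu(\sigma)\nu(\sigma')|z_\sigma-z_{\sigma'}|^2 .
\]
Keeping only the pairs $(a,b)$ and $(b,a)$ on the right gives at least $\alpha^2/M^2$. Hence
\[
\bigl|\Expect{\nu}{P_i}\bigr|\leq \sqrt{1-\alpha^2/M^2}\leq 1-\frac{\alpha^2}{2M^2}.
\]

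\textbf{Item 1.} By independence of the coordinates,
\[
\Bigl|\Expect{x\sim\nu^{\otimes n}}{P(x)}\Bigr|=\prod_{i=1}^n\bigl|\Expect{\nu}{P_i}\bigr|\leq \Bigl(1-\frac{\alpha^2}{2M^2}\Bigr)^{|S|},
\]
which is $2^{-\Omega_{\alpha,M}(D)}$ by Step 1.

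\textbf{Item 2.} The operator $\mathrm{T}_{1-\tau}^{\otimes n}$ acts coordinate-wise, so
\[
\mathrm{T}_{1-\tau}^{\otimes n}P(x)=\prod_i\bigl((1-\tau)P_i(x_i)+\tau\,\Expect{\nu}{P_i}\bigr).
\]
The squared $2$-norm therefore factorizes as $\prod_i \Expect{\nu}{|(1-\tau)P_i+\tau c_i|^2}$ with $c_i=\Expect{\nu}{P_i}$. Expanding each factor,
\[
\Expect{\nu}{|(1-\tau)P_i+\tau c_i|^2}=(1-\tau)^2+(2\tau-\tau^2)|c_i|^2=1-(1-\tau)^2(1-|c_i|^2).
\]
For $i\notin S$ we have $|c_i|=1$, so the factor is $1$. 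For $i\in S$, Step 2 gives $1-|c_i|^2\geq \alpha^2/M^2$, so the factor is at most $1-(1-\tau)^2\alpha^2/M^2$. Multiplying over $i\in S$ and using $|S|\geq D$ yields $\norm{\mathrm{T}_{1-\tau}^{\otimes n}P}_2\leq 2^{-\Omega_{\alpha,\tau,M}(D)}$.

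If $\tau=1$ the operator is just the expectation, and item 1 applies directly. For $\tau<1$ the constant depends on $\tau$, as the statement allows.

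The only delicate point is Step 1: relating symbolic distance to the number of non-constant coordinates. It works because the normalized decomposition forces $P_i\equiv 1$ whenever $P_i$ is constant. Everything after that is routine.
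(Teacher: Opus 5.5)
Your approach is the same as the paper's: factor over coordinates, note that at least $D$ normalized factors $P_i$ are non-constant, and bound each such factor using $|P_i(a)-P_i(b)|\geq 1/M$ and atom mass at least $\alpha$. Step~1 and item~1 are correct. Your explicit justification of $|S|\geq D$ via the normalization $P_i(\sigma^\star)=1$ is a welcome addition, since the paper simply asserts this.

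However, item~2 contains an algebra error that makes your per-coordinate bound false as written. Your first expression $(1-\tau)^2+(2\tau-\tau^2)|c_i|^2$ is correct, but it equals
\[
1-(2\tau-\tau^2)\bigl(1-|c_i|^2\bigr)=|c_i|^2+(1-\tau)^2\bigl(1-|c_i|^2\bigr),
\]
not $1-(1-\tau)^2(1-|c_i|^2)$. You swapped the roles of $(1-\tau)^2$ and $2\tau-\tau^2$. Two sanity checks expose this:
\begin{itemize}
\item At $\tau=0$ the operator is the identity, so the factor must be $\|P_i\|_2^2=1$; your formula gives $|c_i|^2<1$.
\item At $\tau=1$ the factor must be $|c_i|^2$; your formula gives $1$.
\end{itemize}
Consequently your claimed bound $1-(1-\tau)^2\alpha^2/M^2$ fails for small $\tau$, where the true factor tends to $1$.

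The repair is immediate. For $i\in S$ the factor is at most $1-(2\tau-\tau^2)\alpha^2/M^2\leq 1-\tau\alpha^2/M^2$ for $\tau\in(0,1]$. Hence $\|\mathrm{T}_{1-\tau}^{\otimes n}P\|_2\leq (1-\tau\alpha^2/M^2)^{D/2}=2^{-\Omega_{\alpha,\tau,M}(D)}$, with no special case needed at $\tau=1$. Your closing remark is therefore backwards: the bound necessarily degenerates as $\tau\to 0$, not as $\tau\to 1$.

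With this fix, your item~2 reduces everything to $1-|c_i|^2$, already controlled in item~1, via the explicit formula $\mathrm{T}_{1-\tau}P_i=(1-\tau)P_i+\tau c_i$. This gives an exact, slightly cleaner alternative to the paper's route. The paper instead bounds $\langle P_i,\mathrm{T}_{1-\tau}P_i\rangle=1-\frac{1}{2}\mathbb{E}|P_i(x_i)-P_i(x_i')|^2$ over $\tau$-noisy pairs; that quantity upper-bounds $\|\mathrm{T}_{1-\tau}P_i\|_2^2$ rather than equalling it, which still suffices.
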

\begin{proof}
    Write $P(x) = \prod\limits_{i=1}^{n}P_i(x_i)$. 
    By independence, it follows that
    \begin{equation}\label{eq:high_rk_0_exp}
    \card{\Expect{x\sim\nu^{\otimes n}}{P(x)}}
    =\prod\limits_{i=1}^{n} \card{\Expect{x_i\sim\nu}{P_i(x_i)}}.
    \end{equation}
    By assumption at least $D$ of the $P_i$ are not constant, and without loss of generality we assume these are $P_1,\ldots,P_D$. Fix $1\leq i\leq D$. Then
    \[
    \card{\Expect{x_i\sim\nu}{P_i(x_i)}}^2
    =\Expect{x_i,x_i'\sim\nu}{P_i(x_i)\overline{P_i(x_i')}}
    =
    1-\frac{1}{2}
    \Expect{x_i,x_i'\sim\nu}{\card{P_i(x_i)-P_i(x_i')}^2}
    \]
    Since $P_i$ is not constant there are $a,b\in\Sigma$ such that $P_i(a)\neq P_i(b)$, so by Fact~\ref{fact:triv_disc_cyclic} $\card{P_i(a)-P_i(b)}\geq \frac{1}{M}$. It follows that
    $\frac{1}{2}
    \Expect{x_i,x_i'\sim\nu}{\card{P_i(x_i)-P_i(x_i')}^2}\geq \alpha^2/M$, and so $\card{\Expect{x_i\sim\nu}{P_i(x_i)}}^2\leq 1-\frac{\alpha^2}{2M}$. Plugging into~\eqref{eq:high_rk_0_exp} finishes the proof of the first item.

    For the second item, by independence again $\norm{\mathrm{T}_{1-\tau}^{\otimes n}P}_2=\prod\limits_{i=1}^{n}\norm{\mathrm{T}_{1-\tau}P_i}_2$. Fixing $1\leq i\leq D$, we get that
    \[
    \norm{\mathrm{T}_{1-\tau}P_i}_2^2
    =\Expect{\substack{x_i\sim\nu\\x_i'\sim\mathrm{T}_{1-\tau}x_i}}{P_i(x_i)\overline{P_i(x_i')}}
    =
    1-\frac{1}{2}\Expect{\substack{x_i\sim\nu\\x_i'\sim\mathrm{T}_{1-\tau}x_i}}{\card{P_i(x_i)-P_i(x_i')}^2}.
    \]
    By the same argument as before, it follows that 
    $\Expect{\substack{x_i\sim\nu\\x_i'\sim\mathrm{T}_{1-\tau}x_i}}{\card{P_i(x_i)-P_i(x_i')}^2}\geq \frac{\alpha^2\tau}{M}$, and the second item follows.
\end{proof}

The following definition is the stronger notion of rank we consider, and roughly speaking it asserts that non-trivial combinations of functions from $\mathcal{P}$ are always high degree.
\begin{definition}\label{def:rank}
    Let $\mathcal{P}$ be a collection of product functions. We say that it has rank at least $D$, and denote ${\sf rk}(\mathcal{P})\geq D$, if for all integers $\{\alpha_P\}_{P\in\mathcal{P}}$ such that $0\leq \alpha_P<{\sf ord}(P)$ and at least one is non zero, the function $S(x) = \prod\limits_{P}P(x)^{\alpha_P}$ satisfies that
    $\Delta_{{\sf symbolic}}(S,1)\geq D$. 
\end{definition}

The following definition is a weaker notion of rank, and roughly speaking it asserts that any combination of functions from $\mathcal{P}$ either gives a constant function, or else a function far from constant in symbolic distance.
\begin{definition}\label{def:softrank}
    Let $\mathcal{P}$ be a collection of product functions. We say that it has soft-rank at least $D$, and denote ${\sf softrk}(\mathcal{P})\geq D$, if for all integers $\{\alpha_P\}_{P\in\mathcal{P}}$, defining $S(x) = \prod\limits_{P}P(x)^{\alpha_P}$, we either have that $S$ is constant or that
    $\Delta_{{\sf symbolic}}(S,1)\geq D$.
\end{definition}

The contexts in which rank and softrank are used is somewhat different. The notion of rank is used in scenarios where we are only dealing with a single function defined over a probability space. In that scenario, rank governs certain pseudorandomness properties, such as the probability the function attains a certain value. The weaker softrank is used in scenarios we have multiple functions defined over correlated probability spaces. Therein, it governs some form of weak independence between them. To formally explain this multi-function scenario we require the following definition.
\begin{definition}\label{def:tensor_of_collecitions}
    Suppose that $\Sigma,\Gamma,\Phi$ are finite alphabets, $\mu$ is a distribution over $\Sigma\times\Gamma\times \Phi$, and $\mathcal{P},\mathcal{Q},\mathcal{R}$ are collections of product functions over $\Sigma^n$, $\Gamma^n$ and $\Phi^n$ respectively. The collection of product functions $\mathcal{D}_{\mu}$ over ${\sf supp}(\mu)^n$ is defined by including in it, for each $P\in \mathcal{P}$, 
    (and similarly for each $Q\in \mathcal{Q}$ and $R\in\mathcal{R}$)  
    the function $P'(x,y,z) = P(x)$. 
\end{definition}
In words, the collection $\mathcal{D}_{\mu}$ can be thought of as a joint lifting of the collections $\mathcal{P}$, $\mathcal{Q}$ and $\mathcal{R}$ to a common domain. Thus, the rank and softrank of $\mathcal{D}_{\mu}$ can be thought of as pseudorandomness notions measuring the correlations/independence of $\mathcal{P}$, $\mathcal{Q}$, $\mathcal{R}$ over their correlated probability spaces. Though much of what we say holds for general $\mathcal{P},\mathcal{Q},\mathcal{R}$, the case where they are all the same will be sufficient for us, and we make the following definition.
\begin{definition}
    Let $\mathcal{P}$ be a collection of product functions, and let $\mu$ be a distribution over $\Sigma^3$. We define ${\sf softrk}_{\mu}(\mathcal{P}) = {\sf softrk}(\mathcal{D}_{\mu}(\mathcal{P},\mathcal{P},\mathcal{P}))$
\end{definition}
We now elaborate on the motivation behind the notions of rank and softrank, and on how one goes about achieving large rank/softrank.
\begin{enumerate}
    \item {\bf The single function setting:} suppose that we have a single collection of product functions $\mathcal{P}$, which we want to ``process'' into a high-rank collection. Intuitively, a violation to the high-rank condition means that there is a non-trivial dependency between functions in $\mathcal{P}$. If this dependency took a very simple form, such as being able to express $P_1\in\mathcal{P}$ as a product of other $P\in\mathcal{P}$, then one could move to a smaller collection $\mathcal{P}'\subseteq \mathcal{P}$ in a way such that $\mathrm{T}_{1-\eps,\mathcal{P}}\approx \mathrm{T}_{1-\eps,\mathcal{P}'}$.
    \footnote{We remark that, in general the dependencies one has to deal with need not take this form, and then a different modification of $\mathcal{P}$ is necessary.} Indeed, we show that there is a way to modify $\mathcal{P}$ to achieve high rank, so that the noise operator does not change by too much; in fact, the rank we are able to guarantee is arbitrarily large compared to $\eps$ and the size of the collection.\footnote{Strictly speaking, this involves flexibility in choosing the noise rate, which we are able to achieve.} 

    Having high rank is very helpful in analyzing events such as $\{x~|~P(x) = c_P~\forall P\in\mathcal{P}\}$, for fixed values $C_P$. In particular, it allows one to give strong estimates of their probabilities, show strong mixing results on random walks starting in them, and so on.
    
    \item {\bf The multi-function setting:} suppose we have multiple collections of product functions $\mathcal{P}$,
    $\mathcal{Q}$,
    $\mathcal{R}$ over $\Sigma^n$, $\Gamma^n$, $\Phi^n$ respectively, and we have a distribution $\mu$ over $\Sigma\times\Gamma\times \Phi$. Then  $\mathcal{D} = \mathcal{D}_{\mu}(\mathcal{P},\mathcal{Q},\mathcal{R})$ is a lifting of $\mathcal{P},\mathcal{Q},\mathcal{R}$ to the common domain ${\sf supp}(\mu)^{n}$. Thus, if ${\sf rk}(\mathcal{D})$ is large, then as discussed above, we would be able to analyze correlations between the values of $\mathcal{P}$, $\mathcal{Q}$ and $\mathcal{R}$ when the input is sampled from $\mu^{\otimes n}$ very well; in fact, their values would be nearly independently distributed. How can we ensure that $\mathcal{D}$ has a high rank, though?

    Applying a modification procedure as in the last time is possible, but it results in modifications to $\mathcal{D}$, instead of to  $\mathcal{P},\mathcal{Q},\mathcal{R}$. This presents issues in our analysis, the most serious of them is that we are unable to guarantee that the noise operators corresponding to each one of them do not change. %\footnote{More precisely, our argument requires that such modifications procedures will not change noise operators corresponding to $\mathcal{P},\mathcal{Q},\mathcal{R}$, by much. We do not now how to implement modifications to $\mathcal{D}$ in this way.} 
    To circumvent this issue, we instead use the weaker notion of softrank in such contexts, which turns out to be useful in our setting. 
    
    A procedure to gain high softrank is as follows. If $\mathcal{D}$ has small softrank, then we can combine in $\mathcal{D}$ to get a function that is close to constant in symbolic distance, which in our case is possible only when it depends on a small subset  $I\subseteq [n]$ of the coordinates. By projecting each function in $\mathcal{P},\mathcal{Q},\mathcal{R}$ on $\bar{I}$, the aforementioned combination becomes completely constant. Iterating this procedure, we get modifications of $\mathcal{P}$, $\mathcal{Q}$, $\mathcal{R}$ whose joint lifting has a large softrank.
    \footnote{Crucially, by taking the noise rate of the averaging operators to be sufficiently small, we can ensure that this modification does not change by much the noise operators of $\mathcal{P},\mathcal{Q},\mathcal{R}$.}
\end{enumerate}

\subsection{The \texorpdfstring{$\mu$}{mu}-norm and the CSP Stability Result}
We need the following notions and results from~\cite{BKM5}. The first of which is a definition of a certain semi-norm.
	\begin{definition}
		For a distribution $\mu$ over $\Sigma\times \Gamma\times \Phi$
		and a function $f\colon (\Sigma^n,\mu_x^{\otimes n})\to\mathbb{C}$,
		we define the $\mu$ semi-norm of $f$ as
		\[
		\norm{f}_{\mu}
		=\sup_{\substack{g\colon \Gamma^n\to\mathbb{C}\\ h\colon \Phi^n\to\mathbb{C}\\ \text{$1$-bounded}}}\card{\Expect{(x,y,z)\sim \mu^{\otimes n}}{f(x)g(y)h(z)}}.
		\]
	\end{definition}
    For a distribution $\nu$ over $\Sigma$, we define a collection of distributions over $\Sigma^3$ in which one of the marginals is $\nu$.
	\begin{definition}\label{def:set_of_dists}
		For an alphabet $\Sigma$, a distribution $\nu$ over $\Sigma$ and a parameter $\alpha>0$, define the collections
		\[
		M_{\nu} = \left\{\mu~~\Bigg|~~\substack{\text{\normalsize $\mu$ is a pairwise connected distribution over $\Sigma^3$ with no $(\mathbb{Z}, +)$-embedding}\\\\
        \text{\normalsize$\mu_x = \nu$\text{ or }$\mu_y=\nu$\text{ or }$\mu_z = \nu$}}\right\},
		\]
		\[
		M_{\alpha} = \sett{\mu}{\mu(x,y,z)\geq \alpha~\forall(x,y,z)\in {\sf supp}(\mu)},
		\]
		and $M_{\nu,\alpha} = M_{\nu}\cap M_{\alpha}$.
	\end{definition}

    \begin{definition}\label{def:semi_norm_nu}
		Let $\nu$ be a distribution over $\Sigma$, let $\alpha>0$, and let $f\colon (\Sigma^n,\nu^{\otimes n})\to\mathbb{C}$ be a function. We define
		\[
		\norm{f}_{\nu,\alpha}
		=\sup_{\mu\in M_{\nu,\alpha}}\norm{f}_{\mu}.
		\]
        When $\alpha$ is clear from context, we often omit it from the notation and write $\norm{f}_{\nu}$ instead of $\norm{f}_{\nu,\alpha}$.
	\end{definition}
    The last result in this section is the inverse theorem from~\cite{BKM4}.
    \begin{thm}\label{thm:csp4}
		For all $m\in\mathbb{N}$ there is a constant $S_m$ and
        a finite Abelian group $G$ of size at most $S_m$ such that for all
		$\eps,\alpha>0$ there exists $\delta>0$ and $d\in\mathbb{N}$ such that the following holds.
		Suppose that $\Sigma$, $\Gamma$, $\Phi$ are alphabets of size at most $m$,
		and $\mu$ is a distribution over $\Sigma\times\Gamma\times \Phi$ with no $\mathbb{Z}$ embeddings
		in which the probability of each atom is at least $\alpha$.
        Then there exists an embedding $(\sigma,\gamma,\phi)$ of ${\sf supp}(\mu)$ into $G$, such that
		if $f\colon \Sigma^n\to\mathbb{C}$ is a $1$-bounded function with $\norm{f}_{\mu}\geq \eps$, then
		there is $\chi\in \hat{G}^{\otimes n}$ and $L\colon \Sigma^n\to\mathbb{C}$ of degree at most
		$d$ and $\norm{L}_2\leq 1$ such that
		\[
		\card{\inner{f}{L\cdot \chi\circ\sigma^{\otimes n}}}\geq \delta.
		\]
	\end{thm}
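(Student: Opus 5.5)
Theorem~\ref{thm:csp4} is imported from~\cite{BKM4}; here is how I would reconstruct its proof. There are two parts. First, show that a large $\mu$-semi-norm forces correlation with \emph{some} product function times a low-degree function. Second, show that the product functions that can arise are all of the form $\chi\circ\sigma^{\otimes n}$ for a single master embedding into a group $G$ of size bounded in terms of $m$.

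\emph{The group $G$.} Consider all embeddings of ${\sf supp}(\mu)$ into Abelian groups. Since there is no $(\mathbb{Z},+)$-embedding, every embedding into an Abelian group (even an infinite one, such as $\mathbb{R}/\mathbb{Z}$) must have finite image. The embeddings are described by a linear system over $\mathbb{Z}$ with at most $3m$ unknowns and at most $m^3$ equations. I would take the Smith normal form of this system; the absence of a free part is exactly the absence of a $\mathbb{Z}$-embedding. This gives a universal embedding $(\sigma,\gamma,\phi)$ into a finite group $G$, through which every other embedding factors, with $\card{G}$ bounded by the elementary divisors. Those divisors are bounded in terms of $m$ only, and this defines $S_m$.

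\emph{From $\norm{f}_\mu\geq\eps$ to product-function correlation.} Fix $1$-bounded $g,h$ with $\card{\Expect{}{f(x)g(y)h(z)}}\geq\eps$. I would use Cauchy--Schwarz and the path trick to eliminate $h$ and then $g$. This relates $f$ to $\norm{f}_{\mu'}$ for a modified distribution $\mu'$ on $\Sigma\times\Gamma'\times\Phi'$ that keeps the $x$-marginal and gains extra structure. Concretely, the relevant pairs of alphabets become ``saturated'' or ``connected'', and one can arrange that the atom probabilities remain bounded below in terms of $\alpha$. I would then argue by random restrictions. Restrict to a random set $I\subseteq_{\rho}[n]$ of live coordinates, and use a hypercontractivity/invariance argument on the coordinates where $\mu'$ is connected to show that the high-degree part of $f$ on $I$ is negligible unless $f$ correlates with a product function. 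The output I aim for is $\card{\inner{f}{L\cdot P}}\geq\delta$ with $P$ a product function, $\deg L\leq d$ and $\norm{L}_2\leq 1$. I expect this to be the main obstacle: the argument has to be an induction on the alphabet size and structure of $\mu$, and one has to control how $\delta$ and $d$ degrade at each step.

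\emph{Identifying the product function.} Suppose $P=\prod_i P_i$ carries the correlation. Undo the path trick: because $g$ and $h$ are arbitrary, the correlation persists only if there are product functions $Q,R$ on $\Gamma^n,\Phi^n$ with $P(x)Q(y)R(z)$ nearly constant on the support of $\mu^{\otimes n}$. Fact~\ref{fact:highrankdecay}-type decay shows that if this fails on $D$ coordinates, the correlation decays as $2^{-\Omega(D)}$. So $P_iQ_iR_i\equiv 1$ on ${\sf supp}(\mu)$ for all but $O_{\eps,\alpha,m}(1)$ coordinates. On those good coordinates, $(\arg P_i,\arg Q_i,\arg R_i)$ is an embedding into $\mathbb{R}/\mathbb{Z}$, so it factors through $G$: $P_i=\chi_i\circ\sigma$ for some $\chi_i\in\hat G$. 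The exceptional coordinates are absorbed into $L$, since a function of $O(1)$ coordinates has bounded degree. This increases $d$ and shrinks $\delta$ only by factors depending on $m,\eps,\alpha$, which gives the statement.
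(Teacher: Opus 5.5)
The paper gives no proof of this statement; it quotes it from \cite{BKM4}. So your outline has to stand on its own, and it has genuine gaps.

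Your first step is fine once you quotient by constant solutions. Constant triples $(a,b,c)$ with $a+b+c=0$ always solve the embedding system, so its Smith normal form always has a free part. The correct formulation is that, modulo the constant solutions, every solution in $\mathbb{R}/\mathbb{Z}$ is torsion and factors through a finite group of size $O_m(1)$.

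The second step, however, is the entire content of the theorem, and as written it is only a list of tools. Path-trick, random-restriction and invariance arguments naturally show only that a \emph{restriction} $f_{\bar I\to z}$ correlates with some $P_z\cdot L_z$ for a noticeable fraction of $z$, where $P_z$ may depend on $z$. Passing from this to a single global $P\cdot L$ that correlates with $f$ is a separate and hard statement; this is what the ``restriction inverse theorem'' of \cite{BKM4} is for. Your sketch gives no mechanism for this globalization, nor for how the induction on $\mu$ would close.

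Your third step rests on an invalid inference. From $\card{\inner{f}{LP}}\geq\delta$ you learn nothing about $\Expect{}{(LP)(x)g(y)h(z)}$, so ``undoing the path trick'' places no constraint on $P$. Here is a concrete example. Let $f=\frac12\,\chi\circ\sigma^{\otimes n}+\frac12 P$ with $P=u^{\otimes n}$, where $u\colon\Sigma\to\mathbb{D}$ is not a constant multiple of $\psi\circ\sigma$ for any $\psi\in\hat G$. Testing against $g=\chi\circ\gamma^{\otimes n}$ and $h=\chi\circ\phi^{\otimes n}$ gives $\norm{f}_{\mu}\geq\frac12-o(1)$. Also $\inner{f}{P}=\frac12+o(1)$. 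Yet $P$ has $o(1)$ correlation with every $L\cdot\chi'\circ\sigma^{\otimes n}$ with $\chi'\in\hat G^{\otimes n}$, $\deg L\leq d$ and $\norm{L}_2\leq 1$. So a step-two output of the form ``some product function'' cannot be repaired afterwards. You would have to produce $P,Q,R$ for $f,g,h$ jointly while keeping a lower bound on $\Expect{}{(LP)(x)(L'Q)(y)(L''R)(z)}$.

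Even then, the decay you borrow from Fact~\ref{fact:highrankdecay} requires $P,Q,R$ to be $O(1)$-discrete, and you do not have that a priori. $P_iQ_iR_i$ can be non-constant on $\mathsf{supp}(\mu)$ yet $o(1)$-close to a constant on every coordinate, which gives no decay at all. Handling this needs two things:
\begin{enumerate}
\item a quantitative rigidity statement: an approximate solution of the embedding equations in $\mathbb{R}/\mathbb{Z}$ is, modulo constants, close to an exact one, and exact ones have bounded order by the no-$\mathbb{Z}$-embedding hypothesis;
\item a rounding argument that tracks the accumulated per-coordinate errors.
\end{enumerate}
This is where the hypothesis must be used quantitatively rather than only for exact solutions, and it is missing from your sketch.
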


    \subsection{Noise Operators}
    We will use the following noise operator from~\cite{BKM5}.
    
   \begin{definition}\label{def:noise_op_P}
		Let $\Sigma$ be a finite alphabet, let $\nu$ be a distribution over $\Sigma$,
		let $\mathcal{P} = \set{P_1,\ldots,P_r\colon \Sigma^n\to\mathbb{D}}$ be a collection of functions
        and let $I\subseteq [n]$.
		For each $x\in \Sigma^n$ we define the distribution $\mathrm{T}_{\nu, \mathcal{P}, I} x$
		as:
		\begin{enumerate}
			\item Sample $y\sim \nu^{\otimes n}$ conditioned on $y_{\overline{I}} = x_{\overline{I}}$ and $P_i(y) = P_i(x)$ for all $i$.
			\item Output $y$.
		\end{enumerate}
        For $\eps>0$, we define the distribution of $\mathrm{T}_{\nu,\mathcal{P},1-\eps} x$ by sampling $I\subseteq_{\eps}[n]$, and then outputting $y\sim \mathrm{T}_{\nu,\mathcal{P},I} x$.
	\end{definition}
	As usual, we will associate with $\mathrm{T}_{\nu, \mathcal{P}, I},\mathrm{T}_{\nu, \mathcal{P}, 1-\eps}$ averaging operators over functions,
	which by abuse of notation we denote as $\mathrm{T}_{\nu, \mathcal{P}, I}, \mathrm{T}_{\nu, \mathcal{P}, 1-\eps} \colon L_2(\Sigma^n,\nu^{\otimes n})\to L_2(\Sigma^n,\nu^{\otimes n})$. We define
	\[
    \mathrm{T}_{\nu, \mathcal{P}, I}f(x)=\Expect{y\sim \mathrm{T}_{\nu, \mathcal{P}, I}x}{f(y)},
    \qquad
	\mathrm{T}_{\nu, \mathcal{P}, 1-\eps} f(x) = 
    \Expect{I\subseteq_{\eps} [n]}{\Expect{y\sim \mathrm{T}_{\nu, \mathcal{P}, I} x}{f(y)}}
    =\Expect{I\subseteq_{\eps} [n]}{\mathrm{T}_{\nu, \mathcal{P}, I}f(x)}.
	\]
	When the distribution $\nu$ is clear from context, we will often drop $\nu$ from notation and simply write  $\mathrm{T}_{\mathcal{P},I}$ 
    and $\mathrm{T}_{\mathcal{P},1-\eps}$ in place of $\mathrm{T}_{\nu,\mathcal{P},I}$ and $\mathrm{T}_{\nu,\mathcal{P},1-\eps}$. 

 \subsubsection{Properties of the Noise Operator}
    We now list a few properties of the noise operator established in~\cite{BKM5}. The first one is~\cite[Fact 4.2]{BKM5}.
    \begin{fact}\label{fact:noise_op_basic_prop1}
        For all $I\subseteq[n]$ and $\mathcal{P}$, $\nu^{\otimes n}$ 
        is a stationary distribution of $\mathrm{T}_{\nu,\mathcal{P},I}$. Consequently $\nu^{\otimes n}$ is a stationary distribution of $\mathrm{T}_{\nu, \mathcal{P}, 1-\eps}$.	\end{fact}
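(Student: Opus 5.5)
The plan is to prove stationarity first for a fixed $I$ and then average over $I$. Fix $I\subseteq[n]$ and the collection $\mathcal{P}=\{P_1,\ldots,P_r\}$. For $x\in\Sigma^n$, let $C(x)$ be the class of all $y$ satisfying $y_{\overline{I}}=x_{\overline{I}}$ and $P_i(y)=P_i(x)$ for every $i$. The relation $y\in C(x)$ is an equivalence relation, so the classes $C(x)$ partition $\Sigma^n$. By Definition~\ref{def:noise_op_P}, the step $x\mapsto y\sim \mathrm{T}_{\nu,\mathcal{P},I}x$ samples $y$ from $\nu^{\otimes n}$ conditioned on the class $C(x)$. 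The class always contains $x$ itself, so the conditioning is well defined provided $\nu^{\otimes n}(x)>0$. That is the only case that matters, since $x$ will be drawn from $\nu^{\otimes n}$.

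The key step is the following computation. Draw $x\sim\nu^{\otimes n}$ and then $y\sim\mathrm{T}_{\nu,\mathcal{P},I}x$. For any target $z$, the class of $y$ equals the class of $x$, so
\[
\Pr[y=z]=\sum_{x\in C(z)}\nu^{\otimes n}(x)\cdot\frac{\nu^{\otimes n}(z)}{\nu^{\otimes n}(C(z))}=\nu^{\otimes n}(z).
\]
In words, sampling a class according to its $\nu^{\otimes n}$ mass and then a point inside it according to the conditional measure reproduces $\nu^{\otimes n}$. Equivalently, in operator language, $\Expect{x\sim\nu^{\otimes n}}{\mathrm{T}_{\nu,\mathcal{P},I}f(x)}=\Expect{x\sim\nu^{\otimes n}}{f(x)}$ for every $f$, because $\mathrm{T}_{\nu,\mathcal{P},I}$ is a conditional expectation with respect to the partition into classes.

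For the consequence, note that $\mathrm{T}_{\nu,\mathcal{P},1-\eps}$ is the mixture $\Expect{I\subseteq_\eps[n]}{\mathrm{T}_{\nu,\mathcal{P},I}}$ of these kernels. A mixture of Markov kernels that share a stationary distribution has that same stationary distribution, by linearity of expectation. So $\nu^{\otimes n}$ is stationary for $\mathrm{T}_{\nu,\mathcal{P},1-\eps}$ as well.

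There is no real obstacle here. The one point needing care is that each conditioning event has positive mass, and this holds because it contains $x$, which lies in the support of $\nu^{\otimes n}$.
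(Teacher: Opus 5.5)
Your proof is correct. The paper gives no proof of its own here; it imports the statement as \cite[Fact 4.2]{BKM5}. Your argument is the standard direct verification, and it matches what that citation supplies.

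The two steps check out as follows.
\begin{itemize}
\item For fixed $I$, the kernel $\mathrm{T}_{\nu,\mathcal{P},I}$ resamples from $\nu^{\otimes n}$ conditioned on the atom of the partition induced by $x\mapsto(x_{\overline{I}},P_1(x),\ldots,P_r(x))$. So it is a conditional expectation and preserves $\nu^{\otimes n}$.
\item Since $I\subseteq_{\eps}[n]$ is drawn independently of $x$, averaging over $I$ preserves this.
\end{itemize}

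Your computation also shows that $\nu^{\otimes n}(x)\,\mathrm{T}(x,z)=\nu^{\otimes n}(x)\nu^{\otimes n}(z)/\nu^{\otimes n}(C(x))$ is symmetric in $x$ and $z$. This gives reversibility, i.e.\ self-adjointness of $\mathrm{T}_{\nu,\mathcal{P},I}$ and hence of $\mathrm{T}_{\nu,\mathcal{P},1-\eps}$ on $L_2(\nu^{\otimes n})$. The paper uses this property, without separate justification, in the proof of Lemma~\ref{lem:arithmetic_reg}.
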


        We will need the following variant of~\cite[Fact 4.3]{BKM5}.
 %        \begin{fact}\label{fact:noise_op_basic_prop2}
	% 	Let $\mathcal{P}$ be a collection of product functions, let $\eps>0$, let $P'\colon \Sigma^n\to\mathbb{C}$ be a product function and suppose that
	% 	$k = \min_{P\in \spn(\mathcal{P})}\Delta_{{\sf symbolic}}(P,P')$. Then
	% 	\begin{enumerate}
	% 		\item There is a coupling of $(x, y, y')$ such that $(x,y)$ is distributed according to $(x, \mathrm{T}_{\nu, \mathcal{P}, 1-\eps} x)$,
	% 		$(x,y')$ is distributed according to $(x, \mathrm{T}_{\nu, \mathcal{P}\cup\{P'\}, 1-\eps}x)$ and $\Prob{}{y\neq y'}\leq k\eps$.
	% 		\item For any $1$-bounded function $f\colon \Sigma^n\to\mathbb{C}$,
	% 		$\norm{\mathrm{T}_{\nu, \mathcal{P}\cup \{P'\}, 1-\eps} f - \mathrm{T}_{\nu, \mathcal{P}, 1-\eps} f}_2\leq 2\sqrt{k\eps}$.
	% 	\end{enumerate}
	% \end{fact}
%We will need the following easy corollary of Fact~\ref{fact:noise_op_basic_prop2}
\begin{fact}\label{fact:noise_op_basic_prop2}
    Suppose that $\mathcal{P},\mathcal{P}'$ are collections of product functions such that:
    \begin{enumerate}
        \item For all $P\in \mathcal{P}$, $\Delta_{{\sf symbolic}}(P,\spn(\mathcal{P}'))\leq k$.
        \item For all $P'\in \mathcal{P}'$, $\Delta_{{\sf symbolic}}(P',\spn(\mathcal{P}))\leq k$.
    \end{enumerate}
           Then for all $\eps>0$:
        \begin{enumerate}
			\item There is a coupling of $(x, y, y')$ such that $(x,y)$ is distributed according to $(x, \mathrm{T}_{\nu, \mathcal{P}, 1-\eps} x)$,
			$(x,y')$ is distributed according to $(x, \mathrm{T}_{\nu, \mathcal{P}', 1-\eps}x)$ and $\Prob{}{y\neq y'}\leq k(\card{\mathcal{P}}+|\mathcal{P}'|)\eps$.
			\item For any $1$-bounded function $f\colon \Sigma^n\to\mathbb{C}$,
			$\norm{\mathrm{T}_{\nu, \mathcal{P}', 1-\eps} f - \mathrm{T}_{\nu, \mathcal{P}, 1-\eps} f}_2\leq 2\sqrt{k(\card{\mathcal{P}}+|\mathcal{P}'|)\eps}$.
		\end{enumerate}
\end{fact}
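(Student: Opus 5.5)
The plan is to build the coupling by sharing the random set $I\subseteq_{\eps}[n]$, and to show that for most $I$ the two conditional distributions $\mathrm{T}_{\nu,\mathcal{P},I}x$ and $\mathrm{T}_{\nu,\mathcal{P}',I}x$ are literally identical.

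\textbf{Step 1: the bad set $J$.} For each $P\in\mathcal{P}$ fix $Q_P\in\spn(\mathcal{P}')$ with $\Delta_{{\sf symbolic}}(P,Q_P)\leq k$. Fix decompositions $P=\prod_i u_i$ and $Q_P=\prod_i u_i'$ with $u_i=u_i'$ outside a set $J_P$ of size at most $k$. Symmetrically, for each $P'\in\mathcal{P}'$ choose $R_{P'}\in\spn(\mathcal{P})$ and a set $J_{P'}$ with $\card{J_{P'}}\leq k$. Let $J$ be the union of all these sets, so that $\card{J}\leq k(\card{\mathcal{P}}+\card{\mathcal{P}'})$.

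\textbf{Step 2: equality of the conditional distributions when $I\cap J=\emptyset$.} Fix such an $I$ and $x$. Both distributions are $\nu^{\otimes n}$ conditioned on $y_{\overline{I}}=x_{\overline{I}}$ together with a family of constraints. These conditioning events are nonempty, since $y=x$ satisfies them.

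On the fiber $\{y: y_{\overline{I}}=x_{\overline{I}}\}$, for every $P\in\mathcal{P}$ we have
\[
\frac{P(y)}{P(x)}=\prod_{i\in I}\frac{u_i(y_i)}{u_i(x_i)}=\prod_{i\in I}\frac{u_i'(y_i)}{u_i'(x_i)}=\frac{Q_P(y)}{Q_P(x)},
\]
because $I\cap J_P=\emptyset$. This ratio is independent of the choice of decomposition, since rescaling a factor $u_i$ by a unimodular constant cancels in $u_i(y_i)/u_i(x_i)$.

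Hence, on the fiber, the constraint $P(y)=P(x)$ is equivalent to $Q_P(y)=Q_P(x)$. The latter is implied by the constraints $P'(y)=P'(x)$ for all $P'\in\mathcal{P}'$, since $Q_P$ is a product of powers of elements of $\mathcal{P}'$. Symmetrically, every $\mathcal{P}'$-constraint is implied on the fiber by the $\mathcal{P}$-constraints. Therefore the two conditioning events coincide, and so do the distributions.

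\textbf{Step 3: the coupling.} Sample $I\subseteq_{\eps}[n]$ once.
\begin{itemize}
\item If $I\cap J=\emptyset$, sample $y\sim\mathrm{T}_{\nu,\mathcal{P},I}x$ and set $y'=y$.
\item Otherwise, sample $y$ and $y'$ independently from their respective conditional laws.
\end{itemize}
The marginals are correct. By a union bound,
\[
\Pr[y\neq y']\leq\Pr[I\cap J\neq\emptyset]\leq\eps\card{J}\leq k(\card{\mathcal{P}}+\card{\mathcal{P}'})\eps,
\]
which proves the first item.

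\textbf{Step 4: the $L_2$ bound.} Let $q(x)=\Pr[y\neq y'\mid x]$, so that $\Expect{x}{q(x)}\leq\delta:=k(\card{\mathcal{P}}+\card{\mathcal{P}'})\eps$. For $1$-bounded $f$,
\[
\card{\mathrm{T}_{\nu,\mathcal{P}',1-\eps}f(x)-\mathrm{T}_{\nu,\mathcal{P},1-\eps}f(x)}\leq\Expect{}{\card{f(y')-f(y)}\mid x}\leq 2q(x).
\]
Since $q\leq1$, we get $\Expect{x}{4q(x)^2}\leq4\Expect{x}{q(x)}\leq4\delta$. Taking square roots gives the bound $2\sqrt{\delta}$ in the second item.

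The only delicate point is Step 2. One must check that the equivalence of constraints holds only on the fiber $y_{\overline{I}}=x_{\overline{I}}$, where the symbolic discrepancy sits entirely on the frozen coordinates, and that the normalization ambiguity of the product decompositions is harmless because only the ratios $u_i(y_i)/u_i(x_i)$ enter.
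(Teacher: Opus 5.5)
Your proposal is correct and follows essentially the paper's own proof. Both arguments take the same bad set of size at most $k(\card{\mathcal{P}}+\card{\mathcal{P}'})$, observe that $\mathrm{T}_{\nu,\mathcal{P},I}=\mathrm{T}_{\nu,\mathcal{P}',I}$ whenever $I$ avoids it, apply a union bound over $I\subseteq_{\eps}[n]$, and finish with $\card{\mathrm{T}_{\nu,\mathcal{P}',1-\eps}f(x)-\mathrm{T}_{\nu,\mathcal{P},1-\eps}f(x)}^2\leq 4\Pr[y\neq y'\mid x]$. Your ratio argument in Step 2 simply spells out the one-line claim in the paper that both operators coincide with conditioning on $\mathcal{P}|_{I}$.
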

\begin{proof}
    For each $P\in \spn(\mathcal{P})$, let $\tilde{P}\in\spn(\mathcal{P}')$ be the minimizer of $\Delta(P,\tilde{P})$, and let $K(P) = \{i~|~P_i\neq \tilde{P}_i\}$, so that $\card{K(P)}\leq k$. Similarly, define $K'(P')$ for $P'\in\mathcal{P}'$, and set $K = \bigcup_{P\in\mathcal{P}}K(P)\cup \bigcup_{P'\in\mathcal{P}'}K'(P')$. Then we get that $\card{K}\leq (\card{\mathcal{P}}+|\mathcal{P}'|)k$. Note that if $I\subseteq [n]\setminus K$, then $\mathrm{T}_{\nu,\mathcal{P},I}$ and $\mathrm{T}_{\nu,\mathcal{P}',I}$ are identical as they are both equal to $\mathrm{T}_{\nu,\mathcal{P}|_{I},I}$. We have $\Prob{I\subseteq_{\eps}[n]}{I\subseteq [n]\setminus K}\geq 1-\card{K}\eps$, so it follows that the statistical distance between $(x, \mathrm{T}_{\nu, \mathcal{P}', 1-\eps}x)$ and 
    $(x, \mathrm{T}_{\nu, \mathcal{P}, 1-\eps}x)$ is at most $\card{K}\eps$, and the first item follows.

    For the second item, fix a coupling $(x,y,y')$ as in the first item. Then
    \[
    \norm{\mathrm{T}_{\nu, \mathcal{P}', 1-\eps} f - \mathrm{T}_{\nu, \mathcal{P}, 1-\eps} f}_2^2
    =\Expect{x}{
    \card{\mathrm{T}_{\nu, \mathcal{P}, 1-\eps} f(x)
    -\mathrm{T}_{\nu, \mathcal{P}', 1-\eps} f(x)}^2}
    \leq \Expect{x,y,y'}{\card{f(y)-f(y')}^2},
    \]
    which is at most $4\Expect{x,y,y'}{1_{y\neq y'}}$ by the $1$-boundedness of $f$, and the second item follows.
\end{proof}

The following result is essentially~\cite[Claim 4.5]{BKM5}, except that we replaced the assumption that $\mathcal{P}$ arises from some group ($\mathcal{P}\subseteq\mathcal{P}(\Sigma,G,\sigma)$ in the notations therein) with the assumption that each function in $\mathcal{P}$ is discrete; this is in fact the property used in the proof therein. In words, the result says that for sufficiently small noise rate, the operator $\mathrm{T}_{\nu,\mathcal{P},1-\eps}$ acts like the identity on low-degree functions.
\begin{lemma}\label{lem:act_on_ld}
		Let $d,r,m\in\mathbb{N}$ and $\alpha,\eps>0$.
		Suppose that $\Sigma$ has size at most $m$, $\nu$ is a distribution over $\Sigma$
		in which the probability of each atom is at least $\alpha$, and
		$\mathcal{P}$
		is a collection of $O_m(1)$-discrete product functions of size at most $r$. Then for any $L\colon \Sigma^n\to\mathbb{C}$ of degree at most $d$
		we have that
		\[
		\norm{(I-\mathrm{T}_{\nu,\mathcal{P}, 1-\eps})L}_2^2\lll_{d,m,r,\alpha} \eps^{1/3}\norm{L}_2^2.
		\]
	\end{lemma}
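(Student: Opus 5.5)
We will prove Lemma~\ref{lem:act_on_ld} by a coupling argument that compares $\mathrm{T}_{\nu,\mathcal{P},1-\eps}$ with the standard noise operator $\mathrm{T}_{1-\eps,\nu}^{\otimes n}$. The underlying idea is that a degree-$d$ function barely notices re-randomizing an $\eps$-fraction of the coordinates. The conditioning on the values of $\mathcal{P}$ affects only a tiny fraction of inputs once we pass to a small set of coordinates.

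\textbf{Step 1: reduce to a local expectation.} By convexity (Jensen) applied to the averaging over $I\subseteq_\eps[n]$ and over $y$,
\[
\norm{(I-\mathrm{T}_{\nu,\mathcal{P},1-\eps})L}_2^2\leq \Expect{I\subseteq_\eps[n]}{\Expect{x\sim\nu^{\otimes n}}{\card{L(x)-\mathrm{T}_{\nu,\mathcal{P},I}L(x)}^2}}.
\]
Fix $I$. By Fact~\ref{fact:noise_op_basic_prop1}, $(x,y)$ with $y\sim \mathrm{T}_{\nu,\mathcal{P},I}x$ has both marginals $\nu^{\otimes n}$. Then
\[
\Expect{x}{\card{L(x)-\mathrm{T}_{\mathcal{P},I}L(x)}^2}\leq \Expect{x,y}{\card{L(x)-L(y)}^2}.
\]
Moreover, $x$ and $y$ agree outside $I$.

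\textbf{Step 2: split according to whether the constraint is typical.} Write $L(x)=\sum_{S}L_S(x)$ in the Efron--Stein decomposition, where $|S|\leq d$.

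Let $I$ have size about $\eps n$. Since each $P\in\mathcal{P}$ is $M=O_m(1)$-discrete, the vector $(P|_I(x_I))_{P}$ takes at most $M^r$ values. Given $x_{\bar I}$, the point $y_I$ is distributed as $\nu^{\otimes I}$ conditioned on $(P|_I(y_I))_P=(P|_I(x_I))_P$. The pair $(x_I,y_I)$ is therefore obtained by drawing a common "label" $c=(P|_I(x_I))_P$ from its distribution and then two independent samples from $\nu^{\otimes I}$ conditioned on that label.

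The key estimate is
\[
\Expect{x,y}{\card{L(x)-L(y)}^2}=2\norm{L}_2^2-2\,\mathrm{Re}\,\Expect{}{\overline{L(x)}L(y)}.
\]
We then compare $\Expect{}{\overline{L(x)}L(y)}$ with the unconditioned quantity $\Expect{}{\overline{L(x)}L(x_{\bar I},z_I)}$, where $z_I\sim\nu^{\otimes I}$ is independent. The latter equals $\sum_{S\cap I=\emptyset}\norm{L_S}_2^2$.

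Averaged over $I\subseteq_\eps[n]$, the missing mass $\sum_{S\cap I\neq\emptyset}\norm{L_S}^2$ is at most $d\eps\norm{L}_2^2$. It thus suffices to show that conditioning on the label $c$ only increases the correlation. Indeed,
\[
\Expect{}{\overline{L(x)}L(y)}=\Expect{x_{\bar I}}{\sum_c \Pr[c]\,\card{\Expect{}{L\mid x_{\bar I},c}}^2},
\]
which is at least $\Expect{x_{\bar I}}{\card{\Expect{}{L\mid x_{\bar I}}}^2}$ by Cauchy--Schwarz/Jensen.

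This already yields $\norm{(I-\mathrm{T})L}_2^2\leq 2d\eps\norm{L}_2^2$. The proof thus reduces to checking that $\mathrm{T}_{\mathcal{P},I}$ is a genuine conditional expectation, i.e.\ a projection onto functions measurable with respect to $(x_{\bar I},(P(x))_P)$. Since $P(x)=P|_{\bar I}(x)P|_I(x)$, conditioning on $P(x)$ given $x_{\bar I}$ is the same as conditioning on $P|_I(x_I)$.

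\textbf{Main obstacle.} The delicate point is that $\mathrm{T}_{\mathcal{P},I}L$ is a conditional expectation, so $\norm{L-\mathrm{T}_{\mathcal{P},I}L}_2^2=\norm{L}_2^2-\norm{\mathrm{T}_{\mathcal{P},I}L}_2^2$. Combining this with the monotonicity of conditional expectations (the sigma-algebra generated by $x_{\bar I}$ alone is coarser than the one generated by $(x_{\bar I},c)$) gives
\[
\norm{L-\mathrm{T}_{\mathcal{P},I}L}_2^2\leq\norm{L-\mathbb{E}[L\mid x_{\bar I}]}_2^2=\sum_{S\cap I\neq\emptyset}\norm{L_S}_2^2.
\]
Averaging over $I$ gives a bound of $d\eps\norm{L}_2^2$, which is stronger than the $\eps^{1/3}$ claimed in the lemma.

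If there is a subtlety I am missing, it would be measure-zero or degenerate labels; these can be handled by discarding them at a cost depending on $r,m,\alpha$. This is also where the $\eps^{1/3}$ slack and the dependence on $r,m,\alpha$ would enter, for instance by truncating to $|I|\leq 2\eps n$ and using discreteness to bound the number of labels by $M^r$.
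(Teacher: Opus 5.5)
Your proof is correct, and it takes a different, more elementary route than the paper. The paper gives no proof of its own. It imports \cite[Claim 4.5]{BKM5}, remarking that the discreteness of the functions in $\mathcal{P}$ is the property that argument uses, and it obtains only $\eps^{1/3}$ with constants depending on $d,m,r,\alpha$.

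Your key observation is that, for the operator of Definition~\ref{def:noise_op_P}, $\mathrm{T}_{\nu,\mathcal{P},I}$ is just averaging (under $\nu^{\otimes n}$) over the cells of the partition of $\Sigma^n$ by the value of $(x_{\bar I},(P(x))_{P\in\mathcal{P}})$. That is, it is the orthogonal projection onto functions measurable with respect to a $\sigma$-algebra that refines $\sigma(x_{\bar I})$. Hence $\norm{L-\mathrm{T}_{\nu,\mathcal{P},I}L}_2^2\leq\norm{L-\mathbb{E}[L\mid x_{\bar I}]}_2^2=\sum_{S\cap I\neq\emptyset}\norm{L_S}_2^2$. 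Convexity over $I\subseteq_{\eps}[n]$ then gives $\norm{(\mathrm{I}-\mathrm{T}_{\nu,\mathcal{P},1-\eps})L}_2^2\leq d\eps\norm{L}_2^2$, which implies the stated bound since $\eps\leq 1$.

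What your route buys is generality and cleaner constants. It uses nothing about $\mathcal{P}$: not discreteness, not the product structure, not $r$, $m$ or $\alpha$. The exponent is $1$ instead of $1/3$. A uniform constant like this would, for instance, remove the dependence on $\card{\mathcal{P}_{R+1}}$ where the lemma is invoked inside Claim~\ref{claim:it_process_key}.

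Two minor points:
\begin{itemize}
\item The correlation computation in your Step 2 (giving $2d\eps$) is redundant once you have the projection argument.
\item The hedging in your last paragraph is unnecessary. The cell containing $x$ has $\nu^{\otimes n}$-mass at least $\nu^{\otimes n}(x)\geq\alpha^n>0$, so there are no degenerate labels, and no truncation of $\card{I}$ or counting of labels is needed.
\end{itemize}
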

The following result is essentially~\cite[Lemma 4.9]{BKM5}, where again we replace the assumption about $\mathcal{P}$ as before. The proof is the same as therein. In words, the result says that if $f$ is $1$-bounded, then $\mathrm{T}_{\nu,\mathcal{P}, 1-\eps}f$ can be approximated arbitrarily well in $L_2$-distance by a function of the form 
$\sum\limits_{P\in \spn(\mathcal{P})} P(x)L_P(x)$, where each $L_P$ is a low-degree function with bounded $2$-norm.
\begin{lemma}\label{lem:approx_formula}
		For all $m,r\in\mathbb{N}$, $\alpha, \eps>0$ and $\xi>0$ there exist $C, D\in\mathbb{N}$ such that the following holds.
		Suppose that $\Sigma$ has size at most $m$, $\nu$ is a distribution over $\Sigma$
		in which the probability of each atom is at least $\alpha$, and
		$\mathcal{P}$ is a collection of $O_m(1)$-discrete product functions of size at most $r$.
		Then for any $1$-bounded function $f\colon \Sigma^n\to\mathbb{C}$, there exists a function
		$f'\colon\Sigma^n\to\mathbb{C}$ such that:
		\begin{enumerate}
			\item The function $f'$ approximates $\mathrm{T}_{\nu,\mathcal{P}, 1-\eps}f$: $\norm{\mathrm{T}_{\nu,\mathcal{P}, 1-\eps}f - f'}_2\leq \xi$.
			\item The function $f'$ can be written as
			\[
			f'(x) = \sum\limits_{P\in \spn(\mathcal{P})}P(x)\cdot L_{P}(x)
			\]
			where for all $P$, ${\sf deg}(L_P)\leq D$ and $\norm{L_P}_2\leq C$.
		\end{enumerate}
	\end{lemma}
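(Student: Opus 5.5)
The plan is to follow the structure of \cite[Lemma 4.9]{BKM5}: first decompose $f$ in an orthonormal-type basis adapted to $\mathcal{P}$, then show that the noise operator essentially preserves the low-degree part of this decomposition and damps the high-degree part. Only discreteness of the $P\in\mathcal{P}$ should be used.

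\textbf{Step 1: conditional decomposition.} Since each $P\in\mathcal{P}$ has order at most $M=O_m(1)$, the vector of values $(P(x))_{P\in\mathcal{P}}$ ranges over a set of size at most $M^r$. Hence every function $g$ that depends only on these values can be written as
\[
g(x)=\sum_{Q\in\spn(\mathcal{P})}c_Q Q(x),
\]
with at most $M^r$ distinct terms. This is discrete Fourier analysis on the finite group generated by the values.

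Next I use the identity $\mathrm{T}_{\nu,\mathcal{P},I}f(x)=\sum_{Q\in\spn(\mathcal{P})}Q(x)\,\mathrm{E}_I[\cdots]$. This comes from writing the conditioning on the events $P(y)=P(x)$ via characters:
\[
1_{P(y)=P(x)\ \forall P}=M^{-r}\sum_{\alpha}\prod_P \bigl(P(y)\overline{P(x)}\bigr)^{\alpha_P}.
\]
This gives
\[
\mathrm{T}_{\mathcal{P},I}f(x)=\sum_{\alpha}\overline{Q_\alpha(x)}\,\frac{\mathrm{E}_{y\sim \nu^{\otimes n},\,y_{\bar I}=x_{\bar I}}\bigl[f(y)Q_\alpha(y)\bigr]}{\Pr[\text{all } P(y)=P(x)]}.
\]
The issue is that the denominator depends on $x$. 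I handle it in two pieces.

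\textbf{Step 2: the denominator.} Using the product structure of $Q_\alpha$ restricted to $I$, the numerator factors as $Q_\alpha|_{\bar I}(x)\cdot \mathrm{E}_{y_I}[f(x_{\bar I},y_I)Q_\alpha|_I(y_I)]$, which is a noise-type averaging of $fQ_\alpha$. The denominator is a finite combination of terms $\mathrm{E}_{y_I}[Q_\beta|_I(y_I)]\,Q_\beta|_{\bar I}(x)$. Averaged over $I\subseteq_\eps[n]$, each of these becomes $\prod_i(\text{constant})$. The constant is $1$ on coordinates where $Q_\beta$ is trivial, and has modulus at most $1-\Omega_{\alpha,M}(\eps)$ on nontrivial ones. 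So each $\mathrm{E}_{y_I}[Q_\beta|_I]$ is either close to a product of constants or exponentially small in the number of nonconstant coordinates, by the argument of Fact~\ref{fact:highrankdecay}.

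\textbf{Step 3: main obstacle and plan.} The hard step is controlling this ratio uniformly while keeping $\deg(L_Q)$ bounded. I would first apply a truncation: replace $\mathrm{T}_{\mathcal{P},1-\eps}$ by its action after expanding $f=\sum_Q Q\cdot(\overline{Q}f)$ and writing
\[
\overline{Q}f=(\overline{Q}f)^{\le D}+(\overline{Q}f)^{>D}.
\]
The high-degree parts are damped by the independent $\eps$-resampling by a factor $(1-\eps)^{D}$ in $L_2$. This requires comparing $\mathrm{T}_{\mathcal{P},I}$ with the standard noise $\mathrm{T}_{1-\eps}$ times conditioning. That comparison is where the quantitative cost appears: on the event that the conditioning has probability at least $\xi'$, the density ratio is at most $1/\xi'$, and the event where it is smaller has measure at most $M^r\xi'$ by a union bound over value patterns.

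Choosing $\xi'$ small in terms of $\xi$, and then $D$ large in terms of $\xi',\eps,r,M,\alpha$, the surviving terms are of the form $Q(x)L_Q(x)$. Here $L_Q$ is a product of low-degree pieces of $\overline{Q}f$ and a bounded density correction, which I approximate by a polynomial in $\mathcal{P}$-values, absorbed into $\spn(\mathcal{P})$. This gives $\deg L_Q\le D$ and $\|L_Q\|_2\le C=C(M^r/\xi')$.

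Summing the at most $M^r$ error terms, each of size $O(\xi/M^r)$, yields $\|\mathrm{T}_{\nu,\mathcal{P},1-\eps}f-f'\|_2\le\xi$.
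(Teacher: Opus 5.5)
Your Step~3 does not work as stated. The operator $\mathrm{T}_{\mathcal{P},1-\eps}$ does not damp high-degree functions. Under its conditioning every $R\in\spn(\mathcal{P})$ satisfies $R(y)=R(x)$, so $\mathrm{T}_{\mathcal{P},1-\eps}(Rg)=R\cdot\mathrm{T}_{\mathcal{P},1-\eps}g$ and $\mathrm{T}_{\mathcal{P},1-\eps}R=R$, even though such $R$ typically have degree $\Theta(n)$. Concretely, take $\Sigma=\mathbb{F}_3$, $\nu$ uniform, $\mathcal{P}=\{P\}$ with $P(x)=e^{2\pi{\bf i}(x_1+\cdots+x_n)/3}$, and $f=P$. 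The pieces $(\overline{Q}f)^{>D}$ for $Q=1$ and $Q=P^2$ are $P$ and $P^2$, both of pure degree $n$. The operator fixes $Q\cdot(\overline{Q}f)^{>D}=P$ in both cases, so discarding them loses $\frac{2}{3}P$ once your expansion is normalized (as written, $f=\sum_Q Q\cdot(\overline{Q}f)$ is off by a factor $\card{\spn(\mathcal{P})}$).

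A density-ratio bound cannot fix this. It only gives $\card{\mathrm{T}_{\mathcal{P},I}g(x)}\le\frac{1}{\xi'}\E_{y_I}\card{g(x_{\bar I},y_I)}$, and the absolute value destroys exactly the cancellation that produces $(1-\eps)^{\deg}$ decay. Genuine decay appears only in the numerators of your Step~1 formula, $\E_{y_I}[(fQ_\alpha)(x_{\bar I},y_I)]$, i.e.\ plain rerandomization of $I$ applied to $fQ_\alpha$. But these are multiplied by $1/\Pr_{y_I}[P|_I(y)=P|_I(x)\ \forall P]$, which depends on $x$ through $(P|_I(x))_P$, not through $(P(x))_P$. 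Expanding that factor in characters produces $\prod_P P|_I^{\gamma_P}=Q_\gamma\cdot\overline{Q_\gamma|_{\bar I}}$. For each fixed $I$ the leftover $\overline{Q_\gamma|_{\bar I}}$ has huge degree, so it cannot be ``absorbed into $\spn(\mathcal{P})$'', and your $L_Q$ are not low degree. Step~2 has a related flaw: the operator is an $I$-average of a ratio, so you cannot average the denominator separately. Moreover, without a rank hypothesis (which this lemma lacks) the denominator genuinely varies with $x_I$.

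The missing idea is that the ratio never has to be estimated: low degree comes from averaging the \emph{coefficient functions} over $I$, not from damping $f$. Fix $I$. Since $y_{\bar I}=x_{\bar I}$, the condition $P(y)=P(x)$ is equivalent to $P|_I(y)=P|_I(x)$. Hence $\mathrm{T}_{\mathcal{P},I}f(x)=h_{I,\vec b}(x_{\bar I})$ with $\vec b=(P|_I(x))_{P\in\mathcal{P}}$, where $h_{I,\vec b}(z)=\E[f(y)\mid y_{\bar I}=z,\ P|_I(y)=b_P\ \forall P]$ (set to $0$ if the event is empty). This is a conditional expectation of $f$, hence $1$-bounded. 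With the normalized decomposition, $P|_I$ takes ${\sf ord}(P)$-th roots of unity as values. So you may expand $1[(P|_I(x))_P=\vec b]$ in characters and use $\prod_P P|_I^{j_P}=Q_{\vec j}\cdot\overline{Q_{\vec j}|_{\bar I}}$ with $Q_{\vec j}=\prod_P P^{j_P}$. This gives $\mathrm{T}_{\mathcal{P},I}f=\sum_{\vec j}Q_{\vec j}\,G_{\vec j,I}$, where $G_{\vec j,I}=\overline{Q_{\vec j}|_{\bar I}}\sum_{\vec b}\bigl(\prod_P{\sf ord}(P)^{-1}\overline{b_P}^{\,j_P}\bigr)h_{I,\vec b}$. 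Each $G_{\vec j,I}$ is $1$-bounded and depends only on $x_{\bar I}$; the high-degree factor lives inside it. Therefore $\mathrm{T}_{\mathcal{P},1-\eps}f=\sum_{\vec j}Q_{\vec j}L_{\vec j}$ with $L_{\vec j}=\E_{I\subseteq_\eps[n]}[G_{\vec j,I}]$.

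In a product orthonormal basis, $\widehat{G_{\vec j,I}}(\beta)=0$ unless ${\sf supp}(\beta)\cap I=\emptyset$, an event of probability $(1-\eps)^{\#\beta}$. Cauchy--Schwarz then gives $\card{\widehat{L_{\vec j}}(\beta)}^2\le(1-\eps)^{\#\beta}\,\E_I\card{\widehat{G_{\vec j,I}}(\beta)}^2$, and Parseval gives $\norm{L_{\vec j}^{>D}}_2^2\le(1-\eps)^{D}$. Truncating each $L_{\vec j}$ at degree $D$ and grouping by $Q\in\spn(\mathcal{P})$ proves the lemma with $C\le M^r$ and $D=O(\eps^{-1}\log(M^r/\xi))$, where $M=O_m(1)$ bounds the orders. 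No rank assumption enters.
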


    \section{High and Low Rank Collections}
    In this section we give a few results concerning the notions of rank in Definitions~\ref{def:rank} and~\ref{def:softrank}. First in Sections~\ref{sec:high_rank},~\ref{sec:high_softrank} and~\ref{sec:high_bothrank}   we show processes that turn a given low-rank/low-softrank collection $\mathcal{P}$ into a high-rank/high-softrank collection. Then, in Section~\ref{sec:quasirandom_props} we prove some quasirandomness properties of high-rank collections.
    \subsection{Achieving a High Rank}\label{sec:high_rank}
    In this section we show how to process a given collection of product functions $\mathcal{P}$ into a high-rank collection $\mathcal{P}'$. The following claim captures the main sub-procedure in that process. 
    \begin{claim}\label{claim:high_rank_process}
        Let $\mathcal{P}\subseteq \{P\colon\Sigma^n\to\mathbb{D}\}$ be a collection of $M$-discrete cyclic product functions. If ${\sf rk}(\mathcal{P})\leq D$, then there exists $P\in \mathcal{P}$ such that one of the following holds:
        \begin{enumerate}
            \item Setting $\mathcal{P}'=\mathcal{P}\setminus\{P\}$, we have $\Delta_{{\sf symbolic}}(P,\spn(\mathcal{P}'))\leq D$.
            \item There exists an $M$-discrete cyclic product function $P'\colon \Sigma^n\to\mathbb{C}$ such that
            ${\sf ord}(P')<{\sf ord}(P)$, $\Delta_{{\sf symbolic}}(P',\spn(\mathcal{P}))\leq D$, and setting $\mathcal{P}'=(\mathcal{P}\setminus\{P\})\cup\{P'\}$, we have 
            \[
            \Delta_{{\sf symbolic}}(P,\spn(\mathcal{P}'))\leq D.
            \]
        \end{enumerate}
    \end{claim}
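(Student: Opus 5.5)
Since ${\sf rk}(\mathcal{P})\leq D$, Definition~\ref{def:rank} provides exponents $0\leq \alpha_P<{\sf ord}(P)$, not all zero, with $S=\prod_P P^{\alpha_P}$ satisfying $\Delta_{{\sf symbolic}}(S,1)\leq D$. Every function in $\mathcal{P}$ is cyclic, so each order is a prime power. The plan is to exploit this prime-power structure: either one function can be solved for in terms of the others, or a lower-order power of one function is nearly in the span of the others. First, since some $P^{\alpha_P}\not\equiv 1$, I will pick $P_0$ maximizing ${\sf ord}(P_0^{\alpha_{P_0}})$ among the nontrivial terms. Each $P^{\alpha_P}$ is cyclic of order $q_P^{e_P}$, a prime power.

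\textbf{Case of a single prime.} Suppose all the nontrivial $P^{\alpha_P}$ have orders that are powers of one prime $q$. Write $\alpha_{P_0}=q^{t}u$ with $q\nmid u$. Then ${\sf ord}(P_0^{\alpha_{P_0}})={\sf ord}(P_0)/q^t$. If $t=0$, then $u$ is invertible modulo ${\sf ord}(P_0)$, say $uv\equiv 1$. Raising $S$ to the power $v$ and using Fact~\ref{fact:symb_dist_prop}(2) gives $\Delta_{{\sf symbolic}}(P_0\cdot \prod_{P\neq P_0}P^{v\alpha_P},1)\leq D$. Hence $P_0$ is within symbolic distance $D$ of $\prod_{P\neq P_0}P^{-v\alpha_P}$. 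Negative exponents are harmless because they can be reduced modulo orders into $\spn$. This is item 1.

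If $t\geq 1$, every nontrivial term has order at most ${\sf ord}(P_0^{\alpha_{P_0}})$, so all the exponents are divisible in the appropriate sense. I will instead select $P_0$ as a term with maximal $q$-order, whose exponent has minimal $q$-adic valuation relative to its order. Then I set $P'=P_0^{q^{t}}$. This is $M$-discrete and cyclic, with ${\sf ord}(P')<{\sf ord}(P_0)$. Again inverting $u$, $P'$ is within symbolic distance $D$ of a product of the other functions of $\mathcal{P}$, giving $\Delta_{{\sf symbolic}}(P',\spn(\mathcal{P}))\leq D$. The remaining requirement is that $P_0\in$ ``$\spn$'' of $(\mathcal{P}\setminus\{P_0\})\cup\{P'\}$ up to distance $D$, and this is the step I expect to be the main obstacle. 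In fact $P_0$ itself is not generated by $P'$. The natural fix is to reverse the roles: take $P'$ to be the \emph{combination} $\prod_{P\neq P_0}P^{\alpha_P}$ itself, or a power of it, which is symbolically within $D$ of $P_0^{-\alpha_{P_0}}$. The cleaner route is this: let $P'$ be the product-function root that, together with $\mathcal{P}\setminus\{P_0\}$, regenerates $P_0$. Concretely, set $P'':=P_0\cdot\prod_{P\neq P_0}P^{\beta_P}$ with $\beta_P$ chosen (via $u^{-1}$ and division by $q^t$, possible because the other exponents have valuation $\geq t$) so that $(P'')^{q^t}$ equals $S^{v}$ up to trivial terms. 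Then $P''$ has strictly smaller order than $P_0$, because its $q^t$-th power is symbolically near $1$ and discrete, hence the constant after normalization off $D$ coordinates. We have $P''\in\spn(\mathcal{P})$ exactly, and $P_0=P''\prod P^{-\beta_P}\in\spn(\mathcal{P}')$. I will need to adjust $P''$ on the at most $D$ coordinates where $S^v$ is nonconstant, to force the order to actually drop. This adjustment costs symbolic distance $D$ in both directions, matching item 2.

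\textbf{Case of several primes.} If several primes occur, I will first use Fact~\ref{fact:symb_dist_prop}(2) to raise $S$ to the product of the prime-power orders for all primes but one, chosen so that the $q$-part survives. This kills all terms except those of $q$-power order and preserves $\Delta_{{\sf symbolic}}(\cdot,1)\leq D$, because the multiplier is coprime to $q$ and so the $q$-terms remain nontrivial. This reduces to the single-prime case.
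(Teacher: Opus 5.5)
Your final construction is essentially the paper's proof. You raise $S$ to a power coprime to $q$ to isolate a single prime, and solve for a function whose exponent is a unit to get item 1. Otherwise you write $S^{v}=(P'')^{q^t}$ and restrict $P''$ off the at most $D$ coordinates where $S^v$ is nonconstant, so the restriction has order dividing $q^t<{\sf ord}(P_0)$. You then recover $P_0$ through its unit coefficient; you normalize that coefficient to $1$ before extracting the root, whereas the paper inverts $\beta_{P^\star}$ afterwards.

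One step needs fixing: the choice of $P_0$. The case split must be on $t=\min_P v_q(\alpha_P)$ over the nonzero exponents, with $P_0$ attaining this minimum. Choosing $P_0$ to maximize ${\sf ord}(P_0^{\alpha_{P_0}})$ or ${\sf ord}(P_0)$ does not make the other exponents divisible by $q^t$: take ${\sf ord}(P_1)=q^3$, $\alpha_{P_1}=q^2$ and ${\sf ord}(P_2)=q$, $\alpha_{P_2}=1$. So your remark that bounded term orders make ``all the exponents divisible'' is false and should be dropped.
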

    \begin{proof}
        By definition, since ${\sf rk}(\mathcal{P})\leq D$ we may find not-all-zero integers $\{\alpha_P\}_{P\in\mathcal{P}}$ such that $0\leq \alpha_{P}\leq {\sf ord}(P)-1$ and the function $R(x) = \prod\limits_{P\in\mathcal{P}}P(x)^{\alpha_P}$ satisfies that $\Delta_{{\sf symbolic}}(R,1)\leq D$. For each prime $p$, let $\mathcal{P}_{p}\subseteq \mathcal{P}$ be the subset of product functions whose order is a power of $p$, and fix $p$ such that for some $P\in \mathcal{P}_{p}$ we have $\alpha_P\neq 0$. 
        For each $p'$, let $\beta_{p'} = \max_{P'\in\mathcal{P}_{p'}}{\sf ord}(P')$, and denote 
        \[
        s = \prod\limits_{p'\neq p: \mathcal{P}_{p'}\neq \emptyset} \beta_{p'}.
        \]
        Then by Fact~\ref{fact:symb_dist_prop} we have that
        \[
        \Delta_{{\sf symbolic}}(R^s,1)\leq 
        \Delta_{{\sf symbolic}}(R,1)
        \leq D.
        \]
        For $p'\neq p$ and $P'\in \mathcal{P}_{p'}$, as the order of $P'$ divides $s$, we get that $P'(x)^{s}\equiv 1$. For  $P\in \mathcal{P}_p$, as $s$ is coprime to $p$ we may find integers $i$ and $j$ such that $is + j{\sf ord}(P) = 1$, so
        $is\alpha_P \equiv \alpha_P\pmod{{\sf ord}(P)}$, and therefore $\alpha_P':=s\alpha_P\neq 0\pmod{{\sf ord}(P)}$ for some $P\in \mathcal{P}_{p}$.
        Thus,
        \[
        R(x)^s = 
        \prod\limits_{P\in\mathcal{P}_p}P(x)^{\alpha_P'},
        \]
        where $0\leq \alpha_{P}'\leq {\sf ord}(P)-1$ for all $P\in\mathcal{P}_p$ and at least one of them is non-zero.
        There are two cases now. 
        \vspace{-2ex}
        \paragraph{Case 1: } If some $\alpha_P'$ is coprime to $p$, then we fix such $P$ and find $\beta$ such that $\beta\cdot  \alpha_P' = 1\pmod{{\sf ord}(P)}$. Using Fact~\ref{fact:symb_dist_prop} we get that 
        $\Delta(R^{s\beta},1)\leq D$, and a direct computation gives 
        \[
        R(x)^{s\beta}
        = P(x)\prod\limits_{P'\in\mathcal{P}\setminus\{P\}} P'(x)^{\alpha_{P'}' \beta}.
        \]
        Thus, $\Delta(P,\prod\limits_{P'\in\mathcal{P}\setminus\{P\}} P'^{-\alpha_{P'}'\beta})\leq D$, and the first item holds. 
        \vspace{-2ex}
        \paragraph{Case 2:} Else, we may find an integer $k\geq 1$ and integers  $\beta_P$, at least one of which, say $\beta_{P^{\star}}$, is coprime to $p$, such that $\alpha_P' = p^k \beta_P$. Thus, we may write
        \[
        R(x)^{s} 
        = \left(\prod\limits_{P\in \mathcal{P}_p}P(x)^{\beta_P}\right)^{p^k} = 
        R'(x)^{p^k},
        \]
        where we define $R'(x)=\prod\limits_{P\in \mathcal{P}_p}P(x)^{\beta_P}$. Since $\Delta_{{\sf symbolic}}(R,1)\leq D$, we may find $J\subseteq [n]$ of size at most $D$, such that $R|_{\bar{J}}\equiv 1$. We get that for $P' = R'|_{\bar{J}}$, we have $(P')^{p^{k}}\equiv 1$. We now note that:
        \begin{enumerate}
        \item $P'$ is a cyclic product function with ${\sf ord}(P') \leq p^{k}\leq \alpha_{P^{\star}}' < {\sf ord}(P^{\star})$.
        \item We have that
        \[
            \Delta\left((P^{\star})^{-\beta_{P^{\star}}},
            \overline{P'}\prod\limits_{P\in\mathcal{P}_{p}\setminus\{P^{\star}\}} P^{\beta_P}\right)
            =
            \Delta\left(P', (P^{\star})^{\beta_{P^{\star}}}
            \prod\limits_{P\in\mathcal{P}_{p}\setminus\{P^{\star}\}} P^{\beta_P}\right)
            \leq \card{J}\leq D.
        \]
        As $\beta_{P^{\star}}$ is coprime to $p$, we may find $s'$ such that $s'\cdot \beta_{P^{\star}}=1\pmod{{\sf ord}(P^{\star})}$, and get from the above and Fact~\ref{fact:symb_dist_prop} that
        \[
        \Delta
        \left(P^{\star},
            (P')^{s'}\prod\limits_{P\in\mathcal{P}_{p}\setminus\{P^{\star}\}} P^{-s'\cdot\beta_P}\right)
            =
            \Delta\left((P^{\star})^{s'\beta_{P^{\star}}},(P')^{s'}\prod\limits_{P\in\mathcal{P}_{p}\setminus\{P^{\star}\}} P^{-s'\beta_P}\right)
            \leq D.
        \]
        \end{enumerate}
        Together, we get that the second item holds for $\mathcal{P}'=
        (\mathcal{P}\setminus\{P^{\star}\})\cup\{P'\}$.
    \end{proof}
    Iterating Claim~\ref{claim:high_rank_process}, we get the following lemma, which will be used in subsequent sections.
    \begin{lemma}\label{lem:rank_gain}
        Let $\mathcal{P}$ be a collection of $M$-discrete cyclic product functions, let $k = M \card{\mathcal{P}}+1$ and let $D_1\leq D_2\leq\ldots\leq D_k$ be integers. Then we may find a collection $\mathcal{P}'$ of $M$-discrete cyclic product functions with $|\mathcal{P}'|\leq \card{\mathcal{P}}$ such that for some $1\leq i\leq k$, the  following properties hold:
        \begin{enumerate}
            \item ${\sf rk}(\mathcal{P}')\geq D_i$.
            \item For each $P\in\mathcal{P}$, we have that 
            $\Delta_{{\sf symbolic}}(P,\spn(\mathcal{P}'))\leq D_1+\ldots+D_{i-1}$.
            \item For each $P'\in\mathcal{P}'$, we have that 
            $\Delta_{{\sf symbolic}}(P',\spn(\mathcal{P}))\leq D_1+\ldots+D_{i-1}$.
        \end{enumerate}
    \end{lemma}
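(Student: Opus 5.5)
We will prove Lemma~\ref{lem:rank_gain} by iterating Claim~\ref{claim:high_rank_process} and using a potential function to bound the number of iterations. Set $\mathcal{P}^{(0)}=\mathcal{P}$. At step $j\geq 1$, suppose we have a collection $\mathcal{P}^{(j-1)}$ of $M$-discrete cyclic product functions, with $|\mathcal{P}^{(j-1)}|\leq\card{\mathcal{P}}$, satisfying
\[
\Delta_{{\sf symbolic}}(P,\spn(\mathcal{P}^{(j-1)}))\leq D_1+\ldots+D_{j-1}\quad\text{for all } P\in\mathcal{P},
\]
and symmetrically for all $P'\in\mathcal{P}^{(j-1)}$ with respect to $\spn(\mathcal{P})$.

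If ${\sf rk}(\mathcal{P}^{(j-1)})\geq D_j$, we stop and output $\mathcal{P}'=\mathcal{P}^{(j-1)}$ with $i=j$. Otherwise we apply Claim~\ref{claim:high_rank_process} with parameter $D_j$ and obtain $\mathcal{P}^{(j)}$. To guarantee termination we use the potential
\[
\Phi(\mathcal{Q})=\sum_{Q\in\mathcal{Q}}{\sf ord}(Q).
\]
Each application of the claim either removes a function, or replaces a function $P$ by $P'$ with ${\sf ord}(P')<{\sf ord}(P)$; in both cases $\Phi$ strictly decreases. Since $\Phi(\mathcal{P})\leq M\card{\mathcal{P}}$, the process stops within $k=M\card{\mathcal{P}}+1$ steps. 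Note that the empty collection (or a collection of constants) has vacuously infinite rank, and the claim never increases the size of the collection. The collection also stays $M$-discrete and cyclic, since any new $P'$ supplied by the claim has these properties.

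The main work is maintaining the two distance invariants. By the triangle inequality of Fact~\ref{fact:symb_dist_prop}, it suffices to compare $\spn(\mathcal{P}^{(j-1)})$ with $\spn(\mathcal{P}^{(j)})$ in both directions with loss $D_j$.

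\emph{Distance from $\mathcal{P}^{(j-1)}$ to $\spn(\mathcal{P}^{(j)})$.} Every element of $\mathcal{P}^{(j-1)}$ other than the modified $P$ lies in $\mathcal{P}^{(j)}$ itself. The modified $P$ satisfies $\Delta(P,\spn(\mathcal{P}^{(j)}))\leq D_j$ directly by the claim.

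\emph{Distance from $\mathcal{P}^{(j)}$ to $\spn(\mathcal{P}^{(j-1)})$.} Elements of $\mathcal{P}^{(j)}$ are either old elements or the new $P'$. For $P'$, the claim gives $\Delta(P',\spn(\mathcal{P}^{(j-1)}))\leq D_j$.

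\emph{Propagating to $\mathcal{P}$.} We also need to pass distances through products. If $\Delta(P,Q)\leq a$ with $Q=\prod Q_t^{c_t}$, and each $Q_t$ is within distance $b$ of $\spn(\mathcal{R})$, then naively $P$ is within $a+b\cdot(\#\text{terms})$ of $\spn(\mathcal{R})$. This would lose a factor of $\card{\mathcal{P}}$. We avoid this loss: only one element changes per step, and that element occurs as a single factor. Replacing $P$ (respectively $P'$) by its $D_j$-close element of the span, raised to the same power, costs at most $D_j$ by item 2 of Fact~\ref{fact:symb_dist_prop}. Composing with the invariant for $j-1$ then gives the bound $D_1+\ldots+D_j$.

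I expect this last point to be the main obstacle. One has to check carefully that substituting a single generator inside a span element costs only $D_j$, and not $D_j$ times the number of generators. Since only one generator is touched, its power $c$ contributes at most $\Delta(P^c,\tilde P^c)\leq\Delta(P,\tilde P)\leq D_j$. With this checked, the induction closes and yields the lemma.
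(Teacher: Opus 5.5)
Your proposal follows the paper's proof. You iterate Claim~\ref{claim:high_rank_process}, use $\sum_{Q}{\sf ord}(Q)$ as a potential to get termination within $k$ rounds, and chain the one-step bounds by the triangle inequality. Your observation that only one generator is replaced per step is exactly what justifies the paper's unexplained assertion that the one-step bound holds for all span elements: substituting that generator inside a product costs at most $D_j$, by Fact~\ref{fact:symb_dist_prop}(2) together with the immediate invariance $\Delta_{{\sf symbolic}}(AC,BC)\leq\Delta_{{\sf symbolic}}(A,B)$.

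One repair is needed. Your inductive invariant must be stated for all elements of $\spn(\mathcal{P})$ and $\spn(\mathcal{P}^{(j-1)})$, not only for generators. In the reverse direction, the new function $P'$ is only $D_j$-close to some $S\in\spn(\mathcal{P}^{(j-1)})$; in Case 2 of the claim, $S$ is a product of possibly many old generators. A generator-level invariant would then only place $S$ within $\card{\mathcal{P}^{(j-1)}}(D_1+\ldots+D_{j-1})$ of $\spn(\mathcal{P})$, which is exactly the loss you wanted to avoid. The span-level invariant is maintained by your single-substitution argument, and with it the induction closes as you claim.
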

    \begin{proof}
        %If ${\sf rk}(\mathcal{P})\geq D_1$ we are done, so assume otherwise. 
        %Applying Claim~\ref{claim:high_rank_process} we find a collection $\mathcal{P}_1'$ as therein. 
        %It follows that for each $P\in \spn(\mathcal{P})$, $\Delta_{{\sf symbolic}}(P,\spn(\mathcal{P}_1'))\leq D_1$, and that for each $P'\in\spn(\mathcal{P}_1')$, $\Delta_{{\sf symbolic}}(P',\spn(\mathcal{P}))\leq D_1$. 
        Starting with $\mathcal{P}_0'=\mathcal{P}$ and $i=1$, we iterate the following process:
        \begin{itemize}
            \item If ${\sf rk}(\mathcal{P}'_{i-1})\geq D_i$, halt.
            \item Else, apply Claim~\ref{claim:high_rank_process} on $\mathcal{P}_{i-1}'$ to find $\mathcal{P}'_i$ as therein. Thus, we get that for each $P\in \spn(\mathcal{P}_{i-1}')$ and $P'\in \spn(\mathcal{P}_{i}')$ we have \[
            \Delta_{{\sf symbolic}}(P,\spn(\mathcal{P}_i'))\leq D_i,
            \qquad\qquad
            \Delta_{{\sf symbolic}}(P',\spn(\mathcal{P}_{i-1}'))\leq D_i.
            \]
            Increment $i$ and iterate.
        \end{itemize}
        Note that as $\sum\limits_{P\in\mathcal{P}_i'}{\sf ord}(P)
        <\sum\limits_{P\in\mathcal{P}_{i-1}'}{\sf ord}(P)$, the process terminates at some $i\leq k$, and we take $\mathcal{P}' = \mathcal{P}_{i-1}'$. The first item automatically holds. For the second item fix $P\in\spn(\mathcal{P})$. Applying the second bullet of the process we get functions $P_j\in \spn(\mathcal{P}_{j}')$ such that 
        \[
        \Delta_{\sf symbolic}(P, P_1')\leq D_1,
        \qquad\qquad
        \Delta_{\sf symbolic}(P_{j}', P_{j+1}')\leq D_j~\forall j\leq i-2.
        \]
        Combining via Fact~\ref{fact:symb_dist_prop} gives that $\Delta_{\sf symbolic}(P,P_{i-1}')\leq D_1+\ldots+D_{i-1}$, so the second item holds. The proof of the third item is identical.
    \end{proof}
    \subsection{Achieving a High Softrank}\label{sec:high_softrank}
    In this section we show how to process a given collection of product functions $\mathcal{P}$ into a high softrank collection. This is captured by the following lemma.
    \begin{lemma}\label{lem:softrank_gain}
        Let $\Sigma$ be a finite alphabet, let $\mu$ be a distribution over $\Sigma^3$, let $\mathcal{P}$ be a collection of $M$-discrete cyclic product functions and set $k = M^{3\card{\mathcal{P}}}$. Then for all integers $D_1\leq D_2\leq\ldots\leq D_k$ we may find $1\leq i\leq k$ and a subset $J\subseteq [n]$ such that the following holds:
        \begin{enumerate}
            \item $\card{J}\leq D_1+\ldots+D_{i-1}$.
            \item ${\sf softrk}_{\mu}(\mathcal{P}|_{\bar{J}})\geq D_{i}$.
        \end{enumerate}
    \end{lemma}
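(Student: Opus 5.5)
We will run an iterative projection process, in the spirit of the proof of Lemma~\ref{lem:rank_gain}, and control its length by a potential: the size of the finite group generated by the lifted collection modulo functions that have become constant. Start with $J_0=\emptyset$ and $i=1$. At step $i$, look at $\mathcal{D}^{(i-1)}=\mathcal{D}_{\mu}(\mathcal{P}|_{\bar{J}_{i-1}},\mathcal{P}|_{\bar{J}_{i-1}},\mathcal{P}|_{\bar{J}_{i-1}})$. If ${\sf softrk}(\mathcal{D}^{(i-1)})\geq D_i$, halt and output $J=J_{i-1}$. Otherwise there are integers $\{\alpha_{P'}\}$ such that $S=\prod_{P'\in\mathcal{D}^{(i-1)}}P'^{\alpha_{P'}}$ is non-constant but $\Delta_{{\sf symbolic}}(S,1)<D_i$. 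Then there is a set $K_i\subseteq[n]$ with $\card{K_i}<D_i$ such that, on ${\sf supp}(\mu)^n$, $S$ is a product of coordinate functions that are constant outside $K_i$. Set $J_i=J_{i-1}\cup K_i$, so $\card{J_i}\leq D_1+\ldots+D_i$. Projecting away $K_i$ makes $S|_{\bar{J}_i}$ a constant function; under the normalization at $\sigma^\star$ it is the constant $1$. Here we use that projecting each $P\in\mathcal{P}$ to $\bar{J}_i$ commutes with lifting and with taking products: the lift of $P|_{\bar{J}_i}$ is the lift of $P$ restricted to $\bar{J}_i$.

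For the potential, consider the group homomorphism $\Phi_i\colon \prod_{\text{3 copies}}\prod_{P\in\mathcal{P}}\mathbb{Z}_{{\sf ord}(P)}\to\{\text{functions } {\sf supp}(\mu)^n\to\mathbb{D}\}/\{\text{constants}\}$. It sends an exponent vector $(\alpha)$ to the class of $\prod P'^{\alpha_{P'}}$ computed with the collection projected to $\bar{J}_i$. This map is well defined because ${\sf ord}(P|_{\bar J})$ divides ${\sf ord}(P)$. Its domain has size at most $M^{3\card{\mathcal{P}}}=k$. Call an exponent vector \emph{trivial at stage $i$} if it lies in $\ker\Phi_i$. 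Two facts drive the argument:
\begin{enumerate}
\item Projecting to a smaller coordinate set preserves triviality. If $\prod P'^{\alpha_{P'}}$ is constant, its normalized coordinate factors are all constant, hence all equal to $1$, and they stay $1$ after dropping more coordinates. So $\ker\Phi_{i-1}\subseteq\ker\Phi_i$.
\item The witnessing vector $(\alpha)$ at step $i$ was not in $\ker\Phi_{i-1}$, since $S$ was non-constant, but it lies in $\ker\Phi_i$.
\end{enumerate}
Thus the kernels form a strictly increasing chain of subgroups of a group of order at most $k$. Such a chain has fewer than $k$ strict inclusions, since the orders at least double each time. So the process halts at some $i\leq k$, and at that point ${\sf softrk}_{\mu}(\mathcal{P}|_{\bar J})\geq D_i$ with $\card{J}\leq D_1+\ldots+D_{i-1}$.

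The main obstacle is making the first step rigorous. We need to turn small symbolic distance of a product function over ${\sf supp}(\mu)^n$ into a coordinate set $K$ of size below $D_i$ such that the product becomes exactly constant after every projected function has its coordinates in $K$ deleted. The delicate point is the normalization. Symbolic distance allows the coordinate factors $u_i$ of $S$ to differ from $1$ by unit constants off $K$, and those constants multiply to some global constant. So after projection we only get that $S|_{\bar{K}}$ is a constant, not literally $1$. We must check that projection, defined via the normalized decompositions $P_i$ and lifted coordinatewise, yields exactly $\prod_{j\notin K} S_j$ with each $S_j$ constant. This holds because each normalized lifted coordinate factor of $S$ is a product of normalized factors of the $P$'s, and for $j\notin K$ it equals a unit constant, so it is constant. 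Constants are allowed by the definition of softrank, so this suffices. We also use the convention that in the lifted collection a constant function cannot be reintroduced as nonconstant, which is exactly fact (1) above.
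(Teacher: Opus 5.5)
Your proposal is correct and takes essentially the same route as the paper: you iteratively project away the small coordinate set that witnesses a softrank violation, and you bound the number of rounds by the monotone growth of the set of exponent vectors (mod orders) whose lifted product has become trivial. Your bookkeeping modulo constants, as a strictly increasing chain of kernels, is slightly more careful than the paper's count of exponent vectors with product $\equiv 1$, which tacitly relies on the normalization forcing constant coordinate factors to equal $1$; your version also gives termination within $\log_2 k+1$ rounds.
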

    \begin{proof}
        Start with $i=1$ and $\mathcal{P}_1 = \mathcal{P}$. As long as ${\sf softrk}_{\mu}(\mathcal{P}_i)<D_i$, by definition we may find integers $\{\alpha_P,\beta_{P},\gamma_{P}\}_{P\in\mathcal{P}}$ such that the function 
        $R\colon {\sf supp}(\mu)\to\mathbb{D}$ defined as
        \[
        R(x,y,z) = \prod\limits_{P\in\mathcal{P}}P(x)^{\alpha_P}P(y)^{\beta_P}P(z)^{\gamma_P}
        \] 
        satisfies that $R\not\equiv 1$ but $\Delta_{{\sf symbolic}}(R,1)\leq D_i$. We may find $J_i\subseteq [n]$ of size at most $D_i$ such that $R|_{\bar{J}_i}\equiv 1$, define $\mathcal{P}_{i+1} = \mathcal{P}|_{\bar{J}_i}$, increment $i$ by $1$ and iterate. 

        We note that if for integers $\{\alpha_P',\beta_{P}',\gamma_{P}'\}_{P\in\mathcal{P}}$ we have that
        \[
        \prod\limits_{P\in\mathcal{P}}P(x)^{\alpha_P'}P(y)^{\beta_P'}P(z)^{\gamma_P'}\equiv 1,
        \]
        in $\mathcal{P}_i$, then this remains true for the corresponding functions in $\mathcal{P}_{i+1}$. Thus, each step of the iterative process increases the number of $\{\alpha_P',\beta_{P}',\gamma_{P}'\}_{P\in\mathcal{P}}$ as above by at least $1$, so the process terminates within $k$ steps. Let $i$ be the index we stop with, and set $J = J_1\cup\ldots \cup J_{i-1}$. Then $\card{J}\leq D_1+\ldots+D_{i-1}$ so the first item holds, and $\mathcal{P}_i = \mathcal{P}|_{\bar{J}}$ so the second item holds.
    \end{proof}

    \subsection{Achieving High Rank and High Soft Rank}\label{sec:high_bothrank}
    We now combine Lemmas~\ref{lem:rank_gain} and~\ref{lem:softrank_gain} to get the following result:
    \begin{lemma}\label{lem:bothrank_gain}
        Let $\Sigma$ be a finite alphabet, let $\mu$ be a distribution over $\Sigma^3$, let $\mathcal{P}$ be a collection of $M$-discrete cyclic product functions. Then there exists $k = k(\card{\mathcal{P}},M)$
        such that for all integers $D_1\leq D_2\leq\ldots\leq D_k$ we may find $1\leq i\leq k$ and a collection $\mathcal{P}'$ of $M$-discrete cyclic product functions with $|\mathcal{P}'|\leq \card{\mathcal{P}}$ such that the following holds:
        \begin{enumerate}
            \item ${\sf softrk}_{\mu}(\mathcal{P}')$ and ${\sf rk}(\mathcal{P}')$ are both at least $D_i -
        (D_1+\ldots+D_{i-1})$.
            \item For each $P\in\mathcal{P}$, we have that 
            $\Delta_{{\sf symbolic}}(P,\spn(\mathcal{P}'))\leq 2(D_1+\ldots+D_{i-1})$.
            \item For each $P'\in\mathcal{P}'$, we have that 
            $\Delta_{{\sf symbolic}}(P',\spn(\mathcal{P}))\leq 2(D_1+\ldots+D_{i-1})$.
        \end{enumerate}
    \end{lemma}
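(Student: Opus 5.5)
The plan is to run an alternating iteration that combines Lemma~\ref{lem:rank_gain} and Lemma~\ref{lem:softrank_gain}, with a potential function that bounds the number of rounds in terms of $\card{\mathcal{P}}$ and $M$ only. Each round first applies the softrank process and then the rank process. The softrank step never increases a potential, and the rank step strictly decreases it unless we halt.

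The first ingredient is the effect of projection. If $\mathcal{Q}$ is a collection of product functions and $J\subseteq[n]$, then each $Q\in\mathcal{Q}$ satisfies $\Delta_{\sf symbolic}(Q,Q|_{\bar J})\leq\card{J}$, and projection commutes with taking products. Two consequences follow:
\[
{\sf rk}(\mathcal{Q}|_{\bar J})\geq {\sf rk}(\mathcal{Q})-\card{J},
\]
since a low-weight combination in the projection lifts to a combination of weight at most $+\card{J}$. The same lifting argument, applied in the ${\sf supp}(\mu)^n$ lifting, gives ${\sf softrk}_\mu(\mathcal{Q}|_{\bar J})\geq{\sf softrk}_\mu(\mathcal{Q})-\card{J}$, provided that a combination which is nonconstant after projection was also nonconstant before. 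It was, because projection can only kill dependencies.

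For the potential I take the pair $(\sum_{P}{\sf ord}(P),\ N)$ ordered lexicographically, where $N$ is the number of integer relations (mod orders) in $\mathcal{D}_\mu$ that are identically $1$, as in the proof of Lemma~\ref{lem:softrank_gain}. The number of relations is bounded by $M^{3\card{\mathcal{P}}}$, so the total number of steps is at most some $k(\card{\mathcal{P}},M)$.

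The process runs as follows. Given the current $\mathcal{P}_j$ and the next threshold $D_i$, I check both ranks. If ${\sf rk}(\mathcal{P}_j)<D_i$, I apply Claim~\ref{claim:high_rank_process}, which strictly lowers $\sum{\sf ord}$ and moves every function by symbolic distance at most $D_i$ in each direction. Otherwise, if ${\sf softrk}_\mu(\mathcal{P}_j)<D_i$, I apply one step of the softrank process: project away a set $J$ of size at most $D_i$. This moves functions by at most $D_i$, keeps orders from increasing (projection can only lower order), and increases $N$ whenever the order sum stays fixed.

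One subtlety concerns projected functions that might stop being cyclic or might become constant. A projection of a cyclic function of order $p^r$ has order dividing $p^r$, so it stays cyclic. Constant functions can be discarded, which only decreases the potential. When neither rank is below $D_i$, I halt and output $\mathcal{P}'=\mathcal{P}_j$. Both ranks are then at least $D_i$, which is even better than the stated bound $D_i-(D_1+\dots+D_{i-1})$. That slack covers any variant in which the steps are batched, for instance running full Lemma~\ref{lem:softrank_gain} and then full Lemma~\ref{lem:rank_gain}, where the later projection costs rank $\card{J}$ by the first ingredient.

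Items 2 and 3 follow from chaining distances with Fact~\ref{fact:symb_dist_prop}. Each step moves every element of the span by at most $D_{i'}$ in both directions, so the total is at most $D_1+\dots+D_{i-1}$. This is within the stated factor $2$, which accommodates counting a projection step and a rank step separately in each round.

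The main obstacle is the monotonicity of the potential across interleaved steps. I must make sure a rank step does not destroy progress in $N$ in a way that allows cycling. The lexicographic order handles this because rank steps strictly decrease the first coordinate. I still need to check that after a rank-step replacement, $N$ is recomputed but bounded by $M^{3\card{\mathcal{P}}}$ uniformly, which gives $k\leq M\card{\mathcal{P}}\cdot(M^{3\card{\mathcal{P}}}+1)$.
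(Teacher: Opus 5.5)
Your proof is correct, but it takes a different route from the paper's. The paper uses Lemmas~\ref{lem:rank_gain} and~\ref{lem:softrank_gain} as black boxes, sequentially:
\begin{itemize}
\item It first runs the rank process with the thinned-out thresholds $E_j=D_{jk_2}$.
\item It then runs the softrank process on the output with thresholds $E'_\ell=D_{(j-1)k_2+\ell}$.
\item The final projection costs at most $\card{J}$ in rank. This is exactly where the weaker bound $D_i-(D_1+\dots+D_{i-1})$ in item 1 comes from.
\item The index scheduling guarantees that the rank threshold $D_{jk_2}$ dominates $D_i=D_{(j-1)k_2+\ell}$.
\end{itemize}

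You instead interleave single steps of the two processes, Claim~\ref{claim:high_rank_process} and one projection, in a single loop. You control termination with the lexicographic potential $\bigl(\sum_P{\sf ord}(P),\,-N\bigr)$, and you halt only when both ranks clear the current threshold at the same time. This forces you to reopen both iterations and check how they interact. You need that projection keeps functions $M$-discrete and cyclic, so the claim can be reapplied afterwards. You need that projection never increases orders, since ${\sf ord}(P|_{\bar J})$ divides ${\sf ord}(P)$. And you need that, when orders are unchanged, every relation counted by $N$ survives projection while the violating one is added. You verify all of these correctly.

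In return you get a cleaner conclusion: both ranks are at least $D_i$, and the distances are at most $D_1+\dots+D_{i-1}$, with no factor $2$. Your ``first ingredient'' about rank loss under projection then becomes unnecessary. The paper's route is more modular, needing only that one-line projection estimate plus index bookkeeping.

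One small point: your explicit value of $k$ does not match a direct count. There are at most $M\card{\mathcal{P}}$ order-decreasing steps, and at most $M^{3\card{\mathcal{P}}}-1$ consecutive $N$-increasing steps between them. This gives $k=(M\card{\mathcal{P}}+1)M^{3\card{\mathcal{P}}}$, which coincides with the paper's $k_1k_2$. Only the existence of $k(\card{\mathcal{P}},M)$ matters, so this does not affect the argument.
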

    \begin{proof}
        Take $k_1$ and $k_2$ from Lemmas~\ref{lem:rank_gain} and~\ref{lem:softrank_gain} and set $k=k_1\cdot k_2$. Consider the sequence $E_1\leq E_2\leq\ldots\leq E_{k_1}$ given as $E_j = D_{j k_2}$. Then by Lemma~\ref{lem:rank_gain} we may find a collection $\widetilde{\mathcal{P}}$ and $j$ as therein. In particular, 
        ${\sf rk}(\widetilde{\mathcal{P}})\geq E_j = D_{j\cdot k_2}$ and 
        the second and third item there hold with the right hand side being $E_1+\ldots+E_{j-1}\leq D_1+\ldots+D_{(j-1)\cdot k_2}$. Consider the sequence $E_1'\leq E_2'\leq\ldots\leq E_{k_2}'$ given by $E_\ell' = D_{(j-1)k_2+\ell}$. Then applying Lemma~\ref{lem:softrank_gain} we may find an $\ell$ and a subset $J$ of size at most $E_1'+\ldots + E_{\ell-1}'$ such that ${\sf softrk}_{\mu}(\widetilde{\mathcal{P}}|_{\bar{J}})\geq E_{\ell}'$. 
        
        We take $\mathcal{P}' = \widetilde{\mathcal{P}}|_{\bar{J}}$ and $i=(j-1)k_2+\ell$. Then ${\sf softrk}_{\mu}(\mathcal{P}') = {\sf softrk}_{\mu}(\widetilde{P}|_{\bar{J}})\geq E_{\ell}' = D_{i}$ and 
        \[
        {\sf rk}(\mathcal{P}')
        \geq 
        {\sf rk}(\widetilde{\mathcal{P}})-\card{J}
        \geq D_{j k_2}
        -
        (D_1+\ldots+D_{i-1})
        \geq D_i-
        (D_1+\ldots+D_{i-1}),
        \]
        so the first item of the lemma holds. For the second item, for $P\in\mathcal{P}$ we have
        \begin{align*}
        \Delta_{{\sf symbolic}}(P,\spn(\mathcal{P}'))
        \leq 
        \Delta_{{\sf symbolic}}(P,\spn(\widetilde{\mathcal{P}}))+\card{J}
        &\leq E_1+\ldots+E_{j-1}+E_1'+\ldots+E_{\ell-1}'\\
        &\leq 
        2(D_1+\ldots+D_{i-1}).
        \end{align*}
        The proof of the third item is identical.
    \end{proof}
    
    \subsection{Quasirandomness of High-rank Collections}\label{sec:quasirandom_props}
In this section, we prove a few lemmas that show the utility of having a collection of product functions $\mathcal{P}$ with high rank.

The first lemma shows that if $\mathcal{P}$ has a high rank, then the distribution of $\{ P(x)\}_{P\in\mathcal{P}}$ where $x\sim 
\nu^{\otimes n}$ is close to the uniform distribution on $\prod\limits_{P\in\mathcal{P}}{\sf Image}(P)$.
      \begin{lemma}
    \label{lemma:quasirandom_onefunc}
		Let $\Sigma$ be an alphabet of size $m$ and let $\nu$ be a distribution over $\Sigma$ in which the probability of each atom is at least $\alpha$.
		Suppose that $\mathcal{P}$ is a collection of $M$-discrete, $n$-variate product functions with $|\mathcal{P}|=r$, and suppose that ${\sf rk}(\mathcal{P})\geq D$. Then for any $\vec{b}\in\prod\limits_{P\in\mathcal{P}}{\sf Image}(P)$ we have that
		\[
		\card{\Prob{x \sim\nu^{\otimes n}}{\forall P\in \mathcal{P}, P(x)= b_P}
		- \prod\limits_{P\in\mathcal{P}}\frac{1}{{\sf ord}(P)}}\leq 2^{-\Omega_{M,r,\alpha}(D)}.
		\]
	\end{lemma}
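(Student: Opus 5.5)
Fourier expansion of the indicator over the finite abelian group generated by the values, then rank-based decay of the non-trivial terms.

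Write $N_P={\sf ord}(P)$; each $P\in\mathcal{P}$ takes values in the $N_P$-th roots of unity. I will read ${\sf Image}(P)$ as the full group $\mu_{N_P}$ of $N_P$-th roots of unity; for a nonconstant $P$ of high rank the image is indeed essentially all of it. Expanding the indicator of the event $P(x)=b_P$ over this cyclic group gives
\[
1_{P(x)=b_P}=\frac{1}{N_P}\sum_{a=0}^{N_P-1}\left(\overline{b_P}\,P(x)\right)^{a}.
\]

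Multiplying over $P\in\mathcal{P}$ and taking expectation over $x\sim\nu^{\otimes n}$ yields
\[
\Prob{x}{\forall P,\ P(x)=b_P}=\prod_{P}\frac{1}{N_P}\sum_{\vec a}\prod_P\overline{b_P}^{\,a_P}\;\Expect{x}{S_{\vec a}(x)},
\qquad S_{\vec a}=\prod_P P^{a_P},
\]
where $\vec a$ ranges over tuples with $0\le a_P<N_P$.

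The term $\vec a=\vec 0$ contributes exactly $\prod_P 1/N_P$. For every other $\vec a$, Definition~\ref{def:rank} gives $\Delta_{{\sf symbolic}}(S_{\vec a},1)\geq D$. The function $S_{\vec a}$ is a product function whose order divides ${\rm lcm}_P N_P\le M^{r}$, so it is $M^r$-discrete. Fact~\ref{fact:highrankdecay}, first item, then gives
\[
\card{\Expect{x}{S_{\vec a}(x)}}\le 2^{-\Omega_{\alpha,M^r}(D)}=2^{-\Omega_{M,r,\alpha}(D)}.
\]
There are at most $M^r$ tuples $\vec a$, and the coefficients $\prod_P 1/N_P$ and $\overline{b_P}^{\,a_P}$ have modulus at most $1$. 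So the total error is at most $M^r\,2^{-\Omega_{M,r,\alpha}(D)}$, and this is $2^{-\Omega_{M,r,\alpha}(D)}$ once $D$ is large. For small $D$ the statement is trivial, since the difference is always at most $1$ and the constant can absorb it.

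The main subtlety is the image issue: the statement quantifies over $\vec b$ in ${\sf Image}(P)$ yet subtracts $1/{\sf ord}(P)$. One must check that the Fourier inversion is over $\mu_{N_P}$, which holds because $P^{N_P}\equiv1$, and not over the possibly smaller set of attained values. The computation above does exactly this, so it covers every $b_P\in\mu_{N_P}$, and in particular every $b_P\in{\sf Image}(P)$.

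The other point to verify is that the discreteness parameter fed into Fact~\ref{fact:highrankdecay} depends only on $M$ and $r$; the bound ${\rm lcm}_P N_P\le M^r$ settles this. No other obstacle is expected.
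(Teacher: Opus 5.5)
Your proposal is correct and follows the same route as the paper: expand each indicator as a roots-of-unity average, isolate the $\vec a=\vec 0$ term, and bound every other term using the rank hypothesis together with the first item of Fact~\ref{fact:highrankdecay}. One bookkeeping point in your version is actually tighter than the paper's: $S_{\vec a}$ is only guaranteed to be $\mathrm{lcm}_P\,{\sf ord}(P)\le M^r$-discrete, not $M$-discrete as the paper implicitly uses. This only changes the hidden constant to depend on $M,r,\alpha$.
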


    \begin{proof}
 Denote $\mathcal{P} = \{ P_1, P_2, \ldots, P_r\}$. Then we may expand:
      \begin{align*}
          \Prob{x\sim\nu^{\otimes  n}}{\forall P\in \mathcal{P}, P(x)= b_P} &= \Expect{{x\sim\nu^{\otimes  n}}}{\prod_{P\in \mathcal{P}}\left( \frac{1}{{\sf ord}(P)} \sum_{j=0}^{{\sf ord}(P)-1}\left(\overline{b_P}P(x)\right)^j\right)}\\
          &\quad\quad= \sum_{\vec{j} = (j_1, \ldots, j_r)}  \left(\prod_{i=1}^r \frac{\overline{b_P}^{j_i}}{{\sf ord}(P_i)}\right) \Expect{x\sim\nu^{\otimes  n}}{\prod_{i=1}^r P_i(x)^{j_i}}\\
      \end{align*}
      Denote $S_{\vec{j}}(x):= \prod_{i=1}^r P_i(x)^{j_i}$. 
     We now analyze the expectation for the following two different scenarios. 
     \begin{enumerate}
         \item[(a)] $\vec{j}  = \vec{0}$: In this case, we have that the expectation is $1$.
         
         \item[(b)] $\vec{j}  \neq \vec{0}$: In this case, $\Delta_{{\sf symbolic}}(S_{\vec{j}},1)\geq D$ since ${\sf rk}(\tilde{\mathcal{P}})\geq D$. Using Fact~\ref{fact:highrankdecay} (1), we have 
         \[
         \card{\Expect{x\sim\nu^{\otimes  n}}{\prod_{i=1}^r P_i(x)^{j_i}}} \leq 2^{-\Omega_{\alpha, M}(D)}.
         \]
     \end{enumerate}
        Using the above, summing over all $\vec{j}$, we get that
\[
		\card{\Prob{x \sim\nu^{\otimes n}}{\forall P\in \mathcal{P}, P(x)= b_P}
		- \prod\limits_{P\in\mathcal{P}}\frac{1}{{\sf ord}(P)}}\leq 2^{-\Omega_{M,r,\alpha}(D)}.
        \qedhere
		\]

    \end{proof}

    The next lemma shows that if the collection of product functions $\mathcal{P}$ has high rank and softrank with respect to a distribution $\mu$ over $\Sigma^3$, then 
    %the tuple $(P(x), P(y), P(y))$ has a quasirandom behavior, where $(x, y, z)\sim \mu^{\otimes n}$. Note that this behavior is weaker than the one in Lemma~\ref{lemma:quasirandom_onefunc}. More specifically, 
    for any $\vec{b}\in\prod\limits_{P\in\mathcal{P}}{\sf Image}(P)$, the probability that the product functions evaluate to $\vec{b}$ on each one of $x, y, z$, where $(x,y,z)\sim \mu^{\otimes n}$, is non-negligible.
    \begin{lemma}
        Let $\Sigma$ be an alphabet of size $m$ and let $\mu$ be a distribution over $\Sigma^{3}$ such that $(x,x,x)\in {\sf supp}(\mu)$ for all $x\in\Sigma$, and in which the probability of each atom is at least $\alpha$.
        Suppose that $\mathcal{P}$ is a collection of $M$-discrete, $n$-variate product functions, and suppose that ${\sf rk}(\mathcal{P}),{\sf softrk}_{\mu}(\mathcal{P})\geq D$. 
        For any $\vec{b}\in\prod\limits_{P\in\mathcal{P}}{\sf Image}(P)$ we have that
        \[
        \Prob{(x,y,z)\sim\mu^{\otimes n}}{P(x)=P(y)=P(z) = b_P}
        \geq \prod\limits_{P\in\mathcal{P}}\frac{1}{{\sf ord}(P)^3}-2^{-\Omega_{M,m,\alpha}(D)}.
        \]
    \end{lemma}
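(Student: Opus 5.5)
The plan is to repeat the Fourier expansion from the proof of Lemma~\ref{lemma:quasirandom_onefunc}, but now on the lifted domain ${\sf supp}(\mu)^n$ with the lifted collection $\mathcal{D}_{\mu}(\mathcal{P},\mathcal{P},\mathcal{P})$. We then use softrank, rank and the diagonal atoms of $\mu$ to control each term.

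Write $\mathcal{P}=\{P_1,\ldots,P_r\}$ and expand the indicator of the event as
\[
\prod_{i=1}^{r}\frac{1}{{\sf ord}(P_i)^3}\sum_{a_i,b_i,c_i=0}^{{\sf ord}(P_i)-1}\overline{b_{P_i}}^{\,a_i+b_i+c_i}P_i(x)^{a_i}P_i(y)^{b_i}P_i(z)^{c_i}.
\]
Taking expectation over $(x,y,z)\sim\mu^{\otimes n}$, each term becomes $\E[S_{\vec a,\vec b,\vec c}]$, where
\[
S_{\vec a,\vec b,\vec c}(x,y,z)=\prod_i P_i(x)^{a_i}P_i(y)^{b_i}P_i(z)^{c_i}.
\]
This $S$ is an $M$-discrete product function on ${\sf supp}(\mu)^n$, and it lies in $\spn$ of $\mathcal{D}_{\mu}$. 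Since ${\sf softrk}_{\mu}(\mathcal{P})\geq D$, each $S$ is either constant on ${\sf supp}(\mu)^n$, or satisfies $\Delta_{{\sf symbolic}}(S,1)\geq D$.

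\textbf{Non-constant terms.} In the second case, apply Fact~\ref{fact:highrankdecay}(1) over the alphabet ${\sf supp}(\mu)$, of size at most $m^3$, with distribution $\mu$, whose atoms are at least $\alpha$. This gives $|\E[S]|\leq 2^{-\Omega_{\alpha,M}(D)}$. There are at most $M^{3r}$ such terms, and each coefficient has modulus at most $\prod_i {\sf ord}(P_i)^{-3}$. So these terms contribute at most $2^{-\Omega(D)}$ in total. As in Lemma~\ref{lemma:quasirandom_onefunc}, the implicit constant also depends on $r=|\mathcal{P}|$, and I would absorb it the same way.

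\textbf{Constant terms.} This is the key step: I will show that every constant term contributes exactly $+\prod_i{\sf ord}(P_i)^{-3}$, so there are no cancellations. Suppose $S$ is constant, equal to $c$. Because $(w,w,w)\in{\sf supp}(\mu)$ for every $w\in\Sigma$, we may restrict $S$ to diagonal inputs. This gives
\[
c=\prod_i P_i(w)^{a_i+b_i+c_i}=:Q(w)\qquad\text{for all } w\in\Sigma^n,
\]
so $Q$ is constant on $\Sigma^n$. By the normalization $P_i(\sigma^\star,\ldots,\sigma^\star)=1$, we get $c=Q(\sigma^\star,\ldots,\sigma^\star)=1$, hence $Q\equiv 1$ and $\Delta_{{\sf symbolic}}(Q,1)=0<D$. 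Since ${\sf rk}(\mathcal{P})\geq D$ (assuming $D\geq 1$), the reduced exponents $(a_i+b_i+c_i)\bmod{\sf ord}(P_i)$ must all vanish. Therefore the coefficient satisfies $\prod_i\overline{b_{P_i}}^{\,a_i+b_i+c_i}=1$, because $b_{P_i}^{{\sf ord}(P_i)}=1$. So each constant term contributes exactly $\prod_i{\sf ord}(P_i)^{-3}\cdot 1$, which is nonnegative.

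\textbf{Conclusion.} The trivial term $\vec a=\vec b=\vec c=0$ is among the constant terms, so the constant terms contribute at least $\prod_i{\sf ord}(P_i)^{-3}$. Subtracting the $2^{-\Omega(D)}$ error from the non-constant terms gives the claimed bound.

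I expect the main subtlety to be the constant-term analysis. One has to see that softrank only yields constancy, and that the diagonal atoms together with the normalization are what force the constant to be $1$ and the coefficient to be $1$. The rest is the same bookkeeping as in Lemma~\ref{lemma:quasirandom_onefunc}.
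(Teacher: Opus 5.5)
Your proposal is correct and is essentially the paper's argument. The paper proves this as the special case $J=\emptyset$ of Lemma~\ref{lemma:quasirandom_prop_2}, using the same character expansion, the same appeal to Fact~\ref{fact:highrankdecay} for terms with $\Delta_{{\sf symbolic}}(S,1)\geq D$, and the same diagonal-plus-rank argument showing that every constant term contributes exactly $\prod_{P}{\sf ord}(P)^{-3}$; you make explicit the normalization step forcing the constant to equal $1$, which the paper leaves implicit.

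One small remark: no $r$-dependence needs to be absorbed, because there are exactly $\prod_i{\sf ord}(P_i)^3$ terms, each with a coefficient of modulus $\prod_i{\sf ord}(P_i)^{-3}$, so the total error is at most $\max_S|\E[S]|$. Also, $S$ is only $\mathrm{lcm}(1,\ldots,M)$-discrete rather than $M$-discrete, but this still gives $2^{-\Omega_{M,\alpha}(D)}$.
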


    We prove a more general version of the above lemma in which the conclusion holds even after conditioning on a small subset of coordinates taking values from the support of $\mu$.

    \begin{lemma}
    \label{lemma:quasirandom_prop_2}
		Let $\Sigma$ be an alphabet of size $m$ and let $\mu$ be a distribution over $\Sigma^{3}$ such that $(x,x,x)\in {\sf supp}(\mu)$ for all $x\in\Sigma$, and in which the probability of each atom is at least $\alpha$.
		Suppose that $\mathcal{P}$ is a collection of $M$-discrete, $n$-variate product functions with $|\mathcal{P}|=r$, and suppose that ${\sf rk}(\mathcal{P}),{\sf softrk}_{\mu}(\mathcal{P})\geq D$. Suppose $J\subseteq [n]$ of size $|J|\leq \frac{D}{2}$. Then for every $(w, w', w'') \in {\sf supp}(\mu|_{J})$ and 
		for any $\vec{b}\in\prod\limits_{P\in\mathcal{P}}{\sf Image}(P)$ we have that
		\[
		\Prob{(x,y,z)\sim\mu^{\bar{J}}}{\forall P\in \mathcal{P}, P(w,x)=P(w', y)=P(w'', z) = b_P}
		\geq \prod\limits_{P\in\mathcal{P}}\frac{1}{{\sf ord}(P)^3}-2^{-\Omega_{M,r,\alpha}(D)}.
		\]
	\end{lemma}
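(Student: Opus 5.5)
The plan is to expand the indicator of the event in characters, exactly as in the proof of Lemma~\ref{lemma:quasirandom_onefunc}, but to group the resulting terms by whether the corresponding combination of lifted functions is constant on ${\sf supp}(\mu)^n$. Write $N=\prod_{P}{\sf ord}(P)^3$. Each $b_P$ is an ${\sf ord}(P)$-th root of unity. So for each $P$ the indicator of $P(w,x)=P(w',y)=P(w'',z)=b_P$ equals
\[
\frac{1}{{\sf ord}(P)^3}\sum_{j,k,l=0}^{{\sf ord}(P)-1}\left(\overline{b_P}P(w,x)\right)^{j}\left(\overline{b_P}P(w',y)\right)^{k}\left(\overline{b_P}P(w'',z)\right)^{l}.
\]
Multiplying over $P\in\mathcal{P}$ and taking expectation over $(x,y,z)\sim\mu^{\bar J}$ writes the probability as $\frac1N\sum_{h}\overline{b}^{h}\,\mathbb{E}[R_h]$. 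Here $h=(j_P,k_P,l_P)_P$ and $R_h(x,y,z)=\prod_P P(x)^{j_P}P(y)^{k_P}P(z)^{l_P}$ is a product function on ${\sf supp}(\mu)^n$ lying in $\spn(\mathcal{D}_\mu(\mathcal{P},\mathcal{P},\mathcal{P}))$. It is evaluated at the point whose $J$-part is $(w,w',w'')$ and whose $\bar J$-part is $(x,y,z)$. Since $R_h$ is a product function, it factors as $R_h|_J(w,w',w'')\cdot R_h|_{\bar J}(x,y,z)$.

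\textbf{Non-constant terms.} Suppose $R_h$ is not constant on ${\sf supp}(\mu)^n$. By ${\sf softrk}_\mu(\mathcal{P})\geq D$ we have $\Delta_{{\sf symbolic}}(R_h,1)\geq D$. Discarding the at most $|J|\leq D/2$ coordinates of $J$ gives $\Delta_{{\sf symbolic}}(R_h|_{\bar J},1)\geq D/2$. Now $R_h|_{\bar J}$ is $M^{3}$-discrete (in fact $M$-discrete after reducing exponents), and $\mu$ is a distribution on ${\sf supp}(\mu)$ with atoms of probability at least $\alpha$. Item~1 of Fact~\ref{fact:highrankdecay} applied over the alphabet ${\sf supp}(\mu)$ therefore gives $|\mathbb{E}[R_h|_{\bar J}]|\leq 2^{-\Omega_{M,\alpha}(D)}$. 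The prefactor $R_h|_J(w,w',w'')$ has modulus $1$. There are at most $M^{3r}$ such $h$, each with weight $1/N\le 1$, so these terms contribute in total at most $2^{-\Omega_{M,r,\alpha}(D)}$ in absolute value.

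\textbf{Constant terms.} Let $H$ be the set of $h$ for which $R_h\equiv c_h$ is constant on all of ${\sf supp}(\mu)^n$. Since $(w,w',w'')\in{\sf supp}(\mu|_J)$ and $(x,y,z)\in{\sf supp}(\mu)^{\bar J}$, the evaluation point lies in ${\sf supp}(\mu)^n$, so the term for $h\in H$ equals exactly $\frac1N\overline{b}^{h}c_h$. I claim $\overline{b}^{h}c_h=1$ for every $h\in H$. To see this, pick $u\in\Sigma^n$ with $P(u)=b_P$ for all $P$. Such a $u$ exists by Lemma~\ref{lemma:quasirandom_onefunc} once $D$ exceeds a constant depending on $M,r,\alpha$, since then the probability of this event is at least $\prod_P {\sf ord}(P)^{-1}-2^{-\Omega(D)}>0$. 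Because $(\sigma,\sigma,\sigma)\in{\sf supp}(\mu)$ for every $\sigma$, the point $(u,u,u)$ lies in ${\sf supp}(\mu)^n$. Hence
\[
c_h=R_h(u,u,u)=\prod_P b_P^{j_P+k_P+l_P}=b^{h},
\]
and so $\overline{b}^{h}c_h=1$. The constant terms therefore sum to $|H|/N\geq 1/N$, because $h=0$ lies in $H$.

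\textbf{Conclusion.} Combining the two parts, the probability is at least $\prod_P{\sf ord}(P)^{-3}-2^{-\Omega_{M,r,\alpha}(D)}$ whenever $D$ is above the threshold needed for $u$ to exist. When $D$ is below that threshold, the right-hand side is nonpositive after adjusting the implicit constant, so the bound holds trivially.

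The main subtlety I expect is precisely the constant terms: one must check that they do not cancel, and this is where the diagonal condition $(x,x,x)\in{\sf supp}(\mu)$ and the rank hypothesis (to realize $\vec b$ at a single point $u$) are used. The rest is bookkeeping with Fact~\ref{fact:highrankdecay}.
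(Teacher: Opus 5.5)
Your proof is correct and follows essentially the paper's argument: you expand the indicator in characters, use ${\sf softrk}_{\mu}(\mathcal{P})\geq D$ to separate constant terms from terms that stay at symbolic distance at least $D/2$ from $1$ after discarding $J$ (bounded via Fact~\ref{fact:highrankdecay}), and use the diagonal condition with the rank hypothesis to show each constant term is exactly $\prod_P{\sf ord}(P)^{-3}$. The only difference is in that last step: you evaluate the constant at a diagonal point $(u,u,u)$ realizing $\vec b$ (via Lemma~\ref{lemma:quasirandom_onefunc}), whereas the paper uses ${\sf rk}(\mathcal{P}|_{\bar J})$ to force $j_P+k_P+l_P\equiv 0 \pmod{{\sf ord}(P)}$ and then the softrank of the full collection to remove the $J$-part.

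One small correction: $R_h|_{\bar J}$ is in general only $\operatorname{lcm}(1,\ldots,M)$-discrete, not $M$- or $M^3$-discrete (functions of orders $2$ and $3$ multiply to order $6$). This is harmless, since the constant from Fact~\ref{fact:highrankdecay} still depends only on $M$.
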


    \begin{proof}
      Fix any $(w, w', w'') \in {\sf supp}(\mu|_{J})$ and $\vec{b}\in\prod\limits_{P\in\mathcal{P}}{\sf Image}(P)$. We begin by expressing the probability as follows. For the sake of ease of notation, in what follows, we let $o_P:={\sf ord}(P)$ and $\mathcal{P} = \{ P_1, P_2, \ldots, P_r\}$.
      \begin{align*}
          &\Prob{(x,y,z)\sim\mu^{\bar{J}}}{\forall P\in \mathcal{P}, P(w,x)=P(w',y)=P(w'',z) = b_P} \\
          &\quad\quad= \Expect{(x,y,z)\sim\mu^{\bar{J}}}{\prod_{P\in \mathcal{P}}\left( \frac{1}{o_P} \sum_{j=0}^{o_P-1}\left(\overline{b_P}P(w,x)\right)^j\right)\cdot \left( \frac{1}{o_P} \sum_{j=0}^{o_P-1}\left(\overline{b_P}P(w',y)\right)^j\right)\cdot \left( \frac{1}{o_P} \sum_{j=0}^{o_P-1}\left(\overline{b_P}P(w'',z)\right)^j\right)}\\
          &\quad\quad= \Expect{(x,y,z)\sim\mu^{ \bar{J}}}{\prod_{P\in \mathcal{P}} \frac{1}{o_P^3}\left(  \sum_{j, k, \ell=0}^{o_P-1}\left(\overline{b_P}^{(j+k+\ell)}P(w,x)^j P(w',y)^k P(w'',z)^\ell\right)\right)}\\
         &\quad\quad= \sum_{\substack{\vec{j} = (j_1, \ldots, j_r) \\ \vec{k} = (k_1, \ldots, k_r) \\ \vec{\ell} = (\ell_1, \ldots, \ell_r)}}\Expect{(x,y,z)\sim\mu^{\bar{J}}}{\left(  \prod_{i=1}^r \frac{1}{o_{P_i}^3}\left(\overline{b_{P_i}}^{(j_i +k_i+\ell_i)} P_i(w,x)^{j_i} P_i(w',y)^{k_i} P_i(w'',z)^{\ell_i}\right)\right)}
         \end{align*}
We let $\alpha_i = \prod_{i'\in J} P_{i,i'}(w_{i'})$ be the evaluation of $P_i$ at $w$ restricted to the coordinates in $J$. Similarly, let  $\beta_i = \prod_{i'\in J} P_{i,i'}(w'_{i'})$ and  $\gamma_i = \prod_{i'\in J} P_{i,i'}(w''_{i'})$. Also, let $\tilde{P} = P|_{\bar{J}}$ and $\tilde{\mathcal{P}} = \{ \tilde{P}_1, \tilde{P}_2, \ldots, \tilde{P}_r\}$. With these notations, we can rewrite the above expression as:

         \begin{align*}
          &\Prob{(x,y,z)\sim\mu^{\bar{J}}}{\forall P\in \mathcal{P}, P(w,x)=P(w',y)=P(w'',z) = b_P} \\
          &\quad\quad= \sum_{\substack{\vec{j} = (j_1, \ldots, j_r) \\ \vec{k} = (k_1, \ldots, k_r) \\ \vec{\ell} = (\ell_1, \ldots, \ell_r)}}  \left(\prod_{i=1}^r \frac{1}{o_{P_i}^3}\right)\underbrace{\left(\prod_{i=1}^r \left(\overline{b_{P_i}}^{(j_i +k_i+\ell_i)}\right) \alpha_i^{j_i} \beta_i^{k_i} \gamma_i^{\ell_i}\right) \Expect{(x,y,z)\sim\mu^{\bar{J}}}{\prod_{i=1}^r \tilde{P}_i(x)^{j_i} \tilde{P}_i(y)^{k_i} \tilde{P}_i(z)^{\ell_i}}}_{=:\Theta(\vec{j}, \vec{k}, \vec{\ell})}\\
      \end{align*}
     Let us denote the product function within the expectation by $S_{(\vec{j}, \vec{k}, \vec{\ell})}$, i.e., 
     $$ S_{(\vec{j}, \vec{k}, \vec{\ell})}(x, y, z):= \prod_{i=1}^r \tilde{P}_i(x)^{j_i} \tilde{P}_i(y)^{k_i} \tilde{P}_i(z)^{\ell_i}.$$
     As  ${\sf rk}(\mathcal{P}),{\sf softrk}_{\mu}(\mathcal{P})\geq D$ and $|J|\leq D/2$, we have ${\sf rk}(\tilde{\mathcal{P}}),{\sf softrk}_{\mu}(\tilde{\mathcal{P}})\geq D/2$. We now analyze the quantity $\Theta(\vec{j}, \vec{k}, \vec{\ell})$ for the following three different scenarios. 
     \begin{enumerate}
         \item[(a)] $\vec{j} = \vec{k} = \vec{\ell} = \vec{0}$: In this case, we have $\Theta(\vec{j}, \vec{k}, \vec{\ell})= 1$.
         
         \item[(b)] $\Delta_{{\sf symbolic}}(S_{(\vec{j}, \vec{k}, \vec{\ell})},1)\geq D/2$: In this case, using Fact~\ref{fact:highrankdecay} (1), we have $\card{\Theta(\vec{j}, \vec{k}, \vec{\ell})} \leq 2^{-\Omega_{\alpha, M}(D)}$.
         
        \item[(c)]  $S_{(\vec{j}, \vec{k}, \vec{\ell})}$ is a constant function: As ${\sf softrk}_{\mu}(\tilde{\mathcal{P}})\geq D/2$, this is the only remaining case. We now use the fact that ${\sf rk}(\tilde{\mathcal{P}})$ is also large. Since $(x, x, x)$ is in the support of $\mu$ and $S_{(\vec{j}, \vec{k}, \vec{\ell})}$ is a constant function, $S_{(\vec{j}, \vec{k}, \vec{\ell})}(x, x, x)$ is also a constant function. As ${\sf rk}(\tilde{\mathcal{P}})\geq D/2$, this can only happen if $j_i+k_i+\ell_i = 0 \mod o_{P_{i}}$ for all $1\leq i\leq r$. Thus, $S_{(\vec{j}, \vec{k}, \vec{\ell})}(x, x, x)$ is the constant $1$ function and hence $S_{(\vec{j}, \vec{k}, \vec{\ell})}$ is also the constant $1$ function. Furthermore, as $b_{P_i}\in {\sf Image}(P_i)$, we get $\overline{b_{P_i}}^{(j_i +k_i+\ell_i)}\equiv 1$ for all $1\leq i\leq r$. Thus, 
         $$\Theta(\vec{j}, \vec{k}, \vec{\ell}) =  \left(\prod_{i=1}^r\alpha_i^{j_i} \beta_i^{k_i} \gamma_i^{\ell_i}\right).$$
         As we are in the case that $\Delta_{{\sf symbolic}}(S_{(\vec{j}, \vec{k}, \vec{\ell})},1)< D/2$, it must be the case that the following function is a constant function for every $(w, w', w'') \in {\sf supp}(\mu|_{J})$ and $(x, y, z)\in {\sf supp}(\mu^{\bar{J}})$.
         \begin{align*}
         \prod_{i=1}^r \left( P_i(w,x)^{j_i} P_i(w',y)^{k_i} P_i(w'',z)^{\ell_i}\right) &=\prod_{i=1}^r  \alpha_i^{j_i} \beta_i^{k_i} \gamma_i^{\ell_i} \prod_{i=1}^r \tilde{P}_i(x)^{j_i} \tilde{P}_i(y)^{k_i} \tilde{P}_i(z)^{\ell_i}\\
         & = \prod_{i=1}^r  \alpha_i^{j_i} \beta_i^{k_i} \gamma_i^{\ell_i}.
         \end{align*}
         This is because in this case, the function on the left hand side will have symbolic distance from $1$ strictly less than $D$ and we know that ${\sf softrk}_{\mu}(\mathcal{P})\geq D$. Note that in the last step above, we used the fact that $S_{(\vec{j}, \vec{k}, \vec{\ell})}$ is the constant $1$ function. As $(x, x, x)\in {\sf supp}(\mu)$ for all $x\in \Sigma$, we must that that above function, when evaluated at $(w, w, w)$  for $w\in \Sigma^{J}$ is equal to 
          $$\prod_{i=1}^r  \alpha_i^{j_i} \alpha_i^{k_i} \alpha_i^{\ell_i} = \prod_{i=1}^r  \alpha_i^{{j_i}+{k_i} +{\ell_i}} = 1,$$
          where $\alpha_i = \prod_{i'\in J} P_{i,i'}(w_{i'})$ and again we used the fact that $\alpha_i\in {\sf Image}(P_i)$ and ${j_i}+{k_i} +{\ell_i} = 0 \mod o_{P_{i}}$. Thus, we have 
          $$\Theta(\vec{j}, \vec{k}, \vec{\ell}) = 1$$
     \end{enumerate}
     If we denote the number of tuples $(\vec{j}, \vec{k}, \vec{\ell})$ that fall into cases (a) and (c) above by $C_0$, then we have $C_0\geq 1$. As the total number of tuples is upper bounded by $\left(\prod\limits_{P\in\mathcal{P}} {\sf ord (P)}\right)^{3r} = O_{M, r}(1)$, we get 
       \begin{align*}
          \Prob{(x,y,z)\sim\mu^{\bar{J}}}{\forall P\in \mathcal{P}, P(w,x)=P(w', y)=P(w'', z) = b_P}\geq C_0\prod\limits_{P\in\mathcal{P}}\frac{1}{{\sf ord}(P)^3}-2^{-\Omega_{M,r,\alpha}(D)},
      \end{align*}
      as required.
    \end{proof}

Consider the noise operator $\mathrm{T}_{1-\kappa,\mu}^{\otimes n}$ on $(\Sigma^n)^3$ as per Definition~\ref{def:noise_op}: on a tuple $(x, y, z)$ it outputs $(x', y', z')$ where for each coordinate $i\in [n]$ we have $(x'_i,y'_i,z'_i) = (x_i, y_i, z_i)$ with probability $1-\kappa$ and we take $(x'_i,y'_i,z'_i)\sim \mu$  with probability $\kappa$ independently. As usual, when clear from context we omit the $n$ and $\mu$ from notation. The following lemma shows that for a typical tuple $(x, y, z)$ the evaluations of the product function on $(x',y',z')\sim \mathrm{T}_{1-\kappa,\mu}^{\otimes n}(x, y, z)$ is essentially independent of $(x, y, z)$ if ${\sf softrk}_{\mu}(\mathcal{P})$ is large.
\begin{lemma}
\label{lemma:noise_condition_uniform}
    Let $\Sigma$ be an alphabet of size $m$ and let $\mu$ be a distribution over $\Sigma^{3}$ such that $(x,x,x)\in {\sf supp}(\mu)$ for all $x\in\Sigma$, and in which the probability of each atom is at least $\alpha$.
		Suppose that $\mathcal{P}$ is a collection of $M$-discrete, $n$-variate product functions with $|\mathcal{P}|=r$, and suppose that ${\sf softrk}_{\mu}(\mathcal{P})\geq D$. 
        
        Suppose $J\subseteq [n]$ has size $|J|\leq \frac{D}{2}$. Then for every $(w, w', w'') \in {\sf supp}(\mu|_{J})$, $\vec{a},\vec{b}, \vec{c} \in\prod\limits_{P\in\mathcal{P}}{\sf Image}(P)$ with 
       \begin{align*}
           S_{\vec{a}} &= \{ x\in \Sigma^{\overline{J}} \mid P(w, x) = a_P~~\forall P\in \mathcal{P}\},\\
         S_{\vec{b}} &= \{ y\in \Sigma^{\overline{J}} \mid P(w', y) = b_P~~\forall P\in \mathcal{P}\}, \\
           S_{\vec{c}} &= \{ z\in \Sigma^{\overline{J}} \mid P(w'', z) = c_P~~\forall P\in \mathcal{P}\}.
       \end{align*}
       and
        \[
        S = \{ (x, y, z) \in (\Sigma^{\overline{J}})^3 \mid x\in S_{\vec{a}}, y\in S_{\vec{b}}, z\in S_{\vec{c}}\},
        \]
        we have that
		\[
        \norm{\mathrm{T}_{1-\kappa}1_S - \mu(S)}_{L_2(\mu)} \leq 2^{-\Omega_{\alpha, \kappa, r, M}(D)}.
        \]
        %Here, the noise operator $T_{1-\kappa}$ on a tuple $(x, y, z)$ outputs $(x', y', z')$ where for each coordinate $i\in \bar{J}$ we have
%$(x'_i,y'_i,z'_i) = (x_i, y_i, z_i)$ with probability $1-\kappa$ and we take $(x'_i,y'_i,z'_i)\sim \mu$  with probability $\kappa$ independently.
	\end{lemma}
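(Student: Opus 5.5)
The plan is to expand $1_S$ into characters built from the product functions, as in the proof of Lemma~\ref{lemma:quasirandom_prop_2}, and to apply $\mathrm{T}_{1-\kappa}$ term by term. Write $\mathcal{P}=\{P_1,\ldots,P_r\}$, $o_i={\sf ord}(P_i)$ and $\tilde P_i=P_i|_{\bar J}$. Let $\alpha_i,\beta_i,\gamma_i$ be the contributions of $P_i$ at $w,w',w''$ on the coordinates of $J$. Then
\[
1_S(x,y,z)=\sum_{\vec j,\vec k,\vec\ell}\Big(\prod_{i=1}^r\frac{1}{o_i^3}\,\overline{a_{P_i}}^{j_i}\,\overline{b_{P_i}}^{k_i}\,\overline{c_{P_i}}^{\ell_i}\,\alpha_i^{j_i}\beta_i^{k_i}\gamma_i^{\ell_i}\Big)\,S_{(\vec j,\vec k,\vec\ell)}(x,y,z),
\]
where $S_{(\vec j,\vec k,\vec\ell)}=\prod_i\tilde P_i(x)^{j_i}\tilde P_i(y)^{k_i}\tilde P_i(z)^{\ell_i}$. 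Each coefficient has absolute value at most $1$, and the number of terms is $O_{M,r}(1)$.

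I view each $S_{(\vec j,\vec k,\vec\ell)}$ as an $M$-discrete product function on ${\sf supp}(\mu)^{\bar J}$. The alphabet is ${\sf supp}(\mu)$, of size at most $m^3$, and every atom of $\mu$ has probability at least $\alpha$. Since $\mathrm{T}_{1-\kappa,\mu}^{\otimes \bar J}$ is exactly the standard noise operator of Definition~\ref{def:noise_op} for this alphabet and distribution, Fact~\ref{fact:highrankdecay} applies.

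Next I show that every term is either constant or far from constant, using only ${\sf softrk}_\mu$. Suppose $\Delta_{\sf symbolic}(S_{(\vec j,\vec k,\vec\ell)},1)<D/2$. Consider the full combination $\prod_i P_i(x)^{j_i}P_i(y)^{k_i}P_i(z)^{\ell_i}$ on ${\sf supp}(\mu)^n$. It differs from $S_{(\vec j,\vec k,\vec\ell)}$ (extended to $[n]$) only on the coordinates of $J$, so its symbolic distance from $1$ is less than $D/2+|J|\leq D$. By ${\sf softrk}_\mu(\mathcal{P})\geq D$, this full combination is constant on ${\sf supp}(\mu)^n$.

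Because ${\sf supp}(\mu^{\otimes n})={\sf supp}(\mu)^J\times{\sf supp}(\mu)^{\bar J}$ is a product set, the full combination factors as a function of the $J$-coordinates times $S_{(\vec j,\vec k,\vec\ell)}$, which depends only on the $\bar J$-coordinates. A product of a function of $J$-coordinates and a function of $\bar J$-coordinates that is constant on a product set forces each factor to be constant. Hence $S_{(\vec j,\vec k,\vec\ell)}$ is constant.

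So the terms split into two groups:
\begin{itemize}
\item Terms with $S_{(\vec j,\vec k,\vec\ell)}$ constant. These are fixed by $\mathrm{T}_{1-\kappa}$; collect their total contribution into a constant $c$.
\item Terms with $\Delta_{\sf symbolic}(S_{(\vec j,\vec k,\vec\ell)},1)\geq D/2$. By Fact~\ref{fact:highrankdecay}(2), $\norm{\mathrm{T}_{1-\kappa}S_{(\vec j,\vec k,\vec\ell)}}_{L_2(\mu)}\leq 2^{-\Omega_{\alpha,\kappa,M}(D)}$.
\end{itemize}
Summing the second group with the triangle inequality gives $\mathrm{T}_{1-\kappa}1_S=c+E$ with $\norm{E}_{L_2(\mu)}\leq O_{M,r}(1)\cdot 2^{-\Omega_{\alpha,\kappa,M}(D)}$.

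To replace $c$ by $\mu(S)$, I use that $\mu^{\otimes\bar J}$ is stationary for $\mathrm{T}_{1-\kappa}$. Thus $\mu(S)=\E[\mathrm{T}_{1-\kappa}1_S]=c+\E[E]$, so $|c-\mu(S)|\leq\norm{E}_{L_2(\mu)}$. Hence $\norm{\mathrm{T}_{1-\kappa}1_S-\mu(S)}_{L_2(\mu)}\leq 2\norm{E}_{L_2(\mu)}\leq 2^{-\Omega_{\alpha,\kappa,r,M}(D)}$, absorbing the constant number of terms into the exponent for $D$ large (for small $D$ the bound is trivial).

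The main point to check carefully is the dichotomy step: passing from softrank of $\mathcal{P}$ on $[n]$ to softrank of $\mathcal{P}|_{\bar J}$. This needs both $|J|\leq D/2$ and the product-support factorization argument above. Beyond that, the argument is a routine Fourier-type expansion, and it needs neither ${\sf rk}(\mathcal{P})$ nor the hypothesis that $(x,x,x)\in{\sf supp}(\mu)$.
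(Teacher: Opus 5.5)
Your proposal is correct and follows essentially the same route as the paper. You expand $1_S$ in products of the $\tilde P_i$, split the terms into those constant on ${\sf supp}(\mu)^{\bar J}$ and those at symbolic distance at least $D/2$ from $1$, and bound the latter with Fact~\ref{fact:highrankdecay}(2). The differences are minor: you actually prove ${\sf softrk}_\mu(\mathcal{P}|_{\bar J})\geq D/2$, which the paper only asserts, and you match the constant part with $\mu(S)$ via stationarity of $\mu^{\otimes\bar J}$ rather than via Fact~\ref{fact:highrankdecay}(1).
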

\begin{proof}
    The proof is mostly identical to the proof of Lemma~\ref{lemma:quasirandom_prop_2}. Let $o_P:={\sf ord}(P)$ and $\mathcal{P} = \{ P_1, P_2, \ldots, P_r\}$. We start by writing $1_S$ as follows.
\begin{align*}
         1_S(x, y, z) &= {\prod_{P\in \mathcal{P}}\left( \frac{1}{o_P} \sum_{j=0}^{o_P-1}\left(\overline{a_P}P(w,x)\right)^j\right)\cdot \left( \frac{1}{o_P} \sum_{j=0}^{o_P-1}\left(\overline{b_P}P(w',y)\right)^j\right)\cdot \left( \frac{1}{o_P} \sum_{j=0}^{o_P-1}\left(\overline{c_P}P(w'',z)\right)^j\right)}\\
          &= {\prod_{P\in \mathcal{P}} \frac{1}{o_P^3}\left(  \sum_{j, k, \ell=0}^{o_P-1}\overline{a_P}^j \overline{b_P}^k \overline{c_P}^\ell P(w,x)^j P(w',y)^k P(w'',z)^\ell\right)}\\
         &= \sum_{\substack{\vec{j} = (j_1, \ldots, j_r) \\ \vec{k} = (k_1, \ldots, k_r) \\ \vec{\ell} = (\ell_1, \ldots, \ell_r)}}{\left(  \prod_{i=1}^r \frac{1}{o_{P_i}^3}\overline{a_{P_i}}^{j_i} \overline{b_{P_i}}^{k_i} \overline{c_{P_i}}^{\ell_i} P_i(w,x)^{j_i} P_i(w',y)^{k_i} P_i(w'',z)^{\ell_i}\right)}
         \end{align*}
         Similar to the above proof, we let $\alpha_i = \prod_{i'\in J} P_{i,i'}(w_{i'})$ be the evaluation of $P_i$ at $w$ restricted to the coordinates in $J$. Similarly, let  $\beta_i = \prod_{i'\in J} P_{i,i'}(w'_{i'})$ and  $\gamma_i = \prod_{i'\in J} P_{i,i'}(w''_{i'})$. Also, let $\tilde{P} = P|_{\bar{J}}$ and $\tilde{\mathcal{P}} = \{ \tilde{P}_1, \tilde{P}_2, \ldots, \tilde{P}_r\}$. With these notations, we can rewrite the above expression as:
\begin{align*}
         1_S(x, y, z) &= \sum_{\substack{\vec{j} = (j_1, \ldots, j_r) \\ \vec{k} = (k_1, \ldots, k_r) \\ \vec{\ell} = (\ell_1, \ldots, \ell_r)}}{\left(  \prod_{i=1}^r \frac{1}{o_{P_i}^3}\overline{a_{P_i}}^{j_i} \overline{b_{P_i}}^{k_i} \overline{c_{P_i}}^{\ell_i} P_i(w,x)^{j_i} P_i(w',y)^{k_i} P_i(w'',z)^{\ell_i}\right)}\\
        &= \sum_{\substack{\vec{j} = (j_1, \ldots, j_r) \\ \vec{k} = (k_1, \ldots, k_r) \\ \vec{\ell} = (\ell_1, \ldots, \ell_r)}}{\left(  \prod_{i=1}^r \frac{1}{o_{P_i}^3}\overline{a_{P_i}}^{j_i} \overline{b_{P_i}}^{k_i} \overline{c_{P_i}}^{\ell_i} \alpha_i^{j_i}\beta_i^{k_i}\gamma_i^{\ell_i}\tilde{P}_i(x)^{j_i} \tilde{P}_i(y)^{k_i} \tilde{P}_i(z)^{\ell_i}\right)}.
         \end{align*}
         Let us denote the product function within the summand by $S_{(\vec{j}, \vec{k}, \vec{\ell})}$, i.e., 
     $$ S_{(\vec{j}, \vec{k}, \vec{\ell})}(x, y, z):= \prod_{i=1}^r \tilde{P}_i(x)^{j_i} \tilde{P}_i(y)^{k_i} \tilde{P}_i(z)^{\ell_i}.$$ 
We have
\begin{align*}
   1_S = \sum_{\substack{\vec{j} = (j_1, \ldots, j_r) \\ \vec{k} = (k_1, \ldots, k_r) \\ \vec{\ell} = (\ell_1, \ldots, \ell_r)}}
    \left(  \prod_{i=1}^r \frac{1}{o_{P_i}^3}\overline{a_{P_i}}^{j_i} \overline{b_{P_i}}^{k_i} \overline{c_{P_i}}^{\ell_i} \alpha_i^{j_i}\beta_i^{k_i}\gamma_i^{\ell_i} \right)       S_{(\vec{j}, \vec{k}, \vec{\ell})}
\end{align*}
          As  ${\sf softrk}_{\mu}(\mathcal{P})\geq D$ and $|J|\leq D/2$, we have ${\sf softrk}_{\mu}(\tilde{\mathcal{P}})\geq D/2$.  Let $\eta$ be the sum of all the terms in the summand when the function  $S_{(\vec{j}, \vec{k}, \vec{\ell})}$ is constant on the support of $\mu^{\otimes n'}$. We claim that $\mu(S)$ is close to $\eta$. To see this, as ${\sf softrk}_{\mu}(\mathcal{P})\geq D/2$, the only other terms in the summation are those with $\Delta_{{\sf symbolic}}(S_{(\vec{j}, \vec{k}, \vec{\ell})},1)\geq D/2$. Thus,
          \begin{align*}
               &\card{ \Expect{(x, y, z)\sim \mu^{\bar{J}}}{1_S(x, y, z)} -\eta}\\
              & \quad\quad\leq  \hspace{-20pt} \sum_{\substack{\vec{j}, \vec{k}, \vec{\ell} \\\Delta_{{\sf symbolic}}(S_{(\vec{j}, \vec{k}, \vec{\ell})},1)\geq D/2}}\hspace{-15pt}
    \left(  \prod_{i=1}^r \frac{1}{o_{P_i}^3}\overline{a_{P_i}}^{j_i} \overline{b_{P_i}}^{k_i} \overline{c_{P_i}}^{\ell_i} \alpha_i^{j_i}\beta_i^{k_i}\gamma_i^{\ell_i} \right)      \card{\Expect{(x, y, z)\sim \mu^{\bar{J}}}{ S_{(\vec{j}, \vec{k}, \vec{\ell})}(x, y, z)}}
          \end{align*}
          Using Fact~\ref{fact:highrankdecay} (1), for $S_{(\vec{j}, \vec{k}, \vec{\ell})}$ with $\Delta_{{\sf symbolic}}(S_{(\vec{j}, \vec{k}, \vec{\ell})},1)\geq D/2$, we have 
          $$\card{\Expect{(x, y, z)\sim \mu^{\bar{J}}}{ S_{(\vec{j}, \vec{k}, \vec{\ell})}(x, y, z)}}\leq  2^{-\Omega_{\alpha, M}(D)}.$$
          Thus,
          $$\card{ \mu(S) -\eta}\leq 2^{-\Omega_{\alpha,r, M}(D)}.$$
Applying the noise operator $\mathrm{T}_{1-\kappa}$ to the function $1_S$, we have
  \begin{align*}
               \mathrm{T}_{1-\kappa}1_S-\eta =  \hspace{-20pt} \sum_{\substack{\vec{j}, \vec{k}, \vec{\ell} \\\Delta_{{\sf symbolic}}(S_{(\vec{j}, \vec{k}, \vec{\ell})},1)\geq D/2}}\hspace{-15pt}
    \left(  \prod_{i=1}^r \frac{1}{o_{P_i}^3}\overline{a_{P_i}}^{j_i} \overline{b_{P_i}}^{k_i} \overline{c_{P_i}}^{\ell_i} \alpha_i^{j_i}\beta_i^{k_i}\gamma_i^{\ell_i} \right)      \mathrm{T}_{1-\kappa} S_{(\vec{j}, \vec{k}, \vec{\ell})}.
          \end{align*}
Observing the fact that the constant in front of $\mathrm{T}_{1-\kappa} S_{(\vec{j}, \vec{k}, \vec{\ell})}$ in the above summation is at most $1$ in absolute value, by the triangle inequality,
\begin{align*}
               \norm{\mathrm{T}_{1-\kappa}1_S-\eta}_{L^2(\mu)} \leq  \hspace{-20pt} \sum_{\substack{\vec{j}, \vec{k}, \vec{\ell} \\\Delta_{{\sf symbolic}}(S_{(\vec{j}, \vec{k}, \vec{\ell})},1)\geq D/2}} \norm{ \mathrm{T}_{1-\kappa} S_{(\vec{j}, \vec{k}, \vec{\ell})}}_{L^2(\mu)}.
          \end{align*}
 Using Fact~\ref{fact:highrankdecay} (2), we have $\norm{ \mathrm{T}_{1-\kappa} S_{(\vec{j}, \vec{k}, \vec{\ell})}}_{L^2(\mu)} \leq 2^{-\Omega_{\alpha,\kappa, M}(D)}$. Using this and the above established fact that $\card{ \mu(S) -\eta}\leq 2^{-\Omega_{\alpha,r, M}(D)}$, we have
  $$\norm{\mathrm{T}_{1-\kappa}1_S - \mu(S)}_{L^2(\mu)} \leq 2^{-\Omega_{\alpha,\kappa, r M}(D)},$$
  as required.
\end{proof}

\section{The Arithmetic Regularity Lemma}
In this section we state and prove our arithmetic regularity lemmas.
\subsection{The Basic Version}
Below is the basic version of our arithmetic regularity lemma. At first reading, the lemma should be interpreted as saying that $f$ can be approximated by $\mathrm{T}_{\mathcal{P},1-\eps}f$  in the $\|\cdot\|_{\nu}$ semi norm, for an appropriate choice of $\mathcal{P}$ and $\eps$. 
At this resolution, the result is identical to~\cite[Lemma 5.3]{BKM5}, which is insufficient for our purposes.
The key additional feature in the statement below is that $f$ is $\eta$-close to a different function  $\mathrm{T}_{\mathcal{P}',1-\eps'} f$ in the $\|\cdot\|_{\nu}$ semi-norm, where:
\begin{itemize}
    \item $\eta$ could be taken to be arbitrarily small compared to $\card{\mathcal{P}}$ and $\eps$ of the original crude estimator $\mathrm{T}_{\mathcal{P},\eps} f$;
    \item the crude estimator $\mathrm{T}_{\mathcal{P},\eps} f$ is close to the strong estimator $\mathrm{T}_{\mathcal{P}',\eps'} f$ in $\ell_2$ distance.
\end{itemize}
For technical reasons we also allow flexibility in the choice of the noise parameter, but the reader should ignore this and think of $\eps'$ as being fixed to $\eps_{i+2}$ below.
\begin{lemma}\label{lem:arithmetic_reg}
    Let $\Sigma$ be a finite alphabet of size at most $m$, let $\nu$ be a probability distribution over $\Sigma$ in which the probability of each atom is at least $\alpha>0$, and let $w\colon (0,1)\times\mathbb{N}\to(0,1)$ be a sufficiently rapid decay function. Then for all $\xi>0$, there exists  $k\in\mathbb{N}$, such that taking the sequence defined inductively as $\eps_0 = 1/2$ and $\eps_{i+1} = w(\eps_i,i)$, the following holds. For any $1$-bounded function $f\colon \Sigma^n\to\mathbb{C}$, for any $i_0\in\mathbb{N}$, there exists $i_0\leq i\leq k+i_0$ and collections of cyclic product functions $\mathcal{P}$, $\mathcal{P}'$ such that:
    \begin{enumerate}
        \item The collection $\mathcal{P}$ has size at most $1/\eps_{i}$, is $O_{m}(1)$-discrete, and for all $\eps'\in [\eps_{i+2},\eps_{i+1})$ we have
        \[
        \norm{f-\mathrm{T}_{\mathcal{P},1-\eps'}f}_{\nu}\leq \xi.
        \] 
        \item The collection $\mathcal{P}'$ has size at most $1/\eps_{i+4}$, is $O_{m}(1)$-discrete, and we have
        \[
        \norm{f-\mathrm{T}_{\mathcal{P}',1-\eps_{i+5}}f}_{\nu}\leq \eps_{i+3}.
        \]
        \item For all $\eps'\in [\eps_{i+2},\eps_{i+1})$ we have that 
        \[
        \norm{\mathrm{T}_{\mathcal{P}',1-\eps_{i+5}}f -\mathrm{T}_{\mathcal{P},1-\eps'} f}_2\leq \xi.
        \]
    \end{enumerate}
\end{lemma}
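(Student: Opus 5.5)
We prove this by the standard energy-increment/pigeonhole argument behind arithmetic regularity lemmas. For each scale index $j$ we build a collection $\mathcal{P}_j$ of $O_m(1)$-discrete cyclic product functions, of size at most $1/\eps_j$, in the style of the regularity lemma of~\cite[Lemma 5.3]{BKM5}. These collections will be nested, $\mathcal{P}_j\subseteq\mathcal{P}_{j+1}$, and will satisfy $\norm{f-\mathrm{T}_{\mathcal{P}_j,1-\eps'}f}_{\nu}\leq \eps_{j-1}$ for all $\eps'\in[\eps_{j+1},\eps_j)$. We then look at the energies
\[
E_j=\norm{\mathrm{T}_{\mathcal{P}_j,1-\eps_{j+1}}f}_2^2\in[0,1]
\]
and argue that they are essentially monotone in $j$. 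Among $k\approx 2/\xi^2$ consecutive indices starting at $i_0$, some $i$ then has $E_{i+4}-E_{i}\lesssim\xi^2$. We take $\mathcal{P}=\mathcal{P}_i$ and $\mathcal{P}'=\mathcal{P}_{i+4}$.

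The construction of $\mathcal{P}_{j+1}$ from $\mathcal{P}_j$ is as follows. Suppose $\norm{f-\mathrm{T}_{\mathcal{P}_j,1-\eps'}f}_\nu$ exceeds the target for some admissible $\eps'$. Apply Theorem~\ref{thm:csp4} to $g=f-\mathrm{T}_{\mathcal{P}_j,1-\eps'}f$ (normalized to be $1$-bounded). This yields $\chi\circ\sigma^{\otimes n}$ and a low-degree $L$ of degree $d$ with $\card{\inner{g}{\chi L}}\geq\delta$. Using Fact~\ref{fact:break_discrete_to_cyclic}, we split $\chi\circ\sigma^{\otimes n}$ into $O_m(1)$ cyclic pieces and add them to the collection. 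By Lemma~\ref{lem:act_on_ld}, once $\eps$ is small relative to $d$ and the collection size, the new operator nearly fixes $\chi L$. Hence its image captures an extra $\Omega(\delta^2)$ of $L_2$-mass. Iterating only $O(1/\delta^2)$ times makes the $\nu$-norm small, and the rapid decay of $w$ absorbs all the size and noise parameters.

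The key technical claim is that for $\mathcal{Q}\supseteq\mathcal{P}$ and $\eps_2\ll_w\eps_1$ we have the approximate Pythagorean identity
\[
\norm{\mathrm{T}_{\mathcal{Q},1-\eps_2}f-\mathrm{T}_{\mathcal{P},1-\eps_1}f}_2^2\approx \norm{\mathrm{T}_{\mathcal{Q},1-\eps_2}f}_2^2-\norm{\mathrm{T}_{\mathcal{P},1-\eps_1}f}_2^2 ,
\]
with an error that tends to $0$ as $w$ decays. To prove it, approximate $\mathrm{T}_{\mathcal{P},1-\eps_1}f$ by $\sum_{P\in\spn(\mathcal{P})}P L_P$ using Lemma~\ref{lem:approx_formula}. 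Each term $P L_P$ is essentially fixed by $\mathrm{T}_{\mathcal{Q},1-\eps_2}$, since $P$ is preserved exactly (it lies in $\spn(\mathcal{Q})$) and the low-degree part is handled as in Lemma~\ref{lem:act_on_ld}. The operators are self-adjoint with stationary measure $\nu^{\otimes n}$ by Fact~\ref{fact:noise_op_basic_prop1}. Therefore
\[
\inner{\mathrm{T}_{\mathcal{Q},1-\eps_2}f}{\mathrm{T}_{\mathcal{P},1-\eps_1}f}\approx\inner{f}{\mathrm{T}_{\mathcal{P},1-\eps_1}f}\approx\norm{\mathrm{T}_{\mathcal{P},1-\eps_1}f}_2^2 ,
\]
where the last step uses that $\mathrm{T}_{\mathcal{P},1-\eps_1}$ is close to an idempotent on its near-image, again by Lemma~\ref{lem:approx_formula}. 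This gives monotonicity of the $E_j$ up to small errors, and at the chosen index it gives item 3 for $\eps'=\eps_{i+1}$.

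Items 1 and 3 must hold for every $\eps'\in[\eps_{i+2},\eps_{i+1})$, not just one value. For this we use that $\mathrm{T}_{\mathcal{P},1-\eps'}f$ depends on $\eps'$ only weakly within one scale. Comparing two rates in this window via the same approximate-formula argument costs a small term. Items 1 and 2 come directly from the construction guarantees at levels $i$ and $i+4$.

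The main obstacle is the approximate Pythagorean step. The operators $\mathrm{T}_{\mathcal{P},1-\eps}$ are not projections and are not diagonal in a common basis, so all the quantifier bookkeeping of $w$ has to be checked carefully against Lemmas~\ref{lem:act_on_ld} and~\ref{lem:approx_formula}.
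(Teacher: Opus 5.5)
Your architecture (a nested chain of collections, energies $E_j$, and a pigeonhole on an energy gap) is not the paper's. The paper runs a primary iteration, then repeated secondary runs with ever finer targets, and pigeonholes on $\sum_\ell\norm{(\mathrm{T}_{2\ell}-\mathrm{T}_{2\ell-2})f}_2^2$ via a near-orthogonality computation. Once repaired, your route is arguably cleaner, but the step you single out as key is false as stated.

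\textbf{The approximate Pythagorean identity fails.} The operator $\mathrm{T}_{\mathcal{P},1-\eps}$ is not nearly idempotent on its image. For $\mathcal{P}=\emptyset$ it is the usual noise operator, with $\mathrm{T}_{1-\eps}^2=\mathrm{T}_{(1-\eps)^2}$. Take $\nu$ uniform on $\mathbb{F}_p$, $f$ a character of Hamming weight $1/\eps_1$, and $\eps_2\ll\eps_1$. Then $\mathrm{T}_{1-\eps_1}f\approx e^{-1}f$, $\mathrm{T}_{1-\eps_1}^2f\approx e^{-2}f$ and $\mathrm{T}_{1-\eps_2}f\approx f$. So the squared distance is $\approx(1-e^{-1})^2$, while the energy difference is $\approx 1-e^{-2}$. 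Lemma~\ref{lem:approx_formula} cannot supply idempotence: the degree bound $D$ it outputs grows as $\eps_1\to0$, outside the range where Lemma~\ref{lem:act_on_ld} at the same rate $\eps_1$ says anything. What is true, and is all you need, is the one-sided bound
\[
\norm{\mathrm{T}_{\mathcal{Q},1-\eps_2}f-\mathrm{T}_{\mathcal{P},1-\eps_1}f}_2^2\leq\norm{\mathrm{T}_{\mathcal{Q},1-\eps_2}f}_2^2-\norm{\mathrm{T}_{\mathcal{P},1-\eps_1}f}_2^2+o(1).
\]
Your argument does give $\inner{\mathrm{T}_{\mathcal{Q},1-\eps_2}f}{\mathrm{T}_{\mathcal{P},1-\eps_1}f}\approx\inner{f}{\mathrm{T}_{\mathcal{P},1-\eps_1}f}$. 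Now use that each $\mathrm{T}_{\mathcal{P},I}$ is a conditional expectation, hence an orthogonal projection, so by convexity $\inner{f}{\mathrm{T}_{\mathcal{P},1-\eps_1}f}=\E_I\norm{\mathrm{T}_{\mathcal{P},I}f}_2^2\geq\norm{\mathrm{T}_{\mathcal{P},1-\eps_1}f}_2^2$. This gives both the approximate monotonicity of the $E_j$ and the distance bound at the pigeonholed index.

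\textbf{The passage to all $\eps'$ in the window also rests on a false claim.} The window $[\eps_{i+2},\eps_{i+1})$ spans a full scale gap, and $\mathrm{T}_{\mathcal{P},1-\eps'}f$ need not vary slowly across it. Take $\mathcal{P}=\emptyset$ and $f$ a random $\pm1$ function of $K$ coordinates with $1/\eps_{i+1}\ll K\ll 1/\eps_{i+2}$. It has negligible correlation with every $L\cdot\chi\circ\sigma^{\otimes n}$, so your construction adds nothing at these scales. Yet $\mathrm{T}_{1-\eps'}f$ moves from $\approx0$ to $\approx f$ as $\eps'$ decreases across the window. The remedy is again the one-sided bound, applied for each $\eps'$ in the window twice: from $(\mathcal{P}_{i-1},\eps_i)$ to $(\mathcal{P}_i,\eps')$, and from $(\mathcal{P}_i,\eps')$ to $(\mathcal{P}_{i+4},\eps_{i+5})$. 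Both steps are scale-separated uniformly in $\eps'$. So pigeonhole on $E_{i+4}-E_{i-1}$ rather than $E_{i+4}-E_i$: $E_i$ is evaluated at the endpoint $\eps_{i+1}$ of the window and is not separated from $\eps'$ close to it. (The paper's Claim~\ref{claim:arithmetic_reg_2nd} likewise only controls the distance at the single rate produced by the process, so this uniformity needs an argument of this kind in either route.)

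\textbf{The bookkeeping of your chain also needs repair.} Each energy-increment step consumes a fresh, smaller noise scale; the paper moves the window down by two indices per step. So the sketched construction does not certify $\mathcal{P}_j$ on a prescribed window at every consecutive $j$, and the chain should be indexed by adaptively chosen stages. Relatedly, the certified window must sit a full scale below the size bound: item 1 pairs size $1/\eps_i$ with the window $[\eps_{i+2},\eps_{i+1})$, whereas your invariant pairs $1/\eps_j$ with $[\eps_{j+1},\eps_j)$.
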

\begin{proof}
    Fix $\xi$ and $\alpha$ and take $d$ and $\delta>0$ from Theorem~\ref{thm:csp4} for $\xi$ and $\alpha$. 
    The proof is by two similar iterative processes, and we now describe them.
    Let $\eta_0=1$,   $\mathcal{P}_0 = \emptyset$, $g_0 = \mathrm{T}_{\mathcal{P}_0,1-\eta_0}f = \E[f]$,
    $f_0 = f-g_0$ and set $i=0$ and $u=i_0$. If there is $\eps'\in (\eps_{u+2},\eps_{u+1})$ such that $f-\mathrm{T}_{\mathcal{P}_{i},1-\eps'}f$ is $\delta$-correlated with a function of the form $L_i\cdot P_i$ where $L_i\colon \Sigma^{n}\to\mathbb{C}$ has degree at most $d$ and $2$-norm $1$ and $P_i$ is an $O_{m}(1)$-discrete product function, do the following:
    \begin{enumerate}
        \item Note that ${\sf ord}(P_i)\leq O_{m}(1)$, thus by Fact~\ref{fact:break_discrete_to_cyclic} we may write it as $Q_1\cdots Q_T$ where each $Q_i$ is a cyclic product function with order at most $O_{m}(1)$ and $T\leq O_{m}(1)$.   Define $\mathcal{P}_{i+1}=\mathcal{P}_i\cup \{Q_1,\ldots,Q_T\}$,
        $\eta_{i+1} = \eps'$,
        $g_{i+1} = \mathrm{T}_{\mathcal{P}_{i+1}, 1-\eta_{i+1}} f$ and $f_{i+1} = f-g_{i+1}$.
        \item Increase $i$ by $1$, increase $u$ by $2$ and repeat.
    \end{enumerate}
    We claim that this process terminates within $O(1/\delta^2)$ steps. This is a direct consequence of the following claim. 
    \begin{claim}\label{claim:it_process_key}
        If the process has not terminated by step $R$, 
        then $\norm{g_{R+1}}_2^2\geq R\frac{\delta^2}{10}$.
    \end{claim}
    \begin{proof}
        The proof is identical to the proof of~\cite[Claim 5.2]{BKM5}, and we give it for completeness. For each $i\leq R$ write $\mathrm{T}_i = \mathrm{T}_{\mathcal{P}_i,1-\eta_i}$ for simplicity, and choose complex numbers $\theta_i$ of absolute value $1$ such that 
        $\inner{f_i}{\theta_i L_i P_i}$ is real-valued and is at least $\delta$. Recalling that $f_i = (\mathrm{I}-\mathrm{T}_i)f$ and noting that $(\mathrm{I}-\mathrm{T}_i)$ is self adjoint, we conclude that
        \[
        \inner{f}{\theta_i (\mathrm{I}-\mathrm{T}_i)L_i P_i}\geq \delta.
        \]
        We now study the quantity $(\rom{1}) = \sum\limits_{i=1}^{R}\inner{\mathrm{T}_{R+1}f}{\theta_i (\mathrm{I}-\mathrm{T}_i)L_i P_i}$, and prove an upper bound and a lower bound on its absolute value.
        \paragraph{The lower bound:} since $\mathrm{T}_{R+1}$ is self adjoint, we can write
        \[
        (\rom{1}) = \inner{f}{\sum\limits_{i=1}^{R}\theta_i \mathrm{T}_{R+1}(\mathrm{I}-\mathrm{T}_i)L_i P_i}
        =
        \inner{f}{\sum\limits_{i=1}^{R}\theta_i (\mathrm{T}_{R+1}-\mathrm{T}_{R+1}\mathrm{T}_i)L_i P_i}.
        \]
        Fixing $1\leq i\leq R$, we analyze the inner product on the right hand side. 
        First, we have the pointwise equality $\mathrm{T}_{R+1} L_i P_i(x) = P_i(x)\mathrm{T}_{R+1} L_i$ (as the function $P_i$ is constant over $y\sim \mathrm{T}_{R+1} x$). Next, by Lemma~\ref{lem:act_on_ld} we have that 
        $\norm{(\mathrm{I}-\mathrm{T}_{R+1})L_i}\lll_{d,m,\card{\mathcal{P}_{R+1}},\alpha} \eta_{R+1}^{1/3}$. Altogether, it follows that $\norm{\mathrm{T}_{R+1} L_i P_i - L_i P_i}_2\leq \eta_{R+1}^{1/6}$. 
        We now claim in a similar fashion that 
        $\norm{\mathrm{T}_{R+1}\mathrm{T}_iL_i P_i - \mathrm{T}_iL_i P_i}_2\leq \eta_{R}^{1/6}$. First, note that by Lemma~\ref{lem:approx_formula} we may find a function $h_i$ of the form $\sum\limits_{P'\in \spn(\mathcal{P}_i)}L_{P'}P'$ such that $\norm{\mathrm{T}_iL_i P_i-h_i}\leq \eta_R$, and each $L_{P'}$ has degree and $2$-norm at most $O_{\eta_R}(1)$. Noting that by Lemma~\ref{lem:act_on_ld} we have $\norm{\mathrm{T}_{R+1} L_{P'} P' - L_{P'} P'}_2\lll_{\card{\mathcal{P}_{R+1}},\eta_R} \eta_{R+1}^{1/3}$, we conclude that
        \[
    \norm{\mathrm{T}_{R+1}\mathrm{T}_iL_i P_i - \mathrm{T}_iL_i P_i}_2
    \leq
    \norm{\mathrm{T}_{R+1} h_i - h_i}
    +2\norm{L_iP_i - h_i}_2
    \leq 
    \sum\limits_{P'}\norm{\mathrm{T}_{R+1} L_{P'}P' - L_{P'}P'}_2
    +2\eta_R,
        \]
        which is at most $O_{\card{\mathcal{P}_{R+1}},\eta_R}(\eta_{R+1}^{1/3}) + 2\eta_R\leq \eta_R^{1/6}$. 
        We conclude that 
        \begin{align*}
        \card{(\rom{1})}
        &\geq 
        \inner{f}{\sum\limits_{i=1}^{R}\theta_i (\mathrm{I}-\mathrm{T}_i)L_i P_i}
        -\sum\limits_{i=1}^{R}
        \norm{(\mathrm{T}_{R+1}-\mathrm{T}_{R+1}\mathrm{T}_i)L_i P_i-(\mathrm{I}-\mathrm{T}_i)L_i P_i}_2
        \\
        &\geq 
        R\delta - 
        \sum\limits_{i=1}^{R} \eta_{R+1}^{1/6} + \eta_{R}^{1/6}\\
        &\geq 0.99 R\delta.
        \end{align*}
        \paragraph{The upper bound:} by Cauchy-Schwarz
        \[
        \card{(\rom{1})}
        \leq \norm{\mathrm{T}_{R+1}f}_2
        \norm{\sum\limits_{i=1}^{R}\theta_i (\mathrm{I}-\mathrm{T}_i)L_i P_i}_2.
        \]
        Looking at the second term, taking square and expanding, we get that it is at most
        \begin{equation}\label{eq:iterative_upper_bd}
        \sum\limits_{i=1}^{R}
        \norm{(\mathrm{I}-\mathrm{T}_i)L_i P_i}_2^2
        +2\sum\limits_{i'<i}
        \card{
        \inner{
        (\mathrm{I}-\mathrm{T}_i)L_i P_i}{(\mathrm{I}-\mathrm{T}_{i'})L_{i'} P_{i'}}}.
        \end{equation}
        The first sum is at most $4R$ as 
        $\norm{(\mathrm{I}-\mathrm{T}_i)L_i P_i}_2\leq 2$ for each $i$. For the second term, fix $i'<i$. Then by self adjointness of $(\mathrm{I}-\mathrm{T}_i)$ and Cauchy-Schwarz
        \[
        \card{\inner{
        (\mathrm{I}-\mathrm{T}_i)L_i P_i}{(\mathrm{I}-\mathrm{T}_{i'})L_{i'} P_{i'}}}
        \leq \norm{L_i P_i}_2\norm{(\mathrm{I}-\mathrm{T}_i)(\mathrm{I}-\mathrm{T}_{i'})L_{i'} P_{i'}}_2\leq 
        \norm{(\mathrm{I}-\mathrm{T}_i)(\mathrm{I}-\mathrm{T}_{i'})L_{i'} P_{i'}}_2.
        \]
        An argument identical to the one in the ``upper bound'' shows that 
        $\norm{\mathrm{T}_i(\mathrm{I}-\mathrm{T}_{i'})L_{i'} P_{i'}-(\mathrm{I}-\mathrm{T}_{i'})L_{i'} P_{i'}}_2\leq \eta_{i-1}^{1/6}$. Plugging this into~\eqref{eq:iterative_upper_bd} and then back up gives that
        \[
        \card{(\rom{1})}\leq \norm{\mathrm{T}_{R+1}f}_2
        \sqrt{4R+\sum\limits_{i}i\eta_{i-1}^{1/6}}
        \leq 
        \norm{\mathrm{T}_{R+1}f}_2\sqrt{5R}.
        \]

        \paragraph{Combining the bounds:} putting the upper bound and the lower bound on $\card{(\rom{1})}$ together gives that 
        $0.99 R \delta\leq \card{(\rom{1})}\leq \norm{\mathrm{T}_{R+1}f}_2\sqrt{5R}$, and simplifying yields the statement of the claim.
    \end{proof}
    As $\norm{g_i}_2\leq \norm{f}_2\leq 1$ always, the process terminates after $i^{\star}\leq O(1/\delta^2)$ steps and let $u$ be the index of $\eps$ in the end. At the end of the process we have found $\mathcal{P}_{i^{\star}}$  which satisfies the first item (with $\eps_{u+2},\eps_{u+1}$). It is unlikely to be a stronger approximation as required in the second item, though.
    To fix that, the idea is to run more rounds of the iterative process, but with different parameters so as to find a stronger approximation. We call this the secondary process.
    
    Each run of the secondary process has a parameter $s$, which in the beginning is set to $s=u$. We then take $\xi'=\eps_{s+3}$ and choose $d'$ and $\delta'$ from Theorem~\ref{thm:csp4} for $\xi'$ and $\delta'$. Now run the process above with $\xi'$ and $\delta'$ from the point we stopped it. Then the process would terminate after at most $ O(1/\delta'^2)$ steps, and we would reach $j$ such that have that $\norm{f_j}_{\nu}\leq \xi'$. We thus get the candidate decomposition as in the statement with $\mathcal{P} = \mathcal{P}_{i^{\star}}$ with $\eta_{i^{\star}}$, and  
    $\mathcal{P}' = \mathcal{P}_{j}$ with $\eta_j$, and it is clear that the first two items hold. If the third item holds we terminate, and otherwise we take $s = j+2$ and run the secondary process again.

    If this iterative process terminates within $100/\xi^2$ steps, then it is clear we get a decomposition as needed. The key claim now is that if the secondary process runs too many times, we can still find a decomposition as per the following claim.
    \begin{claim}\label{claim:arithmetic_reg_2nd}
      Suppose that the secondary process has not terminated within $S$ runs, and that $S\geq \frac{100}{\xi^4}$. Then we can find $j\leq S-2$ such that taking $\mathcal{P} = \mathcal{P}_j$ and $\mathcal{P}'=\mathcal{P}_{j+2}$ satisfies the claim.
    \end{claim}
    \begin{proof}
        Fix $j_1,\ldots,j_S$, where $j_t$ is the index $j$ at the end of the execution of the secondary process when it is run for the $t$th time. 
        Consider the functions $ g_{j_1},\ldots,g_{j_S}$, and for simplicity we denote them by $G_1,\ldots,G_{S}$ and the corresponding noise operators by $\mathrm{T}_{1},\ldots,\mathrm{T}_{S}$. 
        
        Inspect the quantity 
        $(\rom{1}) = \sum\limits_{\ell<S/2}\norm{(\mathrm{T}_{2\ell}-\mathrm{T}_{2(\ell-1)})f}_2^2$. Then by self adjointness we may write it as
        \begin{align*}
        (\rom{1})
        =
        \sum\limits_{\ell<S/2}\inner{(\mathrm{T}_{2\ell}-\mathrm{T}_{2(\ell-1)})^2f}{f}
        &=\inner{\sum\limits_{\ell<S/2}(\mathrm{T}_{2\ell}-\mathrm{T}_{2(\ell-1)})^2f}{f}\\
        &\leq \norm{\sum\limits_{\ell<S/2}(\mathrm{T}_{2\ell}-\mathrm{T}_{2(\ell-1)})^2f},
        \end{align*}
        where the last inequality is by Cauchy-Schwarz. Squaring the last quantity and expanding, we get it is at most
        \begin{equation}\label{eq:sec_proc_2}
        \sum\limits_{\ell}\norm{(\mathrm{T}_{2\ell}-\mathrm{T}_{2(\ell-1)})^2f}_2^2+2\sum\limits_{\ell'<\ell}\card{\inner{(\mathrm{T}_{2\ell}-\mathrm{T}_{2(\ell-1)})^2f}{(\mathrm{T}_{2\ell'}-\mathrm{T}_{2(\ell'-1)})^2f}}.
        \end{equation}
        For the first sum, each summand is at most $16$. As per the second sum, fix $\ell'<\ell$ and note that 
        \[
        \card{\inner{(\mathrm{T}_{2\ell}-\mathrm{T}_{2(\ell-1)})^2f}{(\mathrm{T}_{2\ell'}-\mathrm{T}_{2(\ell'-1)})^2f}}
        =\card{\inner{(\mathrm{T}_{2\ell}-\mathrm{T}_{2(\ell-1)})f}{(\mathrm{T}_{2\ell}-\mathrm{T}_{2(\ell-1)})(\mathrm{T}_{2\ell'}-\mathrm{T}_{2(\ell'-1)})^2f}},
        \]
        which by Cauchy-Schwarz and the crude bound $\norm{(\mathrm{T}_{2\ell}-\mathrm{T}_{2(\ell-1)})f}_2\leq 2$, is at most 
        \begin{equation}\label{eq:sec_proc_1}
        2\norm{(\mathrm{T}_{2\ell}-\mathrm{T}_{2(\ell-1)})(\mathrm{T}_{2\ell'}-\mathrm{T}_{2(\ell'-1)})^2f}_2.
        \end{equation}
        Repeating the argument as in the ``lower bound'' of Claim~\ref{claim:it_process_key}, we get that 
        \begin{align*}
        &\norm{\mathrm{T}_{2\ell}(\mathrm{T}_{2\ell'}-\mathrm{T}_{2(\ell'-1)})^2f-(\mathrm{T}_{2\ell'}-\mathrm{T}_{2(\ell'-1)})^2f}_2\leq \eta_{2\ell-1}^{1/6},\\
        &\norm{\mathrm{T}_{2(\ell-1)}(\mathrm{T}_{2\ell'}-\mathrm{T}_{2(\ell'-1)})^2f-(\mathrm{T}_{2\ell'}-\mathrm{T}_{2(\ell'-1)})^2f}_2\leq \eta_{2\ell-2}^{1/6},
        \end{align*}
        which by the triangle inequality gives that 
        $\eqref{eq:sec_proc_1}\leq 2\eta_{2\ell-2}^{1/6}$. Plugging this into~\eqref{eq:sec_proc_2} gives that 
        \[
        \eqref{eq:sec_proc_2}\leq 16\left(\frac{S}{2}-1\right)+
        \sum\limits_{\ell}2\ell \eta_{2\ell-2}^{1/12}
        \leq 8S,
        \]
        and plugging this above gives that $(\rom{1})\leq \sqrt{8S}$. 
        By definition of $(\rom{1})$ 
        we conclude that there is $\ell$ such that
        \[
        \norm{\mathrm{T}_{2\ell} f - \mathrm{T}_{2\ell-2}f}_2^2\leq \frac{2\sqrt{8}}{\sqrt{S}}.
        \] 
    Pick $\mathcal{P} = \mathcal{P}_{2\ell-2}$ and 
    $\mathcal{P}'=\mathcal{P}_{2\ell}$. The first two items of the claim hold by the fact that the secondary process terminated at times $j_{2\ell-2}$ and $j_{2\ell}$. The third property follows as
  \[
        \norm{\mathrm{T}_{2\ell} f - \mathrm{T}_{2\ell-2}f}_2^2\leq \frac{2\sqrt{8}}{\sqrt{S}}.
        \leq \frac{2\sqrt{8}}{10/\xi^2}\leq \xi^2.
        \qedhere
        \] 
    \end{proof}
    Thus, if the secondary process was run for at least $100/\xi^4$ steps we are done by Claim~\ref{claim:arithmetic_reg_2nd}, and the proof is concluded.
\end{proof}

\subsection{The Version with High Rank}
We now state and prove a version of Lemma~\ref{lem:arithmetic_reg}, in which the collection $\mathcal{P}$ is guaranteed to have high rank and high softrank. 
\begin{lemma}\label{lem:arithmetic_reg_high_rank}
    Let $\Sigma$ be a finite alphabet of size at most $m$, let $\nu$ be a probability distribution over $\Sigma$ in which the probability of each atom is at least $\alpha>0$, 
    let $\mu$ be a distribution over $\Sigma^3$ with marginals equal to $\nu$, and let $w\colon (0,1)\to(0,1)$ be a sufficiently rapid decay function. Then for every $\xi>0$, and for every $1$-bounded function $f:\Sigma^n\to\mathbb{C}$, there exist $w$-separated scales
    \[
    \rho_5\ll_{w}\rho_4\ll_w\rho_3\ll_w\rho_2\ll_w\rho_1\ll_w\rho_0
\]
satisfying $\rho_0\leq \xi$ and $\rho_5\geq \Omega_{w,m,\alpha,\xi}(1)$, and collections of cyclic product functions $\mathcal{P}$, $\mathcal{P}'$ such that

    %Then for all $\xi>0$, there exists $s\in\mathbb{N}$, such that taking the sequence defined inductively as $\eps_0=1/2$ and $\eps_{i+1} = w(\eps_i)$, the following holds. For any $1$-bounded function $f\colon \Sigma^n\to\mathbb{C}$, for any $i_0\in\mathbb{N}$, there exists $i_0\leq i\leq s+i_0$ and collections of cyclic product functions $\mathcal{P}$, $\mathcal{P}'$ such that:
    \begin{enumerate}
        \item The collection $\mathcal{P}$ satisfies:
        \begin{enumerate}
            \item $\card{\mathcal{P}}\leq 1/\rho_0$.
            \item $\mathcal{P}$ is $O_{m}(1)$-discrete.
            \item $\norm{f-\mathrm{T}_{\mathcal{P},1-\rho_{1}}f}_{\nu}\leq \xi$.
            \item ${\sf rk}(\mathcal{P})\geq \frac{1}{\rho_{2}}$.
            \item ${\sf softrk}_{\mu}(\mathcal{P})\geq \frac{1}{\rho_{2}}$.
        \end{enumerate}
        \item The collection $\mathcal{P}'$ has size at most $1/\rho_{4}$, is $O_{m}(1)$-discrete and we have $\norm{f-\mathrm{T}_{\mathcal{P}',1-\rho_{5}}f}_{\nu}\leq \rho_{3}$.
        \item We have that 
        $\norm{\mathrm{T}_{\mathcal{P}',1-\rho_{5}}f -\mathrm{T}_{\mathcal{P},1-\rho_{1}} f}_2\leq \xi$.
    \end{enumerate}
\end{lemma}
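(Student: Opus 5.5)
We derive Lemma~\ref{lem:arithmetic_reg_high_rank} from Lemma~\ref{lem:arithmetic_reg}: apply it with a much faster decay function, then post-process the crude collection with Lemma~\ref{lem:bothrank_gain}. Fix $\xi$ and let $\tilde{w}$ decay much faster than $w$. Concretely, the scales $\eps_i$ of Lemma~\ref{lem:arithmetic_reg} will be spaced so that between $\eps_{i}$ and $\eps_{i+1}$ there is room for the whole sequence $D_1\leq\ldots\leq D_k$ required by Lemma~\ref{lem:bothrank_gain}, with $k=k(1/\eps_i,O_m(1))$.

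\textbf{Step 1: regularity.} Apply Lemma~\ref{lem:arithmetic_reg} with $\xi/2$, $\tilde w$ and $i_0$ large enough that $\eps_{i_0}\leq\xi$. This gives $i$, a collection $\mathcal{P}_0$ with $|\mathcal{P}_0|\leq 1/\eps_i$, and a collection $\mathcal{P}'$ with $|\mathcal{P}'|\leq1/\eps_{i+4}$. The three items hold for every $\eps'\in[\eps_{i+2},\eps_{i+1})$.

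\textbf{Step 2: boosting the rank.} Apply Lemma~\ref{lem:bothrank_gain} to $\mathcal{P}_0$ with a rapidly increasing sequence $D_1\leq\ldots\leq D_k$. All of these lie far above $1/\eps_i$, and all of $D_1,\ldots,D_k$ are bounded by some function of $\eps_i$; this is possible since $k$ depends only on $|\mathcal{P}_0|$ and $m$. We obtain $\mathcal{P}$ with $|\mathcal{P}|\leq|\mathcal{P}_0|$ and some index $j\leq k$ such that:
\begin{itemize}
\item ${\sf rk}(\mathcal{P})$ and ${\sf softrk}_\mu(\mathcal{P})$ are at least $D_j-(D_1+\ldots+D_{j-1})\geq D_j/2$;
\item the symbolic distances between $\mathcal{P}$ and $\spn(\mathcal{P}_0)$, in both directions, are at most $K:=2(D_1+\ldots+D_{j-1})$.
\end{itemize}
Now choose $\rho_1=\eps'\in[\eps_{i+2},\eps_{i+1})$ with $\eps' K(|\mathcal{P}|+|\mathcal{P}_0|)\ll \xi^2$, for instance $\eps'$ of size about $\xi^2/(K\cdot 2/\eps_i)$ rounded into the allowed window. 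Then Fact~\ref{fact:noise_op_basic_prop2} gives
\[
\norm{\mathrm{T}_{\mathcal{P},1-\rho_1}f-\mathrm{T}_{\mathcal{P}_0,1-\rho_1}f}_2\leq \xi/4 .
\]
Set $\rho_2 = 2/D_j$. This should be $\leq w(\rho_1)$, which is arranged by choosing $D_j$ large relative to the chosen $\eps'$, and hence relative to $D_{j-1}$.

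The main obstacle is the circular dependence: $\eps'$ must be small relative to $K$, which depends on $D_1,\ldots,D_{j-1}$, while $D_j$ must be large relative to $\eps'$. I would resolve this by letting the sequence grow inductively, $D_{t+1}\geq 1/w\big(\xi^2/(100 (D_1+\ldots+D_t)/\eps_i)\big)$. We also need the window $[\eps_{i+2},\eps_{i+1})$ to contain that $\eps'$. This follows by taking $\tilde w(\eps_i)$ smaller than $\xi^2\eps_i/(100\,k\,D_k)$, i.e.\ a tower-type decay that dominates the whole $D$-sequence.

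\textbf{Step 3: transferring the approximation properties.} Since $\norm{\cdot}_\nu$ is a supremum of $\norm{\cdot}_{\mu}$, and each $\norm{\cdot}_\mu$ is bounded by the $L_2$ norm (by Cauchy--Schwarz, $g,h$ being $1$-bounded), item 1(c) follows from the triangle inequality:
\[
\norm{f-\mathrm{T}_{\mathcal{P},1-\rho_1}f}_\nu\leq \xi/2+\xi/4 .
\]
Item 3 follows similarly from item 3 of Lemma~\ref{lem:arithmetic_reg} together with the $L_2$ bound in Step 2. For the remaining scales set $\rho_0=\eps_i$, $\rho_3=\eps_{i+3}$, $\rho_4=\eps_{i+4}$ and $\rho_5=\eps_{i+5}$. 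Then 1(a) holds, and item 2 is inherited from Lemma~\ref{lem:arithmetic_reg}. Separation of $\rho_3$ from $\rho_2$ again follows from $\tilde w$ being fast enough relative to $D_k$. Finally, $\rho_5\geq\Omega_{w,m,\alpha,\xi}(1)$ holds because $i\leq i_0+k(\xi)$, with $k(\xi)$ the bound from Lemma~\ref{lem:arithmetic_reg}, and the sequence $\eps_\cdot$ depends only on $\tilde w$, $m$ and $\xi$.
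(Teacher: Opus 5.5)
Your overall strategy matches the paper's: apply Lemma~\ref{lem:arithmetic_reg} with a faster decay function, boost the coarse collection with Lemma~\ref{lem:bothrank_gain}, and transfer items 1(c) and 3 via Fact~\ref{fact:noise_op_basic_prop2} and $\norm{\cdot}_\nu\le\norm{\cdot}_2$. However, your placement of the scales makes $\rho_2\le w(\rho_1)$ impossible. You put the whole sequence $D_1\le\dots\le D_k$ between $1/\eps_i$ and $1/\eps_{i+1}$, and then force $\eps_{i+1}=\tilde w(\eps_i)<\xi^2\eps_i/(100kD_k)<1/D_k$. Since $\rho_1$ must lie in the window $[\eps_{i+2},\eps_{i+1})$ and $w(x)\le x$, every admissible choice gives
\[
w(\rho_1)\le\rho_1<\eps_{i+1}<1/D_k\le 1/D_j<2/D_j=\rho_2 .
\]
Equivalently, your natural choice $\eps'\approx\xi^2\eps_i/(2K)$ satisfies $\eps'\ge\xi^2\eps_i/(4kD_k)>\eps_{i+1}$, so it is never in the window. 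Rounding it down into the window puts it below $1/D_k$, where no $D_j$ is large enough. Setting $\rho_2=w(\rho_1)$ instead does not help either: you would then need rank at least $1/w(\rho_1)>D_k$, which Lemma~\ref{lem:bothrank_gain} does not provide.

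The fix is to interleave the $D$-sequence with the window itself, not with the gap above it; the flexibility $\eps'\in[\eps_{i+2},\eps_{i+1})$ in Lemma~\ref{lem:arithmetic_reg} exists precisely for this.
\begin{itemize}
\item The $k$ of Lemma~\ref{lem:bothrank_gain} depends only on $\card{\mathcal{P}_0}\le 1/\eps_i$, so you may start at $D_0=1/\eps_{i+1}$ and grow $D_t=2/w(w(1/D_{t-1}))$.
\item Require the next decay step, $\eps_{i+2}=\tilde w(\eps_{i+1},i+1)$, to be at most $1/D_k$. The decay function may recompute $\eps_i$ from its index, as the paper does.
\item Then $\rho_1=w(1/D_{j-1})$ automatically lies in $[\eps_{i+2},\eps_{i+1})$, since $\rho_1\ge w(\rho_1)=2/D_j\ge 2\eps_{i+2}$.
\item Because $\card{\mathcal{P}_0}\le D_0\le D_{j-1}$, $\rho_1$ is tiny relative to $1/(D_{j-1}\card{\mathcal{P}_0})$, so the perturbation from Fact~\ref{fact:noise_op_basic_prop2} is $O(\rho_1^{1/4})$.
\item Finally $\rho_2=w(\rho_1)=2/D_j$, so rank and softrank are at least $D_j/2=1/\rho_2$, and $\rho_3=\eps_{i+3}\le w(\rho_2)$ follows from $\rho_2\ge 2\eps_{i+2}$.
\end{itemize}
This is exactly the paper's choice: $D_0=1/\eta_{i+1}$, $D_k\le 1/\eta_{i+2}$, $\rho_1=w(D_{j-1}^{-1})$, $\rho_2=w(\rho_1)$. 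With this correction, the rest of your argument goes through as written: items 1(a), 1(b), 2 and 3, the choice of $i_0$ ensuring $\rho_0\le\xi$, and the lower bound on $\rho_5$.
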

\begin{proof}
     We first define a very rapidly decaying function $w'\colon (0,1)\times\mathbb{N}\to (0,1)$ with which we will want to apply Lemma~\ref{lem:arithmetic_reg}.
    Set $\tilde{w}(\eps,i) = w(\eps)$. On input $(\eps,i)$, first compute $\eps_i$ and $M = O_{m}(1)$ from Lemma~\ref{lem:arithmetic_reg} for $\tilde{w}$. 
    Then, take $k_i$ from Lemma~\ref{lem:bothrank_gain} for $M$-discrete collections of size at most $1/\eps_i$. Finally, take $w'(\eps,i) = 100^{2(k_1+\ldots+k_i)}\cdot w^{2(k_1+\ldots+k_i)}(\eps)$. For ease of notation,  throughout we denote $\eta_0$ and $\eta_{i+1} = w'(\eta_i,i)$ as used in Lemma~\ref{lem:arithmetic_reg} for the decay function $w'$ and $\xi/4$.
    
    Applying Lemma~\ref{lem:arithmetic_reg} we find $i$, $\mathcal{P}$ and $\mathcal{P}'$ satisfying the properties therein with the decay function $w'$. Set $k=k_1+\ldots+k_i$ and consider the sequence 
    $D_0 \leq D_1 \leq \ldots\leq D_k$ given as 
    \[
    D_0 = \frac{1}{\eta_{i+1}},
    \qquad
    D_1 = 2w^{(2)}(D_0^{-1})^{-1},
    \qquad
    \ldots,
    \qquad
    D_{k} = 2w^{(2)}(D_{k-1}^{-1})^{-1}.
    \]
    We note that $D_{k}\leq \eta_{i+2}^{-1}$.
    By Lemma~\ref{lem:bothrank_gain} we may find $1\leq j\leq k$ and a collection $\widetilde{\mathcal{P}}$ of $O_{m}(1)$-discrete cyclic product functions such that
    ${\sf rk}(\widetilde{\mathcal{P}}),{\sf softrk}(\widetilde{\mathcal{P}})\geq D_{j}-(D_1+\ldots+D_{j-1})$, each function in $\mathcal{P}$ is $3D_{j-1}$-close to a function in $\spn(\widetilde{\mathcal{P}})$, and each function in $\widetilde{\mathcal{P}}$ 
    is $3D_{j-1}$-close to a function in $\spn(\mathcal{P})$. We prove that the collections $\widetilde{\mathcal{P}}$ and $\mathcal{P}'$ satisfy the properties of the lemma with the scales 
    \[
\rho_0=\eta_i,\qquad
\rho_1=w(D_{j-1}^{-1}),\qquad
\rho_2=w(\rho_1),\qquad
\rho_3=\eta_{i+3},\qquad
\rho_4=\eta_{i+4},\qquad
\rho_5=\eta_{i+5}.
\]

    Item 1(a) follows because 
    $|\widetilde{\mathcal{P}}|\leq \card{\mathcal{P}}\leq \eta_{i}^{-1}$ by Lemma~\ref{lem:arithmetic_reg}. Item 1(b) follows by Lemma~\ref{lem:bothrank_gain}.  For item 1(c), by Fact~\ref{fact:noise_op_basic_prop2} we get that
    \begin{equation}\label{eq:reg_with_rank}
    \norm{\mathrm{T}_{\widetilde{\mathcal{P}},1-\rho_1}f-\mathrm{T}_{\mathcal{P},1-\rho_1}f}_2
    \leq 
    \sqrt{2(\card{\mathcal{P}}+|\widetilde{\mathcal{P}}|)\cdot 3D_{j-1}\cdot \rho_1}
    \leq \rho_1^{1/4},
    \end{equation}
    and by the first item of Lemma~\ref{lem:arithmetic_reg}, the triangle inequality and the trivial $\|g\|_{\nu}\leq \norm{g}_2$, we get that
    \[
    \norm{f-\mathrm{T}_{\mathcal{P},1-\rho_1}f}_{\nu}\leq \frac{\xi}{4}+\rho_1^{1/4}
    \leq \xi.
    \]
    
    Item 3 follows by combining the third item in Lemma~\ref{lem:arithmetic_reg} and~\eqref{eq:reg_with_rank} with the triangle inequality. Item 2 follows from the second item in Lemma~\ref{lem:arithmetic_reg}.
    Lastly, for items 1(d) and 1(e), since $w$ is sufficiently rapidly decaying we get that
    ${\sf rk}(\widetilde{\mathcal{P}}),{\sf softrk}(\widetilde{\mathcal{P}})\geq D_{j}/2\geq \rho_2^{-1}$.
\end{proof}

\section{Proof of the Counting Lemma}\label{sec:proof_of_main}

Apart from the arithmetic regularity lemma that we proved in the previous section, we also need the invariance principle from~\cite{Mossel} and a multidimensional Gaussian reverse hypercontractivity theorem due to~\cite{ChenDafnisPaouris2015HolderGaussian}. To state these results, we first give some preliminaries.

\subsection{Preliminaries}
\subsubsection{Fourier Analysis}
It will be convenient to work with the Fourier basis for the space of functions $L^2(\Sigma^n, \nu^{\otimes n})$. The definitions below and the proofs of the propositions can be found in~\cite{O14}.

\begin{definition}
	An $n$-dimensional \textit{multi-index} is a tuple $\alpha \in \mathbb{N}^n$. We write
	\[
	{\sf supp}(\alpha) = \{i : \alpha_i \neq 0\}, \quad \#\alpha = |\operatorname{\sf supp}(\alpha)|, \quad |\alpha| = \sum_{i=1}^{n} \alpha_i.
	\]
	We write $\alpha \in \mathbb{N}^n_{<m}$ when we want to emphasize that each $\alpha_i \in \{0,1,\dots,m-1\}$.
\end{definition}

\begin{definition}\label{def:fourier_basis}
	Given functions $\phi_0, \dots, \phi_{m-1} \in L^2(\Sigma, \nu)$ and a multi-index $\alpha \in \mathbb{N}^n_{<m}$, we define $\phi_{\alpha} \in L^2(\Sigma^n, \nu^{\otimes n})$ by
	\[
	\phi_{\alpha}(\V x) = \prod_{i=1}^{n} \phi_{\alpha_i}(x_i).
	\]
\end{definition}

\begin{proposition}\label{prop:fourier_n}
	Let $\phi_0, \dots, \phi_{m-1}$ be a orthonormal basis for $L^2(\Sigma, \nu)$ where $\phi_0$ is the constant $1$ function. Then the collection $(\phi_{\alpha})_{\alpha \in \mathbb{N}^n_{<m}}$ is an orthornormal basis for $L^2(\Sigma^n, \nu^{\otimes n})$.
\end{proposition}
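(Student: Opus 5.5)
The statement is Proposition~\ref{prop:fourier_n}, the standard fact that tensor products of a one-dimensional orthonormal basis form an orthonormal basis of the product space. I would prove it in two steps: orthonormality via product structure, then spanning via a dimension count.

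\emph{Orthonormality.} Fix multi-indices $\alpha,\beta\in\mathbb{N}^n_{<m}$. Since $\nu^{\otimes n}$ is a product measure and $\phi_\alpha,\phi_\beta$ are products of single-coordinate functions, Fubini gives
\[
\inner{\phi_\alpha}{\phi_\beta}_{L^2(\nu^{\otimes n})}=\prod_{i=1}^{n}\Expect{x_i\sim\nu}{\phi_{\alpha_i}(x_i)\overline{\phi_{\beta_i}(x_i)}}=\prod_{i=1}^n\inner{\phi_{\alpha_i}}{\phi_{\beta_i}}_{L^2(\nu)}.
\]
By the one-dimensional orthonormality, each factor equals $1_{\alpha_i=\beta_i}$. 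Hence the whole product is $1_{\alpha=\beta}$.

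\emph{Spanning.} This is the only place any care is needed. The family $\{\phi_0,\dots,\phi_{m-1}\}$ is an orthonormal basis of $L^2(\Sigma,\nu)$, so $\dim L^2(\Sigma,\nu)=m$. Here I would assume $\nu$ has full support on $\Sigma$, as in all the paper's applications. Then $|\Sigma|=m$ and $\dim L^2(\Sigma^n,\nu^{\otimes n})=m^n$. The family $(\phi_\alpha)_{\alpha\in\mathbb{N}^n_{<m}}$ has exactly $m^n$ members. Orthonormal vectors are linearly independent, so $m^n$ of them in an $m^n$-dimensional space form a basis.

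If one prefers to avoid the support assumption, there is a direct alternative. Write each indicator $1_{x_i=a}$ in the basis $\{\phi_j\}$, then multiply over the coordinates. This shows every point indicator $1_{x=a}$ on $\Sigma^n$ lies in the span of the $\phi_\alpha$, and the point indicators span $L^2(\Sigma^n,\nu^{\otimes n})$.

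Finally, $\phi_0$ being the constant function is not used in the proof. It only ensures that $\phi_{\vec 0}\equiv 1$, which is convenient later for reading off expectations and defining degree.
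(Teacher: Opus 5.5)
Your proof is correct and follows the standard argument that the paper relies on (it defers the proof to O'Donnell's book): orthonormality comes from factoring $\inner{\phi_\alpha}{\phi_\beta}$ over the product measure, and spanning comes from having $m^n$ orthonormal vectors in a space of dimension $m^n$. Your full-support caveat is harmless but not needed, since $\dim L^2(\Sigma^n,\nu^{\otimes n})=(\dim L^2(\Sigma,\nu))^n=m^n$ in general, which is also what your point-indicator argument shows.
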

\subsubsection{Influences}

Next, we define a few basic notions from the analysis of Boolean functions.
\begin{definition}\label{def:influence}
	For a function $f\colon (\Sigma^n,\nu^{\otimes n})\to\mathbb{C}$ and a coordinate $i\in [n]$, the influence of
	$i$ is defined as
	\[
	I_i[f, \nu^{\otimes n}] = \Expect{\substack{y\sim \nu^{\otimes (n-1)}\\a,b\sim \nu}}{\card{f(x_{-i} = y, x_i = a)-f(x_{-i} = y, x_i = b)}^2}.
	\]
    Here, $(x_{-i} = y, x_i = a)$ is the point in $\Sigma^n$ whose $i$th coordinate is $a$, and the rest of its coordinates are filled according to $y$.
	The total influence of a function is defined as $I[f, \nu^{\otimes n}] = \sum_{i=1}^n I_i[f, \nu^{\otimes n}]$.
\end{definition}
We have the following proposition.
\begin{proposition}\label{prop:total_inf}
	For a function $f\colon (\Sigma^n,\nu^{\otimes n})\to\mathbb{C}$, we have
	\[
    I[f, \nu^{\otimes n}] = \sum_{\alpha \in \mathbb{N}^n_{<m}} \card{\alpha}\card{\hat{f}(\alpha)}^2,\quad\quad I_i[f, \nu^{\otimes n}]  = \sum_{\substack{\alpha \in \mathbb{N}^{n}_{<m}\\ \alpha_i \neq 0}} \card{\hat{f}(\alpha)}^2,
    \]
	where $\hat{f}(\alpha) := \langle f, \phi_\alpha\rangle$ and $\card{\alpha}$ is the number of non-zero coordinates in $\alpha$.
\end{proposition}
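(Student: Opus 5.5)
The statement to prove is Proposition~\ref{prop:total_inf}: the Fourier formulas for the coordinate influences and the total influence. I will fix an orthonormal basis $\phi_0\equiv 1,\phi_1,\ldots,\phi_{m-1}$ of $L^2(\Sigma,\nu)$ and expand $f=\sum_{\alpha}\hat f(\alpha)\phi_\alpha$, which is valid by Proposition~\ref{prop:fourier_n}. The statement's phrase ``$\card{\alpha}$ is the number of non-zero coordinates'' means the quantity defined earlier as $\#\alpha$, so that is what I will prove.

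\textbf{Coordinate influences.} For a fixed coordinate $i$, let $E_i f(x)=\Expect{a\sim\nu}{f(x_{-i},x_i=a)}$ denote averaging out coordinate $i$. Since $\Expect{\nu}{\phi_j}=\inner{\phi_j}{\phi_0}=0$ for $j\neq 0$, applying $E_i$ to $\phi_\alpha$ returns $\phi_\alpha$ when $\alpha_i=0$ and kills it otherwise. Hence
\[
E_i f=\sum_{\alpha_i=0}\hat f(\alpha)\phi_\alpha,
\qquad
f-E_i f=\sum_{\alpha_i\neq 0}\hat f(\alpha)\phi_\alpha .
\]

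Next I will relate the influence to $f-E_if$. Fix $y$ and write $g(a)=f(x_{-i}=y,x_i=a)$. For $a,b$ independent with law $\nu$,
\[
\Expect{a,b}{\card{g(a)-g(b)}^2}=2\Expect{a}{\card{g(a)}^2}-2\card{\Expect{a}{g(a)}}^2=2\,\Expect{a}{\card{g(a)-\Expect{}{g}}^2}.
\]
Averaging over $y\sim\nu^{\otimes(n-1)}$ gives $I_i[f,\nu^{\otimes n}]=2\norm{f-E_if}_2^2$. By Parseval this equals $2\sum_{\alpha_i\neq 0}\card{\hat f(\alpha)}^2$.

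\textbf{Normalization.} This computation produces an extra factor of $2$ compared with the stated formula. The stated identity therefore holds exactly if the influence is normalized as $\tfrac12\Expect{}{\card{f(\cdot,a)-f(\cdot,b)}^2}$, equivalently as $\Expect{}{\mathrm{Var}_{x_i}[f]}$. I would point this out and adopt that normalization. The alternative is to carry the constant $2$ through, which is harmless for every later use, since those only need influences up to constant factors. Apart from this, the coordinate formula is proved.

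\textbf{Total influence.} Summing the coordinate formula over $i\in[n]$ and swapping the order of summation gives
\[
\sum_{i=1}^{n}\sum_{\alpha_i\neq 0}\card{\hat f(\alpha)}^2=\sum_\alpha \#\alpha\,\card{\hat f(\alpha)}^2,
\]
because each $\alpha$ is counted once for every $i\in{\sf supp}(\alpha)$. There is no genuine obstacle in this proof; the only care needed is the normalization constant.
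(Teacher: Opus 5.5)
Your proof is correct and is the standard argument the paper has in mind (it gives no proof of its own and defers to O'Donnell's book): identify $I_i$ with the squared norm of $f-E_if=\sum_{\alpha_i\neq 0}\hat f(\alpha)\phi_\alpha$, apply Parseval, and sum over $i$ to get the weights $\#\alpha$. Your normalization remark is also right: with Definition~\ref{def:influence} as literally written one gets $I_i[f,\nu^{\otimes n}]=2\sum_{\alpha_i\neq 0}\card{\hat f(\alpha)}^2$ (e.g.\ $n=1$, $\nu$ uniform on $\{0,1\}$, $f(x)=x$ gives $\tfrac12$ versus $\tfrac14$), so the stated identity holds under O'Donnell's normalization $\Expect{}{\mathrm{Var}_{x_i}[f]}$, and, as you note, only constant factors matter in the later uses.
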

For a function $f$ that is a sum of small functions with small influences, the following proposition upper bounds the influences of the function $f$.
\begin{proposition}
    \label{prop:sum_inf}
    Let $f\colon (\Sigma^n,\nu^{\otimes n})\to\mathbb{C}$ be such that 
    $$f(x) = \sum_{j=1}^{R} \theta_j \cdot 
    f_j(x)$$
    where $f_j\colon (\Sigma^n,\nu^{\otimes n})\to\mathbb{C}$ and $|\theta_j|\leq 1$ for all $1\leq j\leq R$, then for all $i\in [n]$, we have $I_i[f, \nu^{\otimes n}] \lesssim_R \sum_{j}I_i[f_j, \nu^{\otimes n}]$.
\end{proposition}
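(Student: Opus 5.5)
The plan is to pass to the Fourier side via Proposition~\ref{prop:total_inf}. Fix the orthonormal basis $(\phi_\alpha)$ of Proposition~\ref{prop:fourier_n} and expand $f=\sum_j\theta_j f_j$. Since each term $\theta_j f_j$ is linear in the coefficients, we get $\hat f(\alpha)=\sum_j\theta_j\hat{f_j}(\alpha)$. The influence formula then gives
\[
I_i[f,\nu^{\otimes n}]=\sum_{\alpha:\alpha_i\neq 0}\Big|\sum_{j=1}^R\theta_j\hat{f_j}(\alpha)\Big|^2 .
\]

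The key step is the elementary Cauchy--Schwarz bound
\[
\Big|\sum_{j=1}^R\theta_j c_j\Big|^2\leq \Big(\sum_j|\theta_j|^2\Big)\Big(\sum_j|c_j|^2\Big)\leq R\sum_j|c_j|^2 ,
\]
which uses $|\theta_j|\leq 1$. Applying it for each $\alpha$ with $\alpha_i\neq 0$ and summing, we get
\[
I_i[f,\nu^{\otimes n}]\leq R\sum_j\sum_{\alpha_i\neq0}|\hat{f_j}(\alpha)|^2 .
\]
By Proposition~\ref{prop:total_inf} again, the right-hand side equals $R\sum_j I_i[f_j,\nu^{\otimes n}]$.

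Alternatively, one can work directly from Definition~\ref{def:influence}. The difference $f(y,a)-f(y,b)$ equals $\sum_j\theta_j\,(f_j(y,a)-f_j(y,b))$, and the same Cauchy--Schwarz bound applies pointwise before taking the expectation. This route avoids the Fourier basis and again yields the constant $R$.

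There is no real obstacle here. The only point to check is that the constant depends only on $R$, and in fact it equals $R$ exactly.
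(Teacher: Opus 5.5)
Your proof is correct, and it gives the constant exactly $R$. The paper's proof is the route you list as the alternative. It writes $f(y,a)-f(y,b)=\sum_j\theta_j\bigl(f_j(y,a)-f_j(y,b)\bigr)$ inside Definition~\ref{def:influence}, applies Cauchy--Schwarz pointwise, drops the $\theta_j$ using $|\theta_j|\le 1$, and then averages.

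Your primary Fourier route is equally valid, but it relies on Proposition~\ref{prop:total_inf}, whose normalization does not quite match Definition~\ref{def:influence} as written. Since $a,b\sim\nu$ are independent, $\E_{a,b}|g(a)-g(b)|^2=2\,\mathrm{Var}(g)$, so in fact $I_i[f,\nu^{\otimes n}]=2\sum_{\alpha_i\neq 0}|\hat f(\alpha)|^2$. This is harmless for your argument, because the same factor of $2$ appears on both sides of the inequality.

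What the Fourier route buys is transparency: influence is exhibited as an $\ell_2$ quadratic form in the coefficients with $\alpha_i\neq 0$, so the bound is just Cauchy--Schwarz coefficientwise. The direct route buys self-containedness: it needs nothing beyond the definition, no orthonormal basis, and no normalization conventions.
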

\begin{proof}
We expand $I_i(f)$ as follows.
 \begin{align*}
       I_i(f)&= \Expect{\substack{\V y\sim \nu^{\otimes (n-1)}\\ a, b \sim \nu} }{\card{ f( \V x_{-i} = \V y, x_i = a) - f(\V x_{-i} = \V y, x_i = b) }^2}\\
       & = \Expect{\substack{\V y\sim \nu^{\otimes (n-1)}\\ a, b \sim \nu} }{\card{\sum_{j=1}^R \theta_j (f_j( \V x_{-i} = \V y, x_i = a) - f_j(\V x_{-i} = \V y, x_i = b))}^2}\\
       & \lesssim_R \Expect{\substack{\V y\sim \nu^{\otimes (n-1)}\\ a, b \sim \nu} }{\sum_{j=1}^R \card{\theta_j (f_j( \V x_{-i} = \V y, x_i = a) - f_j(\V x_{-i} = \V y, x_i = b))}^2},
       \end{align*}
       where the last inequality is by the Cauchy-Schwarz inequality. Continuing, we get that
       \begin{align*}
       I_i(f)\lesssim_R 
       %\Expect{\substack{\V y\sim \nu^{\otimes (n-1)}\\ a, b \sim \nu} }{\sum_{j=1}^R \card{\theta_j f_j( \V x_{-i} = \V y, x_i = a) - f_j(\V x_{-i} = \V y, x_i = b)}^2}\\& \leq  
       \Expect{\substack{\V y\sim \nu^{\otimes (n-1)}\\ a, b \sim \nu} }{\sum_{j=1}^R \card{f_j( \V x_{-i} = \V y, x_i = a) - f_j(\V x_{-i} = \V y, x_i = b)}^2}
          = \sum_{j}I_i[f_j, \nu^{\otimes n}],
   \end{align*}
   as required.
\end{proof}
\subsubsection{The Invariance Principle}\label{sec:invariance_principle}
In this section, we present the invariance principle of~\cite{MOO}, and we begin with some set-up.
Suppose that $\Sigma,\Gamma,\Phi$ are finite alphabets of sizes $m_1,m_2,m_3$ respectively, and
$\mu$ is a probability measure over $\Sigma\times \Gamma\times \Phi$ in which the probability of
each atom is at least $\alpha>0$. We set up orthonormal bases for $(\Sigma,\mu_x)$,
$(\Gamma,\mu_y)$ and $(\Phi,\mu_z)$ given by $v_0,\ldots,v_{m_1-1}$,
$u_0,\ldots,u_{m_2-1}$ and $w_0,\ldots,w_{m_3-1}$. Consider the ensemble of random
variables
\[
\mathcal{X} = \{v_1(x),\ldots,v_{m_1-1}(x),u_1(y),\ldots,u_{m_2-1}(y),w_1(z),\ldots,w_{m_3-1}(z)\}
\]
where $(x,y,z)\sim \mu$. We define the covariance matrix
$P\in\mathbb{C}^{(m_1+m_2+m_3-3)\times(m_1+m_2+m_3-3)}$ whose rows and columns correspond
to the functions in $\mathcal{X}$, and the entry corresponding to tow random variables
in $\mathcal{X}$.
For example, for $v_i$ and $u_j$,
the corresponding entry is
\[
P(v_i,u_j) = \Expect{(x,y,z)\sim \mu}{v_i(x)\overline{u_j(y)}}.
\]

Let $L^2(\mathbb{C}^n, \gamma^n)$ be the inner product space of functions with standard complex $\mathcal{C}\mathcal{N}(0, 1)$ Gaussian measure.\footnote{A complex random variable $Z$ whose real and imaginary parts are independent normally distributed random variables with mean zero and variance $\frac{1}{2}$.} We define an ensemble of standard {\em proper}\footnote{An ensemble $\mathcal{G}$ of standard complex Gaussian variables is proper if it satisfies $\Expect{}{\mathcal{G}\mathcal{G}^T}=0$.} complex Gaussian variables having the same covariance matrix $P$ as follows.  Let $Z\sim \mathcal{C}\mathcal{N}(0, I_{m_1+m_2+m_3-3})$ be $(m_1+m_2+m_3-3)$ independent standard complex Gaussian variables. We define 
\[
\mathbf{\mathcal{G}} = \{G_{1,x},\ldots,G_{m_1-1, x},\ldots, G_{1,z},\ldots,G_{m_3-1, z}\}
\]
as $\mathcal{G}:= P^{1/2}Z$ to be an ensemble of centered proper Gaussian random variables with covariance matrix $P$. Note that $\mathcal{G}$ satisfies (i) $\Expect{}{\mathcal{G}\mathcal{G}^T}=0$ and (ii) $\Expect{}{\mathcal{G}\mathcal{G}^*}=P$. The invariance principle relates
the behavior of low-influence, multi-linear polynomials over $\mathcal{X}$ and over $\mathbf{\mathcal{G}}$. Below, we state
the version that we need from~\cite{Mossel} specialized to our case of interest, but before that, we need a few definitions.

Denote $q = m_1+m_2+m_3-3$, and let $M\colon \mathbb{C}^{q n}\to\mathbb{C}$ be a multi-linear polynomial given as
\[
M(a_{1,1},\ldots,a_{1,q},\ldots, a_{n,1},\ldots, a_{n,q})
=\sum\limits_{T\subseteq [n]\times [q]}m_T \prod\limits_{(i,j)\in T} a_{i,j}.
\]

\begin{definition}
	The degree of $M$ as above is defined as
	\[
	{\sf deg}(M) = \max_{T}\{ \card{T} \mid M_T \neq 0\}.
	\]
\end{definition}

\begin{definition}
	The influence of variable $(i,j)$ on $M$ as above is defined as
	\[
	I_{i,j}[M] = \sum\limits_{T\ni (i,j)}\card{m_T}^2.
	\]
\end{definition}
We will also consider vector-valued multi-linear functions, which are functions
$M\colon \mathbb{C}^{qn}\to\mathbb{C}^{k}$ wherein each $M_s$ is a multi-linear function.
The influence of $(i,j)$ on $M$ is defined as $I_{i,j}[M] = \max_{s}I_{i,j}[M_s]$.

Define ${\sf trunc}_{[0,1]}\colon\mathbb{C}\to [0,1]$ as follows:
\begin{align*}
	\mathrm{trunc}_{[0,1]}(a) =  \begin{cases}
		0 \qquad  \qquad \mbox{if $\operatorname{Re}$($a$)} < 0,\\
		\mbox{$\operatorname{Re}$($a$)} \qquad  \mbox{ if } 0 \leq \mbox{$\operatorname{Re}$($a$)} \leq 1,\\
		1  \qquad \qquad  \mbox{ if } \mbox{$\operatorname{Re}$($a$)} > 1.
	\end{cases}
\end{align*}

The invariance principle theorem from Mossel~\cite{Mossel} is as follows.
\begin{thm}\label{thm:invariance_principle}
	For all $\alpha>0$, $k,m\in\mathbb{N}$, $d\in\mathbb{N}$, $C>0$ and $\eps>0$, there exists $\tau>0$
	such that the following holds. Suppose that $\card{\Sigma} = m_1, \card{\Gamma} = m_2,\card{\Phi}= m_3$,
	that $\mu$ is a distribution over $\Sigma\times\Gamma\times\Phi$ in which the probability
	of each atom is at least $\alpha$, and let $\mathcal{X}$ and $\mathcal{G}$ be the ensembles
	of random variables as above with the same covariance matrix. Let $M\colon\mathbb{C}^{qn}\to\mathbb{C}^k$
	is a multi-linear polynomial with $\max_{i,j}I_{i,j}[M]\leq \tau$, ${\sf deg}(M) \leq d$ and $\|M\|_2 \leq C$.
	\begin{enumerate}
		\item 
         If $\Psi\colon \mathbb{C}^{k}\to\mathbb{C}$ is differentiable three times and its third order derivatives
		are at most $C$ in absolute value, then
		\[
		\card{\Expect{}{\Psi(M(\mathcal{X}^n))}-\Expect{}{\Psi(M(\mathbf{\mathcal{G}}^n))}}\leq \eps.
		\]
		 \item Define $\zeta\colon \mathbb{C}^k\to \mathbb{R}$ by $\zeta(a_1,\ldots,a_k) = \sqrt{\sum\limits_{i=1}^{k}\card{{\sf trunc}_{[0,1]}(a_i) - a_i}^2}$.
		Then
		\[
		\card{\Expect{}{\zeta(M(\mathcal{X}^n))}-\Expect{}{\zeta(M(\mathcal{G}^n))}}\leq \eps.
		\]
	\end{enumerate}
\end{thm}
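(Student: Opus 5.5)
This is the Mossel invariance principle, cited from~\cite{Mossel}, so the plan is to derive it from the standard Lindeberg replacement argument rather than prove it from scratch.

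Set up a hybrid sequence. For $t=0,\ldots,n$, let $H_t$ be the ensemble in which the first $t$ coordinate blocks $(\mathcal{X}_1,\ldots,\mathcal{X}_t)$ are replaced by independent copies of the Gaussian blocks $(\mathcal{G}_1,\ldots,\mathcal{G}_t)$. Each block is a $q$-tuple of random variables with covariance $P$, and different coordinates are independent. Since $\mathcal{G}$ is proper and matches $P$, the Gaussian block has the same first and second moments as the discrete block. This holds for both $\mathbb{E}[\cdot\,\overline{\cdot}]$ and $\mathbb{E}[\cdot\,\cdot]$ once we choose real-valued bases or treat real and imaginary parts separately. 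I expect this complex bookkeeping to be a technical nuisance rather than a real obstacle; I would reduce to real $2k$-dimensional vectors from the start.

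\emph{Item 1.} Telescope
\[
\E\Psi(M(H_{t}))-\E\Psi(M(H_{t-1})),
\]
and write $M=A_t+\sum_j a_{t,j}B_{t,j}$, where $A_t$ and the $B_{t,j}$ do not involve block $t$. Taylor-expand $\Psi$ to second order around $A_t$. The zeroth-, first- and second-order terms agree because of independence and moment matching. The remainder is bounded by $C\,\E\bigl|\sum_j a_{t,j}B_{t,j}\bigr|^3$. Hypercontractivity bounds this third moment, both for the discrete ensemble (atoms have mass at least $\alpha$) and for the Gaussian one, by $O_{\alpha,m}(1)^{d}\bigl(\E\bigl|\sum_j a_{t,j}B_{t,j}\bigr|^2\bigr)^{3/2}\le O(1)^d\, I_t^{3/2}$, where $I_t=\sum_j I_{t,j}[M]$. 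Summing over $t$ gives
\[
\sum_t I_t^{3/2}\le \max_t I_t^{1/2}\sum_t I_t \le \sqrt{q\tau}\cdot d\,C^2.
\]
For vector-valued $M$ this picks up a factor $k$. Choosing $\tau$ small makes the total at most $\eps$.

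\emph{Item 2.} The function $\zeta$ is Lipschitz but not $C^3$. I would smooth it by convolving with a bump at scale $\lambda$, giving $\zeta_\lambda$ with $|\zeta-\zeta_\lambda|\le\lambda$ and third derivatives $O(\lambda^{-2})$. Then apply item 1 with constant $C'=O(C+\lambda^{-2})$ and accuracy $\eps/3$, and take $\lambda=\eps/3$. This costs only the choice of a smaller $\tau$.

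The main obstacle is the hypercontractive third-moment bound for the mixed hybrid ensembles: each $H_t$ is a product of blocks, some discrete and some Gaussian, and the bound is needed uniformly in $t$. I would invoke the standard fact that both the discrete block, with its minimum atom probability $\alpha$, and the Gaussian block are $(2,3,\eta)$-hypercontractive with $\eta=\eta(\alpha,m)>0$. Hypercontractivity tensorizes across independent blocks, which gives the $\eta^{-d}$ bound for degree-$d$ multilinear polynomials. With that, everything above goes through exactly as in~\cite{MOO}.
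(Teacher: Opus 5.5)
The step that fails is the one you set aside as bookkeeping: the claim that the Gaussian block matches the discrete block's second moments in both the $\E[\cdot\,\overline{\cdot}]$ and the $\E[\cdot\,\cdot]$ sense.

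The ensemble $\mathcal{G}=P^{1/2}Z$ in the statement is proper, so $\E[\mathcal{G}\mathcal{G}^{\mathsf T}]=0$. By contrast, $\E[\mathcal{X}\mathcal{X}^{\mathsf T}]$ never vanishes. For a real-valued basis it equals $P$. For any basis, expand a real mean-zero $g$ with $\E[g(x)^2]=1$ as $g=\sum_j c_jv_j$; then $\sum_{j,j'}c_jc_{j'}\E[v_j(x)v_{j'}(x)]=1$. So choosing a real basis exposes the mismatch rather than removing it. Splitting into real and imaginary parts does not help either: $(\operatorname{Re}\mathcal{G},\operatorname{Im}\mathcal{G})$ has real covariance $\tfrac12\mathcal{R}(P)$, which is not the real covariance of $(\operatorname{Re}\mathcal{X},\operatorname{Im}\mathcal{X})$.

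The conditional expectation of the second-order Taylor term involves both $\E[a_{t,j}\overline{a_{t,j'}}]$ and $\E[a_{t,j}a_{t,j'}]$, so it does not cancel. In fact the statement as written is false. With $g$ as above, take $k=1$, $M=n^{-1/2}\sum_{t=1}^{n}\sum_j c_j a_{t,j}$ (degree $1$, $\norm{M}_2=1$, all influences at most $1/n$) and $\Psi(a)=a^2$. Then $\E[\Psi(M(\mathcal{X}^n))]=\E[g(x)^2]=1$, while properness gives $\E[\Psi(M(\mathcal{G}^n))]=0$ for every $n$. Item 2 fails for the same $M$. Here $M(\mathcal{X}^n)$ is real and asymptotically $N(0,1)$, so $\E[\zeta(M(\mathcal{X}^n))]$ tends to the expected distance of $N(0,1)$ from $[0,1]$, about $0.48$. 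Meanwhile $M(\mathcal{G}^n)\sim\mathcal{CN}(0,1)$ and $\zeta(a)\geq\card{\operatorname{Im}a}$, giving at least $1/\sqrt{\pi}\approx 0.56$.

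What your argument does prove is the corrected statement, in which $\mathcal{G}$ matches the full real covariance of $\mathcal{X}$. Most simply, use a real orthonormal basis and a \emph{real} Gaussian ensemble with covariance $P$; this is the setting of~\cite{Mossel}, which the paper cites in place of a proof. There your cancellation of the low-order terms is valid. The hypercontractive third-moment bound, the estimate $\sum_t I_t^{3/2}\leq\sqrt{q\tau}\,dC^2$, and the smoothing of $\zeta$ then go through as you describe. Downstream, Claim~\ref{claim:inv_gaussian_lb} would use Theorem~\ref{thm:multi_reverse_hyp_real} directly, since $P$ is real and positive definite by Proposition~\ref{prop:positive_definite}.

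You must also restrict the class of $M$. Your decomposition $M=A_t+\sum_j a_{t,j}B_{t,j}$ presumes every monomial contains at most one variable from the block of coordinate $t$, whereas the paper's definition allows any $T\subseteq[n]\times[q]$. Without that restriction even matched Gaussians fail, because products within a block have mismatched higher moments. For $\mu_x$ uniform on three points and the real basis $v_1\propto(1,0,-1)$, $v_2\propto(1,-2,1)$, one has $\E[v_1^2v_2^2]=\tfrac12$ against $\E[G_1^2G_2^2]=1$. So $M=n^{-1/2}\sum_t a_{t,1}a_{t,2}$ with $\Psi(a)=a^2$ is again a counterexample. The one-variable-per-block notion is the standard one, and it is all that Claim~\ref{claim:invariance_lb} needs.
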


We will also need the following fact.
\begin{fact}\label{fact:trivial_lipshitz_pf}
	Consider the function $\zeta\colon \mathbb{C}\to [0,\infty)$ defined as
	$\zeta(a) = \card{{\sf trunc}_{[0,1]}(a) - a}$. Then $\zeta$ is $2$-Lipshitz function.
\end{fact}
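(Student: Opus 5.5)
The claim is that $\zeta(a)=\card{{\sf trunc}_{[0,1]}(a)-a}$ is $2$-Lipschitz on $\mathbb{C}$. I will reduce this to a statement about a projection. Observe that ${\sf trunc}_{[0,1]}(a)$ depends only on $\operatorname{Re}(a)$, so write $a=s+{\bf i}t$ with $s,t$ real. Let $c(s)\in[0,1]$ be the clamp of $s$ to $[0,1]$. Then
\[
{\sf trunc}_{[0,1]}(a)-a = (c(s)-s) - {\bf i}t .
\]
The key structural remark is that $c$ is the nearest-point projection of $\mathbb{R}$ onto $[0,1]$. Hence $\pi(a):=c(\operatorname{Re}(a))$ is exactly the Euclidean nearest-point projection of $\mathbb{C}\cong\mathbb{R}^2$ onto the closed convex segment $[0,1]\subseteq\mathbb{R}\subseteq\mathbb{C}$. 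This gives $\zeta(a)=\card{\pi(a)-a}={\sf dist}(a,[0,1])$.

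With this identification, the plan is to use the standard fact that the distance function to any nonempty set is $1$-Lipschitz. For $a,b\in\mathbb{C}$, we have ${\sf dist}(a,[0,1])\leq \card{a-\pi(b)}\leq \card{a-b}+\card{b-\pi(b)}$, and by symmetry $\card{\zeta(a)-\zeta(b)}\leq\card{a-b}$. This already gives Lipschitz constant $1\leq 2$.

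As a backup that avoids the projection language, one can argue directly with the triangle inequality:
\[
\card{\zeta(a)-\zeta(b)}\leq \card{{\sf trunc}(a)-{\sf trunc}(b)}+\card{a-b}.
\]
The first term is $\card{c(\operatorname{Re} a)-c(\operatorname{Re} b)}\leq\card{\operatorname{Re} a-\operatorname{Re} b}\leq\card{a-b}$, since clamping is $1$-Lipschitz on $\mathbb{R}$. This yields exactly the constant $2$ stated in the fact, with no case analysis beyond the monotonicity of the clamp.

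There is no real obstacle here. The only point to check is that the clamp is $1$-Lipschitz, which follows from the three-case definition (or from the fact that it is a projection onto an interval). I would present the short triangle-inequality argument, since it matches the stated constant $2$ directly.
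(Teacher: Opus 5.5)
Your proof is correct, and it takes a different route from the paper. The paper asserts that it suffices to treat real $a,b$, and then runs a case analysis according to whether each of $a,b$ lies below, inside, or above $[0,1]$. You work directly in $\mathbb{C}$ instead. Identifying $\zeta(a)$ with the Euclidean distance from $a$ to the segment $[0,1]$ gives the sharper constant $1$ in one line. Your backup argument (reverse triangle inequality, plus the fact that the clamp is $1$-Lipschitz and $\card{\operatorname{Re}a-\operatorname{Re}b}\leq\card{a-b}$) gives exactly the constant $2$ with no cases at all.

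Your route also avoids the paper's reduction to real inputs, which the paper states without justification. Making that reduction rigorous requires noting that $\zeta(s+{\bf i}t)=\norm{(\zeta(s),t)}_2$ and that the Euclidean norm is $1$-Lipschitz, which is essentially your distance-to-a-set argument anyway. The paper's real-variable case analysis is completely elementary. However, its constant $2$ comes only from a loose estimate in the case $a<0$, $b>1$. Your projection viewpoint shows that $1$ is the true constant.
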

\begin{proof}
	Let $a,b\in\mathbb{C}$; we show that $\card{\zeta(a) - \zeta(b)} \leq 2\card{a-b}$. It is enough to show this for the real numbers $a$ and $b$.  
	If $a,b$ are both on $[0,1]$, then the left-hand side is
	$0$ and the claim is trivial. If exactly one of $a$ and $b$ is in $[0,1]$, say $a$ then
	\[
	\card{\zeta(a) - \zeta(b)} =
	\zeta(b)
	\]
	If $b>1$, then $\zeta(b) = b-1 \leq \card{b-a}$. Otherwise, $b<0$, and $\zeta(b) = |b|\leq \card{b-a}$. It remains to consider the case that both $a$ and $b$ are not in $[0,1]$. When $a, b<0$, then $\card{\zeta(a) - \zeta(b)} = \card{a}-\card{b} =  \card{a-b}$. If $a, b>1$, then $\card{\zeta(a) - \zeta(b)} = \card{a-b}$. Finally, if $a<0$ and $b>1$, we have $\zeta(a) - \zeta(b) = -a+(b-1)\leq 2\card{a-b}$,
	and the proof is concluded.
\end{proof}

\subsubsection{Gaussian Reverse Hypercontractivity}
Finally, we will need the multidimensional version of the Gaussian reverse hypercontractivity theorem from~\cite{ChenDafnisPaouris2015HolderGaussian}. We use the following special case of the theorem from~\cite{Mos}. 

\begin{restatable}{thm}{MGRH}{}\label{thm:multi_reverse_hyp}
	Let $\eta>0$ and let $\mathbf{\mathcal{G}} = (\mathcal{G}_1,\ldots,\mathcal{G}_{\ell})$ be a jointly proper Gaussian collection of $\ell$ random vectors such that:
	\begin{enumerate}
		\item For each $i \in [\ell]$, $\mathcal{G}_i = \left(G_{i,1}, \ldots, G_{i,n}\right)$ is a random vector distributed as $n$ independent $\mathcal{C}\mathcal{N}(0,1)$ Gaussians.
		\item  The $n\ell\times n\ell$ covariance matrix $P = \Expect{}{\mathcal{G}\mathcal{G}^*}$ of the Gaussian ensemble satisfies  $P-\eta I\succeq 0$, for some $\eta>0$. %For every collection of complex numbers
  %       $\{\alpha_{i, j}\} \subset \mathbb{C}$:
		% \[
		% \mathrm{Var} \left[ \sum_{i,j} \alpha_{i,j} \cdot G_{i,j} \right]
		% \ \ge\ \eta \cdot \sum_{i,j} \card{\alpha_{i,j}}^2.
		% \]
	\end{enumerate}
	Then, for all functions $f_1,\ldots,f_{\ell} \in L^2(\mathbb{C}^n,\gamma^n)$ such that $f_{1},\ldots,f_{\ell} : \mathbb{C}^n \to [0,1]$ and
	$\Expect{}{f_{j}(\mathcal{G}_{j})} = \mu_j$,
	we have:
	\[
	\Expect{}{\prod_{j=1}^\ell f_{j}(\mathcal{G}_{j})} \geq
	\left( \prod_{j=1}^\ell \mu_j \right)^{1/\eta}.
	\]
\end{restatable}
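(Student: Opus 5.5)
The plan is to deduce Theorem~\ref{thm:multi_reverse_hyp} from the real-valued multidimensional reverse Brascamp--Lieb / reverse H\"older inequality of~\cite{ChenDafnisPaouris2015HolderGaussian}. I will pass from complex proper Gaussians to real Gaussians by taking real and imaginary parts, and then convert the resulting norm lower bound into the stated power of the means using $0\le f_j\le 1$.

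\emph{Step 1: realification.} Write $G_{i,k}=\frac{1}{\sqrt 2}(X_{i,k}+{\bf i}Y_{i,k})$, and let $\mathcal{R}_i=(X_{i,1},Y_{i,1},\ldots,X_{i,n},Y_{i,n})\in\mathbb{R}^{2n}$. Since each $G_{i,k}\sim\mathcal{CN}(0,1)$ and the ensemble is proper, i.e.\ $\E[\mathcal{G}\mathcal{G}^T]=0$, a direct computation gives the following. The real covariance of the full vector $(\mathcal{R}_1,\ldots,\mathcal{R}_\ell)$ is the realification $Q=\begin{pmatrix}\mathrm{Re}\,P & -\mathrm{Im}\,P\\ \mathrm{Im}\,P & \mathrm{Re}\,P\end{pmatrix}$, up to the natural reordering of coordinates. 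Properness is exactly what kills the extra terms $\E[X X^T]-\E[YY^T]$ and $\E[XY^T]+\E[YX^T]$. In particular:
\begin{itemize}
\item each $\mathcal{R}_i$ is a standard Gaussian on $\mathbb{R}^{2n}$, because $P$ has identity diagonal blocks;
\item the realification map $A\mapsto Q$ is a real algebra homomorphism that preserves the spectrum (each eigenvalue of $P$ appears twice in $Q$), so $P\succeq \eta I$ implies $Q\succeq \eta I_{2n\ell}$.
\end{itemize}
Define $g_j\colon\mathbb{R}^{2n}\to[0,1]$ by $g_j(\mathcal{R}_j)=f_j(\mathcal{G}_j)$. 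Then $g_j$ has the same mean $\mu_j$ and the same joint law of values as $f_j$.

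\emph{Step 2: apply the real theorem.} The result of~\cite{ChenDafnisPaouris2015HolderGaussian} states the following. Let $\mathcal{R}_j$ be jointly Gaussian, each standard on $\mathbb{R}^{d_j}$, with joint covariance $Q\succeq \mathrm{diag}(p_1 I_{d_1},\ldots,p_\ell I_{d_\ell})$ for exponents $p_j\in(0,1]$. Then for nonnegative $g_j$,
\[
\E\Big[\prod_j g_j(\mathcal{R}_j)\Big]\geq \prod_j \norm{g_j}_{p_j},
\]
where $\norm{g}_p=(\E g^p)^{1/p}$. I would take $p_j=\eta$ for all $j$. This is allowed since $\eta\le 1$, which is automatic because the diagonal of $P$ equals $1$.

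\emph{Step 3: finish.} Since $g_j$ takes values in $[0,1]$ and $\eta\le1$, we have $g_j^\eta\ge g_j$ pointwise. Hence $\norm{g_j}_\eta=(\E g_j^\eta)^{1/\eta}\ge(\E g_j)^{1/\eta}=\mu_j^{1/\eta}$. Multiplying over $j$ gives the claimed bound $\big(\prod_j\mu_j\big)^{1/\eta}$.

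\emph{Main obstacle.} I expect the main difficulty to be matching the hypothesis form in~\cite{ChenDafnisPaouris2015HolderGaussian}, which is often stated as a condition like $Q^{-1}\preceq \mathrm{diag}(p_j^{-1})$ or as a geometric Brascamp--Lieb datum. I would verify that these are equivalent to $Q\succeq \eta I$ when all $p_j=\eta$; this holds because inversion is operator-monotone decreasing on positive definite matrices. A second point to check carefully is the bookkeeping in Step~1, namely that properness, and not merely the Hermitian covariance, gives the block structure of $Q$ and hence the spectral bound.
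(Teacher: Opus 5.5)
Your proposal is correct and follows the paper's own argument. Both realify the ensemble, use properness to identify the real covariance with $\mathcal{R}(P)$, transfer $P\succeq \eta I$ to $\mathcal{R}(P)\succeq \eta I$, and apply the real reverse Gaussian theorem; the one difference is that you invoke the $L^{p}$-norm form of Chen--Dafnis--Paouris and convert it using $g_j^{\eta}\geq g_j$, whereas the paper cites the real theorem already stated with the bound $\bigl(\prod_j \mu_j\bigr)^{1/\eta}$.
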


\begin{remark}
    The statement of the theorem from~\cite{ChenDafnisPaouris2015HolderGaussian} (and~\cite{Mos}) is for real Gaussian random variables (See Theorem~\ref{thm:multi_reverse_hyp_real}). In Appendix~\ref {sec:GRH_complex}, we show how to derive the above theorem from their theorem statement.
\end{remark}

% \begin{remark}
% 	The property $2$ in the above theorem is the same as saying $T-\eta I\succeq 0$, where $T$ is the covariance matrix of the Gaussian ensemble. 
% \end{remark}

\subsection{Proof of the Main Theorem}
Let $A\subseteq \Sigma^n$ such that $\frac{|A|}{|\Sigma|^n}\geq \delta$ and let $f=1_A$ be the indicator function of the set $A$. The goal is to estimate the following quantity
\begin{align*}
    \Delta_1(f) := \Expect{(\V x, \V y, \V z)\sim \mu^{\otimes n}}{f(\V x) f(\V y) f(\V z)}
\end{align*}
where the minimum non-zero probability of the distribution $\mu$ is $\alpha>0$. 
Note that $\mu|_{x} = \mu|_{y} = \mu|_{z} = \nu$ are the marginals of $\mu$. Hence, $\mu\in M_{\nu,\alpha}$ as per Definition~\ref{def:set_of_dists}. Therefore, the conditions in the arithmetic regularity lemma, Lemma~\ref{lem:arithmetic_reg_high_rank}, are satisfied for this distribution and we will use it henceforth.

\subsubsection{Making the Function Resilient}

As the first step of this process, we show that it suffices to bound $\Delta_1$ for resilient functions, defined as follows:

\begin{definition}
   For $B\in \mathbb{N}$ and $0< \gamma\leq 1$, we say a function $g\colon \Sigma^n\to [0,1]$ is $(B,\gamma)$-resilient if
    for every $I\subseteq [n]$ of size at most $B$ and 
    every  $v\in \Sigma^{I}$ we have that $\nu^{\bar{I}}(g_{I\rightarrow v})\geq \gamma\cdot \nu^{\otimes n}(g)$.
\end{definition}

We have the following claim.
\begin{claim}\label{claim:making_f_resilient}
    Suppose $\Delta_1(g) \geq \delta'(B, \eta)$ for every $(B,1/2)$-resilient function $g: \Sigma^n \rightarrow \{0,1\}$ with $\eta =\nu^{\otimes n}(g)$. Then, for every $f: \Sigma^n \rightarrow \{0,1\}$ with $\nu^{\otimes n}(f) = \delta$, we have
    $$\Delta_1(f)\geq  \alpha^{\frac{2B}{\delta\alpha^B}}\delta'(B, \delta).$$ 
\end{claim}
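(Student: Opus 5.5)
We will use a standard density-increment-by-restriction argument, iterating over restrictions of few coordinates. Start with $f_0 = f$, $I_0=\emptyset$, and density $\delta_0 = \delta$. At step $t$ we have a set $I_t\subseteq[n]$ and a value $v_t\in\Sigma^{I_t}$; the current function is $f_t = f_{I_t\to v_t}$ on $\Sigma^{\bar I_t}$, with density $\delta_t = \nu^{\bar I_t}(f_t)$. If $f_t$ is $(B,1/2)$-resilient we stop. Otherwise there is $I'$ with $|I'|\le B$ and $v'\in\Sigma^{I'}$ such that $\nu(f_{t,I'\to v'})<\tfrac12\delta_t$. We then want to locate another value $v''$ on $I'$ for which the density strictly increases.

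The increment comes from averaging. We have $\delta_t = \sum_{u}\nu^{I'}(u)\,\nu(f_{t,I'\to u})$, and the atom $v'$ carries weight at least $\alpha^{|I'|}\ge\alpha^B$ yet contributes a deficit of at least $\tfrac12\delta_t$. Hence some $u$ satisfies $\nu(f_{t,I'\to u})\ge \delta_t(1+\tfrac12\alpha^B)$; a slightly sharper accounting gives an increment of $\delta_t\alpha^B/2$ divided by $1-\alpha^B$, and either version suffices. Set $I_{t+1}=I_t\cup I'$ and $v_{t+1}=(v_t,u)$. Since densities never exceed $1$, the number of steps $T$ satisfies $(1+\alpha^B/2)^T\delta\le 1$. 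This gives $T\le \log(1/\delta)\cdot 4/\alpha^B$, which is in particular at most $2/(\delta\alpha^B)$. We may also use the cruder bound in which each step increases the density additively by at least $\delta\alpha^B/2$. The final restricted set satisfies $|I_T|\le BT\le 2B/(\delta\alpha^B)$, and the final function $g=f_T$ is $(B,1/2)$-resilient with $\nu(g)\ge\delta$.

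To transfer the count back to $f$, we apply the hypothesis to $g$ on $n-|I_T|$ coordinates, which gives $\Delta_1(g)\ge \delta'(B,\nu(g))$. We should assume $\delta'$ is monotone in its density argument, or else simply record $\delta'(B,\nu(g))$; since the claim is stated with $\delta'(B,\delta)$, we treat $\delta'$ as nondecreasing in $\eta$. Then we lower bound $\Delta_1(f)$ by restricting to the event that $(x,y,z)_{I_T} = (v_T,v_T,v_T)$. This triple lies in ${\sf supp}(\mu)^{I_T}$ because the diagonal $(a,a,a)$ lies in ${\sf supp}(\mu)$, so its probability is at least $\alpha^{|I_T|}$. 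Conditioned on this event, the remaining coordinates are distributed as $\mu^{\otimes \bar I_T}$, and the integrand equals $g(x)g(y)g(z)$. Positivity of $f$ lets us drop all other events. This yields
\[
\Delta_1(f)\ge \alpha^{|I_T|}\Delta_1(g)\ge \alpha^{2B/(\delta\alpha^B)}\delta'(B,\delta).
\]

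The main subtlety is the bookkeeping in the step count. The claimed exponent $2B/(\delta\alpha^B)$ corresponds to the additive increment $\delta\alpha^B/2$ per step, with densities starting at $\delta$ and capped at $1$, so we will present the additive version. We also need to check that resilience is meant relative to the restricted product measure on $\bar I_T$, and that $n-|I_T|$ remains large enough for the hypothesis to apply, which holds for $n$ sufficiently large.
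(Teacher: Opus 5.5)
Your proposal is correct and follows the paper's argument: repeatedly pass to a heavier restriction on a violating set of at most $B$ coordinates, gaining at least $\delta\alpha^B/2$ in density each round, so there are at most $2/(\delta\alpha^B)$ rounds and $|I|\le 2B/(\delta\alpha^B)$; then bound $\Delta_1(f)\ge\alpha^{|I|}\Delta_1(f_{I\to v})$ using the diagonal event. You explicitly flag two points that the paper uses only implicitly: monotonicity of $\delta'$ in its density argument, and applicability of the hypothesis on $n-|I|$ coordinates. Your observation that the final density is at least $\delta$, rather than the paper's stated $\delta/2$, is what justifies using $\delta'(B,\delta)$.
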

\begin{proof}
If $f$ is $(B, 1/2)$-resilient, then $\Delta_1(f)\geq \delta'(B, \delta)$ and we are done. Otherwise, we have that $f$ is not $(B, 1/2)$-resilient. This means there exists $I_1\subseteq [n]$ and $w\in \Sigma^{I_1}$ such that $|I_1|\leq B$ and $\nu^{\bar{I_1}}(f_{I_1\rightarrow w}) \leq \delta/2$. As $\nu^{\otimes n}(f) = \delta$, there exists $v_1$ such that $\nu^{\bar{I_1}}(f_{I_1\rightarrow v_1}) \geq \delta + \frac{\delta}{2}\alpha^{B}$. Now, if $f_{I_1\rightarrow v_1}$ is $(B, 1/2)$-resilient then we get that 
\begin{align*}
    \Delta_1(f) &= \Expect{(\V x, \V y, \V z)\sim \mu^{\otimes n}}{f(\V x) f(\V y) f(\V z)}\\
    & \geq \alpha^{|I_1|}\cdot  \Expect{(\V x, \V y, \V z)\sim \mu^{\overline{I_1}}}{ f_{I_1\rightarrow v_1}(\V x) f_{I_1\rightarrow v_1}(\V y) f_{I_1\rightarrow v_1}(\V z)}\\
    &\geq \alpha^{B}\cdot \delta'(B, \delta),
\end{align*}
and hence the conclusion holds. If $f_{I_1\rightarrow v_1}$ is not $(B, 1/2)$-resilient, then we can find $I_2$ and $v_2$ and continue the process. Each time, the measure of the restricted function increases by an additive factor of at least $\frac{\delta}{2}\alpha^B$. Thus, the process stops after at most $t\leq \frac{2}{\delta\alpha^B}$ iterations, as otherwise the measure of the restricted function becomes greater than $1$, which cannot happen as $f(\V x)\in \{0,1\}$ for all $\V x$. Let $I = I_1\cup I_2\cup \ldots \cup I_t$ and $v=v_1\circ v_2\circ \ldots \circ v_t$ be the union of the subsets and the concatenation of the strings found during the process. We have $|I|\leq t\cdot B \leq  \frac{2B}{\delta\alpha^B}$. The function $f_{I\rightarrow v}$ is now $(B, 1/2)$-resilient with measure at least $\frac{\delta}{2}$, and hence
\begin{align*}
    \Delta_1(f) &= \Expect{(\V x, \V y, \V z)\sim \mu^{\otimes n}}{f(\V x) f(\V x+\V a) f(\V x+2\V a)}\\
    & \geq \alpha^{|I|}\cdot \E_{(\V x, \V y, \V z)\sim \mu^{\overline{I}}}\left[ f_{I\rightarrow v}(\V x) f_{I\rightarrow v}(\V y) f_{I\rightarrow v}(\V z)\right]\\
    &\geq \alpha^{\frac{2B}{\delta\alpha^B}}\cdot \delta'(B, \delta).
    \qedhere
\end{align*}
\end{proof}
Thus, it suffices to establish a lower bound $\Delta_1(f)$ under the assumption that $f$ is $(B, 1/2)$-resilient, and focus on this case henceforth.

\subsubsection{Applying the Arithmetic Regularity Lemma}
\label{sec:apply_arl}
% To estimate the quantity
% %
% $$	\Delta_1(f) =\Expect{(\V x, \V y, \V z)\sim \mu^{\otimes n}}{ f(\V x) f(\V y) f(\V z)},$$
% %
% we apply the arithmetic regularity lemma (Lemma~\ref{lem:arithmetic_reg_high_rank}) to the function $f$ for sufficiently small $\xi$ and $\rho_i$'s compared to the density of the function $f$. 
We now apply Lemma~\ref{lem:arithmetic_reg_high_rank}. For better readability, it will be convenient to denote various functions of the lemma using simpler notation. We will use the following notation.

\newcommand{\noisyfcoarse}{\mathsf{f}}
\newcommand{\noisyfrefined}{\mathsf{f}'}
\newcommand{\fcoarsedecomposed}{{\mathsf{F}}}
\newcommand{\fcoarsedecoupled}{{\mathsf{F}_d}{}}
\newcommand{\truncfcoarsedecomposed}{\tilde{\mathsf{F}}}
\newcommand{\truncfcoarsedecoupled}{{\tilde{\mathsf{F}}_d}{}}
\newcommand{\truncmultilinfcoarsedecoupled}{{\widetilde{\mathsf{M}}_d}{}}
\newcommand{\multilinfcoarsedecoupled}{{{\mathsf{M}}_d}{}}

\begin{equation}
\label{eq:regularity_functions}
\begin{array}{ll}
   \noisyfcoarse = \mathrm{T}_{\mathcal{P},1-\rho_1}f,  &\quad\quad \noisyfrefined = \mathrm{T}_{\mathcal{P}',1-\rho_5}f\\ \\
   \fcoarsedecomposed(\V x)  = \sum\limits_{P\in \spn(\mathcal{P})}P(x)\cdot L_{P}(\V x), &\quad\quad\truncfcoarsedecomposed(\V x) = \mathrm{trunc}_{[0,1]}(\fcoarsedecomposed(\V x))\\ \\
   \fcoarsedecoupled(\V x, \V y)  = \sum\limits_{P\in \spn(\mathcal{P})}P(\V x)\cdot L_{P}(\V y), &\quad\quad\truncfcoarsedecoupled(\V x', \V x'') := \mathrm{trunc}_{[0,1]}(\fcoarsedecoupled(\V x', \V x'')).
\end{array}
\end{equation}
Here, $\noisyfcoarse$ 
and $\noisyfrefined$ are from Lemma~\ref{lem:arithmetic_reg_high_rank}, and $\fcoarsedecomposed$ is from Lemma~\ref{lem:approx_formula} applied to the function $\noisyfcoarse$ 
%with the same $\xi$ from the arithmetic regularity lemma, Lemma~\ref{lem:arithmetic_reg_high_rank}. In other words, 
with
$\norm{\fcoarsedecomposed - \noisyfcoarse}_2 \leq \xi$. The function $\fcoarsedecoupled$ is the same as the function $\fcoarsedecomposed$ except that it uses two separate inputs for the product functions and the low-degree functions in the decomposition of $\fcoarsedecomposed$. We will also use the following notation throughout.\\

$m := |\Sigma|$

$M := \max_{P\in \mathcal{P}} \{{\sf ord}(P) \}$ where $M= O_m(1)$.

$r:= |\mathcal{P}|  \leq \frac{1}{\rho_0}$

$\|L_P\|_2\leq C, \quad {\sf deg}(L_P) \leq D$ where $C$ and $D$ depend on $\xi$ and $\rho_1$.\\
\\
We set the parameters as follows (the parameters $\tau, \eta$ and $\kappa$ would appear later in the proof).  Let $w\colon (0,1)\to(0,1)$ be a sufficiently rapid decay function and set\footnote{Note that this setting corresponds to applying Lemma~\ref{lem:arithmetic_reg_high_rank} with the decay function $w(.)^2$ as $\rho_3\ll_{w^2} \rho_2$ and $\rho_2\ll_{w^2}\rho_1$.}

\begin{equation}
    \label{eq:parameters}
    \rho_5\ll_w \rho_4\ll_w \rho_3\ll_w \tau\ll_w \rho_2\ll_w \kappa\ll_w \rho_1 \ll_w \rho_0 \leq \eta \leq \xi  \ll \delta \ll m^{-1},\alpha.
\end{equation}
%We use the notation $O_{\rho_i}(1)$ to denote a quantity that is upper bounded by some fixed function of $\rho_i$ (independent of $\rho_j$ for $j>i$). The above setting would make sure that $\rho_{i+1} \ll O_{\rho_i}(1)$. Similarly, we use $\Omega_{\rho_i}(1)$ to denote a quantity that is lower bounded by some fixed function of $\rho_i$. With these notations, for instance, we can simply write $\|L_P\|_2 = O_{\rho_1}(1)$ and ${\sf deg}(L_P) = O_{\rho_1}(1)$ for the low-degree functions $L_P$ from the decomposed function $\fcoarsedecomposed$ above. \\
%\vspace{20pt}

%\abnote{Used the following inequalities}

%$e^{O_m(1/\rho_0)}\cdot e^{-\frac{1}{8\rho_2}} \leq 0.01$ \hyperlink{setting1}{here}

%$\tau\ll C^{-1}, D^{-1}$

We now replace $f$ with $\noisyfrefined$ in the expression $\Delta_1(f)$ by paying only a factor of $O(\rho_3)$ as follows.
\begin{align}
    \Delta_1(f) &:=\Expect{(\V x, \V y, \V z)\sim \mu^{\otimes n}}{ \noisyfrefined(\V x) f(\V y) f(\V z)} + \Expect{(\V x, \V y, \V z)\sim \mu^{\otimes n}}{ (f- \noisyfrefined)(\V x) f(\V y) f(\V z)} \nonumber\\
    & \geq  \Expect{(\V x, \V y, \V z)\sim \mu^{\otimes n}}{\noisyfrefined(\V x) f(\V y) f(\V z)} - \norm{f-\noisyfrefined}_{\mu}\nonumber\\
        & \geq \Expect{(\V x, \V y, \V z)\sim \mu^{\otimes n}}{\noisyfrefined(\V x) f(\V y) f(\V z)}- \rho_3.\nonumber
 \end{align}
Repeating a similar process for the second and third occurrences of $f$, we get
\begin{align}
    \Delta_1(f) & \geq\Expect{(\V x, \V y, \V z)\sim \mu^{\otimes n}}{ \noisyfrefined(\V x) \noisyfrefined(\V y) \noisyfrefined(\V z)}- 3\rho_3. \label{eq:delta1_to_delta2}
\end{align}

The following claim shows that the resilience of $f$ implies resilience of $\noisyfcoarse$.
\begin{claim}
\label{claim:f_to_noisyf_resilience}
The function $\noisyfcoarse$ is $(B, 1/4)$-resilient.
\end{claim}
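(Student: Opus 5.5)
The plan is to compare the restricted average of $\noisyfcoarse=\mathrm{T}_{\mathcal{P},1-\rho_1}f$ directly with that of $f$, using stationarity of the noise operator on the restricted cube. The argument needs $f$ to be nonnegative (true here, since $f$ is $\{0,1\}$-valued and $(B,1/2)$-resilient) and needs $\rho_1 B\leq 1/2$. I will arrange the latter in the parameter setting~\eqref{eq:parameters} by making $\rho_1$ small compared to $B$; I expect this is the only real constraint the argument imposes.

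First, $\noisyfcoarse$ takes values in $[0,1]$ because it is an average of values of $f$. By Fact~\ref{fact:noise_op_basic_prop1}, $\nu^{\otimes n}$ is stationary for $\mathrm{T}_{\mathcal{P},1-\rho_1}$, so $\nu^{\otimes n}(\noisyfcoarse)=\nu^{\otimes n}(f)=\delta$. Now fix $I\subseteq[n]$ with $\card{I}\leq B$ and $v\in\Sigma^I$. Write
\[
\noisyfcoarse(v,x)=\Expect{J\subseteq_{\rho_1}[n]}{\mathrm{T}_{\mathcal{P},J}f(v,x)}.
\]
Since $f\geq 0$, every term $\mathrm{T}_{\mathcal{P},J}f$ is nonnegative. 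We may therefore discard all $J$ that intersect $I$ and obtain
\[
\nu^{\bar I}(\noisyfcoarse_{I\rightarrow v})\geq \Prob{J}{J\cap I=\emptyset}\cdot\min_{J\cap I=\emptyset}\Expect{x\sim\nu^{\bar I}}{\mathrm{T}_{\mathcal{P},J}f(v,x)},
\]
where $\Prob{J}{J\cap I=\emptyset}=(1-\rho_1)^{\card{I}}\geq 1-B\rho_1$.

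The key step is to fix $J\subseteq\bar I$ and identify $\mathrm{T}_{\mathcal{P},J}$, acting on points with $x_I=v$, with a noise operator on $\Sigma^{\bar I}$. Since $J\cap I=\emptyset$, the sample $y\sim\mathrm{T}_{\mathcal{P},J}(v,x)$ satisfies $y_I=v$. Its coordinates in $\bar I$ are drawn from $\nu$ on $J$, with $y_{\bar I\setminus J}=x_{\bar I\setminus J}$, conditioned on $P(y)=P(v,x)$ for all $P\in\mathcal{P}$. Because the $I$-coordinates agree, the factor $\prod_{i\in I}P_i(v_i)$ cancels from both sides. The condition is therefore equivalent to $P|_{\bar I}(y_{\bar I})=P|_{\bar I}(x)$ for all $P$. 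Hence
\[
\mathrm{T}_{\mathcal{P},J}f(v,x)=\mathrm{T}_{\nu,\mathcal{P}|_{\bar I},J}\,f_{I\rightarrow v}(x),
\]
where the right-hand operator is the operator of Definition~\ref{def:noise_op_P} on $\Sigma^{\bar I}$ with the product functions $\mathcal{P}|_{\bar I}$. Applying Fact~\ref{fact:noise_op_basic_prop1} on $\Sigma^{\bar I}$, $\nu^{\bar I}$ is stationary for this operator, so
\[
\Expect{x\sim\nu^{\bar I}}{\mathrm{T}_{\mathcal{P},J}f(v,x)}=\nu^{\bar I}(f_{I\rightarrow v}).
\]
By $(B,1/2)$-resilience of $f$, this is at least $\delta/2$.

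Combining the two bounds gives $\nu^{\bar I}(\noisyfcoarse_{I\rightarrow v})\geq(1-B\rho_1)\,\delta/2$. With $B\rho_1\leq 1/2$ this is at least $\delta/4=\frac14\nu^{\otimes n}(\noisyfcoarse)$, which is the claimed $(B,1/4)$-resilience. The only point needing care is the identification in the previous paragraph: the conditioning on the values of $\mathcal{P}$ must reduce exactly to conditioning on the values of $\mathcal{P}|_{\bar I}$, and this is precisely where the assumption $J\cap I=\emptyset$ is used.
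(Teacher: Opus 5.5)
Your identification step is correct: when the noise set $J$ is disjoint from the restricted set $I$, the factor $\prod_{i\in I}P_i(v_i)$ cancels, $\mathrm{T}_{\mathcal{P},J}$ acts on $f_{I\rightarrow v}$ as $\mathrm{T}_{\nu,\mathcal{P}|_{\bar I},J}$, and stationarity gives exactly $\nu^{\bar I}(f_{I\rightarrow v})$.

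The gap is the assumption $B\rho_1\leq 1/2$, which cannot be arranged. $B$ is not a free parameter. The claim is used with $I$ equal to the influence-removal set of Claim~\ref{claim:influence_removal}. So $B$ must dominate the bound on that set, which is $O_{|\mathcal{P}|,|\Sigma|,D,C,\eta,\tau}(1)$ and tower-type in $1/(\tau\eta)$. Here $C,D$ grow as $\rho_1\to 0$. Also, $\tau$ must be small as a function of $D$ and $C$ for the invariance principle; in~\eqref{eq:parameters}, $\tau\ll\rho_1$. Hence $B\gg 1/\rho_1$, which is why Lemma~\ref{lemma:mainthm_resilient_f} assumes $B\gg\Omega_{\rho_1}(1)$.

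The ordering also works against you. $B$ is fixed before Lemma~\ref{lem:arithmetic_reg_high_rank} is applied to the already-resilient function. But $\rho_1$ is an output of that lemma and may be as large as roughly $\xi$, so you cannot shrink $\rho_1$ below $1/B$ afterwards. In the relevant regime the noise set avoids $I$ with probability only $(1-\rho_1)^{|I|}\approx e^{-\rho_1|I|}$, and your bound $(1-\rho_1)^{B}\delta/2$ is worthless.

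The missing idea is a way to handle noise sets that \emph{do} resample restricted coordinates. This is where the paper uses the rank of $\mathcal{P}$.
\begin{itemize}
\item Since ${\sf rk}(\mathcal{P})\geq 1/\rho_2$ with $\rho_2\ll\rho_1\ll\rho_0$, a Chernoff bound plus a union bound over the $e^{O_m(1/\rho_0)}$ nontrivial combinations shows the following. With probability $0.99$ over the noise set, $\mathcal{P}$ restricted to the noise set still has rank $\gtrsim\rho_1/\rho_2$.
\item By Lemma~\ref{lemma:quasirandom_onefunc}, the probability that the resampled coordinates reproduce any prescribed vector of $\mathcal{P}$-values is then close to $\prod_{P}{\sf ord}(P)^{-1}$, whatever the fixed coordinates are. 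So conditioning on the $\mathcal{P}$-values is almost uninformative.
\item Consequently, the output of $\mathrm{T}_{\mathcal{P},J}$ started from a $\nu$-random point with $x_I=v$ is $\delta/10$-close in statistical distance to $\nu^{\otimes n}$ conditioned only on the coordinates of $I$ that were \emph{not} resampled.
\item Applying $(B,1/2)$-resilience of $f$ to that subset, which has size at most $B$, gives $0.99\cdot\delta/2-\delta/10\geq\delta/4$.
\end{itemize}
Your exact stationarity computation is the special case with no overlap. The general case needs this approximate version, and what it costs is rank, not a bound on $B\rho_1$.
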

\begin{proof}
To show that $\noisyfcoarse$ is $(B, 1/4)$-resilient, fix a subset $J \subseteq [n]$ of size at most $B$ and a string $v\in \Sigma^{J}$. Let $n' = n-|J|$.
We start by looking at the function $\noisyfcoarse_{J\rightarrow v}$.
\[
\noisyfcoarse_{J\rightarrow v} (x) = \Expect{y\sim {\rm T}_{\mathcal{P}, 1-\rho_1} ((v, x))}{f(y)}.
\]
Our goal is to show that the following quantity is at least $ \frac{\delta}{4}$.
\[
\nu^{\otimes n'}(\noisyfcoarse_{J\rightarrow v} ) = \Expect{x\sim \nu^{\otimes n'}}{\Expect{y\sim {\rm T}_{\mathcal{P}, 1-\rho_1} ((v, x))}{f(y)}}.
\]
Recall the operator $\mathrm{T}_{\nu, \mathcal{P}, I} \colon L_2(\Sigma^n,\nu^{\otimes n})\to L_2(\Sigma^n,\nu^{\otimes n})$ from Definition~\ref{def:noise_op_P}. 
%Now, it is easy to see that the operator ${\rm T}_{\mathcal{P}, 1-\rho_1}$ as the average of the operators ${\rm T}_{\mathcal{P},I}$ where $I\subseteq [n]$ with each $i\in I$ is included with probability $\rho_1$. Let $I\subseteq_{\rho_1}[n]$ denote the distribution on $I$ above. 
We have
\[
\nu^{\otimes n'}(\noisyfcoarse_{J\rightarrow v} ) = \Expect{x\sim \nu^{\otimes n'}}{\Expect{I\subseteq_{\rho_1}[n]}{\Expect{y\sim {\rm T}_{\mathcal{P},I} ((v, x))}{f(y)}}} .
\]
We will show that with probability at least $0.99$ over the choice of $I$, the statistical distance between the distribution on $y$ in the above expectation and the distribution on $z\sim \nu^{\otimes n}$ conditioned on $z_{J'} =  v'$ for $J' = J\cap I$ and $v' = v|_{J'}$  is upper bounded by $\frac{\delta}{10}$. Once we show that, as $f$ takes value in $\{0, 1\}$, we will get that
\[
\nu^{\otimes n'}(\noisyfcoarse_{J\rightarrow v} )  \geq 0.99\Expect{I\subseteq_{1-\rho_1} [n]}{\nu^{\otimes n-|J'|}(f_{J'\rightarrow v'})} - \frac{\delta}{10}\geq 0.99\cdot \frac{\delta}{2} - \frac{\delta}{10} \geq \frac{\delta}{4},
\]
where the second inequality used the fact that $f$ is $(B, 1/2)$-resilient and $\nu^{\otimes n}(f) \geq \delta$.

Towards showing the closeness in the statistical distance, define the following two distributions:
\begin{itemize}
    \item $\mathcal{D}_{I,v}$: Sample $x\sim \nu^{\otimes n'}$, and output $y\sim {\rm T}_{\mathcal{P},I} ((v, x))$. 
    \item $\mathcal{D}'_{J', v'}$: Sample $z\sim \nu^{\otimes n}$ conditioned on $z|_{J'} = v'$. 
\end{itemize}
Let $\mathcal{P} = \{P_1, P_2, \ldots, P_r\}$. 
%For a given $I\subseteq [n]$, let $\mathcal{P}|_{{I}}$ be the following collection of product functions $\{ P_i|_{{I}} \mid P_i\in \mathcal{P}\}$, where $P_i|_{{I}}(x) = \prod_{j\in {I}} P_{i,j}(x)$ if $P_i(x) = \prod_{j\in [n]} P_{i,j}(x)$. 
Thus, the proof will be complete if we show the following two statements. 

\begin{enumerate}
    \item[(a)] With probability at least $0.99$ over $I\subseteq_{\rho_1} [n]$, ${\sf rk}(\mathcal{P}|_{I})\geq \frac{1}{4\rho_2}$.
    \item[(b)] If ${\sf rk}(\mathcal{P}|_{{I}})\geq \frac{\rho_1}{2\rho_2}$, then the distribution $\mathcal{D}_{I, v}$ is $\frac{\delta}{10}$-close to the distribution $\mathcal{D}'_{J', v'}$.
\end{enumerate}

\paragraph{Proof of $(a)$.} Fix a vector $\vec{\alpha}\in (\mathbb{N} \cup \{0\})^{|\mathcal{P}|}$ where $0\leq \alpha_P <{\sf ord}(P)$ for all $P\in \mathcal{P}$ that are not all zero. Let $S_{\vec{\alpha}}(x) = \prod_{P\in \mathcal{P}} P(x)^{\alpha_P}$. As ${\sf rk}(\mathcal{P}) \geq \frac{1}{\rho_2}$, we have $\Delta_{{\sf symbolic}}(S_{\vec{\alpha}},1)\geq \frac{1}{\rho_2}$. Let $A({\vec{\alpha}}) \subseteq [n]$ be the set of coordinates where $(S_{\vec{\alpha}})_i \not\equiv 1$. Thus, we have $|A({\vec{\alpha}})|\geq \frac{1}{\rho_2}$. For $I\subseteq_{\rho_1} [n]$, we have that $	\Expect{I}{|I\cap A({\vec{\alpha}})|} \geq \frac{\rho_1}{\rho_2}$. By Chernoff bound, the probability that $|I\cap A({\vec{\alpha}})| \leq \frac{\rho_1}{2\rho_2}$ is at most $e^{-\frac{\rho_1}{8\rho_2}}$. Finally, taking the union bound over all the admissible $\vec{\alpha}$ (the number of admissible $\vec{\alpha}$s is at most $e^{O_m(1/\rho_0)}$ as ${\sf ord}(P) = O_m(1)$ for all $P\in \mathcal{P}$ and $|\mathcal{P}|\leq \frac{1}{\rho_0}$), we have that there exists an $\vec{\alpha}$ such that $|I\cap A({\vec{\alpha}})| \leq \frac{\rho_1}{2\rho_2}$ is at most \hypertarget{setting1}{$e^{O_m(1/\rho_0)}\cdot e^{-\frac{\rho_1}{8\rho_2}}\leq 0.01$}. Therefore, with probability at least $0.99$ over $I\subseteq_{\rho_1} [n]$, we have ${\sf rk}(\mathcal{P}|_{I})\geq \frac{\rho_1}{2\rho_2}$ as required.
\paragraph{Proof of $(b)$.}
Consider the following two sets for $v'\in \Sigma^{J'}$ and $w\in \Sigma^n$:
\begin{align*}
    S_{v'} &= \{ z \mid z|_{J'} = v'\},\\
    T_w &= \{z\mid P(x) = P(w) ~~\forall P\in \mathcal{P} \mbox{ and } z_{\overline{I}} = w_{\overline{I}}\}.
\end{align*}
For any fixed $w\in \Sigma^n$, we will now compute $\Pr_{y\sim \mathcal{D}'_{J', v'}}[y=w]$ and $\Pr_{y\sim \mathcal{D}_{I,v}}[y=w]$. The first one is easy:
$$\Pr_{y\sim \mathcal{D'}_{J', v'}}[y=w] = \frac{\nu^{\otimes n}(w)}{\nu^{\otimes n}(S_{v'})}.$$
We now estimate $\Pr_{y\sim \mathcal{D}_{I,v}}[y=w]$. Recall the distribution $\mathcal{D}_{I,v}$: Sample $x\sim \nu^{\otimes n'}$, and output $y\sim {\rm T}_{\mathcal{P},I} ((v, x))$. Thus, the following conditions must be met for $y=w$
\begin{enumerate}
    \item $x_{\overline{I}\cap \overline{J}} = w_{\overline{I}\cap \overline{J}}$, as the noise operator ${\rm T}_{\mathcal{P},I}$ does not change $x|_{\overline{I}\cap \overline{J}}$. 
    \item $P((v,x)) = P(w) ~\forall P\in \mathcal{P}$. Let this event be $E$.
    \item Finally, $y_{I} = w_{I}$.
\end{enumerate}
Thus,
\begin{align*}
    &\Pr_{\substack{x\sim \nu^{\otimes n'},\\ y\sim {\rm T}_{\mathcal{P},I} ((v, x))}}[y=w] \\
    &\quad= \Pr_{x\sim \nu^{\otimes n'}}[x_{\overline{I}\cap \overline{J}} = w_{\overline{I}\cap \overline{J}}]\cdot \Pr_{x\sim \nu^{\otimes n'}}[ E \mid x_{\overline{I}\cap \overline{J}} = w_{\overline{I}\cap \overline{J}}] \cdot  \Pr_{\substack{x\sim \nu^{\otimes n'},\\ y\sim {\rm T}_{\mathcal{P},I} ((v, x))}}[y=w \mid E \wedge x_{\overline{I}\cap \overline{J}} = w_{\overline{I}\cap \overline{J}}].
\end{align*}
We now estimate each of these probabilities. Note that
\begin{align*}
    \Pr_{x\sim \nu^{\otimes n'}}[x_{\overline{I}\cap \overline{J}} = w_{\overline{I}\cap \overline{J}}] = \prod_{\ell\in \overline{I}\cap \overline{J} } \nu(w_\ell) = \prod_{\ell\in \overline{I}\cap \overline{J} } \nu(w_\ell)  \cdot \frac{\prod_{\ell\in J'} \nu(w_\ell) }{\prod_{\ell\in J'} \nu(w_\ell) } = \frac{\nu^{\bar{I}}(w|_{\overline{I}})}{\nu^{\otimes n}(S_{v'})}.
\end{align*}
Also, 
\begin{align*}
    \Pr_{\substack{x\sim \nu^{\otimes n'},\\ y\sim {\rm T}_{\mathcal{P},I} ((v, x))}}[y=w \mid E \wedge x_{\overline{I}\cap \overline{J}} = w_{\overline{I}\cap \overline{J}}] = \frac{\nu^{\otimes n}(w)}{\nu^{\otimes n}(T_w)}.
\end{align*}
Therefore,
\begin{align*}
    \card{\Pr_{\substack{x\sim \nu^{\otimes n'},\\ y\sim {\rm T}_{\mathcal{P},I} ((v, x))}}[y=w]  - \frac{\nu^{\otimes n}(w)}{\nu^{\otimes n}(S_{v'})}} &\leq \card{ \frac{\nu^{\bar{I}}(w|_{\overline{I}})}{\nu^{\otimes n}(S_{v'})}\Pr_{x\sim \nu^{\otimes n'}}[ E \mid x_{\overline{I}\cap \overline{J}} = w_{\overline{I}\cap \overline{J}}] \frac{\nu^{\otimes n}(w)}{\nu^{\otimes n}(T_w)}   -\frac{\nu^{\otimes n}(w)}{\nu^{\otimes n}(S_{v'})}}\\
    &\leq \frac{\nu^{\otimes n}(w)}{\nu^{\otimes n}(S_{v'})}\card{ \frac{\nu^{\bar{I}}(w|_{\overline{I}})\Pr_{x\sim \nu^{\otimes n'}}[ E \mid x_{\overline{I}\cap \overline{J}} = w_{\overline{I}\cap \overline{J}}] - \nu^{\otimes n}(T_w)}{\nu^{\otimes n}(T_w)}}.
\end{align*}
Let $s= \prod_{P\in \mathcal{P}} {\sf ord}(P)^{-1}$. Then, using the fact that  ${\sf rk}(\mathcal{P}|_{{I}})\geq \frac{\rho_1}{2\rho_2}$ and Lemma~\ref{lemma:quasirandom_onefunc}, we have
$$ \card{\Pr_{x\sim \nu^{\otimes n'}}[ E \mid x_{\overline{I}\cap \overline{J}} = w_{\overline{I}\cap \overline{J}}] - s} \leq 2^{-\Omega_{M,r,\alpha}\left(\frac{\rho_1}{\rho_2}\right)},$$
and
$$\card{\nu^{\otimes n}(T_w) - s\cdot \nu^{\bar{I}}(w|_{\overline{I}})} \leq 2^{-\Omega_{M,r,\alpha}\left(\frac{\rho_1}{\rho_2}\right)}\nu^{\bar{I}}(w|_{\overline{I}}).$$
Although the latter statement does not follow directly from Lemma~\ref{lemma:quasirandom_onefunc}, observe that the same proof works if we replace the distribution $\nu^{\otimes n}$ with the conditional distribution $\nu^{\otimes n}|x|_{\overline{I}}= w_{\overline{I}}$ and using the fact that  ${\sf rk}(\mathcal{P}|_{{I}})\geq \frac{\rho_1}{2\rho_2}$. Adding the conditioning back gives the required bound as above.

If we let $\eta = 2^{-\Omega_{M,r,\alpha}\left(\frac{\rho_1}{\rho_2}\right)}$, then using the fact that $\rho_2\ll \rho_1\ll s$, we have
\begin{align*}
    \card{\Pr_{\substack{x\sim \nu^{\otimes n'},\\ y\sim {\rm T}_{\mathcal{P},I} ((v, x))}}[y=w]  - \frac{\nu^{\otimes n}(w)}{\nu^{\otimes n}(S_{v'})}} &\leq \frac{\nu^{\otimes n}(w)}{\nu^{\otimes n}(S_{v'})}\card{ \frac{\nu^{\bar{I}}(w|_{\overline{I}})\Pr_{x\sim \nu^{\otimes n'}}[ E \mid x_{\overline{I}\cap \overline{J}} = w_{\overline{I}\cap \overline{J}}] - \nu^{\otimes n}(T_w)}{\nu^{\otimes n}(T_w)}}\\
    & \leq \frac{\nu^{\otimes n}(w)}{\nu^{\otimes n}(S_{v'})}\card{ \frac{\nu^{\bar{I}}(w|_{\overline{I}})\cdot (s+\eta) - (s\cdot \nu^{\bar{I}}(w|_{\overline{I}})-\eta\cdot \nu^{\bar{I}}(w|_{\overline{I}}))}{(s/2)\cdot \nu^{\bar{I}}(w|_{\overline{I}})}}\\
    &\leq \frac{4\eta}{s} \frac{\nu^{\otimes n}(w)}{\nu^{\otimes n}(S_{v'})}.
\end{align*}
Summing over all of $w$, we get that the the distribution $\mathcal{D}_{I,v}$ is $\frac{4\eta}{s}$-close to the distribution $\mathcal{D}'_{J', v'}$. As $\rho_2\ll \rho_1\ll s$, we have $\frac{4\eta}{s}\leq \frac{\delta}{10}$ as required.
\end{proof}

Define the following quantity, 
$$\Delta_2(\noisyfrefined) :=\Expect{(\V x, \V y, \V z)\sim \mu^{\otimes n}}{\noisyfrefined(\V x) \noisyfrefined(\V y) \noisyfrefined(\V z)}.$$
 Using (\ref{eq:delta1_to_delta2}), as we have

\begin{equation}\label{eq:Delta_1_to_2}
    \Delta_1(f)\geq \Delta_2(\noisyfrefined) -3\rho_3,
\end{equation}
and our next goal is to establish a lower bound on  $\Delta_2(\noisyfrefined)$.

\subsubsection{Removing influential variables}
\label{sec:remove_inf}

In the next step, we apply a random restriction to a small subset of coordinates. The primary goal of this restriction is to ensure that in the approximate decomposition of $\noisyfcoarse$, given by $\fcoarsedecomposed$,

  \[
	\fcoarsedecomposed(\V x) = \sum\limits_{P\in\spn(\mathcal{P})} P(\V x) \cdot L_P(\V x),
	\]
the restricted low-degree functions $L_P$ for every $P\in \spn(\mathcal{P})$ have small influences.

We formally define the influence of a variable in the decomposed function. 

\begin{definition}
    We say that a function $\fcoarsedecomposed: (\Sigma^n, \nu^{\otimes n})\rightarrow \mathbb{C}$ given by 
     \[
	\fcoarsedecomposed(\V x) = \sum\limits_{P\in\spn(\mathcal{P})} P(\V x) \cdot L_P(\V x).
	\]
    has $\tau$-small shifted low-degree influences if for every $i\in [n]$  and every $P\in\spn(\mathcal{P})$ it holds that $I_i[L_P, \nu^{\otimes n}]\leq \tau$.
\end{definition}

The following claim shows the existence of a small set such that most restrictions to the variables in the set entail low influences for the restricted function $\fcoarsedecomposed$.   The proof follows the strategy of the regularity lemma from~\cite{Jones}. We prove the claim in the appendix, focusing on the bounds of the parameters.

\begin{restatable}{claim}{jones}\label{claim:influence_removal}
    Consider the function  $\fcoarsedecomposed$ from above. Then for any $\tau>0$, there exists $J\subseteq [n]$ of size $O_{|\mathcal{P}|, |\Sigma|, D, C,\eta, \tau}(1)$ such that with probability at least $1-\eta$ over  $\V w\sim \nu^{\otimes |J|}$, we have
    $$ \max_{P\in \spn(\mathcal{P})} \{ \max_i I_i[(L_P)_{J\rightarrow \V w}]\}\leq \tau.$$
\end{restatable}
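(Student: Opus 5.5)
We prove Claim~\ref{claim:influence_removal} with the Jones-style regularity iteration: repeatedly restrict the coordinates that are currently influential for some $L_P$, and control the process by an energy that increases in bounded steps. The relevant functions are the finitely many low-degree $L_P$, $P\in\spn(\mathcal{P})$. There are at most $K:=M^{|\mathcal{P}|}=O_{|\mathcal{P}|,m}(1)$ of them, each of degree at most $D$ and with $\norm{L_P}_2\leq C$.

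For a set $J$, define the energy
\[
\mathcal{E}(J)=\sum_{P\in\spn(\mathcal{P})}\ \sum_{\beta:\ {\sf supp}(\beta)\subseteq J}\card{\widehat{L_P}(\beta)}^2 ,
\]
which is the weight of the $L_P$'s on Fourier characters supported inside $J$. It is monotone in $J$ and bounded by $KC^2$. Two standard facts link restrictions to Fourier weight. First, $\E_{\V w}\, I_i[(L_P)_{J\to\V w}]=\sum_{\beta:\ \beta_i\neq 0,\ {\sf supp}(\beta)\cap J\ldots}$, which equals the Fourier weight of $L_P$ on characters $\beta$ with $\beta_i\neq0$, $i\notin J$, summed over all possible $J$-parts. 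Second, $\sum_{i\notin J}\E_{\V w} I_i[(L_P)_{J\to \V w}]\leq D\norm{L_P}_2^2$, since restriction does not increase degree. I will use Proposition~\ref{prop:total_inf} in the form $I_i(g)\asymp\sum_{\beta_i\neq0}|\hat g(\beta)|^2$; the constants are harmless.

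The iteration starts with $J_0=\emptyset$. At stage $t$, let $H_t$ be the set of $i\notin J_t$ such that $\sum_P\E_{\V w}I_i[(L_P)_{J_t\to\V w}]\geq \tau\eta/K$. By the total-influence bound, $|H_t|\leq K^2 C^2 D/(\tau\eta)$, so each stage adds only boundedly many coordinates. If $H_t=\emptyset$, apply Markov's inequality and a union bound over the $K$ functions and over $i\notin J_t$. Done naively, this union bound over $i$ fails, so I will use the $L^2$-moment instead. Restricted degree-$D$ functions satisfy $\sum_i I_i(g)^2\leq \max_i I_i(g)\cdot D\norm{g}^2$. Rather than bounding this, I plan to bound $\E_{\V w}\sum_{i} I_i[(L_P)_{J\to\V w}]^2$ by hypercontractivity of low-degree functions, namely $\norm{g}_4\leq C_{D,\alpha}\norm{g}_2$. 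Applied to the derivative in direction $i$ as a function of $\V w$, this gives $\E_{\V w}I_i^2\lesssim_{D,\alpha}(\E_{\V w}I_i)^2$. Summing over $i$ then yields at most $(\max_i\E I_i)\cdot D C^2$. Markov's inequality on $\sum_i I_i^2>\tau^2$ finally gives failure probability at most $\eta$ once the threshold is chosen as $\tau^2\eta/(KDC^2 C_{D,\alpha})$ in place of $\tau\eta/K$. In that case we stop and output $J=J_t$.

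Otherwise set $J_{t+1}=J_t\cup H_t$. The energy then increases. Indeed, for $i\in H_t$ the quantity $\sum_P\E_{\V w}I_i[(L_P)_{J_t\to\V w}]$ is the weight on characters $\beta$ with $\beta_i\neq0$ and ${\sf supp}(\beta)\not\subseteq J_t$. This is the main obstacle: those characters need not be supported inside $J_{t+1}$. To handle it I will use a ``double threshold'' trick in the spirit of Jones. Each character has support of size at most $D$, so weight charged to $i\in H_t$ can escape only to characters with some other coordinate outside $J_{t+1}$. Thus at most $D$ rounds can touch a given character before it is fully absorbed. Concretely, track the potential $\Phi_t=\sum_P\sum_\beta|\widehat{L_P}(\beta)|^2\,|{\sf supp}(\beta)\cap J_t|$. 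It is bounded by $DKC^2$ and increases by at least the threshold at each non-terminal stage, since each $i\in H_t$ contributes its weight to the intersection count. Hence the number of stages is at most $DKC^2$ divided by the threshold, and $|J|$ is bounded by the number of stages times $\max_t|H_t|$. This bound depends only on $|\mathcal{P}|,m,D,C,\eta,\tau$ (and on $\alpha$ through $C_{D,\alpha}$), as the claim requires.
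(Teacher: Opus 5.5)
Your argument is correct, but it takes a different route from the paper.

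The paper builds an adaptive decision tree. Its energy is the $\nu$-weighted average over leaves $T$ of $\sum_P I[(L_P)|_T]$. Whenever at least an $\eta$-fraction of leaves have a $\tau$-influential coordinate for some $L_P$, it splits those leaves, and Lemma~\ref{lemma:split_f_total_inf} gives an energy drop of $\eta\tau$. Since the stopping condition is literally ``at most an $\eta$-fraction of restrictions are bad'', the paper never needs concentration over $w$. The price is that homogenizing the tree into a single set $J$ gives a tower-type bound on $|J|$.

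You restrict one non-adaptive set and get concentration from hypercontractivity. The key estimate is $\E_w\sum_i I_i[(L_P)_{J\to w}]^2 \lesssim_{D,\alpha} \max_{i\notin J} I_i(L_P)\cdot I(L_P)$. It follows from Jensen in the free variables plus the $(4,2)$ inequality for the degree-$D$ function $\sum_{\beta_i\neq 0}\widehat{L_P}(\beta)\phi_\beta$. This estimate is exactly what replaces the failing union bound over $i$.

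Your route gives a much better bound: polynomial in $1/\tau$, $1/\eta$, $K$, $C$, and exponential only in $D$. It costs a dependence on the minimal atom probability $\alpha$ of $\nu$, which the stated claim does not list. That is harmless here, since the whole proof depends on $\alpha$. It also uses hypercontractivity where the paper needs only Parseval.

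One simplification: for $i\notin J$, $\E_w I_i[(L_P)_{J\to w}]$ equals $I_i(L_P)$ regardless of $J$. So after the first stage $H_1=\emptyset$ automatically, and the ``obstacle'' you describe never arises. Taking $J=H_0$ already works, so the potential $\Phi_t$ argument is correct but unnecessary.
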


We take the set $J$ from the previous claim. We next restrict the function $\noisyfrefined$ to inputs $\V x$ such that $\V x|_{J} = \V w$ for some $\V w\in \Sigma^J$. Denote the restricted function by $\noisyfrefined_{J\rightarrow \V w}$. We consider the following four events regarding $w$:
\begin{enumerate}
    \item $E_1$: all the influences of $L_P|_{J\rightarrow \V w}$ are at most $\tau$ for every $P\in \spn(\mathcal{P})$.
    \item $E_2$: the $\ell_2$ norms of the restricted low-degree functions $L_P|_{J\rightarrow \V w}$ are bounded by $C'$ where $C' = 100C2^{O_m(r)}$.
    \item $E_3$: $\|\noisyfrefined|_{J\rightarrow \V w} - \noisyfcoarse|_{J\rightarrow \V w}\|_2 \leq \sqrt{\xi}$.
    \item $E_4$:  $\|\fcoarsedecomposed_{J\rightarrow \V w}  - \noisyfcoarse|_{J\rightarrow \V w}\|_2 \leq \sqrt{\xi}$.
\end{enumerate}
The following claim shows that with high probability over $\V w\sim \Sigma^J$, all the events $E_1, E_2, E_3$, and $E_4$ hold.
\begin{claim}\label{claim:goodevents}
    $\Pr_{\V w\sim \Sigma^J}\left[E_1 \wedge E_2 \wedge E_3\wedge E_4\right]\geq 0.98$.
\end{claim}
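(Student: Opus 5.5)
We prove Claim~\ref{claim:goodevents} by bounding the failure probability of each event separately, by at most $0.005$, and then taking a union bound. Throughout, $\V w\sim\nu^{\otimes |J|}$; this is the measure under which Claim~\ref{claim:influence_removal} and all the $L_2$ norms are defined.

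\textbf{Event $E_1$.} We take $\eta$ in Claim~\ref{claim:influence_removal} to be at most $0.005$, which is allowed by the parameter ordering \eqref{eq:parameters}. The claim then gives $\Pr[\neg E_1]\le \eta\le 0.005$ directly.

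\textbf{Events $E_3$ and $E_4$.} Both follow from averaging and Markov's inequality. For any $g\colon\Sigma^n\to\mathbb{C}$,
\[
\Expect{\V w}{\norm{g_{J\to\V w}}_2^2}=\norm{g}_2^2 ,
\]
because $\nu^{\otimes n}=\nu^{J}\otimes\nu^{\bar J}$.
\begin{itemize}
\item For $E_4$, take $g=\fcoarsedecomposed-\noisyfcoarse$, which has $\norm{g}_2\le\xi$. Markov gives $\Pr[\norm{g_{J\to\V w}}_2^2>\xi]\le \xi^2/\xi=\xi$.
\item For $E_3$, take $g=\noisyfrefined-\noisyfcoarse$, which has $\norm{g}_2\le\xi$ by item 3 of Lemma~\ref{lem:arithmetic_reg_high_rank}. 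The same computation bounds the failure probability by $\xi$.
\end{itemize}
Since $\xi\ll\delta\ll 1$, both are at most $0.005$.

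\textbf{Event $E_2$.} This is the only step needing care, because it must hold for every $P\in\spn(\mathcal{P})$ simultaneously. The number of such $P$ is at most $M^{r}=2^{O_m(r)}$. For a fixed $P$, we have $\Expect{\V w}{\norm{(L_P)_{J\to\V w}}_2^2}=\norm{L_P}_2^2\le C^2$. Markov then gives
\[
\Pr\left[\norm{(L_P)_{J\to\V w}}_2> C'\right]\le \frac{C^2}{C'^2}=\frac{1}{10^4\,2^{O_m(r)}} .
\]
The point of choosing $C'=100C2^{O_m(r)}$ is that the union bound over all $2^{O_m(r)}$ functions $P$ still works. With the constant in the exponent of $C'$ taken at least that in $|\spn(\mathcal{P})|\le 2^{O_m(r)}$ (say $C'\ge 100C\,|\spn(\mathcal{P})|$), the total failure probability is at most $10^{-4}\le 0.005$.

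\textbf{Conclusion.} Summing the four failure probabilities gives at most $0.02$, which proves the claim. The only bookkeeping concern is making the hidden constant in $C'$ dominate $\log_2|\spn(\mathcal{P})|$; all the other steps are immediate.
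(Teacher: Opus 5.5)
Your proof is correct and follows the paper's argument. The paper applies Markov's inequality to the averaged squared $L_2$ norms for $E_2,E_3,E_4$, with a union bound over $\spn(\mathcal{P})$ for $E_2$, uses Claim~\ref{claim:influence_removal} for $E_1$, and then takes a union bound; its failure budgets are $0.01,\eta,\xi,\xi$ instead of your uniform $0.005$, which makes no difference.
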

\begin{proof}
    Consider a function $L_P$ where $P\in \spn(\mathcal{P})$. As $\|L_P\|\leq C$, by Markov's inequality the probability over the choice of $w$ that $\|L_P|_{J\rightarrow \V w}\|_2\geq C'$ is at most $\frac{C^2}{C'^2}$. As the number of the functions $L_P$ in the expansion of $\fcoarsedecomposed$ is upper bounded by $|\spn(\mathcal{P})|\leq 2^{O_m(r
    )}$, 
    by union bound, the event $E_2$ holds with probability at least $1-\frac{3C^22^{O_m(r)}}{C'^2}\geq 0.01$. Using Claim~\ref{claim:influence_removal}, $E_1$ holds with probability at least $1-
    \eta$.
    As $\|\noisyfrefined-\noisyfcoarse\|_2^2\leq \xi^2$, Markov's inequality gives that the probability that $\|\noisyfrefined|_{J\rightarrow \V w} - \noisyfcoarse|_{J\rightarrow \V w}\|_2^2\geq \xi$ is at most $\xi$. Similarly, $\Pr[\neg E_4]\leq \xi$ as  $\|\fcoarsedecomposed  - \noisyfcoarse\|_2 \leq \xi$. Therefore, by the union bound $E_1, E_2, E_3$ and $E_4$ hold together with probability at least $1-\eta-2\xi -0.01\geq 0.98$. 
    \end{proof}

    Fix $J$ from above and let $E_{\V w}$ be the indicator that the events $E_1 , E_2, E_3 $ and $E_4$ happen for the given $\V w$. Recall, $\Delta_2(\noisyfrefined)$,
    \begin{align*}
    \Delta_2(\noisyfrefined) &=\Expect{(\V x, \V y, \V z)\sim \mu^{\otimes n}}{\noisyfrefined(\V x) \noisyfrefined(\V y) \noisyfrefined(\V z)}.
    \end{align*}
We can rewrite the above expectations as follows.
    \begin{align*}
     \Delta_2(\noisyfrefined) &=  \Expect{(\V w, \V w', \V w'') \sim \mu^{J}}{\Expect{(\V x, \V y, \V z)\sim \mu^{\bar{J}}}{\noisyfrefined|_{J\rightarrow \V w}(\V x) \noisyfrefined|_{J\rightarrow \V w'}(\V y) \noisyfrefined|_{J\rightarrow \V w''}(\V z)}}\\
     & \geq 0.98\cdot \alpha^{|J|} \cExpect{w\sim\nu^{J}}{E_w}{\Expect{(\V x, \V y, \V z)\sim \mu^{\bar{J}}}{\noisyfrefined|_{J\rightarrow \V w}(\V x) \noisyfrefined|_{J\rightarrow \V w}(\V y) \noisyfrefined|_{J\rightarrow \V w}(\V z)}}\\
       & \geq \alpha^{|J|+1} \cExpect{w\sim\nu^{J}}{E_w}{\underbrace{\Expect{(\V x, \V y, \V z)\sim \mu^{\bar{J}}}{\noisyfrefined|_{J\rightarrow \V w}(\V x) \noisyfrefined|_{J\rightarrow \V w}(\V y) \noisyfrefined|_{J\rightarrow \V w}(\V z)}}_{\Delta_2(\noisyfrefined, E_{\V w}) }}\\
    \end{align*}
   where in the second step, we use Claim~\ref{claim:goodevents} and the fact that $\V w = \V w' = \V w''$ happens with probability at least $\alpha^{|J|}$.  Next, we estimate $\Delta_2(\noisyfrefined, E_{\V w})$ defined above.
   
\subsubsection{Moving to a Decoupled Function}
\label{sec:move_to_decoulped}
We will fix the set $J$ and $\V w$ for the remainder of this section, and assume that $E_w$ holds.  Let us denote $n' = n-|J|$. For succinctness, we denote $\noisyfrefined_{J\rightarrow\V w}, \noisyfcoarse_{J\rightarrow\V w}$ and $\fcoarsedecomposed|_{J\rightarrow \V w}$ simply by $\noisyfrefined_{\V w}, \noisyfcoarse_{\V w}$ and $\fcoarsedecomposed_{\V w}$, respectively. The next step is to decouple the input from $\fcoarsedecomposed_{\V w}$ to the product functions and the low-degree functions. Towards this, for a given function $\fcoarsedecomposed_{\V w} : (\Sigma^{n'}, \nu^{\otimes n'}) \rightarrow \mathbb{C}$, define a function $\fcoarsedecoupled_{,\V w}: (\Sigma^{n'}\times \Sigma^{n'},  \nu^{\otimes n'}\times  \nu^{\otimes n'}) \rightarrow \mathbb{C}$
$$\fcoarsedecoupled_{, \V w}(\V x', \V x'') = \sum\limits_{P\in\spn(\mathcal{P})} P(\V w, \V x') \cdot L_P(\V w, \V x'').$$

We also truncate the functions so that they remain bounded. Towards this, consider the following functions,
    $$\truncfcoarsedecoupled_{,\V w}(\V x', \V x'') := \mathrm{trunc}_{[0,1]}(\fcoarsedecoupled_{, \V w}(\V x', \V x'')),$$
and
$$\truncfcoarsedecomposed_{\V w}(\V x) = \mathrm{trunc}_{[0,1]}(\fcoarsedecomposed_{\V w}(\V x)).$$

The following lemma from~\cite{BKM5} allows one to replace the function $\fcoarsedecomposed_{\V w} $ with $\fcoarsedecoupled_{, \V w}$ by changing the distribution $X\sim \nu^{\otimes n'}$ to a coupled distribution $(X', X'') \sim \nu^{\otimes n'}\times  \nu^{\otimes n'}$ as long as $\mathcal{P}$ has high rank. The decoupled distribution $\mathcal{D}$ is as follows (recall from (\ref{eq:parameters}) that we choose $\kappa$ such that $\rho_2\ll \kappa\ll \rho_1$):
   \begin{enumerate}
            \item Sample $(X,Y,Z)\sim \mu^{\otimes n'}$.
            \item Sample 
            $(X',Y',Z')\sim \mathrm{T}_{\mathcal{\mathcal{P}},0} (X,Y,Z)$.
            \item Sample $(X'',Y'',Z'')\sim \mathrm{T}_{1-\kappa}(X,Y,Z)$. By that, we mean that for each coordinate $i\in \bar{J}$ we have
$(X''_i,Y''_i,Z''_i) = (X_1, Y_i, Z_i)$ with probability $1-\kappa$ and we take $(X''_i,Y''_i,Z''_i)\sim \mu$ with probability $\kappa$ independently.
\end{enumerate}

We have the following lemma from~\cite{BKM5} adapted to our setting above.
\begin{lemma}(\cite[Lemma 6.7]{BKM5})\label{lem:decoupled_move}
		Let $D>0$ and $\mathcal{P} = \{P_1,\ldots,P_r\colon \Sigma^{n'}\to\mathbb{C}\}$ be a collection of
		product functions from the decomposed function $\fcoarsedecomposed_{\V w}$ such that ${\sf rk}(\mathcal{P})\geq D$. Then for every $\kappa>0$, the coupling $\mathcal{D}$ is a coupling between $(X',X'')$ and $X$ such that
		\[
		\Expect{(X,X',X'')\sim \mathcal{D}}{\card{\fcoarsedecoupled_{,\V w}(X',X'') - \fcoarsedecomposed_{\V w}(X)}^2}
		\leq O_{\rho_0, \rho_1, |\Sigma|}(\kappa) +  2^{-\Omega_{\rho_1, \rho_1,|\Sigma|,\kappa}(D)}.
		\]
	\end{lemma}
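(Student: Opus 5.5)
The plan is to bound the squared error by splitting it into two sources: replacing $X$ by $X'$ in the product-function factors, and replacing $X$ by $X''$ in the low-degree factors. Write
\[
\fcoarsedecoupled_{,\V w}(X',X'') - \fcoarsedecomposed_{\V w}(X)
=\sum_{P\in\spn(\mathcal{P})}\Big(P(\V w,X')-P(\V w,X)\Big)L_P(\V w,X'') + \sum_{P\in\spn(\mathcal{P})}P(\V w,X)\Big(L_P(\V w,X'')-L_P(\V w,X)\Big).
\]
By Cauchy--Schwarz over the at most $M^{r}=2^{O_m(1/\rho_0)}$ terms, it suffices to bound each summand's second moment separately.

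\textbf{The low-degree terms.} The second sum is the easy part. Since $(X,X'')$ is a $(1-\kappa)$-correlated pair with respect to $\mu$, and hence with respect to $\nu$ on the $x$-marginal, we have
\[
\E\card{L_P(X'')-L_P(X)}^2 = 2\norm{L_P}_2^2 - 2\,\mathrm{Re}\inner{L_P}{\mathrm{T}_{1-\kappa}L_P}.
\]
This is at most $O(\kappa\cdot{\sf deg}(L_P)\norm{L_P}_2^2)$ by the standard Fourier expansion in the basis of Proposition~\ref{prop:fourier_n}. Because $|P|=1$, the product factor $P(\V w,X)$ does not matter. Using the bounds $C'$ on the restricted norms (event $E_2$) and $D$ on the degrees, this contributes $O_{\rho_0,\rho_1,|\Sigma|}(\kappa)$.

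\textbf{The product-function terms.} For the first sum I would not argue that $P(X')=P(X)$ pointwise. Indeed, by definition of $\mathrm{T}_{\mathcal{P},0}$, the variable $X'$ is resampled conditioned on $P(X')=P(X)$ for all $P\in\mathcal{P}$, hence for all $P\in\spn(\mathcal{P})$. So the first sum vanishes identically on the $x$-coordinate, and the only real issue is the joint law. Here the interpretation matters. If $\mathrm{T}_{\mathcal{P},0}$ means resampling all coordinates subject to the constraints, the first sum is exactly zero. The nontrivial content of the lemma is then that the marginal of $(X',X'')$ is close to $\nu^{\otimes n'}\times\nu^{\otimes n'}$ (or to the intended product coupling), which is what makes the decoupling useful.

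\textbf{Closeness of the joint law.} This is where high rank enters. Conditioned on $X''$, the vector $(P(X))_{P\in\mathcal{P}}$ has law approximately uniform on $\prod_P{\sf Image}(P)$, up to error $2^{-\Omega(D)}$. I would get this by expanding in characters of the values, as in Lemma~\ref{lemma:quasirandom_onefunc}, and using Fact~\ref{fact:highrankdecay}(2): for each nontrivial $S\in\spn(\mathcal{P})$, $\norm{\mathrm{T}_{1-\kappa}S}_2\leq 2^{-\Omega_{\kappa,M}(D)}$, so $X''$ carries essentially no information on the values $P(X)$. Combined with Lemma~\ref{lemma:quasirandom_onefunc}, this shows that $X'$ given $X''$ is $2^{-\Omega(D)}$-close in statistical distance to $\nu^{\otimes n'}$ independent of $X''$. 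Terms where the comparison involves a bounded but non-matching function are then controlled by this statistical distance times $\sup|\fcoarsedecoupled|^2$. Since the sup is not bounded, I would handle it via the $L_2$ bounds on $L_P$ and a Cauchy--Schwarz/hypercontractivity estimate for degree-$D'$ polynomials, giving a factor $O_{\rho_1}(1)$.

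\textbf{Main obstacle.} The hardest step is this last transfer. It turns the Fourier decay from Fact~\ref{fact:highrankdecay}(2) into an error for unbounded low-degree functions evaluated on correlated inputs. I would handle it by expanding $\E[\,\overline{P(X)}Q(X)\,L(X'')\overline{L'(X'')}\,]$ and bounding each nontrivial character term by $\norm{\mathrm{T}_{1-\kappa}(\overline{P}Q)}_2\cdot\norm{LL'}_2$. The factor $\norm{LL'}_2$ is bounded by hypercontractivity, which yields the $2^{-\Omega_{\rho_1,|\Sigma|,\kappa}(D)}$ term.
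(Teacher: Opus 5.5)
Your proposal is correct, and the proof is already complete once you note that the first sum vanishes. By Definition~\ref{def:noise_op_P}, $\mathrm{T}_{\mathcal{P},0}$ always samples $X'$ conditioned on $P(\V w,X')=P(\V w,X)$ for every $P\in\mathcal{P}$, whatever coordinate set is resampled, so your hedge about how to interpret the operator is unnecessary. The same equality then holds for every $P\in\spn(\mathcal{P})$, and therefore
\[
\mathsf{F}_{d,\V w}(X',X'')-\mathsf{F}_{\V w}(X)=\sum_{P\in\spn(\mathcal{P})}P(\V w,X)\bigl(L_P(\V w,X'')-L_P(\V w,X)\bigr)
\]
holds identically. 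The $x$-marginal of $\mu$ is $\nu$, so $(X,X'')$ is a $(1-\kappa)$-correlated pair, and $\E\card{L(X'')-L(X)}^2=2\inner{L}{(\mathrm{I}-\mathrm{T}_{1-\kappa})L}\leq 2\kappa\,{\sf deg}(L)\norm{L}_2^2$. Combining this with Cauchy--Schwarz over the at most $M^{r}$ terms and $\card{P}=1$ gives an $O(\kappa)$ bound. The implied constant depends on $M^{r}$, the degree bound, and the norm bound $C'$ from $E_2$.

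The paper gives no proof of this lemma; it imports it from \cite{BKM5}. Your argument shows that, for the coupling $\mathcal{D}$ as defined here, neither the hypothesis ${\sf rk}(\mathcal{P})\geq D$ nor the $2^{-\Omega(D)}$ term is needed.

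You should cut everything from ``The nontrivial content of the lemma is then \ldots'' onward. Near-independence of $X'$ and $X''$ is not part of this statement, so the ``main obstacle'' you describe never arises. Your sentence ``I would not argue that $P(X')=P(X)$ pointwise'' also contradicts the very next sentence, which is the key step of the proof.

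The near-independence you have in mind is what the paper needs afterwards, and it is proved separately in Claim~\ref{claim:closeness} via Lemma~\ref{lemma:noise_condition_uniform}. There the conditioning is on $X,Y,Z\in S_{\V w,\vec{b}}$ simultaneously. It therefore requires ${\sf softrk}_{\mu}(\mathcal{P})$ in addition to the rank, not just the rank of $\mathcal{P}$ that your single-variable version relies on.
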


Towards estimating $\Delta_2(\noisyfrefined, E_{\V w})$, we change the function $\noisyfrefined_{\V w}$ to $\fcoarsedecoupled_{,\V w}$. However, for what comes next, after fixing the input to the product functions in $\fcoarsedecoupled_{,\V w}$, we require that the expectation involves the same function on the second input and that the function has non-negligible measure. Therefore, we do the transformation more carefully by conditioning on certain events defined below.

For a fixed $\vec{b}\in\prod\limits_{P\in\mathcal{P}}{\sf Image}(P)$, define the following sets

$$S_{\V w, \vec{b}} = \{ x\in \Sigma^{n'} \mid P(\V w,\V x) = b_P \mbox{ for every } P\in \mathcal{P}\}.$$
Also, for a given $\V x\in \Sigma^{\bar{J}}$, define the vector $\vec{b}(\V x)$ where $b_P(\V x) = P(\V w, \V x)$ for all $P\in \mathcal{P}$.\\

Abusing the notation, for a vector $\vec{b}\in\prod\limits_{P\in\mathcal{P}}{\sf Image}(P)$, we use the following function in the subsequent analysis.

$$\fcoarsedecoupled_{, \V w}(\vec{b}, \V x'') := \sum\limits_{P\in\spn(\mathcal{P})} b_P \cdot L_P(\V w, \V x'').$$
where for $P'\in\spn(\mathcal{P})$ with $P'(x) = \prod\limits_{P\in\mathcal{P}}P(x)^{\alpha_{P}}$, we have $b_{P'} = \prod\limits_{P\in\mathcal{P}}b_P^{\alpha_{P}}$. Also, let $\truncfcoarsedecoupled_{,\V w}(\vec{b},\V x'') = \mathrm{trunc}_{[0,1]}(\fcoarsedecoupled_{,\V w}(\vec{b},\V x''))$.

Suppose we sample $(X, Y, Z), (X', Y', Z')$ and $(X'', Y'', Z'')$ from the decoupled distribution $\mathcal{D}$ defined above. Consider the following event $E' = E'_1\wedge E'_2$ on the string $X$ where events $E'_i$ are defined as follows for $\vec{b} = \vec{b}(X)$.

\begin{enumerate}
    \item\label{event:E'1} $E'_1$: $\Expect{\V x\sim \nu^{\otimes n'}}{\noisyfrefined(\V w, \V x) \mid \V x\in S_{\V w, \vec{b}}}\geq \frac{\delta}{32}$.
    \item\label{event:E'2} $E'_2$: $\Expect{(\V x, \V x', \V x'')\sim \mathcal{D}}{|\truncfcoarsedecoupled_{,\V w}(\vec{b},\V x'')  - \noisyfrefined_{\V w}(\V x)|^2 \mid \V x\in S_{\V w, \vec{b}}} \leq \xi^{1/4}$.
\end{enumerate}

The event $E'_1$ says that the measure of $\noisyfrefined_{\V w}$ remains large when restricted to inputs $x$ such that $x\in S_{\V w, \vec{b}}$. The event $E'_2$ shows the $\ell_2$ closeness of the decoupled function $\truncfcoarsedecoupled_{,\V w}(\vec{b},\V x'')$ and $\noisyfrefined_{\V w}(\V x)$, again conditioned on $x\in S_{\V w, \vec{b}}$.

The following claim shows that $E'$ occurs with non-negligible probability.
\begin{claim}
    If we sample a $(X, Y, Z), (X', Y', Z')$ and $(X'', Y'', Z'')$ from the decoupled distribution $\mathcal{D}$, then the probability that $E'$ happens is at least 
    $\frac{\delta}{32} - \xi^{1/4}\geq \frac{\delta}{64}$.
\end{claim}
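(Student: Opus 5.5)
We prove the claim by bounding the probability of each of $\neg E'_1$ and $\neg E'_2$ and then taking a union bound. Under $\mathcal{D}$, the string $X$ has distribution $\nu^{\otimes n'}$, so $\vec{b}(X)$ is distributed as the vector of values of $\mathcal{P}$ at a random point. We therefore set $\beta_{\vec b}=\Pr_{\V x\sim\nu^{\otimes n'}}[\V x\in S_{\V w,\vec b}]$, and the probability that $\vec b(X)=\vec b$ equals $\beta_{\vec b}$.

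\emph{Step 1 (lower bound for $E'_1$).} Define $g(\vec b)=\Expect{\V x}{\noisyfrefined_{\V w}(\V x)\mid \V x\in S_{\V w,\vec b}}$. By the tower property, $\sum_{\vec b}\beta_{\vec b}\,g(\vec b)=\E[\noisyfrefined_{\V w}]$. We lower bound this mean in three moves. First, $\E[\noisyfcoarse_{\V w}]\geq \delta/4$, since Claim~\ref{claim:f_to_noisyf_resilience} gives that $\noisyfcoarse$ is $(B,1/4)$-resilient and we may take $|J|\leq B$. Second, on $E_3$ we have $\norm{\noisyfrefined_{\V w}-\noisyfcoarse_{\V w}}_2\leq\sqrt\xi$, so $\E[\noisyfrefined_{\V w}]\geq\delta/4-\sqrt\xi\geq\delta/8$. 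Third, $\noisyfrefined_{\V w}$ takes values in $[0,1]$ because it is an average of $f$, so $g(\vec b)\leq 1$. Averaging then gives
\[
\Pr_X[E'_1]=\sum_{\vec b:\,g(\vec b)\geq\delta/32}\beta_{\vec b}\;\geq\;\delta/8-\delta/32\;\geq\;\delta/32 .
\]

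\emph{Step 2 (upper bound for $\neg E'_2$).} Let $h(\vec b)$ denote the conditional expectation appearing in $E'_2$. Its $\beta$-weighted average is exactly $\E_{\mathcal D}\bigl[|\truncfcoarsedecoupled_{,\V w}(\vec b(X),X'')-\noisyfrefined_{\V w}(X)|^2\bigr]$. The key observation is that $P(\V w,X')=P(\V w,X)$ for every $P\in\mathcal{P}$, because $X'\sim\mathrm{T}_{\mathcal P,0}X$ preserves all values of $\mathcal P$. Hence $\truncfcoarsedecoupled_{,\V w}(\vec b(X),X'')=\truncfcoarsedecoupled_{,\V w}(X',X'')$. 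We now bound this average by the triangle inequality in $L_2(\mathcal D)$, in three pieces:
\begin{itemize}
\item Truncation is $1$-Lipschitz and $\noisyfrefined_{\V w}\in[0,1]$, so we may drop the truncation and compare the untruncated $\fcoarsedecoupled_{,\V w}(X',X'')$ with $\noisyfrefined_{\V w}(X)$ instead.
\item Lemma~\ref{lem:decoupled_move} bounds $\norm{\fcoarsedecoupled_{,\V w}(X',X'')-\fcoarsedecomposed_{\V w}(X)}_2^2\leq O_{\rho_0,\rho_1}(\kappa)+2^{-\Omega(1/\rho_2)}$. The rank hypothesis is available: we use ${\sf rk}(\mathcal P|_{\bar J})\geq 1/\rho_2-|J|$, and $|J|$ depends only on parameters coarser than $\tau$, so this rank stays large.
\item On $E_4$ and $E_3$, $\norm{\fcoarsedecomposed_{\V w}-\noisyfrefined_{\V w}}_2\leq 2\sqrt\xi$.
\end{itemize}
Squaring and combining, and using the parameter hierarchy~\eqref{eq:parameters} with $\rho_2\ll\kappa\ll\rho_1\ll\xi$, the average of $h$ is at most $O(\xi)$. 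Markov's inequality then gives
\[
\Pr_X[\neg E'_2]=\sum_{\vec b:\,h(\vec b)>\xi^{1/4}}\beta_{\vec b}\;\leq\;O(\xi)/\xi^{1/4}\;\leq\;\xi^{1/4}.
\]

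\emph{Step 3 (conclusion).} Combining the two steps by a union bound,
\[
\Pr[E']\geq\Pr[E'_1]-\Pr[\neg E'_2]\geq \delta/32-\xi^{1/4}\geq\delta/64,
\]
where the last inequality uses $\xi\ll\delta$.

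The main obstacle is Step 2, specifically checking that Lemma~\ref{lem:decoupled_move} applies to the restricted collection and that its error terms are small compared with $\xi^{3/4}$. This requires the parameters to be ordered exactly as in~\eqref{eq:parameters}. One must also confirm that replacing the input $X'$ by $\vec b(X)$ is legitimate, which holds because $\mathrm{T}_{\mathcal P,0}$ preserves the level sets of $\mathcal P$.
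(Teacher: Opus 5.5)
Your argument follows the paper's proof essentially step for step. For $E_1'$ you average over $\vec b(X)$ using resilience and $E_3$; for $E_2'$ you get a global $L_2$ bound from Lemma~\ref{lem:decoupled_move} together with $E_3,E_4$, then apply Markov; and you finish with a union bound. Two of your details are cleaner than the paper's. First, you handle the truncation by noting that $\mathrm{trunc}_{[0,1]}$ is $1$-Lipschitz and fixes the $[0,1]$-valued $\mathsf{f}'_{\mathbf w}$, whereas the paper detours through the $2$-Lipschitz map $a\mapsto|a-\mathrm{trunc}_{[0,1]}(a)|$ and extra triangle inequalities. Second, you state explicitly that $\vec b(X')=\vec b(X)$, which the paper uses only implicitly.

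The one step whose justification does not go through as written is the rank hypothesis for Lemma~\ref{lem:decoupled_move}. In \eqref{eq:parameters} one has $\tau\ll_w\rho_2$. The bound on $|J|$ from Claim~\ref{claim:influence_removal} depends on $\tau$ itself, and is of tower type in $1/\tau$. So nothing prevents $|J|>1/\rho_2$, in which case ${\sf rk}(\mathcal P|_{\bar J})\geq 1/\rho_2-|J|$ is vacuous. Your remark that $|J|$ depends on parameters coarser than $\tau$ does not rescue this, because $\tau$ sits below $\rho_2$ in the stated hierarchy.

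What you actually need is $|J|\leq 1/(2\rho_2)$. That means $\rho_2$ must be chosen after $\tau$ and $J$, i.e.\ $\rho_2\ll\tau$, while keeping $\rho_2\ll\kappa$ and $\rho_3\ll\rho_2$. This is legitimate: $\tau$ and the bound on $|J|$ depend only on $\rho_1$ and coarser quantities, so a sufficiently fast decay function $w$ in Lemma~\ref{lem:arithmetic_reg_high_rank} arranges it.

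The paper's proof asserts ${\sf rk}(\mathcal P|_{\bar J})\geq{\sf rk}(\mathcal P)/2$ at exactly this point, with the same hidden requirement. The same requirement is also needed for the later applications of Lemmas~\ref{lemma:quasirandom_prop_2} and~\ref{lemma:noise_condition_uniform}. So this is a repair of the parameter ordering rather than a missing idea. You should still amend your closing remark that the argument needs the parameters ``ordered exactly as in \eqref{eq:parameters}'': that ordering is precisely what breaks this step.
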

\begin{proof}
%We first show that if we sample $X \sim \nu^{\otimes n'}$, then the events $E'_1, E'_1$ and $E'_2$ happen simultaneously for $\vec{b}(X)$ with probability at least $\left(\frac{\delta}{100} - \xi^{1/4} - O_{\rho_0, m}(\rho_2'')\right)$. Conditioned on these events, by Lemma~\ref{lemma:quasirandom_prop_2}, event $E'_1$ occurs with probability at least $\prod_{P\in \mathcal{P}}\frac{1}{{\sf ord}(P)}$.

As $\|f-\noisyfcoarse\|_\nu\leq \xi$, we have $\Expect{\V x\sim \nu^{\otimes n}}{\noisyfcoarse(\V x)} \geq \frac{\delta}{2}$. Since $\noisyfcoarse$ is $(B, 1/4)$-resilient (Claim~\ref{claim:f_to_noisyf_resilience}), we have $\Expect{\V x\sim \nu^{\otimes n'}}{\noisyfcoarse(\V w, \V x)}\geq \frac{\delta}{8}$. Furthermore, since $\|\noisyfrefined_{\V w} - \noisyfcoarse_{\V w}\|_2\leq \sqrt{\xi}$ (because of ${\V w}$ satisfying event $E_3$), we get $\Expect{\V x\sim \nu^{\otimes n'}}{\noisyfrefined(\V w, \V x)}\geq \frac{\delta}{8}-\xi \geq \frac{\delta}{16}$. Note that $\noisyfrefined$ is a $1$-bounded function. Therefore, if we sample $(X, Y, Z)\sim \mu^{\otimes n'}$, and letting $\vec{b} = \vec{b}(\V X)$, then with probability at least $\frac{\delta}{32}$, we have $\Expect{\V x\sim \nu^{\otimes n'}}{\noisyfrefined(\V w, \V x) \mid \V x\in S_{\V w, \vec{b}}}\geq \frac{\delta}{32}$. 

In the rest of the proof, we estimate $\Pr[E'_2]$. Before doing that, let us first upper bound the quantity $\Expect{(X,X',X'')\sim \mathcal{D}}{\card{\truncfcoarsedecoupled_{,\V w}(X',X'') - \noisyfrefined_{\V w}(X)}^2}$. We have,

\begin{align*}
    &\Expect{(X,X',X'')\sim \mathcal{D}}{\card{\truncfcoarsedecoupled_{,\V w}(X',X'') - \noisyfrefined_{\V w}(X)}^2}\\
    &\quad = \Expect{(X,X',X'')\sim \mathcal{D}}{\card{\truncfcoarsedecoupled_{,\V w}(X',X'') - \noisyfcoarse_{\V w}(X) + \noisyfcoarse_{\V w}(X) - \noisyfrefined_{\V w}(X)}^2}\\
    &\quad \lesssim \Expect{(X,X',X'')\sim \mathcal{D}}{\card{\truncfcoarsedecoupled_{,\V w}(X',X'') - \noisyfcoarse_{\V w}(X)}^2} + \Expect{(X,X',X'')\sim \mathcal{D}}{\card{ \noisyfcoarse_{\V w}(X) - \noisyfrefined_{\V w}(X)}^2}.
\end{align*}
Since $\V w$ satisfies the event $E_3$, the second expectation is upper bounded by $\xi$. We now bound the first expectation,
\begin{align*}
&\Expect{(X,X',X'')\sim \mathcal{D}}{\card{\truncfcoarsedecoupled_{,\V w}(X',X'') - \noisyfcoarse_{\V w}(X)}^2} \\
&\quad = \Expect{(X,X',X'')\sim \mathcal{D}}{\card{\truncfcoarsedecoupled_{,\V w}(X',X'') - \fcoarsedecomposed_{\V w}(X) + \fcoarsedecomposed_{\V w}(X)- \noisyfcoarse_{\V w}(X)}^2} \\
&\quad \lesssim \Expect{(X,X',X'')\sim \mathcal{D}}{\card{\truncfcoarsedecoupled_{,\V w}(X',X'') - \fcoarsedecomposed_{\V w}(X)}^2}  + \Expect{(X,X',X'')\sim \mathcal{D}}{\card{\fcoarsedecomposed_{\V w}(X)- \noisyfcoarse_{\V w}(X)}^2}.
\end{align*}
Again, since $\V w$ satisfies the event $E_4$, the second expectation is upper bounded by $\xi$. As for the first expectation,
 \begin{align}
        &\Expect{X, X', X''}{\card{\truncfcoarsedecoupled_{, \V w} (X',X'') - \fcoarsedecomposed_{\V w}(X)}^2} \nonumber\\
        & = \Expect{X, X', X''}{\card{\truncfcoarsedecoupled_{, \V w}(X',X'') - \fcoarsedecoupled_{, \V w}(X',X'')  + \fcoarsedecoupled_{, \V w}(X',X'') - \fcoarsedecomposed_{\V w}(X)}^2}\nonumber\\
        & \lesssim \Expect{X, X', X''}{\card{\truncfcoarsedecoupled_{, \V w}(X',X'') - \fcoarsedecoupled_{, \V w}(X',X'')}^2} + \Expect{X, X', X''}{\card{\fcoarsedecoupled_{, \V w}(X',X'') - \fcoarsedecomposed_{\V w}(X)}^2}.\label{eq:two_exp_lemma_3wise_decouple}
    \end{align}
As for the first term, using the property that $\eta(a) = \card{a-\mathrm{trunc}_{[0,1]}(a)}$ is $2$-Lipshitz from Fact~\ref{fact:trivial_lipshitz_pf},
\begingroup
\allowdisplaybreaks
\begin{align*}
    &\Expect{X, X', X''}{\card{\truncfcoarsedecoupled_{, \V w}(X',X'') - \fcoarsedecoupled_{, \V w}(X',X'')}^2} \\
    & \quad\quad=  \Expect{X, X', X''}{\eta(\fcoarsedecoupled_{, \V w}(X',X''))^2}\\
    & \quad\quad\lesssim  \Expect{X, X', X''}{\eta(\noisyfcoarse_{\V w}(X))^2 + \card{\fcoarsedecoupled_{, \V w}(X',X'') - \noisyfcoarse_{\V w}(X)}^2} \\
    & \quad\quad =  \Expect{X, X', X''}{\card{\fcoarsedecoupled_{, \V w}(X',X'') - \noisyfcoarse_{\V w}(X)}^2} \tag*{(Using $\eta(\noisyfcoarse_{\V w}(X))=0$)}\\
   & \quad\quad\leq \Expect{X, X', X''}{\card{\fcoarsedecoupled_{, \V w}(X',X'') - \fcoarsedecomposed_{\V w}(X) + \fcoarsedecomposed_{\V w}(X) - \noisyfcoarse_{\V w}(X)}^2} \\
      & \quad\quad\lesssim  \Expect{X, X', X''}{\card{\fcoarsedecoupled_{, \V w}(X',X'') - \fcoarsedecomposed_{\V w}(X)}^2} + \Expect{X, X', X''}{\card{\fcoarsedecomposed_{\V w}(X) - \noisyfcoarse_{\V w}(X)}^2} \\
  & \quad\quad\leq \rho_2' + \xi,
\end{align*}
\endgroup
where $\rho'_2\ll \xi$ if $\rho_2 \ll \rho_1, \rho_0$. Here, we used Lemma~\ref{lem:decoupled_move} to bound the first expectation by  $O_{\rho_0, \rho_1,|\Sigma|}(\kappa) +  2^{-\Omega_{\rho_0, \rho_1,|\Sigma|, \kappa}(1/\rho_2)}$ which is at most $\rho'_2$ (by choosing small enough $\kappa$ and $\rho_2 \ll \kappa$ and noting that ${\sf rk}(\mathcal{P}|_{\overline{J}})\geq {\sf rk}(\mathcal{P})/2$), and the second expectation is bounded by $\xi$ because of the event $E_4$.
The second expectation from (\ref{eq:two_exp_lemma_3wise_decouple}) is at most $\rho_2'$ as we already bounded the same expectation above. Therefore, we get,
\begin{equation}\label{eq:close_truncfcoarsedecoupled_noisyfrefined}
\Expect{(X,X',X'')\sim \mathcal{D}}{\card{\truncfcoarsedecoupled_{,\V w}(X',X'') - \noisyfrefined_{\V w}(X)}^2}\lesssim \xi + \rho'_2.
\end{equation}
Now suppose that for with probability at least $\xi^{1/4}$ over $X\sim \nu^{\otimes n'}$ we have 
$$\Expect{(\V x, \V x', \V x'')\sim \mathcal{D}}{|\truncfcoarsedecoupled_{,\V w}(\vec{b},\V x'')  - \noisyfrefined_{\V w}(\V x)|^2 \mid \V x\in S_{\V w, \vec{b}(X)}} \geq \xi^{1/4}.$$
This would imply
$$\Expect{(X,X',X'')\sim \mathcal{D}}{\card{\truncfcoarsedecoupled_{,\V w}(X',X'') - \noisyfrefined_{\V w}(X)}^2}\geq \xi^{1/2} \gg  \xi + \rho'_2,$$
contradicting (\ref{eq:close_truncfcoarsedecoupled_noisyfrefined}). Thus, $\Pr[E'_2] \geq 1-\xi^{1/4}$.

Therefore, by the union bound, we have $\Pr[E'_1\wedge E'_2] \geq \frac{\delta}{32} - \xi^{1/4}$, as required.
\end{proof}

Let $W_{\vec{b}}$ be the event that $(X, Y, Z)\sim \mu^{\otimes n'}$ are such that $X, Y, Z\in S_{\V w, \vec{b}}$. We have the following claim that shows the closeness of the distribution $(X'', Y'', Z'')|W_{\vec{b}}$ to the distribution $\mu^{\otimes n'}$, provided $\vec{b} \in \prod_{P\in \mathcal{P}} {\sf Image}(P)$.
\begin{claim}\label{claim:closeness}
Fix any $\vec{b}\in  \prod_{P\in \mathcal{P}} {\sf Image}(P)$. In the decoupled distribution, the conditional distribution on $(X'', Y'', Z'')|W_{\vec{b}}$ is $\tilde{\rho}_2$-close to the distribution $\mu^{\otimes n'}$ where $\tilde{\rho}_2 =  2^{-\Omega_{\alpha, \kappa, r, M}(1/\rho_2)}$.
\end{claim}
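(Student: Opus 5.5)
We show that $\Pr[(X'',Y'',Z'')=(u,u',u'')\mid W_{\vec b}]$ is close in total variation to $\mu^{\otimes n'}(u,u',u'')$ by writing this conditional distribution via Bayes' rule and invoking Lemma~\ref{lemma:noise_condition_uniform}. In the decoupled distribution $\mathcal{D}$, the pair $(X,Y,Z)$ and $(X'',Y'',Z'')$ is a $\mathrm{T}_{1-\kappa}$-correlated pair over $\mu^{\otimes n'}$, and this operator is reversible with respect to $\mu^{\otimes n'}$. The step $(X',Y',Z')$ is irrelevant here. Let $S=S_{\V w,\vec b}^3$ be the set from Lemma~\ref{lemma:noise_condition_uniform}, with $\vec a=\vec b=\vec c$ and $(w,w',w'')=(\V w,\V w,\V w)$; this triple lies in ${\sf supp}(\mu|_J)$ because $(x,x,x)\in{\sf supp}(\mu)$. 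Then $W_{\vec b}$ is the event $(X,Y,Z)\in S$, and by reversibility
\[
\Pr[(X'',Y'',Z'')=u \mid W_{\vec b}] = \mu^{\otimes n'}(u)\cdot\frac{\mathrm{T}_{1-\kappa}1_S(u)}{\mu(S)}.
\]
Hence the statistical distance equals
\[
\frac{1}{2\mu(S)}\,\Expect{u\sim\mu^{\otimes n'}}{\card{\mathrm{T}_{1-\kappa}1_S(u)-\mu(S)}}.
\]

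The numerator is controlled as follows. By Cauchy--Schwarz it is at most $\norm{\mathrm{T}_{1-\kappa}1_S-\mu(S)}_{L_2(\mu)}$. We apply Lemma~\ref{lemma:noise_condition_uniform} to $\mathcal{P}$ with $D={\sf softrk}_\mu(\mathcal{P})\geq 1/\rho_2$, using $\card{J}\leq D/2$; this holds since $|J|=O_{\rho_0,\rho_1,\tau,\ldots}(1)$ and $\rho_2$ is chosen much smaller than all those parameters (we may need to reorder \eqref{eq:parameters} slightly so that $J$'s size is fixed before $\rho_2$, or else bound $|J|$ in terms of $\tau$ and adjust). The lemma bounds the numerator by $2^{-\Omega_{\alpha,\kappa,r,M}(1/\rho_2)}$.

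The denominator needs a lower bound on $\mu(S)$. Here we use Lemma~\ref{lemma:quasirandom_prop_2}, which needs both ${\sf rk}(\mathcal{P})\geq 1/\rho_2$ and ${\sf softrk}_\mu(\mathcal{P})\geq 1/\rho_2$ from Lemma~\ref{lem:arithmetic_reg_high_rank}. It gives
\[
\mu(S)\geq \prod_{P}{\sf ord}(P)^{-3}-2^{-\Omega(1/\rho_2)}\geq M^{-3r}/2,
\]
which is a constant depending only on $m$ and $\rho_0$. Since $\rho_2\ll_w\kappa\ll_w\rho_1\ll_w\rho_0$, dividing by this quantity still leaves a bound of the form $2^{-\Omega_{\alpha,\kappa,r,M}(1/\rho_2)}$ after adjusting constants.

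The main obstacle is bookkeeping. The dependence on $\kappa$ and $r$ in the exponent must not swamp $1/\rho_2$, which is guaranteed by the separation $\rho_2\ll_w\kappa$. We also have to check that the restriction to $\bar J$ is compatible with both lemmas, namely that they are applied to $\mathcal{P}$ on $[n]$ with the fixed prefix $\V w$ on $J$, exactly as they are stated.
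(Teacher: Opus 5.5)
Your argument is correct and is essentially the paper's proof: the paper likewise writes the conditional probability that $(X'',Y'',Z'')\in A$ as $\frac{1}{\mu^{\otimes n'}(S)}\langle \mathrm{T}_{1-\kappa}1_S,1_A\rangle$, bounds its deviation from $\mu^{\otimes n'}(A)$ by $\|\mathrm{T}_{1-\kappa}1_S-\mu^{\otimes n'}(S)\|_{L^2(\mu)}/\mu^{\otimes n'}(S)$ via Cauchy--Schwarz, and then uses Lemma~\ref{lemma:noise_condition_uniform} for the numerator and Lemma~\ref{lemma:quasirandom_prop_2} for the denominator. Your pointwise-density/$L_1$ formulation is the same computation done for all $A$ at once. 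Your caveat that one needs $|J|\le \frac{1}{2\rho_2}$ is well taken: the paper applies both lemmas without checking it, and with $\tau\ll_w\rho_2$ as literally written in \eqref{eq:parameters} that bound is not guaranteed. So $\tau$, and hence $|J|=O_{\rho_1}(1)$ as the paper itself later assumes, should be fixed before $\rho_2$, exactly as you suggest.
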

\begin{proof}
    Fix any subset $A\subseteq (\Sigma^{n'})^3$. The measure of $A$ under $\mu^{\otimes n'}$ is simply $\mu^{\otimes n'}(A)$. We now compute the probability that  $(X'', Y'', Z'')\in A$ conditioned on the event $W_{\vec{b}}$. Let $S = S_{\V w, \vec{b}} \times S_{\V w, \vec{b}}\times S_{\V w, \vec{b}}$.
    \begin{align*}
        \Pr_{\substack{(X, Y, Z)\sim \mu^{\otimes n'}\\ (X'', Y'', Z'')\sim T_{1-\kappa}(X, Y, Z)}}\left[(X'', Y'', Z'')\in A \mid (X, Y, Z) \in S\right] = \frac{1}{\mu^{\otimes n'}(S)} \langle T_{1-\kappa}1_S, 1_A \rangle_{L^2(\mu)}.
    \end{align*}
    Subtracting $\mu^{\otimes n'}(A) = \frac{\mu^{\otimes n'}(S)}{\mu^{\otimes n'}(S)} \langle1, 1_A \rangle_{L^2(\mu)}$ and taking absolute value, we get
    \begin{align*}
        \card{\frac{1}{\mu^{\otimes n'}(S)} \langle T_{1-\kappa}1_S, 1_A \rangle_{L^2(\mu)} - \mu^{\otimes n'}(A)} &= \frac{1}{\mu^{\otimes n'}(S)}\langle T_{1-\kappa}1_S -\mu^{\otimes n'}(S) , 1_A\rangle_{L^2(\mu)}\\
        &\leq \frac{1}{\mu^{\otimes n'}(S)}\norm{T_{1-\kappa}1_S -\mu^{\otimes n'}(S)}_{L^2(\mu)}.
    \end{align*}
    Using Lemma~\ref{lemma:noise_condition_uniform}, we have $\norm{T_{1-\kappa}1_S -\mu^{\otimes n'}(S)}_{L^2(\mu)}\leq 2^{-\Omega_{\alpha, \kappa, r, M}(1/\rho_2)}$. For the denominator, since $\vec{b}\in  \prod_{P\in \mathcal{P}} {\sf Image}(P)$, using Lemma~\ref{lemma:quasirandom_prop_2}, we have $\mu^{\otimes n'}(S) \geq \prod\limits_{P\in\mathcal{P}}\frac{1}{{\sf ord}(P)^3}-2^{-\Omega_{M,r,\alpha}(1/\rho_2)} = \Omega_{r, M}(1)$. Thus,
     \begin{align*}
        \card{\Pr_{\substack{(X, Y, Z)\sim \mu^{\otimes n'}\\ (X'', Y'', Z'')\sim T_{1-\kappa}(X, Y, Z)}}\left[(X'', Y'', Z'')\in A \mid (X, Y, Z) \in S\right] - \mu^{\otimes n'}(A)} \leq  2^{-\Omega_{\alpha, \kappa, r, M}(1/\rho_2)}.
    \end{align*}
\end{proof}

 Consider the distribution $\mathcal{B}$ on the vectors $\vec{b}$ where we first sample $X\sim \nu^{\otimes n'}$ and then output $\vec{b}(X)$. We have 
\begin{align*}
\Delta_2(\noisyfrefined, E_{\V w}) &= \Expect{\substack{(\V x, \V y, \V z)\sim \mu^{n'}}}{\noisyfrefined_{\V w}(\V x) \noisyfrefined_{\V w}(\V y) \noisyfrefined_{\V w}(\V z)}\\
& \geq\Expect{\vec{b}\sim \mathcal{B}}{\Pr[W_{\vec{b}}]\cdot 1_{\vec{b} ~\mbox{satisfies}~E'}\cdot \Expect{\substack{(\V x, \V y, \V z)\sim\mu^{n'}}}{\noisyfrefined_{\V w}(\V x) \noisyfrefined_{\V w}(\V y) \noisyfrefined_{\V w}(\V z) \mid W_{\vec{b}}} }\tag*{($\noisyfrefined$ in non-negative)}\\
& \geq\Expect{\vec{b}\sim \mathcal{B}}{\Pr[W_{\vec{b}}] \cdot 1_{\vec{b} ~\mbox{satisfies}~E'}\cdot\left( \Expect{\substack{(\V x, \V y, \V z)\\(\V x', \V y', \V z')\\ (\V x'', \V y'', \V z'')}\sim \mathcal{D}}{\truncfcoarsedecoupled_{,\V w}(\vec{b}, \V x'') \truncfcoarsedecoupled_{,\V w}(\vec{b}, \V y'') \truncfcoarsedecoupled_{,\V w}(\vec{b}, \V z'')  \mid W_{\vec{b}}} - 3\xi^{1/8}\right) }.
\end{align*}
In the last inequality, for $\vec{b}$ satisfying the event $E'$, we used
\begin{align*}
    &\Expect{(\V x, \V y, \V z)\sim \mu^{n'}}{\noisyfrefined_{\V w}(\V x) \noisyfrefined_{\V w}(\V y) \noisyfrefined_{\V w}(\V z) \mid W_{\vec{b}}} - \Expect{\substack{(\V x, \V y, \V z)\\(\V x', \V y', \V z')\\ (\V x'', \V y'', \V z'')}\sim \mathcal{D}}{{\truncfcoarsedecoupled_{,\V w}(\vec{b}, \V x'') \noisyfrefined_{\V w}(\V y) \noisyfrefined_{\V w}(\V z) \mid  W_{\vec{b}}} }\\
    &\quad\quad\quad\quad = \Expect{\substack{(\V x, \V y, \V z)\\(\V x', \V y', \V z')\\ (\V x'', \V y'', \V z'')}\sim \mathcal{D}}{(\noisyfrefined_{\V w}(\V x) - \truncfcoarsedecoupled_{,\V w}(\vec{b}, \V x''))\noisyfrefined_{\V w}(\V y) \noisyfrefined_{\V w}(\V z) \mid  W_{\vec{b}}}\\
    &\quad\quad\quad\quad \leq \Expect{\substack{(\V x, \V y, \V z)\\(\V x', \V y', \V z')\\ (\V x'', \V y'', \V z'')}\sim \mathcal{D}}{\card{\noisyfrefined_{\V w}(\V x) - \truncfcoarsedecoupled_{,\V w}(\vec{b}, \V x'')} \Bigg|  W_{\vec{b}}} \tag*{($\noisyfrefined$ is $[0,1]$ bounded)}\\
    &\quad\quad\quad\quad = \Expect{\substack{(\V x, \V y, \V z)\\(\V x', \V y', \V z')\\ (\V x'', \V y'', \V z'')}\sim \mathcal{D}}{\card{\noisyfrefined_{\V w}(\V x) - \truncfcoarsedecoupled_{,\V w}(\vec{b}, \V x'')} \Bigg|   x\in S_{\V w, \vec{b}}} \\
    &\quad\quad\quad\quad \leq \Expect{(\V x, \V x', \V x'')}{\card{\noisyfrefined_{\V w}(\V x) - \truncfcoarsedecoupled_{,\V w}(\vec{b}, \V x'')}^2 \Big| \V x\in S_{\V w, \vec{b}}}^{1/2}.\tag*{(Cauchy-Schwarz)}
\end{align*}
As $\vec{b}$ under consideration satisfies the event $E'_2$, the last expectation is upper bounded by $\xi^{1/4}$. Repeating the same thing for the $\V y$ and the $\V z$ variables, and using $[0,1]$-boundedness of $\noisyfrefined_{\V w}$ and $\truncfcoarsedecoupled_{,\V w}$, we get the required bound.

Using Lemma~\ref{lemma:quasirandom_prop_2}, for every $\vec{b}\in\prod\limits_{P\in\mathcal{P}}{\sf Image}(P)$, $\Pr[W_{\vec{b}}]$ is lower bounded by
$$\Pr[W_{\vec{b}}] \geq \prod\limits_{P\in\mathcal{P}}\frac{1}{{\sf ord}(P)^3}-2^{-\Omega_{\rho_0, m, \alpha}(1/\rho_2)}\geq \frac{1}{2}\prod\limits_{P\in\mathcal{P}}\frac{1}{{\sf ord}(P)^3}.$$

Thus, we fix an arbitrary $\vec{b}_\star$ for which the event $E'$ occurs. Note that, such a $\vec{b}_\star$ is in $\prod\limits_{P\in\mathcal{P}}{\sf Image}(P)$ by definition. Using Lemma~\ref{lemma:quasirandom_onefunc}, we have
$$\Pr_{\vec{b}\sim \mathcal{B}}[\vec{b} = \vec{b}_\star] \geq \prod\limits_{P\in\mathcal{P}}\frac{1}{{\sf ord}(P)}-2^{-\Omega_{\rho_0, m, \alpha}(1/\rho_2)}\geq \frac{1}{2}\prod\limits_{P\in\mathcal{P}}\frac{1}{{\sf ord}(P)}.$$
We get
\begingroup
\allowdisplaybreaks
\begin{align*}
\Delta_2(\noisyfrefined, E_{\V w}) &\geq\Expect{\vec{b}\sim \mathcal{B}}{\Pr[W_{\vec{b}}] \cdot 1_{\vec{b} ~\mbox{satisfies}~E'}\cdot\left( \Expect{\substack{(\V x, \V y, \V z)\\(\V x', \V y', \V z')\\ (\V x'', \V y'', \V z'')}\sim \mathcal{D}}{\truncfcoarsedecoupled_{,\V w}(\vec{b}, \V x'') \truncfcoarsedecoupled_{,\V w}(\vec{b}, \V y'') \truncfcoarsedecoupled_{,\V w}(\vec{b}, \V z'')  \mid W_{\vec{b}}} - 3\xi^{1/8}\right) }\\
& \geq \frac{1}{4}\left(\prod\limits_{P\in\mathcal{P}}\frac{1}{{\sf ord}(P)}\right)^4 \cdot \left(\Expect{\substack{(\V x, \V y, \V z)\\(\V x', \V y', \V z')\\ (\V x'', \V y'', \V z'')}\sim \mathcal{D}}{\truncfcoarsedecoupled_{,\V w}(\vec{b}_\star, \V x'') \truncfcoarsedecoupled_{,\V w}(\vec{b}_\star, \V y'') \truncfcoarsedecoupled_{,\V w}(\vec{b}_\star, \V z'')  \mid W_{\vec{b}_\star}} - 3\xi^{1/8}\right).
\end{align*}
\endgroup
We now give a lower bound on the above expectation in the next section.

\subsubsection{Applying the Invariance Principle}
\label{sec:apply_invariance_stuff}

From the last section, our goal is to estimate the following quantity:

$$\Delta_3(\truncfcoarsedecoupled_{, \V w}, \vec{b}_\star) := \Expect{\substack{(\V x, \V y, \V z)\\(\V x', \V y', \V z')\\ (\V x'', \V y'', \V z'')}\sim \mathcal{D}}{\truncfcoarsedecoupled_{,\V w}(\vec{b}_\star, \V x'') \truncfcoarsedecoupled_{,\V w}(\vec{b}_\star, \V y'') \truncfcoarsedecoupled_{,\V w}(\vec{b}_\star, \V z'')  \mid W_{\vec{b}_\star}},$$
where $\vec{b}_\star$ is such that it satisfies the event $E'$. Using Claim~\ref{claim:closeness} and $1$-boundedness of the function $\truncfcoarsedecoupled_{,\V w}$, we have

\begin{equation}
    \label{eq:decoupled_move_to_uniform}
\Delta_3(\truncfcoarsedecoupled_{, \V w}, \vec{b}_\star) \geq  \Expect{(\V x'', \V y'', \V z'')\sim \mu^{\otimes n'}}{\truncfcoarsedecoupled_{,\V w}(\vec{b}_\star, \V x'') \truncfcoarsedecoupled_{,\V w}(\vec{b}_\star, \V y'') \truncfcoarsedecoupled_{,\V w}(\vec{b}_\star, \V z'')} - \tilde{\rho}_2.
\end{equation}
Furthermore, since $\vec{b}_\star$ satisfies the events $E'_1$ and $E'_2$, we have
$$\Expect{(\V x, \V x', \V x'')\sim \mathcal{D}}{\truncfcoarsedecoupled_{,\V w}(\vec{b}_\star,\V x'') \mid \V x\in S_{\V w, \vec{b}_\star} } \geq \frac{\delta}{64}  - \xi^{1/4} .$$
Again, using Claim~\ref{claim:closeness} and the  $1$-boundedness of the function $\truncfcoarsedecoupled_{,\V w}$, we get
\begin{equation}\label{eq:dense_F}
\Expect{\V x''\sim \nu^{\otimes n'}}{\truncfcoarsedecoupled_{,\V w}(\vec{b}_\star,\V x'')  } \geq \frac{\delta}{64}  - \xi^{1/4} - \tilde{\rho}_2\geq \frac{\delta}{128}.
\end{equation}

In the following claim, we use the invariance principle to lower bound the expectation from (\ref{eq:decoupled_move_to_uniform}).
\begin{claim}
\label{claim:invariance_lb}
  For any $\vec{b}$ that satisfies the event $E'$, 
$$\Expect{(\V x'', \V y'', \V z'')\sim\mu^{\otimes n'}}{\truncfcoarsedecoupled_{,\V w}(\vec{b}, \V x'')\truncfcoarsedecoupled_{,\V w'}(\vec{b}, \V y'') \truncfcoarsedecoupled_{,\V w''}(\vec{b}, \V z'')}\geq \delta^{\Omega_{m,\alpha}(1)}.$$
\end{claim}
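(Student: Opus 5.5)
With $\vec{b}$ fixed, $\truncfcoarsedecoupled_{,\V w}(\vec{b},\cdot) = \mathrm{trunc}_{[0,1]}(G)$, where
\[
G(\V x'') := \sum_{P\in\spn(\mathcal{P})} b_P L_P(\V w,\V x'')
\]
is a single low-degree function on $\Sigma^{n'}$. It has degree at most $D$, $\ell_2$-norm at most $|\spn(\mathcal{P})|\,C' = O_{\rho_0,\rho_1}(1)$ by event $E_2$, and every influence is $O_{\rho_0}(\tau)$ by event $E_1$ and Proposition~\ref{prop:sum_inf}. (The $\V w',\V w''$ in the claim are the same $\V w$ fixed earlier.) So the claim reduces to lower bounding $\E[\mathrm{trunc}(G(\V x''))\,\mathrm{trunc}(G(\V y''))\,\mathrm{trunc}(G(\V z''))]$ under $\mu^{\otimes n'}$ for a single bounded-degree, low-influence $G$ with $\E[\mathrm{trunc}(G)]\geq \delta/128$ by \eqref{eq:dense_F}. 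The plan is the standard Mossel route: invariance principle, then Gaussian reverse hypercontractivity.

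\textbf{Step 1: smoothing.} Write $G$ as a multilinear polynomial in the orthonormal basis ensemble $\mathcal{X}$ of Section~\ref{sec:invariance_principle}. The $x$-, $y$- and $z$-copies use disjoint variable blocks, so the triple $(G(\V x''),G(\V y''),G(\V z''))$ is a vector-valued multilinear $M\colon\mathbb{C}^{qn'}\to\mathbb{C}^3$ of degree $\leq D$, bounded norm, and influences $O(\tau)$. Before invoking invariance I first apply a mild Bonami--Beckner noise $\mathrm{T}_{1-\gamma}$ to each copy, with $\gamma$ small in terms of $\delta$. Since $G$ has degree $D$ and bounded norm, this changes $G$ in $\ell_2$ by $O(\gamma D C'')$; after truncation (which is $1$-Lipschitz) this costs only a small additive error in the trilinear expectation. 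The point of the noise is the covariance condition in Step 3.

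\textbf{Step 2: transfer to Gaussian space.} Replace $\Psi(a_1,a_2,a_3)=\prod_j \mathrm{trunc}_{[0,1]}(a_j)$ by a smooth approximation $\Psi_\epsilon$ with bounded third derivatives, which is $\epsilon$-close uniformly. Since $\tau\ll_w \rho_2,\kappa,\rho_1,\rho_0$ and $C,D$ depend only on $\xi,\rho_1$, Theorem~\ref{thm:invariance_principle}(1) gives that the expectation equals the corresponding Gaussian expectation $\E[\prod_j \Psi_\epsilon(M_j(\mathcal{G}))]$ up to $\epsilon$. By part (2) together with Fact~\ref{fact:trivial_lipshitz_pf}, the Gaussian marginals $\E[\mathrm{trunc}(G(\mathcal{G}_x))]$ remain at least $\delta/256$. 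The same argument shows that truncating on the Gaussian side changes the product only slightly.

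\textbf{Step 3: reverse hypercontractivity (main obstacle).} Apply Theorem~\ref{thm:multi_reverse_hyp} with $f_j = \mathrm{trunc}\circ G\circ(\text{noise})$ evaluated on the three Gaussian blocks $\mathcal{G}_x,\mathcal{G}_y,\mathcal{G}_z$. This requires the joint covariance matrix of the three blocks to satisfy $P\succeq \eta I$ with $\eta=\Omega_{m,\alpha}(1)$. The obstacle is correlation $1$ between blocks, i.e., $\E[|u(x)+v(y)+w(z)|^2]=0$ under $\mu$ for nonzero mean-zero $u,v,w$. Such $u,v,w$ would give real-valued (hence $\mathbb{R}$- and then $\mathbb{Z}$-approximable) embeddings of ${\sf supp}(\mu)$ with nonconstant maps, contradicting the no-$(\mathbb{Z},+)$-embedding hypothesis of Theorem~\ref{thm:main_2}; the rationality of the linear constraints from ${\sf supp}(\mu)$ converts a real solution into an integral one. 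Compactness over distributions with atoms at least $\alpha$ then gives $\lambda_{\min}(P)\geq\eta(m,\alpha)>0$ per coordinate, hence for the tensorized covariance. If this conversion is delicate, the fallback is the small noise $\gamma$ added in Step 1: it shrinks off-diagonal correlations uniformly, which at worst forces $\eta$ to depend on $\gamma$, itself a function of $\delta$. Either way this yields a Gaussian lower bound of $(\delta/256)^{3/\eta}$. Subtracting the $\epsilon$ and smoothing errors, chosen $\ll\delta^{3/\eta}$, gives the claimed bound $\delta^{O_{m,\alpha}(1)}$, which is $\delta^{\Omega_{m,\alpha}(1)}$ in the sense of the statement.
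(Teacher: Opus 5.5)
Your main line works once Step~1 is dropped and the marginal bound is re-justified (both explained below). It shares the paper's backbone: the invariance principle on the low-influence, bounded-degree $G$, then Theorem~\ref{thm:multi_reverse_hyp}, with the covariance bound coming from the absence of $(\mathbb{Z},+)$-embeddings exactly as in Proposition~\ref{prop:positive_definite}.

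\textbf{Where you differ: truncation.} The paper applies part (1) of Theorem~\ref{thm:invariance_principle} to a smooth extension of $abc$ evaluated on the \emph{untruncated} $\mathsf{F}_d(\vec b,\cdot)$. It therefore has to pay for truncation on both sides.
On the discrete side it proves that $\mathbb{E}_{\nu}|\tilde{\mathsf{F}}_d-\mathsf{F}_d|^2$ is small (Claim~\ref{claim:inv_truncation}). This goes back to the decoupled distribution via Claim~\ref{claim:closeness}, uses a hypercontractive fourth-moment bound, and uses $E_2'$ to compare with the $[0,1]$-valued $\mathsf{f}'_{\mathbf w}$.
On the Gaussian side it transfers this via part (2) (Claims~\ref{claim:inv_gaussian_lb} and~\ref{claim:inv_trunc_gauss}). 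The Gaussian marginals then come from $\mathbb{E}[\mathsf{F}_d]$ plus the small Gaussian truncation error.

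You instead treat $\prod_j\mathrm{trunc}_{[0,1]}(a_j)$ as a Lipschitz test function and mollify it at a scale $\epsilon$ depending only on $\delta,m,\alpha$. Part (1) then applies with third derivatives $O(\epsilon^{-2})$, once $\tau$ is small in terms of $\epsilon$, $D$ and $C''$, which \eqref{eq:parameters} allows. This never needs $G$ to be close to a $[0,1]$-valued function, so Claims~\ref{claim:inv_truncation} and~\ref{claim:inv_trunc_gauss} become unnecessary. Inside this claim you only need \eqref{eq:dense_F}, $E_1$ and $E_2$.

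\textbf{Fix for the marginals.} Do the same mollification with the $1$-Lipschitz map $a\mapsto\mathrm{trunc}_{[0,1]}(a)$. Citing part (2) alone is not enough. Turning the $\zeta$-comparison into a lower bound on $\mathbb{E}[\mathrm{trunc}_{[0,1]}(M(\mathcal{G}_x))]$ requires $\mathbb{E}\,\zeta(G)$ to be small on the discrete side. That is exactly what the paper's Claim~\ref{claim:inv_truncation} supplies, and you never prove it.

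\textbf{Delete Step~1 and the fallback; they cannot work.}
\begin{itemize}
\item Applying $\mathrm{T}_{1-\gamma}$ to $G$ just replaces $M$ by another low-degree multilinear polynomial on the \emph{same} Gaussian ensemble, so the covariance $P$ is unchanged. Only re-randomizing the inputs shrinks the cross-covariances, giving $\eta=\Omega(\gamma)$.
\item In either form, the error you incur can only be bounded in terms of $\gamma D\|G\|_2$. Since $D$ and $\|G\|_2$ depend on $\xi,\rho_1\ll\delta$, this error is not small when $\gamma$ depends only on $\delta$.
\item If you instead take $\gamma\ll 1/D$, then $(\delta/256)^{3/\eta}$ depends on $\xi$ and $\rho_1$. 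It is no longer of the form $\delta^{O_{m,\alpha}(1)}$, and nothing guarantees it beats the $3\xi^{1/8}$ loss incurred before this claim, because $\xi$ is fixed before $\rho_1$ in \eqref{eq:parameters}.
\end{itemize}
So the no-$(\mathbb{Z},+)$-embedding argument is the proof of the covariance bound, not one of two options. With $\gamma=0$ and the marginal fix above, your argument is complete.
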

\begin{proof}
   Consider the function $\fcoarsedecoupled_{,\V w}(\vec{b}, \cdot) : (\Sigma^{n'}, \nu^{\otimes n'}) \rightarrow \mathbb{C}$ and its decomposition from (\ref{eq:regularity_functions}). Every function $L_{P}$ in the decomposition of the function $\fcoarsedecomposed_{w}$, we have $I_i(L_{P}) \leq \tau$ for every $i\in J$.  As the function $\fcoarsedecoupled_{,\V w}(\vec{b}, \cdot)$ is the weighted sum of these functions $L_{P}$ for $P\in \mathcal{P}$, we can express $\fcoarsedecoupled_{,\V w}(\vec{b}, \cdot) = \sum_{P\in \spn(\mathcal{P})} \theta_P \cdot L_P$ where $|\theta_P| = 1$. Applying Proposition~\ref{prop:sum_inf}, as $\card{\spn(\mathcal{P})} = O_r(1)$, we get that $I_i(\fcoarsedecoupled_{,\V w}(\vec{b},\cdot))\leq  O_{r}(\tau) = O_{\rho_1}(\tau)$ for all $i\in J$.
   
   % Furthermore, using the fact that ${\sf trunc}_{[0,1]}$ is $1$-Lipschitz, we have 
   % \begin{align*}
   %     I_i(\truncfcoarsedecoupled_{,\V w}(\vec{b},\cdot)) &= \Expect{\substack{\V y\sim \nu^{\otimes (n'-1)}\\ y', y'' \sim \nu} }{\card{ \truncfcoarsedecoupled_{,\V w}(\vec{b},\V x_{-i} = \V y, x_i = y') - \truncfcoarsedecoupled_{,\V w}(\vec{b}, \V x_{-i} = \V y, x_i = y'') }^2}\\
   %     &\lesssim \Expect{\substack{\V y\sim \nu^{\otimes(n'-1)}\\ y', y'' \sim \nu} }{\card{ \fcoarsedecoupled_{,\V w}(\vec{b}, \V x_{-i} = \V y, x_i = y') - \fcoarsedecoupled_{,\V w}(\vec{b}, \V x_{-i} = \V y, x_i = y'') }^2}\\
   %     &= I_i(\fcoarsedecoupled_{,\V w}(\vec{b}, \cdot)) \leq O_{r}(\tau).
   % \end{align*}

   	Let $\Psi\colon \mathbb{C}^3\to\mathbb{C}$ be any smooth extension of the function $\Psi'(a,b,c) = abc$ for $a,b,c$
	that have absolute value at most $1$, by which we mean it has third order derivatives bounded by $O(1)$ and  additionally it satisfies
    $$\card{\Psi(a, b, c) - \Psi(a',b',c')} \lesssim \sqrt{\card{a-a'}^2+\card{b-b'}^2+\card{c-c'}^2}$$
    for all complex numbers $a,b,c,a',b',c'\in \mathbb{C}$. Let $\mathbf{\mathcal{X}} = \{X_{1},\ldots,X_{m-1}, Y_{1}, \ldots, Y_{m-1}, Z_{1},\ldots, Z_{m-1}\}$ and $\mathbf{\mathcal{G}} = \{G_{1,x},\ldots,G_{m-1, x},\ldots, G_{1,z},\ldots,G_{m-1, z}\}$ be the ensemble of random variables and the Gaussian random variables for the distribution $\mu$, as defined in Section~\ref{sec:invariance_principle}. Let $X = (\{X_{1},\ldots,X_{m-1}), Y = (Y_{1}, \ldots, Y_{m-1}), Z=  (Z_{1},\ldots, Z_{m-1})$ and ${G}_x = (G_{1,x},\ldots,G_{m-1, x}),  {G}_y = (G_{1,y},\ldots,G_{m-1, y}), {G}_z=(G_{1,z},\ldots,G_{m-1, z})$. We now apply the invariance principle, Theorem~\ref{thm:invariance_principle}, to the expectation under consideration to move to the Gaussian space. Towards this, we work with the multi-linear representation $\multilinfcoarsedecoupled_{, \V w}: \mathbb{C}^{(m-1)n'}\rightarrow \mathbb{C}$ of the function $\fcoarsedecoupled_{,\V w}$. Also, let $\truncmultilinfcoarsedecoupled_{, \V w} = \mathrm{trunc}_{[0,1]}(\multilinfcoarsedecoupled_{, \V w})$ be its truncated version.

Consider the following quantities
\begin{align*}
    \mathbf{\Lambda}_1 &= \Expect{\substack{(\V x, \V y, \V z)\sim \mu^{\otimes n'}}}{\Psi\left(\truncfcoarsedecoupled_{,\V w}(\vec{b},\V x), \truncfcoarsedecoupled_{,\V w}(\vec{b}, \V y),  \truncfcoarsedecoupled_{,\V w}(\vec{b}, \V z)\right)}\\
    \mathbf{\Lambda}_1' &= \Expect{\substack{(\V x, \V y, \V z)\sim \mu^{\otimes n'}}}{\Psi\left(\fcoarsedecoupled_{,\V w}(\vec{b},\V x), \fcoarsedecoupled_{,\V w}(\vec{b}, \V y),  \fcoarsedecoupled_{,\V w}(\vec{b}, \V z)\right)}\\
    \mathbf{\Lambda}_2 &= \Expect{\mathbf{\mathcal{X}}}{\Psi\left(\multilinfcoarsedecoupled_{,\V w}(\vec{b}, X), \multilinfcoarsedecoupled_{,\V w}(\vec{b}, Y),  \multilinfcoarsedecoupled_{,\V w}(\vec{b}, Z)\right)}\\
     \mathbf{\Lambda}_3 &= \Expect{\mathbf{\mathcal{G}}}{\Psi\left(\multilinfcoarsedecoupled_{,\V w}(\vec{b}, G_x), \multilinfcoarsedecoupled_{,\V w}(\vec{b}, G_y),  \multilinfcoarsedecoupled_{,\V w}(\vec{b}, G_z)\right)}\\
   \mathbf{\Lambda}_3' &= \Expect{\mathbf{\mathcal{G}}}{\Psi\left(\truncmultilinfcoarsedecoupled_{,\V w}(\vec{b}, G_x), \truncmultilinfcoarsedecoupled_{,\V w}(\vec{b}, G_y),  \truncmultilinfcoarsedecoupled_{,\V w}(\vec{b}, G_z)\right)}
\end{align*}
    Note that  $\mathbf{\Lambda}'_1 =  \mathbf{\Lambda}_2$, as in $\mathbf{\Lambda}_2$, we just moved to the multi-linear representation of $\fcoarsedecoupled_{,\V w}$. We have the following claims.
   \begin{claim}\label{claim:inv_truncation}
        $\card{\mathbf{\Lambda}_1 -  \mathbf{\Lambda}'_1 }\leq O_{\rho_1}(\tilde{\rho}_2) + O(\xi^{1/4}).$
   \end{claim}
   \begin{proof}
           Using the smoothness property of $\Psi$, we have 
       \begin{align*}
          & \card{\mathbf{\Lambda}_1 -  \mathbf{\Lambda}'_1 } \\
          &~~
          \lesssim \Expect{\substack{(\V x, \V y, \V z)\sim \mu^{\otimes n'}}}{\sqrt{\card{\truncfcoarsedecoupled_{,\V w}(\vec{b},\V x) - \fcoarsedecoupled_{,\V w}(\vec{b},\V x) }^2 + \card{\truncfcoarsedecoupled_{,\V w}(\vec{b},\V y) - \fcoarsedecoupled_{,\V w}(\vec{b},\V y) }^2+ \card{\truncfcoarsedecoupled_{,\V w}(\vec{b},\V z) - \fcoarsedecoupled_{,\V w}(\vec{b},\V z) }^2}}.
           \end{align*}
       Now, using the fact that the marginal distribution of $\mu$ on each coordinate is $\mu$, by the Cauchy-Schwarz inequality,
      \begin{equation}\label{eq:inv_trunc_bound}
        \card{\mathbf{\Lambda}_1 -  \mathbf{\Lambda}'_1 } \lesssim \sqrt{\Expect{\substack{\V x\sim \nu^{\otimes n'}}}{\card{\truncfcoarsedecoupled_{,\V w}(\vec{b},\V x) - \fcoarsedecoupled_{,\V w}(\vec{b},\V x) }^2}}.
      \end{equation}
      Recall the decoupled distribution $\mathcal{D}$ from Lemma~\ref{lem:decoupled_move}. We move from the distribution $\nu^{\otimes n'}$ to the marginal of the decoupled distribution as follows. Using Claim~\ref{claim:closeness}, the statistical distance between the distribution on $X''$ in $(X, X', X'')\sim \mathcal{D}|W_{\vec{b}}$ and the distribution $x\sim \nu^{\otimes n'}$ is at most $\tilde{\rho}_2$. Thus, we may couple $X''$ in $(X, X', X'')$ with $x$ distributed according to $\nu^{\otimes n'}$ such that $\Pr[X''\neq x]\leq \tilde{\rho}_2$. Let $E$ be the event that $x\neq X''$ in the coupling. We have,
      \begin{align*}
          &\card{\Expect{\substack{\V x\sim \nu^{\otimes n'}}}{\card{\truncfcoarsedecoupled_{,\V w}(\vec{b},\V x) - \fcoarsedecoupled_{,\V w}(\vec{b},\V x) }^2} -\Expect{\substack{(X, X', X'')\sim \mathcal{D}|W_{\vec{b}}}}{\card{\truncfcoarsedecoupled_{,\V w}(\vec{b},\V X'') - \fcoarsedecoupled_{,\V w}(\vec{b},\V X'') }^2} }^2\\
          & = \Expect{\substack{(X, X', X'')\\\V x }}{\left(\card{\truncfcoarsedecoupled_{,\V w}(\vec{b},\V x) - \fcoarsedecoupled_{,\V w}(\vec{b},\V x) }^2 -\card{\truncfcoarsedecoupled_{,\V w}(\vec{b},\V X'') - \fcoarsedecoupled_{,\V w}(\vec{b},\V X'') }^2\right) 1_E} ^2\\
          &\leq \Pr[E]\cdot \Expect{\substack{(X, X', X'')\\\V x }}{\left(\card{\truncfcoarsedecoupled_{,\V w}(\vec{b},\V x) - \fcoarsedecoupled_{,\V w}(\vec{b},\V x) }^2 -\card{\truncfcoarsedecoupled_{,\V w}(\vec{b},\V X'') - \fcoarsedecoupled_{,\V w}(\vec{b},\V X'') }^2\right)} ^2\\
           &\leq \Pr[E]\cdot \Expect{\substack{(X, X', X'')\\\V x }}{\left(\card{\truncfcoarsedecoupled_{,\V w}(\vec{b},\V x) - \fcoarsedecoupled_{,\V w}(\vec{b},\V x) }^2 -\card{\truncfcoarsedecoupled_{,\V w}(\vec{b},\V X'') - \fcoarsedecoupled_{,\V w}(\vec{b},\V X'') }^2\right)^2}\\
          &\lesssim \Pr[E]\cdot \left(\Expect{}{\card{\truncfcoarsedecoupled_{,\V w}(\vec{b},\V x)}^4} + \Expect{}{ \card{\fcoarsedecoupled_{,\V w}(\vec{b},\V x) }^4} +\Expect{}{\card{\truncfcoarsedecoupled_{,\V w}(\vec{b},\V X'')}^4} + \Expect{}{\card{\fcoarsedecoupled_{,\V w}(\vec{b},\V X'') }^4}\right)
      \end{align*}
      The first and the third expectations are bounded by $1$. We bound the second expectation, and the last expectation can be bounded analogously. Note that $\card{\fcoarsedecoupled_{,\V w}(\vec{b},\V x) } \leq \sum_{P} \card{L_P(w,x)}$. We now use the fact that the ${\sf deg}(L_P(w,\cdot))$ and $\|L_P(w,\cdot)\|_2$ are bounded by $O_{\rho_1}(1)$. By H\"older's inequality and hypercontractivity, we get
      $$\Expect{}{ \card{\fcoarsedecoupled_{,\V w}(\vec{b},\V x) }^4} \lesssim_r \sum_P \|L_P(w,\cdot)\|_4^4 \lesssim_{r, \rho_1} \|L_P(w,\cdot)\|_2^4 = O_{\rho_1}(1).$$
      Thus, we get
      $$\card{\Expect{\substack{\V x\sim \nu^{\otimes n'}}}{\card{\truncfcoarsedecoupled_{,\V w}(\vec{b},\V x) - \fcoarsedecoupled_{,\V w}(\vec{b},\V x) }^2} -\Expect{\substack{(X, X', X'')\sim \mathcal{D}|W_{\vec{b}}}}{\card{\truncfcoarsedecoupled_{,\V w}(\vec{b},\V X'') - \fcoarsedecoupled_{,\V w}(\vec{b},\V X'') }^2} } = O_{\rho_1}(\tilde{\rho}_2).$$
      Hence,
    $$\card{\mathbf{\Lambda}_1 -  \mathbf{\Lambda}'_1 } \lesssim \sqrt{\Expect{\substack{(X, X', X'')\sim \mathcal{D}|W_{\vec{b}}}}{\card{\truncfcoarsedecoupled_{,\V w}(\vec{b}, X'') - \fcoarsedecoupled_{,\V w}(\vec{b}, X'') }^2}}+O_{\rho_1}(\tilde{\rho}_2).$$
    Now, using the property that $\eta(a) = \card{a-\mathrm{trunc}_{[0,1]}(a)}$ is $2$-Lipshitz from Fact~\ref{fact:trivial_lipshitz_pf},
    \begin{align*}
            &\Expect{\substack{(X, X', X'')\sim \mathcal{D}|W_{\vec{b}}}}{\card{\truncfcoarsedecoupled_{,\V w}(\vec{b}, X'') - \fcoarsedecoupled_{,\V w}(\vec{b}, X'') }^2} \\
            &= \Expect{\substack{(X, X', X'')\sim \mathcal{D}|W_{\vec{b}}}}{\eta(\fcoarsedecoupled_{,\V w}(\vec{b}, X''))^2}\\
            &= \Expect{\substack{(X, X', X'')\sim \mathcal{D}}}{\eta(\fcoarsedecoupled_{,\V w}(\vec{b}, X''))^2\mid X\in S_{w,\vec{b}}}\\
            &= \Expect{\substack{(X, X', X'')\sim \mathcal{D}}}{\eta(\noisyfrefined_{\V w}(X))^2 + \card{\noisyfrefined_{\V w}(X) - \fcoarsedecoupled_{,\V w}(\vec{b}, X'')}^2\mid X\in S_{w,\vec{b}}}\\
             &= \Expect{\substack{(X, X', X'')\sim \mathcal{D}}}{\card{\noisyfrefined_{\V w}(X) - \fcoarsedecoupled_{,\V w}(\vec{b}, X'')}^2\mid X\in S_{w,\vec{b}}} \tag*{($\eta(\noisyfrefined_{\V w}(X)) = 0$)}
    \end{align*}
    As $\vec{b}$ satisfies the vent $E'$ (more specifically, the event $E'_2$), the above expectation is at most $\xi^{1/4}$.
   \end{proof}
   \begin{claim}\label{claim:inv_gaussian} $\card {\mathbf{\Lambda}_2 - \mathbf{\Lambda}_3} = O_{\rho_1}(\tau')$ where $\tau'\rightarrow 0 $ as $\tau \rightarrow 0$. 
   \end{claim}
   \begin{proof}
       This follows from a direct application of Theorem~\ref{thm:invariance_principle} by noting that the degree and the $\ell_2$ norm of the multilinear function $\multilinfcoarsedecoupled_{,\V w}(\vec{b}, \cdot)$ is upper bounded by $O_{\rho_1}(1)$.
   \end{proof}

   \begin{claim}\label{claim:inv_gaussian_lb} $\mathbf{\Lambda}'_3 \geq \delta^{\Omega_{m,\alpha}(1)}.$
   \end{claim}
   \begin{proof}
   Recall that from (\ref{eq:inv_trunc_bound}) we derived the following in the proof of Claim~\ref{claim:inv_truncation}.
       $$\Expect{\substack{\V x\sim \nu^{\otimes n'}}}{\card{\truncfcoarsedecoupled_{,\V w}(\vec{b},\V x) - \fcoarsedecoupled_{,\V w}(\vec{b},\V x) }^2} = O_{\rho_1}(\tilde{\rho}_2) + O(\xi^{1/4}).$$
       This along with the lower bound from (\ref{eq:dense_F}), we get
       $$\Expect{\V x\sim \nu^{\otimes n'}}{\fcoarsedecoupled_{,\V w}(\vec{b}_\star,\V x)  } \geq \frac{\delta}{128} -  O_{\rho_1}(\tilde{\rho}_2) + O(\xi^{1/4})\geq \frac{\delta}{256}.$$
 By applying Theorem~\ref{thm:invariance_principle} again, we have 
\begin{align*}
\card{\Expect{\mathcal{X}}{\multilinfcoarsedecoupled_{,\V w}(\vec{b}, X)} - \Expect{\mathcal{G}}{\multilinfcoarsedecoupled_{,\V w}(\vec{b},\V G_x)}} &\leq O_{\rho_1}(\tau'),
\end{align*}
and thus,
\begin{align*}
\Expect{\mathcal{G}}{\multilinfcoarsedecoupled_{,\V w}(\vec{b},\V G_x)}\geq \frac{\delta}{256} - O_{\rho_1}(\tau')\geq \frac{\delta}{512}.
\end{align*}
The same holds for $\Expect{G_y}{\multilinfcoarsedecoupled_{,\V w}(\vec{b},  \V G_y)}$ and $\Expect{G_z}{\multilinfcoarsedecoupled_{,\V w}(\vec{b}, \V G_z)}$. Applying Theorem~\ref{thm:invariance_principle} (item $2$.), we get
\begin{align*}
\card{\Expect{\mathcal{X}}{\eta(\multilinfcoarsedecoupled_{,\V w}(\vec{b}, X))} - \Expect{\mathcal{G}}{\eta(\multilinfcoarsedecoupled_{,\V w}(\vec{b},\V G_x))}} &\leq O_{\rho_1}(\tau').
\end{align*}
As by the Cauchy-Schwarz inequality,
$$\Expect{\mathcal{X}}{\eta(\multilinfcoarsedecoupled_{,\V w}(\vec{b}, X))}^2 \leq \Expect{\substack{\V x\sim \nu^{\otimes n'}}}{\card{\truncfcoarsedecoupled_{,\V w}(\vec{b},\V x) - \fcoarsedecoupled_{,\V w}(\vec{b},\V x) }^2} = O_{\rho_1}(\tilde{\rho}_2) + O(\xi^{1/4}),$$
we get,
\begin{equation}\label{eq:trunc_gauss_small}\Expect{\mathcal{G}}{\eta(\multilinfcoarsedecoupled_{,\V w}(\vec{b},\V G_x))}^2 = O_{\rho_1}(\tilde{\rho}_2) + O(\xi^{1/4}) + O_{\rho_1}(\tau').
\end{equation}
Thus, using the fact that $\Expect{\mathcal{G}}{\multilinfcoarsedecoupled_{,\V w}(\vec{b},\V G_x)}\geq  \frac{\delta}{512}$ and the above bound, we get
\begin{align*}
\Expect{\mathcal{G}}{\truncmultilinfcoarsedecoupled_{,\V w}(\vec{b},\V G_x)}\geq \frac{\delta}{1024},
\end{align*}

Let $T$ be the covariance matrix of the ensemble $\mathcal{G}$. In Proposition~\ref{prop:positive_definite} below, we show that the matrix $T$ satisfies $T-\eta_{\alpha, |\Sigma|} I\succeq 0$, for some constant $\eta_{\alpha, |\Sigma|}>0$ that depends on $\alpha$ and $|\Sigma|$. Hence, using Theorem~\ref{thm:multi_reverse_hyp}, we have
    \begin{align*}
        \Expect{\mathcal{G}^{n'}}{ \truncmultilinfcoarsedecoupled_{,\V w}(\vec{b}, \V G_x),\truncmultilinfcoarsedecoupled_{,\V w}(\vec{b}, \V G_y), \truncmultilinfcoarsedecoupled_{,\V w}(\vec{b},  \V G_z)}\geq \left(\frac{\delta^3}{1024^3}\right)^{1/\eta_{m,\alpha}}.
    \end{align*}
   \end{proof}

 \begin{claim}\label{claim:inv_trunc_gauss}
       $\card{{\bf \Lambda}_3 - {\bf \Lambda}'_3} = O_\xi(1)$.
   \end{claim}
   \begin{proof}
       Using the smoothness property of $\Psi$, we have 
       \begin{align*}
          & \card{\mathbf{\Lambda}_3 -  \mathbf{\Lambda}'_3 } \\
          &~~
          \lesssim \Expect{\substack{\mathcal{G}}}{\sqrt{\card{\multilinfcoarsedecoupled_{,\V w}(\vec{b},G_x) - \truncmultilinfcoarsedecoupled_{,\V w}(\vec{b},G_x) }^2 + \card{\multilinfcoarsedecoupled_{,\V w}(\vec{b},G_y) - \truncmultilinfcoarsedecoupled_{,\V w}(\vec{b},G_y) }^2+ \card{\multilinfcoarsedecoupled_{,\V w}(\vec{b},G_z) - \truncmultilinfcoarsedecoupled_{,\V w}(\vec{b},G_z) }^2}}.
           \end{align*}
           Using the fact that $\sqrt{\card{a}^2+\card{b}^2+\card{c}^2} \leq (\card{a}+\card{b}+\card{c})$ for all $a, b, c\in \mathbb{C}$, we get
           \begin{align*}
          & \card{\mathbf{\Lambda}_3 -  \mathbf{\Lambda}'_3 } \\
          &~~
          \lesssim \Expect{\substack{\mathcal{G}}}{\card{\multilinfcoarsedecoupled_{,\V w}(\vec{b},G_x) - \truncmultilinfcoarsedecoupled_{,\V w}(\vec{b},G_x) } + \card{\multilinfcoarsedecoupled_{,\V w}(\vec{b},G_y) - \truncmultilinfcoarsedecoupled_{,\V w}(\vec{b},G_y) } + \card{\multilinfcoarsedecoupled_{,\V w}(\vec{b},G_z) - \truncmultilinfcoarsedecoupled_{,\V w}(\vec{b},G_z) }}.
           \end{align*}
     From (\ref{eq:trunc_gauss_small}), we have
     $$\Expect{\substack{\mathcal{G}}}{\card{\multilinfcoarsedecoupled_{,\V w}(\vec{b},G_x) - \truncmultilinfcoarsedecoupled_{,\V w}(\vec{b},G_x) }}^2  = \Expect{\substack{\mathcal{G}}}{\eta(\multilinfcoarsedecoupled_{,\V w}(\vec{b},G_x))}^2 = O_{\rho_1}(\tilde{\rho}_2) + O(\xi^{1/4}) + O_{\rho_1}(\tau') = O_\xi(1). $$
    We have the same bound for the other terms in the upper bound of $\card{\mathbf{\Lambda}_3 -  \mathbf{\Lambda}'_3 }$ above, and hence 
    \[
    \card{\mathbf{\Lambda}_3 -  \mathbf{\Lambda}'_3 } = O_{\xi}(1).
    \qedhere
    \]
   \end{proof}

   Using Claims~\ref{claim:inv_truncation},~\ref{claim:inv_gaussian},~\ref{claim:inv_trunc_gauss}, and ~\ref{claim:inv_gaussian_lb}, we have the following.
   \begin{align*}
   \Expect{(\V x'', \V y'', \V z'')\sim\mu^{\otimes n'}}{\truncfcoarsedecoupled_{,\V w}(\vec{b}, \V x'')\truncfcoarsedecoupled_{,\V w'}(\vec{b}, \V y'') \truncfcoarsedecoupled_{,\V w''}(\vec{b}, \V z'')}&\geq \delta^{\Omega_{m,\alpha}(1)}- O_{\rho_1}(\tilde{\rho}_2) - O_{\rho_1}(\tau') -O_{\xi}(1)\\
   &\geq \delta^{\Omega_{m,\alpha}(1)},
   \end{align*}
   as required, where the last inequality follows from the relations (\ref{eq:parameters}).
\end{proof}

We are now ready to prove the main theorem for resilient functions.
\begin{lemma}\label{lemma:mainthm_resilient_f}
    Suppose $f:\Sigma^n \rightarrow \{0,1\}$ is a $(B, 1/2)$ resilient function for $B \gg \Omega_{\rho_1}(1)$, then $\Delta_1(f)\geq  \alpha^{B+1} \Omega_{\rho_0}(\delta^{\Omega_{m,\alpha}(1)})$.
\end{lemma}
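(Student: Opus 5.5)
Lemma~\ref{lemma:mainthm_resilient_f} is obtained by chaining the estimates of Sections~\ref{sec:apply_arl}--\ref{sec:apply_invariance_stuff}; the work lies in fixing the parameters so that every error term is absorbed.

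First fix the hierarchy~\eqref{eq:parameters}. Given $\delta,m,\alpha$, choose $\xi\ll\delta$ so that $\xi^{1/8}$ (and $O(\xi^{1/4})$) is much smaller than the Gaussian bound $\delta^{\Omega_{m,\alpha}(1)}$ of Claim~\ref{claim:inv_gaussian_lb}, and take $w$ fast enough that $\tau,\kappa,\tilde\rho_2,\rho_3$ are negligible against every quantity depending only on $\rho_0,\rho_1$. Apply Lemma~\ref{lem:arithmetic_reg_high_rank} to $f$ with this $w$ to obtain $\mathcal{P},\mathcal{P}'$ and the scales $\rho_i$. Since $\rho_5\geq\Omega_{w,m,\alpha,\xi}(1)$, all scales are bounded below by constants depending only on $\delta,m,\alpha$. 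The resilience parameter $B$ must dominate the sizes of the sets restricted along the way (the $|J|\leq D/2$ hypotheses in Lemmas~\ref{lemma:quasirandom_prop_2} and~\ref{lemma:noise_condition_uniform}, and the uses of resilience in Claim~\ref{claim:f_to_noisyf_resilience} and in the proof that $E'$ has probability $\geq\delta/64$). Hence it suffices to take $B\geq|J|=O_{\rho_1,\tau,\eta}(1)$ from Claim~\ref{claim:influence_removal}, which is the hypothesis $B\gg\Omega_{\rho_1}(1)$.

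The chain of inequalities is then as follows. By~\eqref{eq:Delta_1_to_2}, $\Delta_1(f)\geq\Delta_2(\noisyfrefined)-3\rho_3$. Restricting to $J$ and using Claim~\ref{claim:goodevents} gives $\Delta_2(\noisyfrefined)\geq\alpha^{|J|+1}\Delta_2(\noisyfrefined,E_{\V w})$ for a good $\V w$, where $\alpha^{|J|+1}\geq\alpha^{B+1}$. The decoupling step picks $\vec b_\star$ satisfying $E'$ and yields
\[
\Delta_2(\noisyfrefined,E_{\V w})\geq\tfrac14\Big(\prod_{P\in\mathcal{P}}\tfrac{1}{{\sf ord}(P)}\Big)^4\big(\Delta_3-3\xi^{1/8}\big),
\]
with the prefactor $\geq M^{-4/\rho_0}=\Omega_{\rho_0}(1)$. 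Next, \eqref{eq:decoupled_move_to_uniform} with Claim~\ref{claim:invariance_lb} gives $\Delta_3\geq\delta^{\Omega_{m,\alpha}(1)}-\tilde\rho_2$. Combining these,
\[
\Delta_1(f)\geq\alpha^{B+1}\,\Omega_{\rho_0}(1)\big(\delta^{\Omega_{m,\alpha}(1)}-\tilde\rho_2-3\xi^{1/8}\big)-3\rho_3.
\]

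The main obstacle is the final absorption step. The term $3\rho_3$ is not multiplied by $\alpha^{B+1}$, so to absorb it one needs $\rho_3\ll\alpha^{B+1}\Omega_{\rho_0}(\delta^{c})$. But $B$ depends on $|J|$, which depends on $\tau$, and $\tau$ sits above $\rho_3$ in the hierarchy, so this is consistent provided $w$ is also chosen to decay faster than $\alpha^{|J(\tau)|}$ at the step from $\tau$ to $\rho_3$. I would make this explicit in the definition of $w$. The bound from Claim~\ref{claim:influence_removal} depends only on $\rho_0,\rho_1,\tau,\eta$ and $C,D$ (which are functions of $\xi,\rho_1$), all of which are fixed before $\rho_3$ is chosen. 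With that choice, $\tilde\rho_2$, $\xi^{1/8}$ and $\rho_3$ are each at most a quarter of the main term, and the claimed bound follows.
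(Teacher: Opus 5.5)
Your chain of estimates is the paper's own proof: \eqref{eq:Delta_1_to_2}, the restriction to $J$ via Claim~\ref{claim:goodevents}, the choice of $\vec b_\star$ with an $\Omega_{\rho_0}(1)$ prefactor, then \eqref{eq:decoupled_move_to_uniform} and Claim~\ref{claim:invariance_lb}. Handling the $-3\rho_3$ term explicitly is a real improvement over the paper's bare appeal to \eqref{eq:parameters}, but the requirement should be stated differently. What you need is $3\rho_3\le\frac12\alpha^{|J|+1}\Omega_{\rho_0}(\delta^{c})$, not $\rho_3\ll\alpha^{B+1}(\cdots)$. The parameter $B$ is fixed before Lemma~\ref{lem:arithmetic_reg_high_rank} is applied: it must dominate $|J|$ for every possible output of the scales, and the hypothesis only bounds it from below. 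So $\rho_3\ll\alpha^{B+1}$ cannot be arranged through $w$. Instead absorb $3\rho_3$ against $\alpha^{|J|+1}$, where $|J|$ depends only on parameters chosen before $\rho_3$, and only afterwards weaken using $|J|\le B$. Your mechanism ``$w$ decays faster than $\alpha^{|J(\tau)|}$'' already does exactly this.

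The step that fails as you justify it is the hypothesis $|J|\le D/2$ of Lemmas~\ref{lemma:quasirandom_prop_2} and~\ref{lemma:noise_condition_uniform}. The same issue affects ${\sf rk}(\mathcal{P}|_{\bar J})\ge{\sf rk}(\mathcal{P})/2$ in the use of Lemma~\ref{lem:decoupled_move}. You list this hypothesis among the constraints that $B$ must dominate, but $B$ has nothing to do with it. The $D$ there is the rank/softrank bound $1/\rho_2$, so the requirement is $|J|\le\frac{1}{2\rho_2}$. The size of $J$ from Claim~\ref{claim:influence_removal} grows with $1/\tau$, and in general can be of order $1/\tau$ or more. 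Hence $\rho_2$ must lie \emph{below} $\tau$. The hierarchy \eqref{eq:parameters} you adopt has the opposite ordering $\tau\ll_w\rho_2$, and under it these lemmas cannot be invoked. The paper's \eqref{eq:parameters} has the same misordering, although its remark $B=O_{\rho_1}(1)$ shows $\tau$ is meant to depend only on $\rho_0,\rho_1,\xi,\eta$.

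The repair is cheap. Choose the decay function in Lemma~\ref{lem:arithmetic_reg_high_rank} so that $\tau$ and $\kappa$ (in either order) fit between $\rho_2$ and $\rho_1$, i.e.\ $\rho_3\ll_w\rho_2\ll_w\kappa,\tau\ll_w\rho_1$. Nothing requires $\tau\ll\rho_2$:
\begin{itemize}
\item the invariance-principle error depends only on the degree and norm bounds of the $L_P$ and on $\card{\spn(\mathcal{P})}$, hence on $\xi,\rho_0,\rho_1$;
\item Lemma~\ref{lem:decoupled_move} and Claim~\ref{claim:closeness} need only $\rho_2\ll\kappa\ll\rho_1$ together with $|J|\ll1/\rho_2$;
\item your absorption of $3\rho_3$ is unaffected, since $\rho_3$ still comes after every parameter that $|J|$ depends on.
\end{itemize}
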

\begin{proof}
    Apply the arithmetic regularity lemma (Lemma~\ref{lem:arithmetic_reg_high_rank}) to the function $f$ and the distribution $\mu$ for $\xi\ll \delta,\alpha, m^{-1}$. We use the notations from (\ref{eq:regularity_functions}) and the parameters satisfying (\ref{eq:parameters}). From (\ref{eq:Delta_1_to_2}), we have
    
    $$\Delta_1(f) \geq \Delta_2(\noisyfrefined)  - 3\rho_3.$$

    Next, as in Section~\ref{sec:remove_inf}, we randomly restrict at most $B$ coordinates to a string $\V w$ so that in the restricted function $\fcoarsedecomposed_{\V w}$, all the influences of the restricted functions $L_P$ in its decomposition are $\tau  \ll O_{\rho_0}(1)$ (along with $\V w$ satisfying the event $E$). For this restriction, we get,
    
    $$\Delta_2(\noisyfrefined) \geq \alpha^{B+1}\cdot \Delta_2(\noisyfrefined, E_{\V w}).$$
   From Section~\ref{sec:move_to_decoulped}, for some vector $\vec{b}_\star$ satisfying the event $E'$, we get

    $$ \Delta_2(\noisyfrefined, E_{\V w}) \geq \Omega_{\rho_0}(1)\left(\Delta_3(\truncfcoarsedecoupled_{, \V w}, \vec{b}_\star) - 3\xi^{1/8}\right)$$
    Finally, using (\ref{eq:decoupled_move_to_uniform}) and Claim~\ref{claim:invariance_lb}, we have

$$\Delta_3(\truncfcoarsedecoupled_{, \V w}, \vec{b}_\star)\geq \delta^{\Omega_{m,\alpha}(1)} -\tilde{\rho}_2,$$
where $\tilde{\rho}_2$ is from Claim~\ref{claim:closeness}. Combining all these, we get
\begin{align*}
\Delta_1(f) &\geq  \alpha^{B+1} \Omega_{\rho_0}(1)\left( \delta^{\Omega_{m,\alpha}(1)} -\tilde{\rho}_2 - 3\xi^{1/8}\right) - 3\rho_3\\
& = \alpha^{B+1} \Omega_{\rho_0}(\delta^{\Omega_{m,\alpha}(1)}),
\end{align*}
where in the last equality, we use the relations (\ref{eq:parameters}) among different parameters. 
\end{proof}

The proof of the main theorem follows from Lemma~\ref{claim:making_f_resilient} and Lemma~\ref{lemma:mainthm_resilient_f}.\\

\noindent {\bf Proof of Theorem~\ref{thm:main_2}:} We start with $f:\Sigma^n \rightarrow \{0,1\}$ with density $\delta>0$. Using Claim~\ref{claim:influence_removal}, in order to move to a restricted function where all influences of the decomposed function are at most $\tau$, and still have the average of the restricted function being $\Omega(\delta)$, we need $f$ to be $(B, 1/2)$-resilient where $B = O_{|\mathcal{P}|, |\Sigma|, D, C,\eta, \tau}(1) = O_{\rho_1}(1)$. Using Claim~\ref{claim:making_f_resilient}, if $f$ is not $(B, 1/2)$-resilient to begin with, we first make $f$ resilient by losing a multiplicative factor of $2^{-\Omega_{B, \delta, \alpha}(1)} =  2^{-\Omega_{\rho_1}(1)}$ towards estimating $\Delta_1(f)$. Using the estimate for ($B, 1/2)$ resilient functions from Lemma~\ref{lemma:mainthm_resilient_f}, we have

\begin{align*}
\Delta_1(f) &\geq 2^{-\Omega_{\rho_1}(1)}\Omega_{\rho_0}(\delta^{\Omega_{m,\alpha}(1)}).
\end{align*}
thereby finishing the proof of the main theorem. $\qedsymbol$\\

The following proposition shows that for $\mu$ such that ${\sf supp}(\mu)$ has no $\mathbb{Z}$-embedding, the corresponding covariance matrix from the proof of Claim~\ref{claim:invariance_lb} is positive definite. 

\begin{proposition}\label{prop:positive_definite}
Let $\mu$ be a distribution on $\Sigma^3$ such that ${\sf supp}(\mu)$ has no $\mathbb{Z}$-embedding and let $\alpha>0$ be the minimum non-zero probability of an atom in $\mu$. Let $\mu_1,\mu_2,\mu_3$ denote the marginals of $\mu$ on the first, second, and third coordinates, respectively. Assume that each marginal $\mu_i$ has full support on $\Sigma$. For
$i\in\{1,2,3\}$, let
\[
f^{(i)}_0=1,f^{(i)}_1,\ldots,f^{(i)}_{q-1}
\]
be an orthonormal basis of $L^2(\Sigma,\mu_i)$ where $q=|\Sigma|$. Define the random vector
\[
W=
\bigl(
f^{(1)}_1(X),\ldots,f^{(1)}_{q-1}(X),
f^{(2)}_1(Y),\ldots,f^{(2)}_{q-1}(Y),
f^{(3)}_1(Z),\ldots,f^{(3)}_{q-1}(Z)
\bigr)
\in\mathbb{C}^{3(q-1)},
\]
and let $M=\operatorname{Cov}(W)$ be the covariance matrix of $W$, i.e.,  $M= \Expect{\mu}{ WW^* }$ where $^*$ denotes conjugate transpose. Then $M$ is Hermitian positive definite, i.e., $M-\eta_{\alpha, |\Sigma|}I\succeq 0$ for some $\eta_{\alpha, |\Sigma|}>0$.
\end{proposition}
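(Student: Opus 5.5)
We will show that $v^*Mv>0$ for every nonzero $v\in\mathbb{C}^{3(q-1)}$, and then deduce the quantitative bound $M-\eta I\succeq 0$ by a compactness argument. Write $v=(a,b,c)$ with $a,b,c\in\mathbb{C}^{q-1}$, and set
\[
g_1=\sum_{j\ge1}\overline{a_j}f^{(1)}_j,\qquad g_2=\sum_{j\ge1}\overline{b_j}f^{(2)}_j,\qquad g_3=\sum_{j\ge1}\overline{c_j}f^{(3)}_j
\]
(conjugation conventions chosen to match $M=\E[WW^*]$). Then
\[
v^*Mv=\E_{(X,Y,Z)\sim\mu}\bigl[\,|g_1(X)+g_2(Y)+g_3(Z)|^2\bigr].
\]
Each $g_i$ has mean zero under $\mu_i$, since it is orthogonal to $f^{(i)}_0=1$. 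By orthonormality, $\|g_1\|_{2}^2+\|g_2\|_2^2+\|g_3\|_2^2=\|v\|^2$. So $v^*Mv=0$ means exactly that $g_1(x)+g_2(y)+g_3(z)=0$ for every $(x,y,z)\in{\sf supp}(\mu)$, because every atom of $\mu$ has positive mass.

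The main step is to show that such a relation forces $g_1=g_2=g_3=0$. Taking real and imaginary parts separately, we may assume the $g_i$ are real-valued. The identity then says that $(g_1,g_2,g_3)$ is an embedding of ${\sf supp}(\mu)$ into the group $(\mathbb{R},+)$.

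Here we use the standard reduction from $\mathbb{R}$-embeddings to $\mathbb{Z}$-embeddings. The set of real solutions $(\sigma,\gamma,\phi)\in\mathbb{R}^{3q}$ to the linear system $\sigma(x)+\gamma(y)+\phi(z)=0$ for $(x,y,z)\in{\sf supp}(\mu)$ is a subspace cut out by equations with integer coefficients. Such a subspace has a basis of rational vectors. If it contained a solution with some coordinate map non-constant, then some rational basis vector would also have a non-constant coordinate map, because the constant solutions form a subspace. Clearing denominators would then give a $\mathbb{Z}$-embedding with some map non-constant, which contradicts the hypothesis. Hence every solution has $g_1,g_2,g_3$ all constant.

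Since each $g_i$ has mean zero under $\mu_i$, which has full support, each constant is zero. Therefore $v=0$ by linear independence of the basis functions, and $M$ is positive definite. I expect the main care to lie in this rational-basis step and in handling real and imaginary parts correctly.

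For the uniform constant, we use that the minimum eigenvalue
\[
\lambda_{\min}(M)=\min_{\|v\|=1}\E_\mu\Bigl[\,\bigl|\textstyle\sum_i g_i\bigr|^2\Bigr]
\]
is invariant under change of orthonormal bases. So it depends only on $\mu$, and it is continuous in $\mu$. The set of distributions on $\Sigma^3$ with atoms of mass at least $\alpha$ on a fixed support $S$ is compact. There are finitely many supports $S\subseteq\Sigma^3$, and we keep those with no $\mathbb{Z}$-embedding and full marginals. On each resulting compact set, $\lambda_{\min}$ is a positive continuous function, so it is bounded below by some $\eta_{\alpha,|\Sigma|}>0$. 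This gives $M-\eta_{\alpha,|\Sigma|}I\succeq0$.
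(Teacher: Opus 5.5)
Your proof is correct and follows the paper's route. Both write $v^*Mv=\E_\mu\bigl[|g_1(X)+g_2(Y)+g_3(Z)|^2\bigr]$, pass to real and imaginary parts, and use a rational basis of the integer linear system $\sigma(x)+\gamma(y)+\phi(z)=0$ to turn a non-constant real solution into a $\mathbb{Z}$-embedding, with the mean-zero condition ruling out non-zero constants. Your compactness step (basis invariance of $\lambda_{\min}$, continuity in $\mu$, finitely many supports each giving a compact polytope) spells out the ``standard compactness argument'' the paper only names. Continuity deserves one line of justification: choose the orthonormal bases continuously via Gram--Schmidt, which is possible because each marginal gives every atom mass at least $\alpha$.
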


\begin{proof}
Let
\[ \theta= (\alpha_1,\ldots,\alpha_{q-1}, \beta_1,\ldots,\beta_{q-1}, \gamma_1,\ldots,\gamma_{q-1})^{\mathsf T} \in\mathbb{C}^{3(q-1)}. \] 
Define complex-valued functions $g,h,k:\Sigma\to\mathbb{C}$ by 
\[ g(x) = \sum_{j=1}^{q-1}\overline{\alpha_j}f^{(1)}_j(x), \quad\quad h(y) = \sum_{j=1}^{q-1}\overline{\beta_j}f^{(2)}_j(y),  \mbox{ and } \quad\quad k(z) = \sum_{j=1}^{q-1}\overline{\gamma_j}f^{(3)}_j(z). \] 
Then 
\[ \theta^*W=g(X)+h(Y)+k(Z). \] 
Consequently, 
\[ \begin{aligned} \theta^*M\theta &= \Expect{\mu}{ \left| \theta^*W \right|^2 }\\ &=  \Expect{\mu}{\left| g(X)+h(Y)+k(Z)  \right|^2}. \end{aligned} \] 

It follows immediately that $M$ is Hermitian positive semidefinite. We show that it is strictly positive definite. Suppose, toward a contradiction, that there exists $\theta\neq 0$ such that 
\[ \theta^*M\theta=0. \] 
Then $g(X)+h(Y)+k(Z)$ is $0$. Hence $g(x)+h(y)+k(z)=0$ for every $(x,y,z)$ in the support of $\mu$. 
Write 
\[ g=g_{\mathrm R}+{\bf i}g_{\mathrm I},\qquad h=h_{\mathrm R}+{\bf i}h_{\mathrm I},\qquad k=k_{\mathrm R}+{\bf i}k_{\mathrm I}, \]
where all real and imaginary parts are real-valued functions. Taking real and imaginary parts gives 
\[ g_{\mathrm R}(x)+h_{\mathrm R}(y)+k_{\mathrm R}(z) = 0 \]
and 
\[ g_{\mathrm I}(x)+h_{\mathrm I}(y)+k_{\mathrm I}(z) =0 \]
for every $(x,y,z)\in {\sf supp}(\mu)$. For every $j\geq 1$, the function $f^{(i)}_j$ is orthogonal to the constant function $1$ in $L^2(\Sigma,\mu_i)$. Therefore, 
\[ \mathbb{E}_{\mu_1}[g]=0,\qquad \mathbb{E}_{\mu_2}[h]=0,\qquad \mathbb{E}_{\mu_3}[k]=0. \]
In particular, if $g,h,k$ were all constant, then they would all be identically zero. Since the basis functions are linearly independent and $\theta\neq 0$, this is impossible. Hence at least one of $g,h,k$ is nonconstant. It follows that at least one of the two real triples 
\[ (g_{\mathrm R},h_{\mathrm R},k_{\mathrm R}) \qquad\text{or}\qquad (g_{\mathrm I},h_{\mathrm I},k_{\mathrm I}) \]
contains a nonconstant function. Choose such a triple and denote it by $(G,H,K)$. Then 
\[ G(x)+H(y)+K(z)=0 \qquad \text{for every }(x,y,z)\in {\sf supp}(\mu), \]
and at least one of $G,H,K$ is nonconstant. Consider the homogeneous linear system 
\[ A(x)+B(y)+C(z)=0 \qquad \text{for every }(x,y,z)\in {\sf supp}(\mu). \]
Its coefficient matrix has integer entries. The tuple $(G,H,K)$ is a real solution lying outside the linear subspace in which $A,B,C$ are all constant. Since the kernel of a matrix with rational entries has a basis over $\mathbb{Q}$, the real solution space is the real span of its rational solutions. Therefore, if every rational solution had $A,B,C$ all constant, then every real solution would also have $A,B,C$ all constant. This contradicts the existence of $(G,H,K)$. Hence there exists a rational solution \[ (A,B,C) \] such that at least one of $A,B,C$ is nonconstant and 
\[ A(x)+B(y)+C(z)=0 \qquad \text{for every }(x,y,z)\in {\sf supp}(\mu). \]
Multiplying by a common denominator, we may assume that $A,B,C$ are integer-valued. Thus $(A, B, C)$ is a nonconstant embedding of ${\sf supp}(\mu)$ into $\mathbb{Z}$, contradicting the hypothesis. Therefore, \[ \theta^*M\theta>0 \qquad \text{for every nonzero } \theta\in\mathbb{C}^{3(q-1)}. \] Hence $M$ is Hermitian positive definite.

From this, standard compactness argument shows that $\theta^*M\theta$ is indeed bounded away from $0$ as a function of $|\Sigma|$ and $\alpha$ for every nonzero $\theta\in\mathbb{C}^{3(q-1)}$ with $\norm{\theta}_2 = 1$.
\end{proof}

\section{Discussion and Open Problems}\label{sec:discussion}
%\subsection{Generalized Gaussian Problems}
A key conceptual challenge in extending Theorem~\ref{thm:main} to the setting of restricted $3$-APs is the corresponding problem in Gaussian space, which we now describe, and for simplicity we fix $p=3$. 
Let $\omega_3$ be the primitive root of unity of order $3$, and let $\mu$ be the uniform distribution over $\{(x,x+a,x+2a)~|~x\in\mathbb{F}_3, a\in\{0,1\}\}$. Consider the $6$ by $6$ correlation matrix, $M$, whose rows and columns are indexed by $\{1,2\}\times\{1,2,3\}$ and are given by
\[
M((a,i),(b,j))
=\Expect{(x_1,x_2,x_3)\sim\mu}{\omega^{a x_i}\omega^{-b x_j}}.
\]
Explicitly, using the lexicographic ordering on $\{1,2\}\times\{1,2,3\}$, $M$ can be written as
\begin{equation}\label{matrix:M}
M = \begin{pmatrix}
1 & \frac{1+\omega}{2} & \frac{1+\omega^2}{2} & 0 & 0 & 0 \\
\frac{1+\omega^2}{2} & 1 & \frac{1+\omega}{2} & 0 & 0 & 0 \\
\frac{1+\omega}{2} & \frac{1+\omega^2}{2} & 1 & 0 & 0 & 0 \\
0 & 0 & 0 & 1 & \frac{1+\omega^2}{2} & \frac{1+\omega}{2} \\
0 & 0 & 0 & \frac{1+\omega}{2} & 1 & \frac{1+\omega^2}{2} \\
0 & 0 & 0 & \frac{1+\omega^2}{2} & \frac{1+\omega}{2} & 1
\end{pmatrix}.
\end{equation}
It turns out that this matrix is singular, and this presents difficulties. The result of~\cite{ChenDafnisPaouris2015HolderGaussian} can be used to bound expectations of the form
\[
\Expect{(G_1,\ldots,G_k)}{\prod\limits_{i=1}^{k}f_i(G_i)},
\]
where $G_1,\ldots,G_k$ jointly distributed Gaussian random variables with non-singular covariance matrix, and each $f_i$ is non-negative bounded function. Strictly speaking, their result is phrased for real-valued Gaussian distributions, but it also holds for complex Gaussians. 

For this multi-function version, the non-singularity assumption is indeed necessary, and we give an example. Multiplying $M$ by $(1,\omega,\omega^2,1,\omega^2,\omega)$ shows that $M$ is singular, and this implies that complex Gaussians $(G_1,\ldots,G_6)$ with covariance matrix $M$ satisfy the linear relation $G_1+\omega^2 G_2 + \omega G_3 = 0$. Take a constant $\eps>0$ and define $f_1(G)=f_2(G) = 1_{\card{G}\leq \eps}$ and 
$f_3(G) = 1_{\card{G}\geq 3\eps}$. Then for $G_1,G_2,G_3$ as above, if $f_1(G_1)=f_2(G_2)=1$, then we get that
\[
\card{G_3}=\card{G_1+\omega^2 G_2}\leq 2\eps< 3\eps,
\]
so $f_3(G_3) = 0$. This shows that $\Expect{G_1,G_2,G_3}{f_1(G_1)f_2(G_2)f_3(G_3)} = 0$.

We are not aware of any similar example where the functions $f_1,f_2,f_3$ are all the same. In fact, we believe the following statement should be true:
\begin{conj}\label{conj:equil_triangle}
    For all $\alpha>0$, there exists $\beta>0$, such that for all $n\in\mathbb{N}$, if $f\colon \mathbb{C}^{2n}\to\{0,1\}$ has measure at least $\alpha$, then
    \[
    \Expect{(G_1,\ldots,G_6)}{f(G_1,G_4)f(G_2,G_5)f(G_3,G_6)}\geq \beta.
    \]
    Here, $G_1,\ldots,G_6$ is the $n$-dimensional Gaussian distribution, where each coordinate is sampled independently as jointly distributed centered complex Gaussian random variables with the covariance matrix $M$ from (\ref{matrix:M}).
\end{conj}
We remark that we know how to prove dimension-dependent bounds for Conjecture~\ref{conj:equil_triangle}, and the real challenge here is to establish bounds that hold for all dimensions. 

\begin{remark}
    There may be a more general phenomenon underlying statements along the lines of Conjecture~\ref{conj:equil_triangle} for matrices $M$ whose main diagonal are all $1$'s. If $M$ is singular and there is $v\in {\sf ker}(M)$ not orthogonal to the all $1$ vector, then even the single function can produce an expectation equal to $0$, so the analogous statement to Conjecture~\ref{conj:equil_triangle} fails for $M$. It is feasible that this is the only obstruction, and that if a covariance matrix $M$ has that ${\sf ker}(M)\subseteq{\sf span}(\vec{1})^{\bot}$, then $M$ admits a counting result in the spirit of Conjecture~\ref{conj:equil_triangle}. If true, such a result would be very interesting in our opinion, as it gives a clear distinction between the single-function version and the multi-function version.
\end{remark}

\bibliography{ref}
\bibliographystyle{alpha}

\appendix

\section{Proof of Theorem~\ref{thm:multi_reverse_hyp}}
\label{sec:GRH_complex}

We begin by stating the multidimensional version of the Gaussian reverse hypercontractivity theorem from~\cite{ChenDafnisPaouris2015HolderGaussian, Mos}.

\begin{thm}\label{thm:multi_reverse_hyp_real}(\cite{ChenDafnisPaouris2015HolderGaussian, Mos})
	Let $\eta>0$ and let $\mathbf{\mathcal{G}} = (\mathcal{G}_1,\ldots,\mathcal{G}_{\ell})$ be a jointly Gaussian collection of $\ell$ random vectors such that:
	\begin{enumerate}
		\item For each $i \in [\ell]$, $\mathcal{G}_i = \left(G_{i,1}, \ldots, G_{i,n}\right)$ is a random vector distributed as $n$ independent $\mathcal{N}(0,1)$ real Gaussians.
		\item  The $n\ell\times n\ell$ covariance matrix $P = \Expect{}{\mathcal{G}\mathcal{G}^T}$ of the Gaussian ensemble satisfies  $P-\eta I\succeq 0$, for some $\eta>0$. 
	\end{enumerate}
	Then, for all functions $f_1,\ldots,f_{\ell} \in L^2(\mathbb{R}^n,\gamma^n)$, where $\gamma$ is the standard $\mathcal{N}(0,1)$ Gaussian measure, such that $f_{1},\ldots,f_{\ell} : \mathbb{R}^n \to [0,1]$ and
	$\Expect{}{f_{j}(\mathcal{G}_{j})} = \mu_j$,
	we have:
	\[
	\Expect{}{\prod_{j=1}^\ell f_{j}(\mathcal{G}_{j})} \geq
	\left( \prod_{j=1}^\ell \mu_j \right)^{1/\eta}.
	\]
\end{thm}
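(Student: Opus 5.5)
The plan is to compare the joint law of $\mathcal{G}$ with the product law $\gamma^{n\ell}$ through its density, and to apply a reverse H\"older inequality with an exponent $p<1$. The hypothesis $P\succeq \eta I$ is exactly what makes the resulting Gaussian integral finite and at most $1$.

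\textbf{Setup.} Set $N=n\ell$. Since $P-\eta I\succeq 0$ with $\eta>0$, $P$ is invertible. Because each $G_{i,k}$ is a standard $\mathcal{N}(0,1)$ variable, every diagonal entry of $P$ equals $1$, so $\mathrm{tr}(P)=N$. Write $F(x)=\prod_{j}f_j(x_j)$ for $x=(x_1,\ldots,x_\ell)\in(\mathbb{R}^n)^\ell$. The density of $\mathcal{G}$ with respect to $\gamma^{N}$ is
\[
R(x)=\det(P)^{-1/2}\exp\Bigl(-\tfrac12 x^{T}(P^{-1}-I)x\Bigr),
\]
so that $\mathbb{E}[\prod_j f_j(\mathcal{G}_j)]=\int F R\, d\gamma^N$.

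\textbf{Reverse H\"older.} Fix $0<p<\eta\le 1$ and let $q=p/(1-p)$. For nonnegative functions the reverse H\"older inequality gives
\[
\int FR\,d\gamma^N\ \ge\ \Bigl(\int F^{p}\,d\gamma^N\Bigr)^{1/p}\Bigl(\int R^{-q}\,d\gamma^N\Bigr)^{-1/q}.
\]
The first factor is easy to handle. Since $0\le f_j\le 1$ and $p<1$, we have $f_j^p\ge f_j$. Under $\gamma^N$ the blocks $x_j$ are independent, so
\[
\int F^p\,d\gamma^N=\prod_j\mathbb{E}[f_j^p]\ \ge\ \prod_j\mu_j .
\]
It therefore suffices to show $\int R^{-q}\,d\gamma^N\le 1$. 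Given that, we obtain $\mathbb{E}[\prod_j f_j(\mathcal{G}_j)]\ge(\prod_j\mu_j)^{1/p}$ for every $p<\eta$, and letting $p\uparrow\eta$ yields the claimed bound with exponent $1/\eta$.

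\textbf{The Gaussian integral.} Diagonalize $P$ with eigenvalues $\lambda_1,\ldots,\lambda_N\ge\eta$. Then
\[
\int R^{-q}\,d\gamma^N=\det(P)^{q/2}\det\bigl(I-q(P^{-1}-I)\bigr)^{-1/2}=\exp\Bigl(\tfrac12\sum_i h(\lambda_i)\Bigr),
\]
where $h(\lambda)=q\ln\lambda-\ln(1+q-q/\lambda)$. This computation is valid provided $1+q-q/\lambda>0$ for every eigenvalue. Since $q(1/\lambda-1)\le q(1/\eta-1)$ and $q(1/\eta-1)<1$ exactly when $p<\eta$, the choice $p<\eta$ guarantees this. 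This is the only place the hypothesis $P\succeq\eta I$ enters.

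We now need $\sum_i h(\lambda_i)\le 0$ under the constraint $\sum_i\lambda_i=N$. We have $h(1)=0$, and
\[
h'(\lambda)=\frac q\lambda-\frac{q}{\lambda((1+q)\lambda-q)},
\]
so $h'(1)=0$. If $h$ is concave on $[\eta,\infty)$, Jensen's inequality gives $\sum_i h(\lambda_i)\le N\,h\bigl(\tfrac1N\sum_i\lambda_i\bigr)=N\,h(1)=0$, which finishes the proof.

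\textbf{Main obstacle.} The step I expect to need the most care is verifying that $h$ is concave on $\{\lambda: (1+q)\lambda>q\}$. The plan is to compute $h''$ explicitly and check its sign on this range. If global concavity fails somewhere, I would fall back on the tangent-line inequality $h(\lambda)\le h(1)+h'(1)(\lambda-1)=0$, checked directly on $[\eta,\infty)$. This already gives $\sum_i h(\lambda_i)\le 0$ termwise, without needing Jensen.
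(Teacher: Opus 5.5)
The step you flagged as the main obstacle is not merely delicate. The inequality $\int R^{-q}\,d\gamma^{N}\le 1$ is false whenever $P\neq I$; in fact the reverse always holds. Since $\int R\,d\gamma^{N}=1$ and $t\mapsto t^{-q}$ is strictly convex on $(0,\infty)$, Jensen's inequality gives $\int R^{-q}\,d\gamma^{N}\ge 1$, with equality only if $R\equiv 1$, i.e.\ $P=I$.

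Your own formula shows the same thing. Equivalently $h(\lambda)=(1+q)\ln\lambda-\ln\bigl((1+q)\lambda-q\bigr)$, and
\[
h'(\lambda)=\frac{q(1+q)(\lambda-1)}{\lambda\bigl((1+q)\lambda-q\bigr)},\qquad h''(1)=q(1+q)>0.
\]
So on the admissible range $\lambda>p=q/(1+q)$, $h$ decreases up to $\lambda=1$ and increases afterwards. Hence $h\ge 0$, with equality only at $\lambda=1$. This means $h$ is convex, not concave, near $1$. The tangent-line fallback $h(\lambda)\le 0$ fails too, and $\sum_i h(\lambda_i)>0$ unless all $\lambda_i=1$.

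Consequently your chain only yields
\[
\E\Bigl[\prod_j f_j(\mathcal{G}_j)\Bigr]\ \ge\ \Bigl(\prod_j\mu_j\Bigr)^{1/p}\exp\Bigl(-\frac{1}{2q}\sum_{i}h(\lambda_i)\Bigr).
\]
Consider the case that matters: the $n$ coordinates are i.i.d.\ copies of an $\ell$-dimensional Gaussian with correlation matrix $\Sigma_0\neq I$, so the spectrum of $P$ is that of $\Sigma_0$ repeated $n$ times. Then the correction factor is $\exp\bigl(-\frac{n}{2q}\sum_k h(\lambda_k(\Sigma_0))\bigr)$, which is exponentially small in $n$. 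The whole content of the theorem is a bound independent of $n$; the paper applies it with $n'\to\infty$.

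No choice of $p$ rescues a one-shot change of measure. The reverse H\"older step is valid for every nonnegative $F$. In that product setting, take $F$ to be the indicator of $\{R\le e^{-cn}\}$ with $c>0$ small. Then $\int F^p\,d\gamma^N=1-o(1)$, because $\ln R$ concentrates near $-n$ times a Kullback--Leibler divergence. Yet $\E F(\mathcal{G})\le e^{-cn}$, so any constant in front of $\bigl(\int F^p\,d\gamma^N\bigr)^{1/p}$ must be exponentially small in $n$.

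The missing ingredient is an argument that uses the product structure of $F=\prod_j f_j$ inside the comparison and tensorizes across coordinates. Examples are Borell-type reverse hypercontractivity, or the Gaussian interpolation (semigroup) method behind the reverse Brascamp--Lieb-type inequalities of \cite{ChenDafnisPaouris2015HolderGaussian}. Note that the paper does not prove this statement itself; it imports it from \cite{ChenDafnisPaouris2015HolderGaussian,Mos}.
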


We show how to derive Theorem~\ref{thm:multi_reverse_hyp} from the above theorem by reducing the complex case to the real case.  
%by treating the complex input $z=x+{\bf i}y\in\mathbb{C}^n$, as $(x,y)\in\mathbb{R}^{2n}$.  
Towards this end, we use the following definition.

\begin{definition}\label{def:realification}
    For \(H\in\mathbb{C}^{m\times m}\), define its {\em realification} by
\[
\mathcal{R}(H)
:=
\begin{pmatrix}
\operatorname{Re}H & -\operatorname{Im}H\\
\operatorname{Im}H &  \operatorname{Re}H
\end{pmatrix}
\in\mathbb{R}^{2m\times 2m}.
\]
\end{definition}
The key feature of $\mathcal{R}(H)$ is that it correctly maps the inner product over $\mathbb{C}^{m}$ to the inner product over $\mathbb{R}^{2m}$. In particular, we have the following lemma asserting that if a Hermitian matrix $H$ is positive definite, then $\mathcal{R}(H)$ is also positive definite.

\begin{lemma}\label{lem:complex_real_psd}
If $H\in\mathbb{C}^{m\times m}$ is Hermitian, then 
$\mathcal{R}(H)\in\mathbb{R}^{2m\times 2m}$ is real symmetric and
for every $x,y\in\mathbb{R}^m$
\[
(x+{\bf i}y)^*H(x+{\bf i}y)
=
\begin{pmatrix}
x\\y
\end{pmatrix}^{\mathsf T}
\mathcal{R}(H)
\begin{pmatrix}
x\\y
\end{pmatrix}.
\]
Consequently, $
H\succeq 0
\Longleftrightarrow
\mathcal{R}(H)\succeq 0,
$
and
$
H\succ 0
\Longleftrightarrow
\mathcal{R}(H)\succ 0.
$
\end{lemma}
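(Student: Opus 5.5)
Write $H = A + {\bf i}B$ with $A = \operatorname{Re}H$ and $B = \operatorname{Im}H$ real $m\times m$ matrices. The plan is to first read off symmetry of $\mathcal{R}(H)$ from Hermitianity, then verify the quadratic-form identity by direct expansion, and finally deduce the two equivalences from the fact that $x+{\bf i}y \leftrightarrow (x,y)$ is a bijection $\mathbb{C}^m\to\mathbb{R}^{2m}$ sending nonzero vectors to nonzero vectors.

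For symmetry, Hermitianity $H^* = H$ means $A^{\mathsf T} - {\bf i}B^{\mathsf T} = A + {\bf i}B$, so $A^{\mathsf T} = A$ and $B^{\mathsf T} = -B$. The transpose of $\mathcal{R}(H)$ is
\[
\begin{pmatrix} A^{\mathsf T} & B^{\mathsf T}\\ -B^{\mathsf T} & A^{\mathsf T}\end{pmatrix}
=\begin{pmatrix} A & -B\\ B & A\end{pmatrix} = \mathcal{R}(H).
\]

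For the identity, expand $(x-{\bf i}y)^{\mathsf T}(A+{\bf i}B)(x+{\bf i}y)$. The real part is
\[
x^{\mathsf T}Ax + y^{\mathsf T}Ay - x^{\mathsf T}By + y^{\mathsf T}Bx.
\]
The imaginary part is
\[
x^{\mathsf T}Bx + y^{\mathsf T}By + x^{\mathsf T}Ay - y^{\mathsf T}Ax.
\]
This imaginary part vanishes: $x^{\mathsf T}Bx = y^{\mathsf T}By = 0$ by antisymmetry of $B$, and $x^{\mathsf T}Ay = y^{\mathsf T}Ax$ by symmetry of $A$. On the other side, expanding the real quadratic form gives
\[
\begin{pmatrix}x\\y\end{pmatrix}^{\mathsf T}\mathcal{R}(H)\begin{pmatrix}x\\y\end{pmatrix}
= x^{\mathsf T}Ax - x^{\mathsf T}By + y^{\mathsf T}Bx + y^{\mathsf T}Ay,
\]
which matches the real part above.

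Since the map $(x,y)\mapsto x+{\bf i}y$ is a bijection with $(x,y)=0$ if and only if $x+{\bf i}y=0$, nonnegativity (respectively strict positivity on nonzero vectors) of one quadratic form is equivalent to that of the other. This gives $H\succeq 0 \Leftrightarrow \mathcal{R}(H)\succeq 0$ and $H\succ 0 \Leftrightarrow \mathcal{R}(H)\succ 0$.

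The same identity also shows that the eigenvalue bounds transfer. Indeed $\norm{x+{\bf i}y}^2 = \norm{(x,y)}^2$, so $H-\eta I\succeq 0$ if and only if $\mathcal{R}(H)-\eta I\succeq 0$, since $\mathcal{R}(H-\eta I)=\mathcal{R}(H)-\eta I$. This is the form in which the lemma is presumably used downstream. No step is a real obstacle; the only care needed is bookkeeping the sign conventions in the off-diagonal blocks.
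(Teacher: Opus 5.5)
Your proof is correct and follows essentially the same route as the paper: split $H=A+{\bf i}B$, use $A^{\mathsf T}=A$ and $B^{\mathsf T}=-B$ to get symmetry of $\mathcal{R}(H)$, expand $(x-{\bf i}y)^{\mathsf T}H(x+{\bf i}y)$ directly, and transfer (strict) positivity through the norm-preserving bijection $x+{\bf i}y\leftrightarrow(x,y)$. You are slightly more explicit than the paper in checking that the imaginary part of $z^*Hz$ vanishes. Your closing remark that $\mathcal{R}(H-\eta I)=\mathcal{R}(H)-\eta I$ is exactly how the appendix uses the lemma.
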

\begin{proof}
Write
\[
H=M+{\bf i}N,
\qquad
M=\operatorname{Re}H,
\qquad
N=\operatorname{Im}H.
\]
Since \(H\) is Hermitian,
\[
M^{\mathsf T}=M
\qquad\qquad
N^{\mathsf T}=-N,
\qquad\qquad
\mathcal{R}(H)
=
\begin{pmatrix}
M&-N\\
N&M
\end{pmatrix}.
\]
% It follows that
% \[
% \mathcal{R}(H)
% =
% \begin{pmatrix}
% M&-N\\
% N&M
% \end{pmatrix}
% \]
% is a real symmetric matrix. 
Let \(z=x+{\bf i}y\), with \(x,y\in\mathbb{R}^m\). A direct
calculation gives
\[
\begin{aligned}
z^*Hz
=
(x^{\mathsf T}-{\bf i}y^{\mathsf T})
(M+{\bf i}N)
(x+{\bf i}y)
&=
x^{\mathsf T}Mx-x^{\mathsf T}Ny
+y^{\mathsf T}Nx+y^{\mathsf T}My\\
&=
\begin{pmatrix}
x\\y
\end{pmatrix}^{\mathsf T}
\begin{pmatrix}
M&-N\\
N&M
\end{pmatrix}
\begin{pmatrix}
x\\y
\end{pmatrix},
\end{aligned}
\]
giving the first assertion of the lemma. The second assertion is immediate by definitions.
\end{proof}

We are now ready to prove Theorem~\ref{thm:multi_reverse_hyp}, which we restate here.

{\MGRH*}

\begin{proof}
Write
\[
\mathcal{G}_j=X_j+{\bf i}Y_j,
\qquad X_j,Y_j\in\mathbb{R}^n,
\]
and define the normalized realification
\[
\widetilde{\mathcal{G}}_j
:=
\sqrt{2}
\begin{pmatrix}
X_j\\
Y_j
\end{pmatrix}
\in\mathbb{R}^{2n},
\qquad
\widetilde{\mathcal{G}}
=
(\widetilde{\mathcal{G}}_1,\ldots,
 \widetilde{\mathcal{G}}_\ell).
\]
Since the coordinates of $\mathcal{G}_j$ are independent
$\mathcal{CN}(0,1)$ random variables, the coordinates of
$\widetilde{\mathcal{G}}_j$ are independent and mean $0$. The normalization $\sqrt{2}$ makes them $\mathcal{N}(0,1)$ random variables.

Write \(P=(P_{jk})_{j,k=1}^{\ell}\), where each \(P_{jk}\) is an
\(n\times n\) complex matrix, and let $\widetilde{P} :=\operatorname{Cov}(\widetilde{\mathcal{G}})
$
be the real covariance matrix of the realified collection. Then we have
%The joint properness of $\mathcal{G}$ gives
\begin{equation}\label{eq:realified_covariance}
\widetilde P_{jk}
=
\begin{pmatrix}
\operatorname{Re}P_{jk} & -\operatorname{Im}P_{jk}\\
\operatorname{Im}P_{jk} &  \operatorname{Re}P_{jk}
\end{pmatrix}.
\end{equation}
Indeed, by definition we have 
\[
P_{jk}=\Expect{}{\mathcal{G}_j\mathcal{G}_k^*} = \Expect{}{(X_j+{\bf i} Y_j)(X_k^T-{\bf i}Y_k^T)} = \Expect{}{X_jX_k^T} + \Expect{}{Y_jY_k^T} + i \Expect{}{Y_jX_k^T} - i \Expect{}{X_jY_k^T}.
\]
On the other hand, the properness of the collection gives
\[
0=\Expect{}{\mathcal{G}_j\mathcal{G}_k^T} = \Expect{}{(X_j+{\bf i} Y_j)(X_k+{\bf i}Y_k)^T} = \Expect{}{X_jX_k^T} - \Expect{}{Y_jY_k^T} + {\bf i} \Expect{}{Y_jX_k^T} + {\bf i} \Expect{}{X_jY_k^T},
\]
implying $\Expect{}{X_jX_k^T} = \Expect{}{Y_jY_k^T}$ and $\Expect{}{Y_jX_k^T} =- \Expect{}{X_jY_k^T}$. Plugging this above gives
\[
\operatorname{Re}P_{jk} = \Expect{}{X_jX_k^T} + \Expect{}{Y_jY_k^T} = 2\Expect{}{X_jX_k^T} = 2\Expect{}{Y_jY_k^T},
\]
and
\[
\operatorname{Im}P_{jk}  =  \Expect{}{Y_jX_k^T} -  \Expect{}{X_jY_k^T} = -2  \Expect{}{X_jY_k^T} = 2\Expect{}{Y_jX_k^T}.
\]
This implies~\eqref{eq:realified_covariance}, and so $\widetilde P=\mathcal{R}(P)$, where $\mathcal{R}$ denotes
blockwise realification. From Lemma~\ref{lem:complex_real_psd}, we conclude that hence
\[
\widetilde P-\eta I_{2n\ell}
=
\mathcal{R}\left(P-\eta I_{n\ell}\right)
\succeq 0.
\]
Therefore, $\widetilde{\mathcal{G}}$ satisfies the covariance
hypothesis of the real reverse Gaussian hypercontractivity theorem, Theorem ~\ref{thm:multi_reverse_hyp_real}, with $n$ replaced by $2n$.

For $x,y\in\mathbb{R}^n$, define
\[
\widetilde f_j(x,y)
:=
f_j\left(\frac{x+{\bf i}y}{\sqrt{2}}\right).
\]
Then
\[
\widetilde f_j(\widetilde{\mathcal{G}}_j)
=
f_j(\mathcal{G}_j)
\quad\text{and}\quad
\Expect{}{
\widetilde f_j(\widetilde{\mathcal{G}}_j)}
=
\mu_j.
\]
Applying Theorem ~\ref{thm:multi_reverse_hyp_real} to
\(\widetilde{\mathcal{G}}_1,\ldots,\widetilde{\mathcal{G}}_\ell\)
therefore yields
\[
\Expect{\mathcal{G}}{
    \prod_{j=1}^{\ell}f_j(\mathcal{G}_j)
} = \Expect{\widetilde{\mathcal{G}}}{
    \prod_{j=1}^{\ell}\widetilde{f}_j(\widetilde{\mathcal{G}}_j)
}
\geq
\left(
    \prod_{j=1}^{\ell}\mu_j
\right)^{1/\eta},
\]
as required.
    
\end{proof}

\section{Proof of Claim~\ref{claim:influence_removal}}

   The following lemma will be crucial in proving the claim.

    \begin{lemma}
    \label{lemma:split_f_total_inf}
        Fix $i\in [n]$ and a function $f: (\Sigma^n, \nu^{\otimes n}) \rightarrow \mathbb{C}$. Then,
        $$\Expect{a\sim \nu}{I(f_{x_i \rightarrow a})} = I(f) - I_i(f).$$
    \end{lemma}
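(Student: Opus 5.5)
We prove the identity by expressing both sides in terms of Fourier coefficients via Proposition~\ref{prop:total_inf}, using the orthonormal basis $(\phi_\alpha)$ of Proposition~\ref{prop:fourier_n}. For $a\in\Sigma$ and $x_{-i}\in\Sigma^{n-1}$, write
\[
f(x)=\sum_{\beta\in\mathbb{N}^{n-1}_{<m}} \phi_\beta(x_{-i})\, g_\beta(x_i),
\qquad
g_\beta(x_i)=\sum_{j<m}\hat f(\beta\cup\{i\mapsto j\})\,\phi_j(x_i).
\]
Then the restriction $f_{x_i\to a}$ is the function on $\Sigma^{n-1}$ with Fourier coefficients $\widehat{f_{x_i\to a}}(\beta)=g_\beta(a)$. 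Applying Proposition~\ref{prop:total_inf} to the restriction gives
\[
I(f_{x_i\to a})=\sum_\beta |\beta|\,|g_\beta(a)|^2,
\]
where $|\beta|$ is the number of nonzero coordinates of $\beta$.

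Next we take the expectation over $a\sim\nu$. By orthonormality of $\phi_0,\ldots,\phi_{m-1}$ in $L^2(\Sigma,\nu)$ (Parseval in one variable),
\[
\Expect{a\sim\nu}{|g_\beta(a)|^2}=\sum_{j<m}\bigl|\hat f(\beta\cup\{i\mapsto j\})\bigr|^2 .
\]
Hence $\Expect{a}{I(f_{x_i\to a})}=\sum_{\alpha}\#\alpha_{-i}\,|\hat f(\alpha)|^2$, where $\#\alpha_{-i}$ counts the nonzero coordinates of $\alpha$ other than $i$.

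Finally we compare with the right-hand side. Since $\#\alpha=\#\alpha_{-i}+1_{\alpha_i\neq 0}$, Proposition~\ref{prop:total_inf} gives
\[
I(f)-I_i(f)=\sum_\alpha \#\alpha\,|\hat f(\alpha)|^2-\sum_{\alpha_i\neq0}|\hat f(\alpha)|^2=\sum_\alpha\#\alpha_{-i}\,|\hat f(\alpha)|^2,
\]
which matches the previous expression and proves the lemma.

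The only delicate point is the bookkeeping in the restriction step: one must check that the restricted function's coefficients are exactly $g_\beta(a)$ with respect to the product basis on the remaining $n-1$ coordinates, and that the influences of $f_{x_i\to a}$ are computed with respect to $\nu^{\otimes(n-1)}$. Both are immediate from the product structure of the basis, so no step is a real obstacle.

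Alternatively, one can avoid Fourier analysis altogether. For $j\neq i$, averaging $I_j(f_{x_i\to a})$ over $a\sim\nu$ is exactly Definition~\ref{def:influence} for $I_j(f)$, since averaging over $x_i$ is already part of that definition. Summing over $j\neq i$ gives $I(f)-I_i(f)$ directly.
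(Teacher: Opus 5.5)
Your main argument is the same as the paper's: expand $f_{x_i\to a}$ in the product basis on the remaining coordinates, apply Proposition~\ref{prop:total_inf}, and average over $a$ using orthonormality of $\phi_0,\ldots,\phi_{m-1}$. Your one-variable Parseval step is exactly the paper's cancellation of the cross terms $t\neq t'$, so this part is correct.

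Your alternative argument is also correct, and it is a genuinely more elementary route. You write $I(f_{x_i\to a})=\sum_{j\neq i}I_j(f_{x_i\to a})$ and note that $\mathbb{E}_{a\sim\nu}[I_j(f_{x_i\to a})]=I_j(f)$ directly from Definition~\ref{def:influence}, since $x_i$ is one of the averaged coordinates there. This needs no basis at all and gives the identity coordinate by coordinate.

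A side benefit is that it avoids Proposition~\ref{prop:total_inf}, whose normalization differs from Definition~\ref{def:influence} by a factor of $2$ (because $\mathbb{E}_{a,b\sim\nu}|h(a)-h(b)|^2=2\,\mathrm{Var}(h)$). That discrepancy is harmless for this lemma only because both sides of the identity scale in the same way.
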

\begin{proof}
Let $|\Sigma|= m$ and consider the Fourier basis $\phi_0 ={\bf 1}, \phi_1, \ldots, \phi_{m-1}$ for the space $L^2(\Sigma, \nu)$. Using Proposition~\ref{prop:fourier_n}, $f$ has the following expansion
$$f(\V x) = \sum_{\alpha \in \mathbb{N}^n_{<m}} \hat{f}(\alpha) \phi_{\alpha}(\V x).$$
Thus, for $\V x\in \Sigma^{n-1}$, the expansion of the function $f_{x_i \rightarrow a}(\V x)$ is
$$f_{x_i \rightarrow a}(\V x) = \sum_{\alpha \in \mathbb{N}^{n-1}_{<m}} \sum_{t=0}^{m-1} \phi_t(a)\hat{f}(\alpha^{i\rightarrow t}) \phi_{\alpha}(\V x),$$
where $\alpha^{i\rightarrow t} \in \mathbb{N}^n_{<m}$ with $(\alpha^{i\rightarrow t})_i = t$ and the other coordinates same as $\alpha$. Now using Proposition~\ref{prop:total_inf},
\begin{align*}
    I(f_{x_i \rightarrow a}) &= \sum_{\alpha \in \mathbb{N}^{n-1}_{<m}} \card{\alpha}\card{\sum_{t=0}^{m-1} \phi_t(a)\hat{f}(\alpha^{i\rightarrow t})}^2\\
    & = \sum_{\alpha \in \mathbb{N}^{n-1}_{<m}} \card{\alpha}\left( \sum_{t} \card{\hat{f}(\alpha^{i\rightarrow t})}^2 + \sum_{t\neq t'} \phi_t(a)\overline{\phi_t'(a)}\hat{f}(\alpha^{i\rightarrow t})\overline{\hat{f}(\alpha^{i\rightarrow t'})} \right).
\end{align*}
Using the fact that the characters are orthogonal, and hence $\Expect{a\sim \nu}{\phi_t(a)\overline{\phi_t'(a)}} = 0$ for $t\neq t'$, we get,
\begin{align*}
   \Expect{a\sim \nu}{ I(f_{x_i \rightarrow a})} &= \sum_{\alpha \in \mathbb{N}^{n-1}_{<m}} \card{\alpha}\left( \sum_{t} \card{\hat{f}(\alpha^{i\rightarrow t})}^2 \right)\\
   &= \sum_{\alpha \in \mathbb{N}^{n}_{<m}} \card{\alpha} \card{\hat{f}(\alpha)}^2  - \sum_{\substack{\alpha \in \mathbb{N}^{n}_{<m}\\ \alpha_i \neq 0}} \card{\hat{f}(\alpha)}^2 \\
   & = I(f) - I_i(f),
\end{align*}
as required.
\end{proof}

We restate the claim here.

\jones*
\begin{proof}
We build a decision tree $\mathcal{D}$ as follows. Start with the root note and associate it with functions $\{L_P\mid P\in \spn(\mathcal{P})\}$. At each node $N$ in the tree $\mathcal{D}$, we pick a variable $i\in [n]$ and add $|\Sigma|$ many child nodes to it. Suppose node $N$ is associated with functions $\{(L_P)|_{N} \mid P\in \spn(\mathcal{P})\}$, then the children nodes will be associated with the restriction $x_i \rightarrow a$ of functions $\{(L_P)|_{N} \mid P\in \spn(\mathcal{P})\}$ for $a\in \Sigma$.

    We prove the statement by starting with the tree with just the root node. Each time, if for most leaf nodes $T$, at least one of the functions $\{(L_P)|_T\}$ has an influential variable, then we split the node $T$ into $\Sigma$ nodes each with the restriction $x_i \rightarrow a$ for $a\in \Sigma$. We show that after $O(1)$ many steps, most of the leaf nodes in $\mathcal{D}$ are such that all functions $\{(L_P)|_T\}$ associated with $T$ have all influences at most $\tau$. \\

 For a given decision tree, consider the following distribution, denoted by $\nu(\mathcal{D})$ on the leaf nodes of $\mathcal{D}$. Start with a root node of $\mathcal{D}$, at each step sample the $a^{th}$ child of the current node according to the probability $\nu(a)$ and keep going down until the process hits a leaf node $T$. Consider the following energy function associated with a given decision tree:
    $$\varphi(\mathcal{D}) = \Expect{T\sim \nu(\mathcal{D})}{\sum_{P\in \spn(\mathcal{P})} I[(L_P)|_{T}]},$$
    where the expectation is over the leaf nodes $T$ of $\mathcal{D}$, weighted according to the distribution $\nu$ , and $I[f]$ is the total influence of $f$. Note that if $\mathcal{D}_0$ is just the root node, then $\varphi(\mathcal{D}_0) = O_{|\mathcal{P}|, |\Sigma|, C, D}(1)$, as $\mathrm{deg}(L_P)\leq D$ and $\|L\|_2\leq C$ and hence $I[L_P] \leq CD$.

    Consider any decision tree $\mathcal{D}$. Suppose that in $\mathcal{D}$, for at least $\eta$ fraction of leaf nodes $T$, weighed according to $\nu(\mathcal{D})$, at least one of the associated functions has some variable $i$ with influence at least $\tau$. In this case, we can split each of these leaf nodes according to the respective influential variable. Using Lemma~\ref{lemma:split_f_total_inf}, the new tree $\mathcal{D}'$ has the property
    $$ \varphi(\mathcal{D}') \leq \varphi(\mathcal{D}) - \eta\cdot \tau.$$

    From the above, we concluded that if for at least $\eta$ fraction of leaf nodes $T$, at least one of the associated functions has a variable $i$ with influence at least $\tau$, then the energy of the next tree (generated by splitting each of these leaf nodes according to the respective influential variable) decreases by at least $\eta\cdot \tau$. Hence, after $k := O_{|\mathcal{P}|, |\Sigma|, C, D}(\frac{1}{\tau \eta})$ many steps, we get a tree $\mathcal{D}^{\mathrm{final}}$ with the property that for at least $(1-\eta)$ fraction of leaf nodes $T$ in $\mathcal{D}^{\mathrm{final}}$, the associated functions $\{(L_P)|_{T}\}$ have all the influences at most $\tau$.

    In order to prove the claim,
    it will be convenient to have a homogeneous tree where all nodes at a given level $\ell$ are split based on a fixed variable $i(\ell)$. We can modify the above process of getting a decision tree to ensure that the tree is always homogeneous: At each iteration $j$, suppose in the above process we split the nodes based on the variables $x_{j_1}, x_{j_2}, \ldots, x_{j_t}$, then in the modified process we split all the nodes at last level based on all the values of $x_{j_1}, x_{j_2}, \ldots, x_{j_t}$ variables (this will increase the depth of the homogeneous tree by $t$, assuming all $x_{j_m}$s are distinct). Similar to the above analysis, the energy of the new homogeneous tree is at most the energy of the original non-homogeneous tree. Hence, this process will also stop after $k= O_{|\mathcal{P}|, |\Sigma|, C, D}(\frac{1}{\tau \eta})$ iterations. However, note that in each iteration, the depth of the tree increases by up to the number of leaf nodes in the previous iteration. If we let $d(\ell)$ denote the depth of the homogeneous tree before the $\ell^{th}$ iteration, then we have
    $$d(0) = 1, \quad d(\ell+1) \leq d(\ell)+ 2^{d(\ell)}.$$ 
    Solving this recurrence gives $d(k) \leq 2\uparrow\uparrow k-1$. 

    If we let the restricted coordinates in the above homogeneous tree by the set $J\subseteq [n]$, then $|J| = O_{|\mathcal{P}|, |\Sigma|, C, D, \eta, \tau}(1)$ and the guarantee on the decision tree implies that with probability at least $1-\eta$ over $\V w\sim \nu^{\otimes |J|}$, we have

    $$\max_{P\in \spn(\mathcal{P})} \{ \max_{i}I_i[(L_P)_{J\rightarrow \V w}] \}\leq \tau,$$
    as required
\end{proof}

\end{document}